\numberwithin{equation}{section}
\numberwithin{table}{section}
\numberwithin{figure}{section}
\def\naive{{na\"ive}}
\def\cf{cf.~}
\def\sqrtminusone{\hbox{\footnotesize{$\sqrt{-1}$}}}
\newcommand{\hsp}[1]{{\hbox{\hspace{#1}}}}
\newcounter{letcnt} 
\def\a{\alpha}  
\def\b{\beta}  
\def\d{\delta}
\def\z{\zeta}
\def\m{\mu}
\def\n{\nu}
\def\s{\sigma}
\def\w{\omega} 
\def\x{\xi}
\def\fa{\mathfrak{a}} 
\def\tAd{\mathrm{Ad}} \def\tad{\mathrm{ad}}
\def\tAut{\mathrm{Aut}}
\def\cB{\mathcal B}
\def\fb{\mathfrak{b}} 
 \def\tbd{\mathrm{bd}}
\def\bC{\mathbb C} \def\cC{\mathcal C}
\def\sC{\mathscr{C}} 
\def\fc{\mathfrak{c}}  
\def\bc{\mathbf{c}}
\def\tcodim{\mathrm{codim}} 
 \def\sD{\mathscr{D}}
\def\tdeg{\mathrm{deg}} 
 \def\tdim{\mathrm{dim}}
 \def\ttE{\mathtt{E}}
\def\tEnd{\mathrm{End}} 
 \def\fe{\mathfrak{e}} 
 \def\teven{\mathrm{even}}
\def\texp{\mathrm{exp}}
\def\cF{\mathcal F} 
\def\ff{\mathfrak{f}}
\def\tFlag{\mathrm{Flag}}
\def\sG{\mathscr{G}}
\def\tGr{\mathrm{Gr}}
\def\fg{{\mathfrak{g}}}
 \def\ttH{\mathtt{H}}
 \def\sH{\mathscr{H}}
\def\tHom{\mathrm{Hom}}
\def\fh{\mathfrak{h}} 
\def\bi{\mathbf{i}} 
\def\tti{\mathtt{i}}
 \def\tIm{\mathrm{Im}}
 \def\tim{\mathrm{im}}
\def\fk{\mathfrak{k}} 
\def\bbk{\mathbbm{k}}
 \def\tker{\mathrm{ker}}
\def\tlim{\mathrm{lim}}
\def\tLG{\mathrm{LG}}
\def\sfN{\mathsf{N}} \def\sN{\mathscr{N}}
 \def\cO{\mathcal O}
 \def\tOG{\mathrm{OG}}
 \def\todd{\mathrm{odd}}
\def\bP{\mathbb P} 
\def\sP{\mathscr{P}} 
\def\fp{\mathfrak{p}} 
\def\tprim{\mathrm{prim}}
\def\bQ{\mathbb Q} \def\cQ{\mathcal Q} 
\def\fq{\mathfrak{q}} 
\def\bR{\mathbb R} \def\cR{\mathcal R}
\def\fr{\mathfrak{r}}
\def\trank{\mathrm{rank}}
 \def\cS{\mathcal S}
\def\sS{\mathscr{S}}
\def\ttS{\mathtt{S}} 
\def\fs{\mathfrak{s}}
\def\tss{\mathrm{ss}}
\def\tSL{\mathrm{SL}} \def\tSO{\mathrm{SO}}
\def\tStab{\mathrm{Stab}} \def\tSym{\mathrm{Sym}}
\def\tSU{\mathrm{SU}}
 \def\tspan{\mathrm{span}}
\def\fsl{\mathfrak{sl}} \def\fso{\mathfrak{so}} 
\def\fsp{\mathfrak{sp}} \def\fsu{\mathfrak{su}}
\def\cT{\mathcal T}  
 \def\ttT{\mathtt{T}}
\def\ft{\mathfrak{t}}
\def\sU{\mathscr{U}}
 \def\cV{\mathcal V}
 \def\sW{\mathscr{W}}
\def\bx{\mathbf{x}}
\def\cY{\mathcal{Y}} 
\def\ttY{\mathtt{Y}} 
   \def\bZ{\mathbb Z}
\def\fz{\mathfrak{z}} 
\def\half{\tfrac{1}{2}}
\def\one{\mathbbm{1}}
\def\tand{\quad\hbox{and}\quad}
\def\del{\partial}
\def\dfn{\stackrel{\hbox{\tiny{dfn}}}{=}}
\def\sb{{\hbox{\tiny{$\bullet$}}}}
\def\inj{\hookrightarrow}
\def\sur{\twoheadrightarrow}
\def\op{\oplus}
\def\ot{\otimes}
\def\tw{\hbox{\small $\bigwedge$}}
\def\rtL{{\Lambda_\mathrm{rt}}}
\newcounter{numcnt}
\newcounter{cnt}
\newcounter{acnt}
\newenvironment{a_list}{ 
  \begin{list}{{(\alph{acnt})}}
   {\usecounter{acnt} \setlength{\itemsep}{3pt}
    \setlength{\leftmargin}{25pt} 
    \setlength{\labelwidth}{20pt}
    \setlength{\listparindent}{20pt} }
   }
   {\end{list}}
\newenvironment{a_list_emph}{ 
  \begin{list}{{\emph{(\alph{acnt})}}}
   {\usecounter{acnt} \setlength{\itemsep}{3pt}
    \setlength{\leftmargin}{25pt} 
    \setlength{\labelwidth}{20pt}
    \setlength{\listparindent}{20pt} }
   }
   {\end{list}}
\newenvironment{a_list_bold}{ 
  \begin{list}{{\bf{(\alph{acnt})}}}
   {\usecounter{acnt} \setlength{\itemsep}{3pt}
    \setlength{\leftmargin}{25pt} 
    \setlength{\labelwidth}{20pt}
    \setlength{\listparindent}{20pt} }
   }
   {\end{list}}
\newcounter{Acnt}
\newcounter{icnt}
\newcounter{Icnt}
\newcounter{exam_cnt}
\newcounter{mccnt}
\newenvironment{bcirclist}{ 
  \begin{list}{\boldmath$\circ$\unboldmath}
   {\usecounter{cnt} \setlength{\itemsep}{2pt}
    \setlength{\leftmargin}{15pt} \setlength{\labelwidth}{20pt}
    \setlength{\listparindent}{20pt} }
   }
   {\end{list}}
\newtheorem{corollary}[equation]{Corollary}
\newtheorem{lemma}[equation]{Lemma}
\newtheorem{proposition}[equation]{Proposition}
\newtheorem{theorem}[equation]{Theorem}
\theoremstyle{definition}
\newtheorem*{boldQ*}{Question}
\newtheorem*{boldP*}{Problem}
\theoremstyle{definition}
\theoremstyle{remark}
\newtheorem*{assume*}{Assume}
\newtheorem*{answer*}{Answer}
\newtheorem*{claim*}{Claim}
\newtheorem{definition}[equation]{Definition}
\newtheorem*{definition*}{Definition}
\newtheorem{example}[equation]{Example}
\newtheorem*{example*}{Example}
\newtheorem*{hint*}{Hint}
\newtheorem*{notation*}{Notation}
\newtheorem{remark}[equation]{Remark}
\newtheorem*{remark*}{Remark}
\newtheorem*{remarks*}{Remarks}
\newtheorem*{fact*}{Fact}
\newtheorem*{emphL*}{Lemma}
\newtheorem*{emphQ*}{Question}
\newtheorem*{emphA*}{Answer}
\def\BB66{MR0216035}
\def\CKpmhs{MR664326}
\def\CKSdeg{MR840721}
\def\DeligneII{MR0498551}
\def\FHW{MR2188135}
\def\GGK{MR2918237}
\def\Schmid{MR0382272}
\def\tNC{\sigma}
\theoremstyle{remark}
\newtheorem{RE}{Running Example}
\begin{document}

\title{Variations of Hodge structure and orbits in flag varieties}

\author[Kerr]{Matt Kerr}
\email{matkerr@math.wustl.edu}
\address{Department of Mathematics, Washington University in St. Louis, Campus Box 1146, One Brookings Drive, St. Louis, MO 63130-4899}
\thanks{Kerr is partially supported by NSF grants DMS-1068974, 1259024, and 1361147.}

\author[Robles]{Colleen Robles}
\email{robles@math.duke.edu}
\address{Mathematics Department, Duke University, Box 90320, Durham, NC  27708-0320} 
\thanks{Robles is partially supported by NSF grants DMS 02468621 and 1361120.  This work was undertaken while Robles was a member of the Institute for Advanced Study; she thanks the institute for a wonderful working environment and the Robert and Luisa Fernholz Foundation for financial support.}

\date{\today}

\begin{abstract}
Period domains, the classifying spaces for (pure, polarized) Hodge structures, and more generally Mumford-Tate domains, arise as open $G_\bR$--orbits in flag varieties $G/P$.
We investigate Hodge--theoretic aspects of the geometry and representation theory associated with these flag varieties.
In particular, we relate the Griffiths--Yukawa coupling to the variety of lines on $G/P$ (under a minimal homogeneous embedding), construct a large class of polarized $G_\bR$--orbits in $G/P$, and compute the associated Hodge--theoretic boundary components. 
An emphasis is placed throughout on adjoint flag varieties and the corresponding families of Hodge structures of levels two and four.
\end{abstract}

\keywords{Hodge theory, period domains, Mumford-Tate domains, Mumford-Tate groups, boundary components, variations of Hodge structure, Schubert varieties, Griffiths--Yukawa coupling}
\subjclass[2010]
{
 14D07, 32G20. 
 14M15. 
 14C30. 
}

\maketitle

\setcounter{tocdepth}{1}
\let\oldtocsection=\tocsection
\let\oldtocsubsection=\tocsubsection
\let\oldtocsubsubsection=\tocsubsubsection
\renewcommand{\tocsection}[2]{\hspace{0em}\oldtocsection{#1}{#2}}
\renewcommand{\tocsubsection}[2]{\hspace{3em}\oldtocsubsection{#1}{#2}}
\renewcommand{\tocsubsubsection}[2]{\hspace{6em}\oldtocsubsubsection{#1}{#2}}

\section{Introduction}

This is the first in a sequence of papers (including \cite{KPR, KR2, SL2}) examining the relationship between 
\begin{itemize}
\item the Hodge theory of Mumford--Tate (MT) domains $D=G_\bR/R$ 
\end{itemize}
and 
\begin{itemize}
\item the projective geometry and $G_\bR$--orbit structure of their compact duals $\check D=G/P$.  
\end{itemize}
Its {\em motivating principle} is to use representation theory to classify potential differential and asymptotic features of variations of Hodge structure (VHS) with given ``symmetries'', with a view to applying the conclusions to decide what is possible or expected for geometric moduli.

The recent manifestations of this principle in the literature begin with the classification of Mumford--Tate groups \cite{MR2918237, Patrikis}, which clarifies the possible assortments of Hodge tensors in families, and hence what algebraic cycles are expected. On the differential front, a consideration of Schubert varieties in $G/P$ leads, via Kostant's theorem \cite{Kostant}, to an infinitesimal description of all maximal integrals of the infinitesimal period relation (IPR, a.k.a. Griffiths's transversality condition) \cite{MR3217458}. One thereby arrives at maximal expected dimensions for images of algebro-geometric period maps in any $\Gamma\backslash D$. In the asymptotic direction, a good understanding of abelian nilpotent cones in $\mathfrak{g}_{\mathbb{R}}$ allows for a classification of boundary components parametrizing all possible limiting mixed Hodge structures (LMHS) \cite{KU, KP2012}. This leads to precise predictions for the degeneration of varieties (and GIT-type boundary strata of relevant moduli spaces), as well as a means of attacking Torelli-type problems \cite{Usui, Jrthesis}.

In this paper, we point out another avenue through which representation theory influences period maps into $\Gamma \backslash D$, showing in $\S$\ref{S:sec5} how the projective geometry of the flag variety $\check{D}$ influences differential invariants of such VHS. Specifically, the variety of lines on $\check{D}$ (in its minimal homogeneous embedding) is closely related to the Griffiths-Yukawa coupling of a weight-2 variation $\mathcal{V}$. This relationship is connected to the second fundamental form of $\check{D}$ and the characteristic variety of $\mathcal{V}$, and should yield a useful criterion for distinguishing VHS (in the spirit of the disproof \cite{SXZ} of Dolgachev's conjecture).

We also introduce a tool for making the classification of boundary components effective.  Recently it has emerged \cite{MR3331177,KPR} that classifying the ``polarizable'' $G_{\mathbb{R}}$-orbits in the analytic boundary $\text{bd}(D)\subset \check{D}$ is both easier (than dealing with nilpotent cones of all dimensions) and corresponds more accurately to the boundary components (up to $G_{\mathbb{R}}$-conjugacy, this is one-to-one). We describe in $\S$\ref{S:cayleyconstr} a simple algorithm in terms of root-theoretic data for parametrizing many (and, in important cases, all) of these orbits, as well as the corresponding nilpotent cones and boundary components. While Hodge-theoretic in motivation, this construction produces an interesting family of smooth horizontal subvarieties of $\check{D}$ --- ``enhanced $\tSL_2$-orbits'', described in $\S$\ref{S:sl2+adj} --- which can be expanded in $H^*(\check{D},\mathbb{Z})$ as effective linear combinations of classes of (typically singular) horizontal Schubert varieties, thereby yielding smooth representatives.

\begin{center}
\line(1,0){150}
\end{center}

Principles aside, the motivating {\it impulse} for this work, and the reason for the peculiar pairing of topics, was our desire to give a uniform treatment of an important class of MT domains related to algebraic surfaces with $p_g > 1$.  This comes from recent work of the authors\footnote{forthcoming work of Green, Griffiths, Laza, and Robles on $H$-surfaces; and of da Silva, Kerr and Pearlstein on Katz surfaces} on {\em (a)} Horikawa surfaces \cite{Horikawa} and {\em (b)} certain 1-parameter families of elliptic surfaces defined by Katz \cite{MR3364748}; these correspond (resp.) to {\em (a)} the period domain $D_{(2,n,2)}$ for Hodge structures (HS) with $h^{2,0}=h^{0,2}=2$ and $h^{1,1}=n$, and {\em (b)} the MT domain for Hodge structures with Hodge numbers $(2,3,2)$ and MT group $G_2$.  These are examples of {\em irreducible, non-classical} (i.e. non-Hermitian, hence with nontrivial IPR) {\em domains for HS of weight two}. The remaining specimens of this type are the usual period domains $D_{(a,b,a)}$ ($a>2$) and their unitary ``cousins'' (for HS over an imaginary quadratic field), and three further ``exceptional'' examples parametrizing HS with Hodge numbers $(6,14,6)$ (resp. $(12,30,12)$,\footnote{over $\mathbb{Q}$, arising from an $\mathbb{F}$-Hodge structure of type $(6,15,6)$ ($\mathbb{F}$ imaginary quadratic)} $(12,32,12)$) and MT group $F_4$ (resp. $E_6$, $E_7$).  While geometric sources of the latter have yet to be found, techniques of Yun and others (which have already produced higher-weight examples for $G_2$, $E_7$, $E_8$ \cite{Yun}) should eventually provide them.

To see what distinguishes this class of domains for weight-two HS, it is helpful to recall what makes weight-three Calabi-Yau VHS (of type $(1,n,n,1)$) on a symplectic space $(V,Q)$ special. The period map $\Phi:\, \mathcal{S}\to \Gamma\backslash D$ lifts to $\tilde{\Phi} :\, \tilde{\mathcal{S}} \to D$ on the universal cover, and considering Hodge filtrands $\{F^3$, $F^2\}$ embeds $D \hookrightarrow \mathbb{P}V \times \mathbb{P}(\bigwedge^{n+1} V)$. Composing $\tilde{\Phi}$ with the projection $D\overset{\pi}{\to} \mathbb{P} V$ (forgetting $F^2$) yields an immersion $\tilde{\phi}:\, \tilde{\mathcal{S}} \to \mathbb{P}V$ integrating the natural contact structure on $\mathbb{P}V$. If $\Phi (\mathcal{S})$ is maximal (of dimension $n$), then $\tilde{\phi}$ is Legendrian and $\tilde{\Phi}$ can be completely reconstructed from the $1$-jet of $\tilde{\phi}$. For the Hermitian CY 3-VHS classified by Friedman and Laza \cite{FL}, the situation is even better: $\pi$ itself is an embedding!

Given the wide and fruitful attention received by these cases in the literature, it seems difficult to justify ignoring the (arguably more natural) case where the horizontal distribution on $\check{D}$ itself is a (homogeneous!) contact structure. Equivalently, $\check{D}$ is an adjoint variety, i.e. the orbit of a highest-weight line in $\mathbb{P}\mathfrak{g}$. Each of the adjoint varieties has a unique open $G_{\mathbb{R}}$-orbit $D$ with the structure of a MT domain for polarized HS of weight $2$ and $p_g >1$ (with MT group $G$). In fact, for $G$ of type $B$ or $D$, these are just the period domains $D_{(2,m,2)}$; for type $A$, they parametrize HS of type $(1,m,1)$ over an imaginary quadratic field (type $(2,2m,2)$ over $\mathbb{Q}$); and for types $E_6$, $E_7$, $F_4$, $G_2$ they recover the four examples described above.\footnote{The $E_8$ adjoint variety only parametrizes HS of weight 4.} The adjoint varieties turn out to admit a uniform treatment, with the method of $\S$\ref{S:cayleyconstr} describing all of their boundary components, and the codimension-one components closely related to the Hermitian C-Y VHS of \cite{FL}.

\begin{center}
\line(1,0){150}
\end{center}

We now turn to a more detailed discussion of the contents of this paper, beginning with some background.

\subsection*{Hodge-theoretic classifying spaces}

Mumford--Tate domains are classifying spaces for Hodge structures on a $\bQ$-vector space $V_{\bQ}$ with fixed Hodge tensors, including a polarizing form $Q\in (V^*)^{\otimes 2}$.  Recall how these arise: a (pure) Hodge structure polarized by $Q$, is given by a nonconstant homomorphism 
\[
  \varphi: S^1 \ \to \ \tAut(V_{\bR},Q)
\]
of real algebraic groups with $Q(v,\varphi(\sqrt{-1})\bar{v})>0$, for all $v\in V_\bC\setminus \{0\}$.   Let $G_{\bQ}$ denote the smallest $\bQ$--algebraic subgroup of $\tAut(V_{\bQ},Q)$ with $G_\bR\supseteq \varphi(S^1)$.  This is the \emph{Mumford--Tate group} (MT group) of $\varphi$.  The real form $G_\bR$ is known to be reductive \cite{MR654325}.  This group acts on $\varphi$ by conjugation, and the orbit
\[
   D \ \dfn \ G_\bR\cdot \varphi\ \cong \ G_\bR/R
\]
is a \emph{Mumford--Tate domain} (MT domain).  The stabilizer $R$ is compact, and its Lie algebra $\fr$ contains a compact Cartan subalgebra $\ft$ of $\fg_\bR$ \cite{\GGK}.   The MT domain is an analytic open subset of its compact dual, the projective homogeneous variety 
\[
  \check D \ \dfn \ G\cdot F_{\varphi}^\sb \ \cong G/P\,.
\]
Here, $G$ is the complexification of $G_\bR$, and the action of $G$ (and $G_\bR$) is by left translation on the associated Hodge flag $F^\sb_{\varphi} \in \check D$.

One way to construct  MT domains is to begin with a simple adjoint group $G_{\bQ}$, for which $G_\bR$ contains a compact maximal torus $T$.  If $\ttE\in\bi\ft=\bi\mathit{Lie}(T)$ is a grading element which is odd (respectively, even) on the noncompact (respectively, compact) roots, then $\varphi(z) = e^{2\log(z)\ttE}$ defines a weight zero Hodge structure (HS) on $\fg_{\bQ}=\mathit{Lie}(G_{\bQ})$ that is polarized by $-(\cdot,\cdot)$, where $(\cdot,\cdot)$ is the \emph{Killing form} on $\fg_\bQ$.  Moreover, $\varphi$ may lift to a HS on a representation $V_{\bQ}$.  In either case, a sufficiently general $G_\bR$-conjugate of this HS has  MT group $G_{\bQ}$.  In case $\varphi$ lifts, the Hodge decomposition $V_\bC = \op V^{p,q}$ is the $\ttE$--eigenspace decomposition; the grading element acts on $V^{p,q}$ by the scalar $(p-q)/2$.  Then $\varphi$ is identified with the Hodge flag $F^p = \op_{a\ge p} V^{a,b}$, the compact dual $\check D = G/P$ is identified with the $G$--flag variety containing $F^\sb$, and the MT domain $D = G_\bR/R$ with the open $G_\bR$--orbit of $F^\sb$ in $\check D$.  The Lie algebra of $P = \tStab_G(F^\sb)$ is $\fp = \fg^{\ge0} = \op_{p\ge0}\fg^p$, where $\fg^p = \fg^{p,-p}$.  The Lie algebra of $R$ is a real form of $\fg^0$; we shall write 
\[
  G^0 \ \dfn \ R_\bC \,.
\]
For the purposes of this introduction, we shall  assume that $D$ has a base point $o=\varphi$ defined over $\bQ$ (\cf~Definition \ref{def:MTCD}).  

\begin{RE}
A case in point, which we shall use as a running example throughout this Introduction, is the $\bQ$--form of the exceptional, rank two group $G_2$ constructed in $\S$\ref{S:Qform}.  The grading element $\ttS^2\in\ft_{\bQ(\sqrt{-1})}$ (dual to the second simple root) induces (i) a weight zero, $-(\cdot,\cdot)$ polarized Hodge decomposition of the complex Lie algebra 
\[
  \fg \ = \ \fg^{-2}\oplus \fg^{-1}\oplus\fg^0\oplus \fg^1\oplus\fg^2 \,,
  \quad \fg^{-p} = \fg^{-p,p} \,,
\]
with Hodge numbers $(1,4,4,4,1)$; and (ii) a weight two polarized Hodge decomposition 
\[
  V \ = \ V^{0,2}\oplus V^{1,1}\oplus V^{2,0} 
\]
on the standard, 7--dimensional representation with Hodge numbers $(2,3,2)$.  The compact dual is the 5--dimensional adjoint variety in $\bP\fg$.
\end{RE}

\subsection*{Variations of Hodge structure and Schubert varieties}

Recall that a variation of Hodge structure $\mathcal{V}$ over a complex manifold $S$ consists of a $\bQ$-local system $\mathbb{V}$, together with a filtration of $\mathbb{V}\otimes \cO_S$ by holomorphic subbundles $\mathcal{F}^\sb$ whose fibre-wise restrictions are Hodge flags, and which satisfy the infinitesimal period relation $\nabla(\mathcal{F}^\sb)\subset \mathcal{F}^{\sb -1}\otimes \Omega^1_S$ for the flat connection  $\nabla$ on $\mathcal{V}$ annihilating $\mathbb{V}$.  The asymptotics of $\mathcal{V}$ over a punctured disk $\Delta^*\subset S$ are captured by the (limiting) mixed Hodge structure $(V_\tlim,F^\sb_\tlim,W_{\sb})\dfn (\tilde{\mathbb{V}},\mathcal{F}^\sb,W_{\sb}(N))|_{s=0}$, where $N$ is the monodromy logarithm of $\mathbb{V}|_{\Delta^*}$ and $\tilde{\mathbb{V}}=\texp(-\log(s)/2\pi\bi)\mathbb{V}$. 

If $G_{\bQ}$ contains the  MT groups of the fibres of $\mathcal{V}$, the possible limiting MHS are classified by the boundary components $\tilde{B}(N)$ of \cite{KP2012}.  Moreover, $\mathcal{V}$ induces a \emph{period map} $\Phi:\, S\to \Gamma\backslash D$, with $\Gamma$ the monodromy group.  We may lift $\left.\Phi\right|_{\Delta^*}$ to the upper--half plane $\widetilde\Phi : \sH \to D$, and take the \emph{\naive~ limit} $\lim_{\tIm(\tau)\to\infty} \widetilde\Phi(\tau) \in \check D$, which lies in the analytic boundary $\tbd(D)\subset\check D$.  This boundary breaks into finitely many $G_\bR$--orbits, and those accessible by such Hodge--theoretic limits were termed \emph{polarizable} in \cite{MR3331177}.\footnote{This notion of a ``polarized'' orbit is distinct from J.~Wolf's in \cite[Definition 9.1] {MR0251246} where the orbit $G_\bR \cdot x\subset \check D$ is \emph{polarized} if it realizes the minimal homogeneous CR--structure on the homogeneous manifold $G_\bR/\tStab_{G_\bR}(x)$, \cf\cite[Remark 5.5]{MR2668874}.}  

The IPR forces the differential of the period map $\Phi$ to lie in the \emph{horizontal distribution} $T^1 \subseteq T\check{D}$.  At the point $\varphi$ holomorphic tangent space $T_\varphi\check D$ is identified with $\fg/\fp$; the horizontal subspace is $T^1_\varphi \simeq \fg^{-1,1}/\fp$.  A subvariety (or submanifold) $Y\in\check{D}$ is \emph{horizontal} if its tangent space $T_y Y$ is contained in $T^1_y$ at every smooth point $y\in Y$. The horizontal Schubert varieties $X\subset \check{D}$ encode a great deal of information about the IPR, cf. \cite{MR3217458}. In particular, the rank of the differential $d\Phi$ of a period map is bounded above by $\mathrm{max}\{ \dim(X) \, | \, X\subset \check{D}\;\text{horizontal Schubert}\}$.

\begin{RE}
The horizontal distribution $T^1$ is a contact $4$-plane distribution. Modulo the action of $G_2$, there is a unique Schubert variety $X_d \subset \check{D}$ of dimension $d$, for $0\leq d\leq 5$. The Schubert variety is horizontal if and only if $d\leq 2$. 
\end{RE}

A natural question about horizontal Schubert varieties is: when can they be (up to translation) the compact dual of a MT subdomain of $\check{D}$?  Any such Schubert variety is necessarily homogeneous and therefore smooth. The majority of Schubert varieties are singular. Moreover, a smooth Schubert variety $X\subset \check{D}$ need not be homogeneous. In \cite{KR2}, it is shown that a smooth \emph{horizontal} Schubert variety is necessarily a homogeneous embedded Hermitian symmetric space; furthermore, with an assumption on the ``$\mathbb{Q}$-types'' of $G_{\mathbb{Q}}$ and $o\in D$, these are all MT subdomains (Prop. \ref{Prop B}).

\begin{RE}
No translate of the 2-dimensional horizontal Schubert variety $X_2$ can be MT. This can be seen at once (and was first observed by the authors) by looking at its tangent space, which cannot be that of a reductive group orbit in $\check{D}$. In fact, it turns out that $X_2$ is singular, and thus not homogeneous. (The 1-dimensional horizontal Schubert variety $X_1$ is a $\mathbb{P}^1$, and therefore homogeneous.)
\end{RE}

\subsection*{Lines on $\check D$ and the Griffiths--Yukawa coupling}

When $\check D$ is the minimal homogeneous embedding $G/P \inj \bP V$, the minimal degree rational curves on $\check D$ are lines $\bP^1 \subset \bP V$ (degree one).  Let $\tilde \cC_o \subset \bP T_o\check D$ be the variety of tangent directions to lines passing though $o \in \check D$.  (We will often think of $\tilde\cC_o$ as the set of lines $o \in \bP^1 \subset \check D$.)  Of particular interest here is the set 
\[
  \cC_o \ \dfn \ \tilde \cC_o\,\cap\,\bP T^1
\] 
of directions horizontal with respect to the IPR:  If $P$ is a maximal parabolic, then the variety
\begin{equation} \label{E:cone}
  X \ \dfn \ \bigcup_{\bP^1 \in \cC_o} \bP^1
\end{equation}
swept out by these lines --- which is a cone over $\cC_o$ with vertex $o$ --- is a Schubert variety.  This is typically singular, and \emph{a priori} need not be horizontal. When $P$ is a maximal parabolic associated to a non-short root, we have $\cC_o = \tilde \cC_o$ and $X$ is in fact horizontal, \cf\cite[Proposition 2.11]{MR1966752} and Proposition \ref{P:Xo}.  When $\check D$ is an adjoint variety ($V=\fg$), $\cC_o$ realizes the homogeneous Legendrian varieties studied by J.M.~Landsberg and L.~Manivel \cite{MR1890196, MR2372722}.  

\begin{RE}
In our $G_2$ example, $X = X_2$ is the famous twisted cubic cone known to \'E.~Cartan, \cf $\S$\ref{S:sec5}.  
\end{RE}

In this paper we will discuss two Hodge--theoretic characterizations of $\cC_o$.  The first is in terms of the Griffiths--Yukawa coupling, a differential invariant of VHS's.  (The second is quite distinct, and will be discussed below.)  When the kernel of the Griffiths--Yukawa coupling is nonempty, it necessarily contains $\cC_o$ (Theorem \ref{T:GY}).  Moreover,  for certain compact duals (including the $A_n$, $E_6$, $E_7$, $E_8$, $F_4$, and $G_2$ adjoint varieties, \cf Lemma \ref{L:m=2} and \cite[\S5.3]{KR1long}) equality holds ($\cC_o$ is the kernel of the Griffiths--Yukawa coupling) for some Hodge representation of weight two (Theorem \ref{T:adj_m=2}).  

\subsection*{Polarized orbits}

The polarized orbits, and the corresponding limiting mixed Hodge structures, describe the asymptotics of a variation of Hodge structure on a MT domain.  More generally, one may take the \naive~limit of nilpotent orbit $\exp(zN)F^\sb$ on $\check D$.  In this paper our emphasis is on representation--theoretic descriptions of polarized orbits and mixed Hodge structures associated with nilpotent orbits, with the motivation that such descriptions may guide (i) the construction of geometric/motivic examples, and (ii) the application of Hodge theory to the study of nonclassical moduli.\footnote{See \cite{GGVan, GGR, GTAGS, GFRG3, GatF60, GMiami2015} for more on this line of thought.}

\begin{RE}
The boundary $\tbd(D)$ contains a unique real-codimension one $G_\bR$--orbit, which is polarizable.  The weight-graded pieces of the corresponding limiting mixed Hodge structures have $\dim\, Gr^W_3 V_\tlim = \dim\, Gr^W_1 V_\tlim = 2$ and $\dim\, Gr^W_2 V_\tlim=3$ (with Hodge numbers $(1,1,1)$).  Additional detail on this example may be found in \cite[$\S$6.1.3]{MR3331177}.
\end{RE}

Section \ref{S:cayleyconstr} gives a systematic construction of polarizable boundary strata in $\tbd(D)\subset\check D$ via distinguished ``$\bQ$-Matsuki''\footnote{The reason for this terminology is that the Cartan subalgebra $\fh\subset\fg$ corresponding to $o$ is assumed $\bQ$-rational (hence stable under complex conjugation) and stable under the Cartan involution; hence so are its Cayley transforms.} points on them and sets $\cB=\{\b_1,\ldots,\b_s\}\subset \fh^*$ of strongly orthogonal roots\footnote{The set $\cB$ is constrained by an additional condition imposed by the IPR.}, culminating in Theorem \ref{T:matsuki}.  

\begin{remark*}
In the case that $D$ is Hermitian symmetric, all the boundary orbits $\cO \subset \tbd(D)$ are polarizable.  From \cite[Theorem 3.2.1]{\FHW} it may be seen that they are all parameterized by the construction of $\S$\ref{S:cayleyconstr}, and the parameterization is essentially that given by the Harish--Chandra compactification of $D$.
\end{remark*}

\noindent While notationally dense, the construction is very natural, and we will briefly summarize it here.  With each root $\b_j \in \cB$ is associated a Cayley transform $\bc_{\b_j} \in G$, and we let $\bc_\cB$ denote the composite/product.  From the $\bQ$--Matsuki point $o$ and $\bc_\cB$ we obtain a $\bQ$--Matsuki point $o_\cB \in \check D$, with corresponding Hodge flag $F^\sb_\cB$, a ``weight filtration'' $W_\sb^\cB$, and nilpotent cone
\[
  \sigma_\cB \ \dfn \ 
  \{ t^1 N_1^{\cB} + \cdots + t^s N^{\cB}_s \ | \ t^i \ge 0\}
\]
with $N_j^\cB \in \fg_\bR$ and $N_j^\cB F^p_\cB \subset F^{p-1}_\cB$, for all $p$.  These objects have the properties that the nilpotent cone $\s_\cB$ underlies a multiple variable nilpotent orbit 
\[
  (z^1 , \ldots , z^s ) \ \mapsto \ 
  \texp( z^1N_1^\cB + \cdots z^s N_s^\cB) \cdot F^\sb_\cB \,,
\]
$W_\sb^\cB = W_\sb(\s_\cB)$, and $(F^\sb_\cB,W^\cB_\sb)$ is a ($\bQ$--split) $\bQ$--MHS on $\fg$.  In particular, the MHS $(F^\sb_\cB,W_{\sb}^{\cB})$ belongs to the boundary component $\tilde{B}(\sigma_\cB)$, and the orbit 
\[
  \cO_\cB \ \dfn \ G_\bR\cdot o_\cB
\]
is polarizable.\footnote{The notation of this introduction differs slightly from that of $\S$\ref{S:cayleyconstr}, where the terms $\bc_\cB$, $o_\cB$, $F^\sb_\cB$, $W_\sb^\cB$, $\s_\cB$, $\cO_\cB$, $\fg^{p,q}_\cB$  are denoted $\bc_s$, $o_s$, ${}_sF^\sb$, ${}_sW_\sb$, $\s_s$, $\cO_s$, ${}_s\fg^{p,q}$, respectively.}  

\subsubsection*{Boundary components}

The construction also a yields a semisimple element $\ttY_\cB \in \fg_\bR$ which grades the weight filtration.  The MT group of $(F^\sb_\cB,W_{\sb}^{\cB})$ is contained in 
\[
  G_\cB \ \dfn \ \tStab_G\{\ttY_\cB \} \,\cap\, \tStab_G\{N^\cB_1\}
  \,\cap\cdots\cap\,\tStab_G\{N^\cB_s\} \,.
\]
Defining\footnote{Here one should really think of $G_\cB$ as acting on the associated graded of the MHS $(F_\cB^\sb,W_{\sb}^{\cB})$.  Since this is a direct sum of Hodge structures, $D_\cB$ is a MT domain in the usual sense.} 
\[
  D_\cB \ \dfn \ G_{\cB,\bR}\cdot F^\sb_\cB 
  \ \subset \ G_\cB\cdot F_\cB^\sb \ \dfn \ \check D_\cB \,,
\]
the set 
\[
  B(\sigma_\cB)\ \dfn \ e^{\bC\,\sigma_\cB}\setminus \tilde{B}(\sigma_\cB)
\]
of nilpotent orbits fibres naturally over $D_\cB$. Passing to the quotient by the intersection $\Gamma$ of an arithmetic subgroup of $G_{\bQ}$ with $\tStab(\sigma_\cB)$, the surjection $\Gamma \backslash B(\sigma_\cB) \twoheadrightarrow\Gamma\backslash D_\cB$ has the structure of an iterated fibration by generalized intermediate Jacobians.

Part of the value of this construction is that it allows us to compute the ranks of the components $\fg_\cB^{p,q}$ in the Deligne splitting $\fg_\bC = \op \fg^{p,q}_\cB$ and the structure of the $\tilde{B}(\sigma_\cB)$.  For example, when $P$ is a maximal parabolic there is exactly one orbit $\cO_1\subset\tbd(D)$ of real codimension one (Proposition \ref{P:maxP}); it is of the form $\cO_\cB$ with $\cB = \{\b_1\}$, and the corresponding boundary components $B(\s_\cB) = B(N)$ are completely worked out in \cite[Appendix A]{KR1long}.  

\subsubsection*{Adjoint varieties and $\cC_o \simeq \check D_\cB$}
For the fundamental adjoint varieties, the Deligne bigrading takes a very intriguing form (Figure \ref{f:adj_bigr}): an element of the Weyl group of $(\fg,\fh)$ exchanges the Hodge and weight gradings, and the homogeneous Legendrian varieties $\cC_o$ above reappear in the guise of $\check D_\cB$.  Meanwhile, by Proposition \ref{P:adjB(N)}, the $\Gamma \backslash B(N)$ realize the intermediate Jacobian families associated to the Friedman--Laza \cite{FL} weight-three maximal Hermitian VHS's of Calabi-Yau type!  

\begin{RE}
The quotient $\Gamma \backslash B(N)$ is just an elliptic modular surface.  (The Friedman--Laza variation is the symmetric cube of a VHS of type $(1,1)$.)
\end{RE}

\subsection*{Enhanced horizontal $\tSL_2$--orbits and homology classes}

Another motivation for the construction of $\S$\ref{S:cayleyconstr} goes back to a basic question of Borel and Haefliger \cite{MR0149503} on smoothability of cohomology classes in $G/P$.  By \cite[Theorem  4.1]{MR3217458}, the invariant characteristic cohomology of $\check D$ is generated by classes of horizontal Schubert varieties (HSV).  Moreover, the homology class of any horizontal cycle in $\check D$ may be expressed as a linear combination of horizontal Schubert classes \cite[Theorem 4.7]{CC}.  Applying a ``horizontal twist'' to Borel and Haefliger's question, we ask: when does the class (or a multiple thereof) of a singular HSV, such as \eqref{E:cone}, admit a smooth, horizontal, algebraic representative?  One natural source of smooth subvarieties (in fact, MT subdomains) which are often horizontal, are the ``enhanced multivariable $\tSL_2$-orbits'' $X(\sigma_\cB)$ which arise as follows.  The strongly orthogonal roots $\cB$ above determine a collection $\{ \fsl_2^{\b_j} \ | \ \b_j \in \cB\}$ of commuting $\fsl_2$'s.  Letting $\tSL_2^\cB(\bC) \subset G_\bC$ be the connected Lie subgroup with Lie algebra $\fsl_2^\cB(\bC) = \fsl_2^{\b_1}(\bC) \times \cdots \times \fsl_2^{\b_s}(\bC)$, the enhanced $\tSL_2$--orbit is  
\[
  X(\sigma_\cB) \ \dfn \ 
  SL_2^{\cB}(\bC)\cdot G_{\cB,\bC}  \cdot o_\cB
  \ \subset \ \check D\,,
\]
\cf $\S$\ref{S:B+D}.  In the case that $\check D$ is a fundamental adjoint variety, the enhanced $\tSL_2$-orbit attached to $\cO_1$ is a horizontal and a cylinder on $\cC_o$; that is, 
\[
  X(\s_\cB) \ = \ X(N)\ \cong \ \cC_o\times\mathbb{P}^1 \,.
\]  
Recalling that $X$ is a cone over $\cC_o$, \cf\eqref{E:cone}, we conclude this article by testing the irresistible hypothesis that $[X(N)] = [X]$.  

\begin{RE}
We will show that $[X(N)]=2[X]$ in $\S$\ref{S:[X(N)]eg}.
\end{RE}

\noindent The case that $G = \tSO(2r+1,\bC)$ is worked out in \cite[Appendix B]{KR1long}.  

\subsection*{Future work}

Of the many directions one could pursue from here, among the more salient is the systematic computation of the cohomology classes of smooth horizontal subvarieties (and the horizontal $X(\sigma_\cB)$ in particular).  We will undertake this in a sequel (by a different method than that of $\S$\ref{S:[X(N)]eg}).  It is also important to generalize the construction of $\S$\ref{S:cayleyconstr} in order to parameterize all polarizable orbits; this work will appear in \cite{SL2}, and is applied in \cite{KPR} to tie together combinatorial structures related to nilpotent cones and boundary orbits.

Finally, one reason for classifying Hodge--theoretic phenomena is to predict algebro-geometric ones.  Turning one last time to the running example, suppose we have a 2-parameter family of algebraic surfaces $\pi:\, \mathcal{X}\to S$ for which (a subquotient of) $R^2 \pi_* \bQ$ underlies a maximal VHS with Hodge numbers $(2,3,2)$ and MT group $G_2$. Then one can ask what sort of degeneration produces a point in the ``elliptic modular surface'' boundary component $\Gamma\backslash B(N)$, and what sort of geometry produces a subvariation whose period map image lies in the ``cubic cone'' Griffiths--Yukawa kernel.  Now since one expects such 2-parameter VHS to arise from certain elliptic surfaces (with internal fibration over an elliptic curve, cf. \cite{MR3364748,Jrthesis}), neither question looks too difficult.  With some optimism, one might imagine asking the analogous questions for all the fundamental adjoint varieties.

\subsection*{Acknowledgements}

The authors thank P.~Griffiths and J.M.~Landsberg for helpful discussions.

\section{Preliminaries}

In this section we set notation and review the necessary background material from Hodge theory and representation theory.

\subsection{Flag varieties} \label{S:rhv}

Let $G$ be a connected, complex semisimple Lie group, and let $P \subset G$ be a parabolic subgroup.  The homogeneous manifold $G/P$ admits the structure of a rational homogeneous variety as follows.  Fix a choice of \emph{Cartan} and \emph{Borel subgroups} $H \subset B \subset P$.  Let $\fh \subset \fb \subset\fp\subset\fg$ be the associated Lie algebras.  The choice of Cartan determines a set of \emph{roots} $\Delta = \Delta(\fg,\fh) \subset \fh^*$.  Given a root $\a\in \Delta$, let $\fg^\a \subset \fg$ denote the \emph{root space}.  Given a subspace $\fs \subset \fg$, let 
$$
  \Delta(\fs) \ \dfn \ \{ \a\in\Delta \ | \ \fg^\a \subset \fs \} \,.
$$ 
The choice of Borel determines \emph{positive roots} $\Delta^+ = \Delta(\fb) = \{ \a\in\Delta \ | \ \fg^\a \subset \fb \}$.  Let $\sS = \{\a_1,\ldots,\a_r\}$ denote the \emph{simple roots}, and set 
\begin{equation} \label{E:I}
  I \ = \ I(\fp) \ \dfn \ \{ i \ | \ \fg^{-\a_i} \not\subset \fp \}\,.
\end{equation}
Note that 
$$
  I(\fb) \ = \ \{1,\ldots,r\} \,,
$$
and $I = \{\tti\}$ consists of single element if and only if $\fp$ is a \emph{maximal parabolic}.

Let $\{ \w_1,\ldots,\w_r\}$ denote the \emph{fundamental weights} of $\fg$ and let $V$ be the irreducible $\fg$--representation of highest weight 
\begin{equation} \label{E:mu}
  \m \ = \ \m_I \ \dfn \ \sum_{i \in I} \w_i \,.
\end{equation}
Assume that the representation $\fg \to \tEnd(V)$ `integrates' to a representation $G \to \tAut(V)$ of $G$.  (This is always the case if $G$ is simply connected.)  Let $o \in \bP V$ be the \emph{highest weight line} in $V$.  The $G$--orbit $G \cdot o \subset \bP V$ is the \emph{minimal homogeneous embedding} of $G/P$ as a rational homogeneous variety.

\begin{remark}[Non-minimal embeddings] \label{R:nonminemb}
More generally, suppose that $V$ is the irreducible $G$--representation of highest weight $\tilde \m = \sum_{i \in I} a^i\,\w_i$ with $0 < a^i \in \bZ$.  Again, the $G$--orbit of the highest weight line $o \in \bP V$ is a homogeneous embedding of $G/P$.  We write $G/P \inj \bP V$.  The embedding is minimal if and only if $a^i=1$ for all $i \in I$.  For example, the Veronese re-embedding $v_d(\bP^n) \subset \bP\,\tSym^d\bC^{n+1}$ of $\bP^n$ is if minimal if and only if $d = 1$.  (Here $V = \tSym^d\bC^{n+1}$ has highest weight $d\w_1$.)
\end{remark}

The rational homogeneous variety $G/P$ is sometimes indicated by circling the nodes of the Dynkin diagram (Appendix \ref{S:dynkin}) corresponding to the index set $I(\fp)$.

\subsection{Flag domains and Mumford--Tate domains} \label{S:mtd}

Let $G_\bR$ be a (connected) real form of $G$.  There are only finitely many $G_\bR$--orbits on $G/P$.  An open $G_\bR$--orbit
\[
  D \ = \ G_\bR/R
\]
is a \emph{flag domain}.  The stabilizer $R \subset G_\bR$ is the centralizer of a torus $T' \subset G_\bR$ \cite[Corollary 2.2.3]{\FHW}.    The flag variety
\[
  \check D \ \dfn \ G/P
\]
is the \emph{compact dual} of the flag domain.

We will be interested in the case that $D$ admits the structure of a MT domain.  MT domains are generalizations of period domains, and as such arise as the classification spaces of polarized Hodge structures (possibly with additional structure); see \cite{\GGK} for a thorough treatment.   When $D$ admits the structure of a MT domain the stabilizer $R$ is compact, and as a consequence there exists a compact maximal torus $T \subset G_\bR$ such that $T' \subset T \subset R$ \cite{\GGK}.  We will assume this to be the case throughout.  

In the MT domain case, $G$ also has an underlying $\bQ$-algebraic group $G_{\bQ}$, whose groups of real and complex points recover $G_\bR$ and $G$, respectively.  We will assume this only where relevant.  In a few places (mostly limited to $\S$\ref{S:cayleyconstr}), we shall make the stronger assumption that $G_{\bQ}$ is a MT--Chevalley group (Definition \ref{def:MTC}). 

\subsection{Grading elements} \label{S:grelem}

Let $\{ \ttS^1 , \ldots , \ttS^r\}$ be the basis of $\fh$ dual to the simple roots.  A \emph{grading element} is any member of $\tspan_\bZ\{ \ttS^1,\ldots,\ttS^r\}$; these are precisely the elements $\ttT\in \fh$ of the Cartan subalgebra with the property that $\a(\ttT) \in \bZ$ for all roots $\a\in \Delta$.  Since $\ttT$ is semisimple (as an element of $\fh$), the Lie algebra $\fg$ admits a direct sum decomposition
\begin{subequations} \label{SE:grading}
\begin{equation}
  \fg \ = \ \bigoplus_{\ell\in\bZ} \fg^\ell
\end{equation}
into $\ttT$--eigenspaces
\begin{equation} \label{E:grT}
  \fg^\ell \ \dfn \ \{ \xi \in \fg \ | \ [\ttT,\xi] = \ell \xi \} \,.
\end{equation}
In terms of root spaces, we have
\begin{equation} \label{E:gr1} \renewcommand{\arraystretch}{1.3}
\begin{array}{rcl}
  \displaystyle \fg^\ell & = & 
  \displaystyle \bigoplus_{\a(\ttT)=\ell} \fg^\a \,,\quad \ell \not=0 \,,\\
  \displaystyle \fg^0 & = & 
  \displaystyle \fh \ \op \ \bigoplus_{\a(\ttT)=0} \fg^\a \,.
\end{array}
\end{equation}
\end{subequations}
The $\ttT$--eigenspace decomposition is a \emph{graded Lie algebra decomposition} in the sense that 
\begin{equation}\label{E:gr}
  \left[ \fg^\ell  , \fg^m \right] \ \subset \ \fg^{\ell+m} \,,
\end{equation}
a straightforward consequence of the Jacobi identity.  It follows that $\fg^0$ is a Lie subalgebra of $\fg$ (in fact, reductive), and each $\fg^\ell$ is a $\fg^0$--module.  The Lie algebra
\begin{equation} \label{E:p}
  \fp \,=\, \fp_\ttT \ = \ \fg^0 \,\op \,\fg^+ 
\end{equation}
is the \emph{parabolic subalgebra determined by the grading element $\ttT$}.  See \cite[$\S$2.2]{MR3217458} for details.

\subsubsection{Minimal grading elements} \label{S:TvE}

Two distinct grading elements may determine the same parabolic $\fp$.  As a trivial example of this, given a grading element $\ttT$ and $0 < d \in \bZ$, both $\ttT$ and $d \ttT$ determine the same parabolic.  Among those grading elements $\ttT$ determining the same parabolic \eqref{E:p}, only one will have the property that $\fg^{\pm1}$ generates $\fg^\pm$ as an algebra.  That canonical grading element is defined as follows.  Given a parabolic $\fp$ subalgebra (and choice of Cartan and Borel $\fh \subset \fb \subset \fp$), the \emph{grading element associated to $\fp$} is
\begin{equation} \label{E:ttE}
  \ttE \ \dfn \ \sum_{i \in I(\fp)} \ttS^i \,.
\end{equation}
The reductive subalgebra $\fg^0 = \fg^0_\mathrm{ss} \op \fz$ has center $\fz = \tspan_\bC\{ \ttS^i \ | \ i \in I \}$ and semisimple subalgebra $\fg^0_\mathrm{ss} = [\fg^0,\fg^0]$.  A set of simple roots for $\fg^0_\mathrm{ss}$ is given by $\sS(\fg_0) = \{ \a_j \ | \ j \not \in I\}$.


\subsubsection{$\ttE$--eigenspace decompositions} \label{S:Edecomp}

Any $\fg$--representation $U$ admits a $\ttE$--eigenspace decomposition 
$$
  U \ = \ \bigoplus_{q\in\bQ} U^q \,,
$$ 
see \cite[$\S$2.2]{MR3217458}.  In the case that $U = \fg$, we recover \eqref{E:gr1}.  Again, the Jacobi identity implies
\begin{equation}\label{E:gU}
  \fg^\ell ( U^q ) \ \subset \ U^{q+\ell} \,.
\end{equation} 
In particular,
\begin{center}
\emph{each $\ttE$--eigenspace $U^q$ is a $\fg^0$--module.}
\end{center}

If $V$ is the irreducible $\fg$--module of highest weight $\m = \,\sum \m^i \w_i$, then the $\ttE$--eigenspace decomposition is of the form $V = V^m \op V^{m-1} \op V^{m-2} \op \cdots$.  Moreover, if $\m^i = 0$ when $i \not\in I$, then $V^m$ is the (one--dimensional) highest weight line, cf.~\cite[Lemma 6.3]{MR3217458}.  If $\mu^i > 0$ if and only if $i \in I$, then the $G$--orbit of $[V^m] \in \bP V$ is a homogeneous embedding of $G/P$; the embedding is minimal if $\m^i = 1$ for all $i\in I$.

\subsection{Infinitesimal period relation} \label{S:IPR}

This section is a very brief review of the infinitesimal period relation on $G/P$ and the solutions of this differential system, the horizontal submanifolds of $G/P$.  The reader interested in greater detail is encouraged to consult \cite{MR3217458} and the references therein.

The (holomorphic) tangent bundle of 
\[
  \check D \ \dfn \ G/P
\] 
is the $G$--homogeneous vector bundle 
\[
  T\check D \ = \ G \times_P (\fg/\fp) \,.
\]
By \eqref{E:gr} and \eqref{E:p}, the quotient $\fg^{\ge-1}/\fp$ is a $\fp$--module.  Therefore,
\begin{equation} \label{E:ipr}
  T^1 \ \dfn \ G \times_P (\fg^{\ge-1}/\fp)
\end{equation}
defines a homogeneous, holomorphic subbundle of $T\check D$.  This is the \emph{horizontal subbundle}, a.k.a.~the \emph{infinitesimal period relation} (IPR). The IPR is \emph{trivial} if $T^1 = T \check D$.  This is the case when $G/P$ is Hermitian symmetric. 

A \emph{horizontal submanifold} (or \emph{subvariety}) is an integral of $T^1$; that is, a connected complex submanifold $Z \subset G/P$ with the property that $T_zZ \subset T^1_z$ for all $z \in Z$, or any irreducible subvariety $Y \subset \check D$ with the property that $T_yY \subset T^1_y$ for all smooth points $y \in Y$.

\subsection{Adjoint varieties} \label{S:AV}

Throughout the paper we will illustrate the ideas and results with adjoint varieties.  From the Hodge--theoretic perspective, these are the simplest varieties for which the infinitesimal period relation is nontrivial: the adjoint varieties are precisely those $G/P$ for which the IPR \eqref{E:ipr} is a contact distribution.

Let $G \subset \tAut(\fg)$ be the adjoint group of a complex simple Lie algebra $\fg$, and let $\tilde\a \in \Delta^+$ be the \emph{highest root}.  See Table \ref{t:highest_rt}.
%
%
\begin{footnotesize}
\begin{table}[!ht]
\caption{The highest root $\tilde\a$ of $G$.}
\renewcommand{\arraystretch}{1.2}
\begin{tabular}{|c|l|}
\hline $G$ & $\tilde\a$ \\ \hline\hline
 $A_r$ & $\a_1\,+\cdots+\,\a_r \ = \ \w_1 \,+\, \w_r$ \\
 $B_r$ & $\a_1 \,+\, 2 (\a_2\,+\cdots+\,\a_r) \ = \ \w_2$ \\ 
 $C_r$ & $2 (\a_1\,+\cdots+\,\a_{r-1}) \,+\, \a_r \ = \ 2 \w_1$ \\
 $D_r$ & $\a_1 \,+\, 2 (\a_2\,+\cdots+\,\a_{r-2}) \,+\,\a_{r-1} \,+\, \a_r 
         \ = \ \w_2$ \\
 $E_6$ & $\a_1 \,+\, 2\a_2 \,+\, 2\a_3 \,+\, 3\a_4 \,+\, 2\a_5 \,+\, \a_6 
          \ = \ \w_2$ \\
 $E_7$ & $2\a_1 \,+\, 2\a_2 \,+\, 3\a_3 \,+\, 4\a_4 \,+\, 3\a_5 \,+\, 
          2\a_6 \,+\, \a_7 \ = \ \w_1$ \\
 $E_8$ & $2\a_1 \,+\, 3\a_2 \,+\, 4\a_3 \,+\, 6\a_4 \,+\, 5\a_5 \,+\, 4\a_6 \,+\, 
          3\a_7 \,+\, 2\a_8 \ = \ \w_8$ \\
 $F_4$ & $2\a_1 \,+\, 3\a_2 \,+\, 4\a_3 \,+\, 2\a_4 \ = \ \w_1$ \\
 $G_2$ & $3\a_1 \,+\, 2\a_2 \ = \ \w_2$ \\ \hline
\end{tabular}
\label{t:highest_rt}
\end{table}
\end{footnotesize}
Recall $\fg^{\tilde\a}$ is one-dimensional, and therefore a line in $\fg$; let $o \in \bP\fg$ be the corresponding point.  Then the \emph{adjoint variety} $\check D$ is the $G$--orbit of $o$.  Writing $\check D=G/P$, if $\ttT$ is the grading element associated to $P$ then Table \ref{t:highest_rt} yields 
\[
  \fg \ = \ \fg^{-2}\oplus \fg^{-1}\oplus \fg^0 \oplus \fg^1 \oplus \fg^2 \,,
\]
where $\fg^2=\fg^{\tilde{\a}}$.

\begin{remark}
The adjoint varieties are precisely the compact, simply connected, homogeneous complex contact manifolds \cite{MR0137126}.
\end{remark}

\begin{example}
\begin{a_list}
\item 
If $\fg = \fsl_{r+1}\bC$, then the adjoint variety is $\tFlag(1,r,\bC^{r+1})$ is the (partial) flag variety of lines in hyperplanes.
\item
If $\fg = \fso_m\bC$, then the adjoint variety is the orthogonal grassmannian $\tOG(2,\bC^m)$ of 2--planes that are $\nu$--isotropic for a nondegenerate symmetric bilinear form $\n$.
\item If $\fg = \fsp_{2r}\bC$, then the adjoint variety is the second Veronese re-embedding $v_2(\bP^{2r-1}) \subset \bP \,\tSym^2\bC^{2r}$.
\end{a_list}
\end{example}

Assume that $G \not= A_r, C_r$ so that the adjoint representation $\fg$ is fundamental; in this case we call $\check{D}$ a \emph{fundamental adjoint variety}.  Equivalently, $\tilde\a = \w_\tti$ (cf. Table \ref{t:highest_rt}), $\fg$ is the irreducible $\fg$--representation of highest weight $\w_\tti$ and $P$ is maximal; we have $I(\fp) = \{\tti\}$ and 
\begin{equation} \label{E:ta_si}
  \tilde \a \,-\, \a_j \hbox{ is a root if and only if $j=\tti$.}
\end{equation}

\begin{table}[!ht]
\caption[Adjoint varieties]{The dimensions $n = \tdim\,\check D$ and $\sfN = \tdim\,\bP\fg$, and degree $d = \tdeg\,\check D$ for the exceptional adjoint varieties.}
\begin{tabular}{|c|c|c|c|}
\hline
$G$ & $n$ & $d$ & $\sfN$ \\
\hline
$E_6$ & 21 & 151,164 & 77 \\
$E_7$ & 33 & 141,430,680 & 132 \\
$E_8$ & 57 & 126,937,516,885,200 & 247 \\
$F_4$ & 15 & 4,992 & 51 \\
$G_2$ & 5 & 18 & 13 \\ \hline
\end{tabular}
\label{t:numerics}
\end{table}

\subsection{Nilpotent orbits} \label{S:no}

Let $\fg_\bR \subseteq \tEnd(V_\bR,Q)$ be the Lie algebra of $G_\bR$.   A ($n$--variable) \emph{nilpotent orbit} on $D$ consists of a tuple $(F^\sb; N_1,\ldots,N_n )$ such that $F^\sb \in \check D$, the $N_i \in \fg_\bR$ commute and $N_iF^p \subset F^{p-1}$, and the holomorphic map $\psi : \bC^m \to \check D$ defined by
\begin{equation}\label{E:htno}
  \psi(z^1,\ldots,z^n) \ = \ \texp( z^i N_i ) F^\sb 
\end{equation}
has the property that $\psi(z) \in D$ for $\tIm(z^i) \gg 0$.  We shall use the term $\sigma$\emph{-nilpotent orbit} to refer to the submanifold $e^{\bC\,\sigma}F^\sb\subset \check D$.  The associated (open) \emph{nilpotent cone} is
\begin{equation} \label{E:s}
  \tNC\ = \ \{ t^i N_i \ | \ t^i > 0 \} \,.
\end{equation}

Given a nilpotent $N \in \fg_\bR$ such that $N^{k+1}=0$, there exists a unique increasing filtration $W_0(N) \subset W_1(N) \subset \cdots \subset W_{2k}(N)$ of $V_\bR$ with the properties that 
\[
  N\,W_\ell(N) \ \subset \ W_{\ell-2}(N)
\]
and the induced 
\[
  N^\ell : \tGr_{k+\ell} W_\sb(N) \ \to \ \tGr_{k-\ell} W_\sb(N)
\]
is an isomorphism for all $\ell \le k$.  Above, $\tGr_m W_\sb(N) = W_m(N)/W_{m-1}(N)$.  Moreover, 
\[
  Q_\ell(u,v) \ = \  Q(u , N^\ell v)
\]
defines a nondegenerate $(-1)^{k+\ell}$--symmetric bilinear form on $\tGr_{k+\ell} W_\sb(N)$.  

Define
\[
  \tGr_{k+\ell} W_\sb(N)_\tprim \ = \ 
  \tker\,\{
    N^{\ell+1} : \tGr_{k+\ell} W_\sb(N) \to \tGr_{k-\ell-2} W_\sb(N) \} \,,
\]
for all $\ell \ge 0$.  A \emph{limiting mixed Hodge structure} (or \emph{polarized mixed Hodge structure}) on $D$ is given by a pair $(F^\sb,N)$\footnote{Of course, the actual (real) mixed Hodge structure is the pair $(F^\sb,W_{\sb}(N))$.  If $V$ and $\fg$ admit compatible $\bQ$-rational structures, and $N\in\fg_{\bQ}$, then $(V_{\bQ},F^\sb,W_{\sb}(N))$ is a $\bQ$-mixed Hodge structure.} such that $F^\sb \in \check D$, $N \in \fg_\bR$ and $N(F^p) \subset F^{p-1}$, the filtration $F^\sb$ induces a weight $m$ Hodge structure on $\tGr_m W_\sb(N)$ for all $m$, and the Hodge structure on $\tGr_{k+\ell}(W_\sb(N))_\tprim$ is polarized by $Q_\ell$ for all $\ell\ge0$.  The notions of nilpotent orbit and limiting mixed Hodge structure are closely related.  Indeed, they are equivalent when $n=1$.

\begin{theorem}[Cattani, Kaplan, Schmid] \label{T:cks}
Let $D \subset \check D$ be a Mumford--Tate domain (and compact dual) for a Hodge representation of $G_\bR$.
\begin{a_list_emph}
\item 
A pair $(F^\sb;N)$ forms a one--variable nilpotent orbit if and only if it forms a limiting mixed Hodge structure, \emph{\cite[Corollary 3.13]{\CKSdeg} and \cite[Theorem 6.16]{\Schmid}}.  
\item 
Given an $n$--variable nilpotent orbit $(F^\sb;N_1,\ldots,N_n)$, the weight filtration $W_\sb(N)$ does not depend on the choice of $N\in\tNC$ \emph{\cite[Theorem 3.3]{\CKpmhs}.  Let $W_\sb(\tNC)$ denote this common weight filtration.} 
\end{a_list_emph}
\end{theorem}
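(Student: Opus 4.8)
The plan is to treat the two assertions in turn. Part (a) is the analytic heart: I would obtain the nontrivial implication from Schmid's $\tSL_2$--orbit theorem and dispatch the converse by a direct positivity estimate. For the converse, ``limiting mixed Hodge structure $\Rightarrow$ one--variable nilpotent orbit'', suppose $(F^\sb,W_\sb(N))$ is a MHS with the Hodge structure on each $\tGr_{k+\ell}W_\sb(N)_\tprim$ polarized by $Q_\ell$. Fix an $\fsl_2$--triple $(N,Y,N^+)\subset\fg_\bR$ with $Y$ the grading element of $W_\sb(N)$, decompose $V_\bC$ into $Y$--eigenspaces and into primitive subspaces under $N^+$, and estimate the Hermitian form $v\mapsto Q(v,\texp(zN)\bar v)$ on each piece: on the span of a primitive vector of $Y$--weight $k+\ell$ together with its $N$--descendants the form is governed by $Q_\ell$ and by monomials in $\tIm(z)$, so it becomes positive--definite on all of $V_\bC\setminus\{0\}$ once $\tIm(z)\gg0$. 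That positivity is precisely the assertion $\texp(zN)F^\sb\in D$, so $(F^\sb;N)$ is a nilpotent orbit; this is the elementary bookkeeping behind \cite[Corollary 3.13]{\CKSdeg}.

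For the implication ``one--variable nilpotent orbit $\Rightarrow$ limiting mixed Hodge structure'' I would invoke the $\tSL_2$--orbit theorem \cite[Theorem 6.16]{\Schmid}. From $z\mapsto\texp(zN)F^\sb$, which lands in $D$ for $\tIm(z)\gg0$, that theorem produces a homomorphism $\rho:\tSL_2(\bR)\to G_\bR$ whose nilpotent part is $N$ and whose semisimple part is the grading element $Y$ of $W_\sb(N)$, a point $\tilde F^\sb\in\check D$, and a $G_\bR$--valued correction function converging to the identity exponentially as $\tIm(z)\to\infty$, such that the nilpotent orbit is approximated by the associated $\fsl_2$--orbit. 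An honest $\fsl_2$--orbit carries a polarized MHS --- the Lefschetz decomposition of $V$ under $(N,Y,N^+)$ together with the Hodge--Riemann relations for $\tilde F^\sb$ polarizes each primitive graded piece --- and since the correction acts trivially on $\tGr W_\sb(N)$, the filtration $F^\sb$ induces there the same polarized pure Hodge structures as $\tilde F^\sb$ does. Hence $(F^\sb,W_\sb(N))$ is a limiting MHS. The main obstacle is the $\tSL_2$--orbit theorem itself: constructing $\rho$ and controlling the correction terms rests on a delicate distance--decreasing argument for the period map into $D$, which I would cite rather than reprove.

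For part (b), I would induct on the number of variables, $n=1$ being vacuous. For $n=2$ and $N=t^1N_1+t^2N_2$ with $t^i>0$, part (a) applies to the rays $N_1$, $N_2$ and to $N$, so each polarizes a MHS with respect to $F^\sb$; the point is that $W_\sb(N)$ coincides with the relative weight filtration of $N_2$ with respect to $W_\sb(N_1)$ (equivalently, of $N_1$ with respect to $W_\sb(N_2)$), a purely linear--algebraic object depending only on $N_1$, $N_2$ and $W_\sb(N_1)$, hence independent of the positive coefficients $t^i$. This coincidence is \cite[Theorem 3.3]{\CKpmhs}, proved by combining Deligne's uniqueness of relative weight filtrations with the polarization to rule out the a priori possible jumps, and it is the subtle imported ingredient of this part. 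For general $n$, write $N\in\tNC$ as $N'+t^nN_n$ with $N'$ a positive combination of $N_1,\ldots,N_{n-1}$; applying the two--variable case (with $N'$ in place of $N_1$) and the inductive hypothesis shows $W_\sb(N)$ is independent of the coefficient of $N_n$ and of those of $N_1,\ldots,N_{n-1}$, so it is constant on the connected cone $\tNC$, which finishes the argument.
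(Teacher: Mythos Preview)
The paper does not supply a proof of this theorem: it is stated as a foundational result and attributed to the cited references (Schmid \cite{\Schmid}, Cattani--Kaplan \cite{\CKpmhs}, and Cattani--Kaplan--Schmid \cite{\CKSdeg}). So there is no ``paper's own proof'' to compare against; your proposal is a sketch of the arguments in those external sources.

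As such a sketch, your outline is broadly faithful to the original arguments. For part (a), the direction ``LMHS $\Rightarrow$ nilpotent orbit'' is indeed an explicit positivity computation on primitive pieces as in \cite[Corollary~3.13]{\CKSdeg}, and the converse genuinely rests on Schmid's $\tSL_2$--orbit theorem as you say. One small clarification: the passage from the $\fsl_2$--orbit's polarized MHS to that of $(F^\sb,W_\sb(N))$ uses not only that the correction $g(y)$ acts trivially on $\tGr\,W_\sb(N)$, but that the Hodge filtrations $F^\sb$ and $\tilde F^\sb$ agree modulo $W_{-1}\fg$; this is part of the content of \cite[Theorem~6.16]{\Schmid} rather than a formal consequence of convergence.

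For part (b), your induction via the relative weight filtration is the right strategy and matches \cite[Theorem~3.3]{\CKpmhs}. Two points deserve tightening. First, you should note explicitly that $(F^\sb;N',N_n)$ is itself a two--variable nilpotent orbit (this follows by specializing the $n$--variable horizontality and positivity conditions), since the cited theorem applies to nilpotent orbits, not to arbitrary commuting pairs. Second, the identification $W_\sb(N'+t^nN_n)=M(N_n,W_\sb(N'))$ is the nontrivial assertion --- that the relative weight filtration exists and equals the monodromy weight filtration --- and its proof in \cite{\CKpmhs} uses the polarization in an essential way; your phrase ``purely linear--algebraic'' undersells this. Once that identification is in hand, independence of $t^n$ is immediate (relative weight filtrations are scale--invariant) and independence of $t^1,\ldots,t^{n-1}$ follows from the inductive hypothesis applied to $W_\sb(N')$, as you indicate.
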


The \emph{Deligne bigrading} \cite{\CKSdeg, \DeligneII} 
\begin{subequations} \label{SE:deligne}
\begin{equation}
  V_\bC \ = \ \bigoplus I^{p,q}
\end{equation}
associated with the limiting mixed Hodge structure is given by
\begin{equation}
  I^{p,q} \ = \ F^p \,\cap\, W_{p+q} \, \cap \, 
  \Big( \overline{F^q} \,\cap\,W_{p+q} \,+\, 
         \sum_{j\ge1} \overline{F^{q-j}} \,\cap\, W_{p+q-j-1} \Big) \,.
\end{equation}
\end{subequations}
It is the unique bigrading of $V_\bC$ with the properties that 
\begin{equation} \nonumber 
  F^p \ = \ \bigoplus_{r \ge p} I^{r,\sb} \tand
  W_\ell(\tNC) \ = \ \bigoplus_{p+q \le \ell} I^{p,q} \,,
\end{equation}
and
\[
  \overline{I^{p,q}} \ = \ I^{q,p} \quad\hbox{mod} \quad
  \bigoplus_{r<q,s<p} I^{r,s} \,.
\]
The (real) mixed Hodge structure $(F^\sb,W_{\sb}(\sigma))$ is $\bR$-split, i.e. isomorphic to its associated graded, if $\overline{I^{p,q}} = I^{q,p}$.

\section{Griffiths--Yukawa couplings and lines on $G/P$} \label{S:sec5}

Historically, the first exceptional group to be realized geometrically was $G_2$, as the transformation group of a complex 5-manifold preserving a nontrivial distribution \cite{MR1509120, Bryant2000}.  There are two distinct such realizations: one may take the manifold to be either of the homogeneous spaces  $G_2/P_1$ or $G_2/P_2$; the second is an adjoint variety ($\S$\ref{S:AV}).  Each of these $G/P_i$ may be viewed as the compact dual of a MT domain with bracket-generating horizontal distribution \cite[$\S$IV.F]{MR2918237}. This section is motivated by an observation about the Hodge theory associated to the adjoint variety $\check D=G_2/P_2$ and its contact (hyperplane) distribution, which we now briefly explain.  

Suppose $\mathcal{V}=\oplus_{j=0}^n \mathcal{V}^{n-j,j}$ is a variation of Hodge structure (in the classical sense) over a complex manifold $S$.  The IPR is expressed in terms of the flat connection as $\nabla \cF^p \subset \Omega^1_S \ot \cF^{p-1}$.  It follows that $\nabla$ induces an $\cO_S$--linear map $\Theta_S \to \tHom( \cF^p, \cF^{p-1}/\cF^p)$, where $\Theta_S$ is the sheaf of holomorphic vector fields on $S$.  Setting $p = n$, and iterating this map, we obtain an $\cO_S$--linear map $\tSym^n\Theta_S \to \tHom( \cF^n, \cF^0/\cF^1)$.  Recalling that $\cF^0/\cF^1 \simeq \cV^{0,n}$ as smooth vector bundles, we obtain a well-defined linear mapping
\begin{equation} \label{E:preGYC} 
  \tSym^n T_s S \ \to \ \tHom(V^{n,0}_s,V^{0,n}_s)
  \ \cong \ (V^{0,n}_s)^{\otimes 2}
\end{equation}
for each $s\in S$, which we shall call the \emph{Griffiths--Yukawa coupling}.  When $\mathcal{V}$ comes from a family of varieties, it may be of particular interest to study the geometry of subfamilies for which \eqref{E:preGYC} vanishes.  Recent work of Katz \cite{MR3364748} suggests that motives with Hodge numbers $(2,3,2)$ and MT group $G_2$, hence admitting a period map into a quotient of $D=G_2(\bR)/(P_2\cap\overline{P_2})(\bR) \subset \check D$, arise from certain families of elliptically fibered surfaces.\footnote{For one of Katz's families, a direct geometric proof of this is given in the Ph.D. thesis of Genival da Silva Jr. \cite{Jrthesis}.}  We shall not pursue the (algebro-)geometric angle here, but instead look at what representation theory can tell us.


Fixing a point $o\in D$ with stabilizer $P \subset G$ and a compact Cartan $\ft \subset \fg_\bR \cap \fp$ determines the grading element $\ttT$.  The action of $\fg$ on $V$ defines a map
\begin{equation}\label{E:pre2GYC} 
  \tSym^n \fg^{-1} \ \to \ \tHom(V^{n,0},V^{0,n}) 
\end{equation}
which pulls back to \eqref{E:preGYC} under the period map.  If $\fg=\fg_2$ and $\ttT$ is the adjoint grading element, which is defined by $\a_1(\ttT)=0$ and $\a_2(\ttT)=1$ on the simple roots, and $V$ is the 7-dimensional irreducible representation (with $n=2$ and $V^{2,0}=\bC\langle e_1,e_2\rangle$), one may ask in which horizontal tangent directions \eqref{E:pre2GYC} vanishes.  Writing
\[
  \xi \ \dfn \ 
  \xi_0 x^{-\a_2} + \xi_1 x^{-\a_1-\a_2} 
  + \xi_2 x^{-2\a_1-\a_2} + \xi_3 x^{-3\a_1-\a_2} \ \in \ \fg^{-1}\,,
\]
and $\{ e^*_1,e^*_2\}$ for the dual basis of $V^{0,2}$, a straightforward computation (with appropriate normalizations) gives
\begin{equation}\label{E:xi^2} 
  {}_{e^*}[\xi^2]_e \ = \ 
  \left( \begin{array}{cc} -2\xi_1\xi_3+2\xi_2^2 
    & \xi_1\xi_2-\xi_0\xi_3 \\ 
    \xi_1\xi_2-\xi_0\xi_3 
    & -2\xi_0\xi_2+2\xi_1^2  \end{array}\right)\,.
\end{equation}
The common vanishing locus of the matrix entries in \eqref{E:xi^2} yields a twisted cubic curve $v_3(\mathbb{P}^1)\subset\mathbb{P}\fg^{-1}$, \cf\cite{MR3115136}.  Upon varying $o\in D$ (or $\check D$), this recovers the field of cubic cones in the contact planes $\fg^{-1}_o\subset T_o\check D$ preserved by $G_2$, \cf\cite{Bryant2000}.

Furthermore, it is known that the lines $\ell\subset \fg^{-1} \subset T_o\check D$ in each cubic curve are precisely the lines through $o$ contained in $\check D$, under its minimal homogeneous embedding in $\mathbb{P}\fg$.  To see this, we apply $(\tad\xi)^2$ to the highest weight vector $v=x^{3\a_1+2\a_2}\in \fg^2$ to check vanishing of the second fundamental form:
\[
  (\tad\xi)^2 v \ = \ (-2\xi_0\xi_2+2\xi_1^2)x^{\a_1}
  +(\xi_0\xi_3-\xi_1\xi_2)\ttH^{3\a_1+2\a_2}
  +(-2\xi_1\xi_3+2\xi_2^2)x^{-\a_1} \ \in \ \fg^0 \,.
\]
Hence we get an identification of the variety of horizontal lines $\cC_o$ (cf. \eqref{E:Co}) with the ``variety of Yukawa vanishing directions'', which provides a homogeneous-space description of the latter and a Hodge--theoretic interpretation of $\cC_o$. Moreover, the argument produces equations for both as projective varieties in $\mathbb{P}\fg^{-1}$.  The purpose of this section is to discern the degree to which both phenomena generalize.

We shall require a few preliminaries on the projective geometry of flag varieties ($\S\S$\ref{S:lines}-\ref{S:uniruling}); the main results follow in $\S\S$\ref{S:GYCvan}-\ref{S:GYeqns}.

\subsection{Lines on flag varieties} \label{S:lines}

Let $\check D$ be the image of the minimal homogeneous embedding $G/P \inj \bP V$, cf.~$\S$\ref{S:rhv}.  Fix a highest weight vector $0\not=v \in V$, so that $[v] = o \in \bP V$ is the highest weight line.  By \eqref{E:p}, the tangent space $T_o\check D$ is naturally identified with $\fg/\fp$ as a $\fp$--module, and with $\fg^-$ as a $\fg^0$--module; for the most part, we will work with the latter identification.  The set of embedded, linear $\bP^1 \subset \bP V$ containing $o$ and tangent to $\check D$ at that point is in bijection with $\bP \fg^- = \bP\,T_o\check D$.  To be precise, given a tangent line $[\xi] \in \bP \fg^-$, we have 
$$
  \bP^1 \ = \ \bP^1(o,[\xi]) \ \dfn\ \bP \, \tspan_\bC\{v , \xi(v)\} \subset \bP V \,.
$$  
Making use of this identification, let 
$$
  \tilde \cC_o \ \dfn \ \{ [\xi] \in \bP \,\fg^- \ | \ \bP^1(o,[\xi]) \subset \check D \}
  \ = \ \{ \bP^1 \subset \bP V \ | \ o \in \bP^1 \subset \check D \} 
$$ 
be the set of lines on $\check D$ passing through $o$.  (For a general embedding $G/P \inj \bP V$, not necessarily minimal, $\tilde \cC_o$ is defined to be the variety of minimal rational tangents, cf.~\cite{MR1748609}.)  The subvariety of lines tangent to $\fg^{-1} \subset \fg^- \simeq T_o\check D$ is
\begin{equation}\label{E:Co}
  \cC_o \ \dfn \ \{ [\xi] \in \tilde \cC_o \ | \ \xi \in \fg^{-1} \} 
  \ = \ \{ \bP^1 \subset \check D \ | \ \bP^1 \ni o 
  \hbox{ is horizontal}\}\,.
\end{equation}

Let 
\begin{equation}\label{E:Xo}
  X \ \dfn \ \bigcup_{\bP^1 \in \cC_o} \bP^1
\end{equation}
be the variety swept out by the lines that pass through $o$ and are horizontal.

\begin{proposition}[{\cite{KR2}}] \label{P:Xo}
Assume that $P$ is a maximal parabolic subgroup of $G$, and let $\check D$ be the minimal homogeneous embedding of $G/P$ \emph{($\S$\ref{S:rhv})}.  Then 
\begin{a_list_emph}
\item $X$ is a cone over $\cC_o$ with vertex $o$ and a Schubert variety.  
\item $X$ is horizontal if and only if the simple root $\a_\tti$ associated with the maximal parabolic $\fp$ is not short.
\end{a_list_emph}
\end{proposition}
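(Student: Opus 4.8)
The plan is to analyze the variety $X = \bigcup_{\bP^1 \in \cC_o} \bP^1$ via the structure of the graded Lie algebra $\fg = \fg^{-2} \oplus \fg^{-1} \oplus \fg^0 \oplus \fg^1 \oplus \fg^2$ (or $\fg = \fg^{-\ell} \oplus \cdots \oplus \fg^\ell$ in general) attached to the maximal parabolic $\fp = \fp_\ttE$ with $I(\fp) = \{\tti\}$. The key observation is that $\cC_o$, the variety of horizontal minimal rational tangents, is the highest-weight orbit of $G^0$ in $\bP\fg^{-1}$: a line direction $[\xi] \in \bP\fg^{-1}$ lies on $\check D$ precisely when the Segre-type second fundamental form conditions hold, and for a minimal embedding this forces $\xi$ to be (conjugate under $G^0$ to) a root vector $x^{-\c}$ for a suitable root $\c$. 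So the first step is to identify $\cC_o$ root-theoretically and then describe $X$ as the union of the projective lines $\bP\,\tspan_\bC\{v, \xi(v)\}$; since these all contain the vertex $o = [v]$, part (a) — that $X$ is a cone over $\cC_o$ — is essentially formal, and the identification of $X$ with a Schubert variety comes from recognizing $X$ as a $B$-stable (in fact, the closure of the $B^-$- or $P^-$-orbit of an appropriate Weyl translate) subvariety; this is already asserted in the Introduction around \eqref{E:cone} and attributed to \cite{KR2}, so I would cite that.

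For part (b), the criterion is horizontality of $X$, i.e. whether $T_x X \subset T^1_x$ at smooth points $x$. At the vertex $o$ this is automatic since $\cC_o \subset \bP\fg^{-1} = \bP T^1_o$ by construction. The content is at a generic point of one of the lines, say at the point $[\xi(v)]$ or an interior point of $\bP^1(o,[\xi])$: moving along the line changes the osculating data, and the tangent space there picks up a contribution from $(\tad\,\xi)^2 v \in \fg^0$ together with $\xi(v) \in \fg^{-1}$ — exactly the computation sketched in the $G_2$ discussion preceding $\S$\ref{S:lines}. So the second step is to compute, for $\xi = x^{-\c}$ a root vector generating a horizontal line direction, the tangent space to $X$ along $\bP^1(o,[\xi])$, and to determine when it is contained in the horizontal subbundle $T^1 = G \times_P (\fg^{\ge -1}/\fp)$. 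The tangent space at an interior point $[v + t\,\xi(v)]$ of the line will be spanned (mod $\fp$) by $\xi$, by the "radial" direction along the line, and by the derivative of the line family as $[\xi]$ varies over $\cC_o$ — and the obstruction to horizontality is precisely whether these second-order terms land in $\fg^{\ge -1}$ or spill into $\fg^{-2}$ (and lower). This is where the short/non-short dichotomy enters: $\c$ appears in the grading with $\fg^{-1} = \bigoplus_{\a(\ttE) = -1}\fg^\a$, and $[x^{-\c}, x^{-\c}] = 0$, but the relevant bracket is with other root vectors in $\fg^{-1}$; whether brackets of pairs of weight-$(-1)$ root spaces hit only $\fg^{-2}$ vs.\ also lower pieces, and whether $2\c$-type combinations occur, is governed by the root string structure, which behaves differently according to whether $\a_\tti$ is long or short.

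Concretely, the cleanest route I would take is: reduce to checking horizontality along a single line $\bP^1(o,[x^{-\c}])$ where $\c$ is the lowest root of the $G^0$-representation $\fg^{-1}$ containing a horizontal line direction (equivalently $\c = \a_\tti + (\text{stuff orthogonal to } \a_\tti)$, with $\tilde\a - \c \in \fg^{\ge 0}$); then the family of lines through a neighboring point is controlled by $\tad(\fg^0)\,x^{-\c}$, and the second fundamental form of $X$ at an interior point is computed from the image of the symmetric square map $\tSym^2(\text{tangent directions}) \to \fg^{\le -2}/(\text{allowed part})$. The assertion reduces to: this map vanishes iff $\a_\tti$ is not short. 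Here I would invoke \cite[Proposition 2.11]{MR1966752} (which the excerpt already signals gives $\cC_o = \tilde\cC_o$ and $X$ horizontal when $P$ is associated to a non-short root) and supply the converse by exhibiting, when $\a_\tti$ is short (the cases $B_r$ at $\a_\tti$ short, $C_r$, $F_4$ at a short node, $G_2$ at $\a_1$), an explicit line on $X$ whose tangent at an interior point is non-horizontal — for $C_r$ the adjoint variety $v_2(\bP^{2r-1})$ already makes $X = \check D$ non-horizontal, which is the extreme case.

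The main obstacle I expect is the converse direction in part (b) — showing that when $\a_\tti$ \emph{is} short, $X$ genuinely fails to be horizontal. The "only if" is delicate because one must compute the tangent space of the (singular, in general) cone $X$ at a smooth interior point of a generic ruling line, and this requires understanding how the family $\cC_o$ of lines deforms — i.e.\ a hands-on description of $T_{[\xi]}\cC_o$ inside $\bP\fg^{-1}$ together with the second-order jet of the line map $\cC_o \times \bP^1 \to \check D$. The root-combinatorial fact that makes this work — essentially that for $\a_\tti$ short there exists a pair of roots $\c_1, \c_2$ with $\c_1(\ttE) = \c_2(\ttE) = -1$ but $\c_1 + \c_2$ a root with $(\c_1 + \c_2)(\ttE) = -2$ whose root vector is not "absorbed" correctly, versus for $\a_\tti$ long this fails — is the crux, and verifying it uniformly (rather than type-by-type through $B$, $C$, $F_4$, $G_2$) is the part requiring the most care; a case check against Table \ref{t:highest_rt} and the short roots may in fact be the honest way to finish.
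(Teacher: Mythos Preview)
The paper does not prove this proposition: it is stated with the attribution ``\cite{KR2}'' and no argument is given here, so there is no in-paper proof to compare your proposal against. Your sketch is a reasonable outline of how such a proof might go, and you correctly flagged both \cite{KR2} and \cite[Proposition~2.11]{MR1966752} as the operative references; but in the present paper the result is simply imported from the companion article, and any detailed comparison would have to be made against \cite{KR2} rather than against anything in this text.
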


\subsection{The case that $P$ is maximal} \label{S:Pmax}

We now recall two properties of $\cC_o$ in the case that $P$ is maximal (equivalently, $I = \{\tti\}$ and $\ttE = \ttS^\tti$).  The results that follow are due to \cite{MR1966752}, where $\cC_o$ and $\tilde\cC_o$ are discussed for arbitrary (not necessarily maximal) $P$.
\begin{a_list_bold}
\item  
   Since $P$ is maximal, $\fg^{-1}$ is an irreducible $\fg^0$--module with highest weight line $\fg^{-\a_\tti}$, cf.  \cite[Theorem 8.13.3]{MR928600}.  The variety of lines $\cC_o \subset \bP \fg^{-1}$ is the $G^0$--orbit of this highest weight line, cf.~\cite[Theorem 4.3]{MR1966752}.  In particular, $\cC_o$ is a rational homogeneous variety; indeed, 
$$
  \cC_o \ \simeq \ G^0/(G^0\cap Q)\,,
$$ 
where $Q \supset B$ is the parabolic subgroup defined by 
\begin{equation} \label{E:Iq}
  I(\fq) \ = \ \{ j \ | \ \fg^{-\a_j} \not\subset \fq\} 
  \ \dfn \ \{ j \ | \ \langle \a_\tti , \a_j \rangle \not=0\} \,.
\end{equation}
That is, the simple roots indexed by $I(\fq)$ are those adjacent to $\a_\tti$ in the Dynkin diagram of $\fg$; cf.~\cite[Proposition 2.5]{MR1966752}.  With only a few exceptions, $\cC_o$ is a $G_0$--cominuscule variety; equivalently, $\cC_o \simeq G^0/(G^0\cap Q)$ admits the structure of a compact Hermitian symmetric space, cf.~\cite[Proposition 2.11]{MR1966752}.  
\item
If the simple root $\a_\tti$ associated with the maximal parabolic $P$ is not short, then $\cC_o = \tilde \cC_o$, cf.~\cite[Theorem 4.8.1]{MR1966752}.  If the simple root is short, then $\tilde \cC_o$ is the union of two $P$--orbits, and open orbit and its boundary $\cC_o$.  
\end{a_list_bold}

\begin{remark}[Adjoint varieties] \label{R:Pmax}
In $\S\S$\ref{S:adII}-\ref{S:[X(N)]eg}, we will be interested in the variety \eqref{E:Xo} in the case that $G/P$ is a fundamental adjoint variety.    In those cases, $P$ is a maximal parabolic, so that \cite[Corollary 4.12]{KR2} applies: the $\cC_o$ are listed in Table \ref{t:adj}.
\begin{table}[!ht]
\caption{The set $\cC_o$ for the fundamental adjoint varieties}
\renewcommand{\arraystretch}{1.3}
\begin{tabular}{|c||c|c|c|c|c|c|}
    \hline
    $G/P$ & $\tOG(2,\bC^n)$ & $E_6/P_2$ & $E_7/P_1$ & $E_8/P_8$ & $F_4/P_1$ 
          & $G_2/P_2$ \\  
  $\cC_o$ & $\bP^1 \times \cQ^{n-6}$ & $\tGr(3,\bC^6)$ & $\cS_6$ 
          & $E_7/P_7$ & $\tLG(3,\bC^6)$ & $v_3(\bP^1)$ \\ \hline
\end{tabular}
\label{t:adj}
\end{table}
In the table $\cS_6$ is a Spinor variety, one of the two connected components of the orthogonal grassmannian $\tOG(6,\bC^{12})$.

Moreover, for each of the fundamental adjoint varieties, the simple root $\a_\tti$ associated with the maximal parabolic $P = P_\tti$ is not short.  Whence Proposition \ref{P:Xo} applies and the variety $X$ swept out by lines passing through a fixed point is horizontal.
\end{remark}

\subsection{The case of a general parabolic} \label{S:Pgen}

If we drop the assumption that $P$ is maximal, then $\cC_o$ may be described as follows.  Let $I$ be the index set \eqref{E:I} corresponding to $\fp$.  Given $i \in I$, let 
$$
  \fg^{-1}_i \ = \ \{ \z \in \fg^{-1} \ | \ [\ttS^i , \xi] = -\xi \} \,.
$$
Each $\fg^{-1}_i$ is an irreducible $G^0$--module with highest weight line $\fg^{-\a_i}$, and 
$$
  \fg^0 \ = \ \bigoplus_{i \in I} \fg^{-1}_i 
$$
is a $G^0$--module decomposition, cf.  \cite[Theorem 8.13.3]{MR928600}.  Let $\cC_{o,i} \subset \bP \fg^{-1}_i$ be the $G^0$--orbit of the highest weight line $\fg^{-\a_i}$.  Then the variety of lines $\cC_o \subset \bP\fg^{-1}$ is the disjoint union
\begin{equation} \label{E:CgenP}
  \cC_o \ = \ \bigsqcup_{ i \in I } \cC_{o,i} \,.
\end{equation}
As in the case that $P$ is maximal ($\S$\ref{S:Pmax}), the assertions here are established in \cite[$\S$4]{MR1966752}.

\subsection{Uniruling of $\check D$} \label{S:uniruling}

For ease of exposition we continue with the assumption that $P$ is maximal.  However, analogous statements follow for unirulings on general $G/P$.

Given $x = g o \in \check D$, with $g \in G$, let $\cC_x = g \cC_o$ denote the corresponding set of lines through $x$. (It is an exercise to show that $\cC_x$ is well-defined; that is, $\cC_x$ does not depend on our choice of $g$ yielding $x = go$.)  Then 
$$
  \cC \ \dfn \ \{ \bP^1 \ | \ \bP^1 \in \cC_x \,,\ x\in \check D\} 
  \ = \ \bigcup_{g \in G} g \,\cC_o \,.
$$
forms a uniruling of $\check D$.  (By \cite[Corollary 4.12]{KR2}, this uniruling is parameterized by $G/Q$ -- that is, $\cC \simeq G/Q$.)  

\begin{remark}\label{R:tC}
More generally, the set of \emph{all} lines on $G/P$ is $\tilde \cC = \cup_{g\in G} \, g \,\tilde\cC_o$.
\begin{a_list_emph}
\item  It follows from definition \eqref{E:Co} and the homogeneity of the IPR, that 
\begin{equation} \label{E:Cogen}
  \cC \ = \ \{ \bP^1 \in \tilde\cC \ | \ \bP^1 \hbox{ is horizontal}\}
\end{equation}
is precisely the set of lines on $G/P$ that are integrals of the IPR.
\item  As noted in $\S$\ref{S:Pmax}(b), if the simple root associated to the maximal parabolic $P$ is not short, then $\tilde \cC = \cC$ consists of a single $G$--orbit.  If the simple root is short, then $\tilde\cC$ consists of two $G$--orbits, an open orbit and its boundary $\cC$, cf.~\cite[Theorem 4.3]{MR1966752}.
\end{a_list_emph}
\end{remark}

\begin{remark}[$\bP^1$--unirulings of $G/P$ for maximal $P$] \label{R:P1}
In the case that $P \subset G$ is a maximal parabolic ($\S$\ref{S:rhv}), there is a unique $G$--homogeneous variety $G/Q$ parameterizing a uniruling of $G/P$ by lines $\bP^1$; it may be identified by inspection of the Dynkin diagram $\sD$ of $\fg$ as follows.  The maximality of $P$ is equivalent to $I(\fp) = \{\tti\}$ for some $\tti$.  In order to obtain a uniruling by $\bP^1$s, we must choose $Q$ (equivalently, the index set $I(\fq)$) so that $G'/P' = \bP^1$.  To that end, let $J = \{ j \not= \tti \ | \ (\a_\tti,\a_j) \not=0 \}$ index the nodes in the Dynkin diagram that are adjacent to the $\tti$--th node.  Then $G'/P' \simeq \bP^1$ (equivalently, $G/Q$ parameterizes a uniruling of $G/P$ by $\bP^1$s) if and only if $J \subset I(\fq)$.  When $I(\fq) = J$ we say that \emph{$G/Q$ is the smallest rational $G$--homogeneous variety parameterizing a uniruling of $G/P$ by lines $\bP^1$}.

Examples \ref{eg:ptSO} and \ref{eg:excep_adj} below identify the varieties $G/Q$ parameterizing lines on the fundamental adjoint varieties ($\S$\ref{S:AV}).
\end{remark}

\begin{example}[Unirulings of the orthogonal adjoint varieties] \label{eg:ptSO} 
Fix a nondegenerate symmetric bilinear form $\n$ on $\bC^n$, $n\ge 7$.  Let $G = \tAut(\bC^n,\n)^0 = \tSO(\bC^n)$ denote the identity component of the orthogonal group.  The adjoint variety ($\S$\ref{S:AV})
$$
  G/P_2 \ = \ \tOG(2,\bC^n) \ = \ 
  \left\{ E \in \tGr(2,\bC^n) \ \left| \ \left.\n\right|_{E} = 0 \right. \right\}
$$ 
is the set of $\n$--isotropic 2-planes.  The partial flag variety
$$
  G/P_{1,3} \ = \ \tFlag_\n(1,3,\bC^n)
  \ = \ \{ F^1 \in \tOG(1,\bC^n) \times \tOG(3,\bC^n) \ | \ F^1 \subset F^3 \}  
$$ 
parameterizes a uniruling of the adjoint variety $G/P_2$ by $\bP^1$s.  Given one such flag $F^1 \subset F^3$ the corresponding line is $\{ E \in \tOG(2,\bC^n) \ | \ F^1 \subset E \subset F^3 \}$.

In this case $\sD' = \sD \backslash\{2\}$ so that $\fg' = \fsl_2\bC \,\op\, \fso_{n-4}\bC$.  Additionally, $I(\fq) = I(\fp') = \{1,3\}$.
\end{example}

\begin{example}[Unirulings of the exceptional adjoint varieties] \label{eg:excep_adj}
Let $G/P \inj \bP\fg$ be the adjoint variety ($\S$\ref{S:AV}) of an exceptional simple Lie group $G$.  The rational homogeneous variety $G/Q$ parameterizing a uniruling of $G/P$ by lines $\bP^1$ is given in Table \ref{t:adjvar_lines}.
\begin{table}[!ht] \renewcommand{\arraystretch}{1.3}
\caption{Lines on exceptional adjoint varieties}
\begin{tabular}{|r||c|c|c|c|c|}
  \hline
  Adjoint variety $G/P$ & 
     $E_6/P_2$ & $E_7/P_1$ & $E_8/P_8$ & $F_4/P_1$ & $G_2/P_2$ \\ 
  Variety $G/Q$ of lines & 
     $E_6/P_4$ & $E_7/P_3$ & $E_8/P_7$ & $F_4/P_2$ & $G_2/P_1$ \\ \hline
\end{tabular}
\label{t:adjvar_lines}
\end{table}
In the table the subscript $\tti$ in $P_\tti$ indicates the set $I = \{\tti\}$ of \eqref{E:I}.  For example, $G/P = E_6/P_2$ indicates that $\fg$ is the exceptional simple Lie algebra of rank six and and $I(\fp) = \{2\}$.  
\end{example}

\subsection{Griffiths--Yukawa vanishes along horizontal lines} \label{S:GYCvan}

Fix $\check D = G/P$.  (We do not assume that $P$ is maximal here.)  Let $\ttE$ be the grading element \eqref{E:ttE} associated with the parabolic $\fp$, and let $V$ be the irreducible $G$--module giving the minimal homogeneous embedding $G/P \inj \bP V$ ($\S$\ref{S:IPR}).  

Fix a $G$--module $U$, and suppose that the $\ttE$--eigenspace decomposition is of the form 
\begin{equation} \label{E:U}
  U \ = \  U^{m/2} \,\op\, U^{m/2-1}\,\op\cdots\op\, 
  U^{1-m/2}\,\op\, U^{-m/2} \,,
\end{equation}
with $0 \le m \in \bZ$, $U^{m/2} \not=0$ and $(U^{a})^* \simeq U^{-a}$ as $\fg_0$--modules.

\begin{remark} \label{R:GYC}
In general, the $\ttE$--eigenvalues of $U$ are rational.  However, if $G$ is $\bQ$--algebraic and $\ttE$ is the grading element $\ttT_\varphi$ associated to a Hodge representation $(G,U_\bQ,\varphi)$, cf.~\cite[$\S$2.3]{MR3217458}, then the $\ttE$--eigenspace decomposition of $U = U_\bQ \ot \bC$ is of the form \eqref{E:U}.
\end{remark}

Any element of $\tSym^m(\fg^{-1})$ naturally defines a $G^0$--module map $U^{m/2} \to U^{-m/2}$.  Explicitly, given $\xi \in \fg^{-1}$ and $u \in U^{m/2}$, the relation \eqref{E:gU} implies $\xi^m(u) \in U^{-m/2}$.  Thus, we have a $G^0$--module map 
$$
  \Psi : \tSym^m(\fg^{-1}) \, \to\, \tHom(U^{m/2} , U^{-m/2}) \, ,
$$
which is the Griffiths--Yukawa coupling from above.  Define
\begin{subequations} \label{SE:cY}
\begin{equation} \label{E:cY}
  \cY_U \ \dfn \ 
  \{ [\x] \in \bP(\fg^{-1}) \ | \ \Psi(\x^m) = 0 \} \, .
\end{equation}
In particular, given $0\not=\xi \in \fg^{-1}$, 
\begin{equation}
  [\xi] \in \cY_U \quad\hbox{if and only if}\quad \left.\xi^m\right|_{U^{m/2}} = 0\,.
\end{equation}
\end{subequations}
Observe that $\cY_U$ is a closed, $G^0$--invariant subvariety of $\bP\fg^{-1}$.

Our main goal in this section is to understand the relationship between $\cY_U$ and $\cC_o$.

\begin{theorem} \label{T:GY}
If the the kernel of the Griffiths--Yukawa coupling is nonempty, then it contains the (tangent directions to) lines through $o = P/P$ in the minimal homogeneous embedding $G/P \inj \bP V$ that are horizontal.  That is, if $\cY_U \not= \emptyset$, then
\begin{equation} \label{E:CvY}
  \cC_o \ \subset \ \cY_U \,.
\end{equation}
\end{theorem}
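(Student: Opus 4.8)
The plan is to deduce the inclusion from the $G^0$-equivariance of the Griffiths--Yukawa coupling, using that $\cC_o$ is assembled out of the closed $G^0$-orbits in $\bP\fg^{-1}$. Recall from the discussion preceding the theorem that $\cY_U$ is a closed, $G^0$-invariant subvariety of $\bP\fg^{-1}$, and from $\S\S$\ref{S:Pmax}--\ref{S:Pgen} that $\fg^{-1}=\bigoplus_{i\in I}\fg^{-1}_i$ is a decomposition into irreducible $G^0$-modules with highest weight lines $\fg^{-\a_i}$, and that $\cC_o=\bigsqcup_{i\in I}\cC_{o,i}$ where $\cC_{o,i}=G^0\cdot[\fg^{-\a_i}]$ is the (closed) $G^0$-orbit of $[\fg^{-\a_i}]$. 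Because $\cY_U$ is closed and $G^0$-invariant, $[\fg^{-\a_i}]\in\cY_U$ already forces $\cC_{o,i}\subseteq\cY_U$; so it suffices to prove $[\fg^{-\a_i}]\in\cY_U$ for every $i\in I$, i.e., by \eqref{E:cY}, that $(x_{-\a_i})^m$ annihilates $U^{m/2}$ for a generator $x_{-\a_i}$ of $\fg^{-\a_i}$.

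I would first reformulate this vanishing $\fsl_2$-theoretically. Since $i\in I$, the grading element satisfies $\a_i(\ttE)=1$, so $x_{-\a_i}$ (resp. $x_{\a_i}$) lowers (resp. raises) the $\ttE$-grading by one; in particular $x_{\a_i}$ kills the top eigenspace $U^{m/2}$, so every weight vector of $U^{m/2}$ sits at the top of its $\fsl_2^{\a_i}$-string. A short argument then gives: $(x_{-\a_i})^m|_{U^{m/2}}=0$ if and only if no $\fsl_2^{\a_i}$-string in $U$ has length $\ge m$; and because the $\ttE$-levels of $U$ lie in $\{m/2,\,m/2-1,\dots,-m/2\}$, a set of $m+1$ consecutive levels, while each step of an $\a_i$-string changes the $\ttE$-level by one, this is the same as: no $\fsl_2^{\a_i}$-string of $U$ runs from the top $\ttE$-level all the way to the bottom.

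Now I would use $\cY_U\ne\emptyset$ to produce at least one good index. Fix $[\xi]\in\cY_U$ and write $\xi=\sum_{i\in I}\xi_i$ along the $G^0$-decomposition; pick $i_0$ with $\xi_{i_0}\ne0$. Since every weight of $\fg^{-1}$ has $\ttE$-eigenvalue $1$ and non-negative $\ttS^k$-eigenvalues for $k\in I$, it is supported on a single index among $\{\ttS^k:k\in I\}$; hence $\ttS^{i_0}$ acts by $1$ on $\fg^{-1}_{i_0}$ and by $0$ on every other $\fg^{-1}_j$. Therefore the one-parameter subgroup $\texp(-s\,\ttS^{i_0})\subset G^0$ (note $\ttS^{i_0}\in\fz\subset\fg^0$) scales $\fg^{-1}_{i_0}$ by $e^{s}$ and fixes the remaining summands, so $[\texp(-s\,\ttS^{i_0})\,\xi]\to[\xi_{i_0}]\in\bP\fg^{-1}_{i_0}$ as $s\to+\infty$. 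As $\fg^{-1}_{i_0}$ is irreducible, a further one-parameter limit (by a generic cocharacter of a maximal torus of $G^0$) carries $[\xi_{i_0}]$ to the highest weight line $[\fg^{-\a_{i_0}}]$, the unique closed $G^0$-orbit in $\bP\fg^{-1}_{i_0}$. Since $\cY_U$ is closed and $G^0$-invariant, $[\fg^{-\a_{i_0}}]\in\cY_U$. When $P$ is maximal this finishes the proof: then $I=\{\tti\}$, the module $\fg^{-1}$ is irreducible, $\cC_o=\cC_{o,\tti}$ is its unique closed $G^0$-orbit, and every nonempty closed $G^0$-invariant subset of $\bP\fg^{-1}$ contains it. In particular all the adjoint-variety applications of $\S$\ref{S:sec5} (where $P$ is maximal) are settled here.

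The remaining step, which I expect to be the real obstacle, is the passage from ``$[\fg^{-\a_{i_0}}]\in\cY_U$ for some $i_0\in I$'' to ``$[\fg^{-\a_i}]\in\cY_U$ for all $i\in I$'' in the non-maximal case. By the $\fsl_2$-reformulation this asserts: if no $\fsl_2^{\a_{i_0}}$-string of $U$ spans all $m+1$ of the $\ttE$-levels, then neither does any $\fsl_2^{\a_i}$-string for $i\in I$ arbitrary; equivalently, if some weight of the top slice $U^{m/2}$ pairs to $m$ with one coroot $\a_{i_0}^\vee$ ($i_0\in I$), then some weight does so with every $\a_j^\vee$ ($j\in I$). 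The lever should be that the roots $\a_i$, $i\in I$, all enter the grading element $\ttE=\sum_{k\in I}\ttS^k$ with coefficient one and hence play symmetric roles relative to the $\ttE$-filtration, together with the self-duality $(U^a)^*\simeq U^{-a}$; executing it amounts to a combinatorial analysis of the weight diagram of $U$ in its extreme $\ttE$-slices, refined by the $\fg^0$-module structure. Everything up to that point is formal (orbit geometry and $\fsl_2$-representation theory), so this weight-combinatorial input is where the genuine content of the theorem resides.
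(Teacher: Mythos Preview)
Your treatment of the maximal parabolic case is correct and matches the paper's Lemma~\ref{L:GYmaxP} exactly: $\cY_U$ is closed and $G^0$--invariant, $\cC_o$ is the unique closed $G^0$--orbit in the irreducible $\bP\fg^{-1}$, and any nonempty closed $G^0$--stable set must contain it.

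For general $P$, however, your argument stops at a genuine gap that you correctly identify. Degenerating a point of $\cY_U$ first to $\bP\fg^{-1}_{i_0}$ (via $\texp(-s\,\ttS^{i_0})$) and then to the highest weight line gives $[\fg^{-\a_{i_0}}]\in\cY_U$ for \emph{some} $i_0\in I$; but your proposed passage to \emph{every} $i\in I$ by a symmetry of $\fsl_2$--strings is never carried out, and there is no evident reason why the existence of a full-length $\a_{i_0}$--string should force a full-length $\a_i$--string for unrelated $i$. The weight-diagram heuristic you sketch does not supply this.

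The paper's route in the non-maximal case is quite different and avoids relating the indices at all. Writing $\ttE=\sum_{i\in I}\ttS^i$, one introduces for each $i$ the top $\ttS^i$--eigenvalue $m_i/2$ on $U$ with eigenspace $\sU_i$, so that $m=\sum_i m_i$ and $U^{m/2}=\bigcap_i\sU_i$. The maximal case is then applied to $G/P_i$ with the $\ttS^i$--grading on $U$: for $[\xi]\in\cC_{o_i}(\check D_i)$ one obtains $\xi^{m_i}|_{\sU_i}=0$, whence (since $m_i\le m$ and $U^{m/2}\subset\sU_i$) also $\xi^m|_{U^{m/2}}=0$. Together with the description \eqref{E:prfGY} of $\cC_{o,i}$ inside $\cC_{o_i}(\check D_i)$, this yields $\cC_{o,i}\subset\cY_U$ for each $i$ separately, hence $\cC_o\subset\cY_U$. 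The point is that the inequalities $m_i\le m$ and $U^{m/2}\subset\sU_i$ absorb the discrepancy between the $\ttE$-- and $\ttS^i$--gradings, so no ``transfer'' between different simple roots is needed.
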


\begin{remark}
The relationship \eqref{E:CvY} was proved by Sheng and Zuo \cite{MR2657440} in the special case that $D$ is Hermitian symmetric and $U = V$.  In this setting $m$ is the rank of $D$, and $\bP \fg^{-1}$ decomposes into $m$ $G^0$--orbits $\sC_1 , \ldots , \sC_m$ with the properties $\sC_{a-1} \subset \overline{\sC_{a}}$ for all $2\le a \le m$, and $\sC_1 = \cC_o$ and $\overline{\sC_{m-1}} = \cY_U$.
\end{remark}

\noindent The theorem is proved in $\S$\ref{S:prfGY}.  The equations cutting out $\cC_o$ and $\cY_U$ are given in $\S$\ref{S:GYeqns}.  In general, containment is strict in \eqref{E:CvY}, \cf\cite[Example 5.19]{KR1long}.   A large class of examples for which equality holds is the following.  Take
$$
  V \ = \ \fg \,,
$$  
so that $\check D \subset \bP \fg$ is the adjoint variety of $G$ ($\S$\ref{S:AV}).  The associated grading element is $\ttE = \sum_{i \in I} \ttS^i$ where the set $I$ is defined by $\tilde\a = \sum_{i \in I} \w_i$, cf. Table \ref{t:highest_rt}.  Further assume that $m=2$; equivalently, the $\ttE$--eigenspace decomposition \eqref{E:U} of $U$ is 
\begin{equation} \label{E:U1}
  U \ = \ U^1 \,\op\, U^0 \,\op\, U^{-1} 
  \,.
\end{equation}
 
\begin{theorem}\label{T:adj_m=2}
Let $V = \fg$, so that $G/P \inj \bP\fg$ is the adjoint variety of $G$.  Assume this homogeneous embedding of $G/P$ is minimal.  Suppose that $m=2$, so that \eqref{E:U1} holds.  If $U^1$ is a faithful representation of the Levi factor $\fg^0$, then $\cC_o = \cY_U$.
\end{theorem}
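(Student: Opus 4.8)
The inclusion $\cC_o\subseteq\cY_U$ is already Theorem~\ref{T:GY}, so the content is the reverse inclusion $\cY_U\subseteq\cC_o$ under the hypotheses ($V=\fg$, minimal embedding, $m=2$, and $U^1$ faithful over $\fg^0$). First I would unwind what $m=2$ means: for the adjoint case $V=\fg$ the grading is $\fg=\fg^{-2}\op\fg^{-1}\op\fg^0\op\fg^1\op\fg^2$ with $\fg^{\pm2}$ the (one-dimensional) highest/lowest root spaces, and the hypothesis $U=U^1\op U^0\op U^{-1}$ forces $\Psi:\tSym^2\fg^{-1}\to\tHom(U^1,U^{-1})$. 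For $\xi\in\fg^{-1}$, membership $[\xi]\in\cY_U$ says $\xi^2|_{U^1}=0$, i.e. $\tad(\xi)^2$ (acting via the $U$-representation) kills $U^1$. The key reduction is to show this is equivalent to $\tad(\xi)^2$ killing $\fg^2$ — which, by the second-fundamental-form computation recalled in the $G_2$ discussion of $\S$\ref{S:sec5} (the element $(\tad\xi)^2 v$ for $v$ the highest root vector), is exactly the condition $[\xi]\in\cC_o$, i.e. that $\bP^1(o,[\xi])\subset\check D$.

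**Key steps, in order.** (1) Reduce $\cY_U\subseteq\cC_o$ to the claim: for $\xi\in\fg^{-1}$, if $\tad(\xi)^2$ annihilates $U^1$ then $\tad(\xi)^2$ annihilates $\fg^2$. (2) To prove this, exploit that $\tad(\xi)^2$ is a $G^0$-equivariant map $U^1\to U^{-1}$ and, separately, $\fg^2\to\fg^{-2}$; both $\fg^2$ and (I will argue) the relevant piece of $U^1$ are generated as $\fg^0$-modules in a controlled way. The cleanest route: observe $\xi^2|_{U^1}=0$ is a closed condition and, because $U^1$ is a \emph{faithful} $\fg^0$-module, the subalgebra of $\fg^0$ generated by the image of $[\xi,\fg^1]$ (which lands in $\fg^0$) sees whether $\xi^2=0$ on all of $\fg$. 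Concretely: if $\xi^2|_{U^1}=0$, differentiate/bracket against $\fg^1$ to get constraints in $\tEnd(U^0)$, and use faithfulness of $U^1$ over $\fg^0$ together with the graded structure to propagate the vanishing to $\xi^2|_{\fg}=0$, which is the equation for $\cC_o$ via the contact/Legendrian description. (3) Conclude $\cY_U\subseteq\cC_o$, and combine with Theorem~\ref{T:GY} to get equality. Throughout I would lean on $\S$\ref{S:Pmax}(a): since $P$ is maximal for a fundamental adjoint variety, $\fg^{-1}$ is $\fg^0$-irreducible and $\cC_o$ is the unique closed $G^0$-orbit (the highest-weight orbit) in $\bP\fg^{-1}$; likewise $\cY_U$ is a closed $G^0$-stable subvariety of $\bP\fg^{-1}$, so it suffices to show $\cY_U$ contains no $[\xi]$ outside $\cC_o$ — and one can stratify $\bP\fg^{-1}$ by $G^0$-orbits and check the minimal-rank statement on a representative of each larger orbit.

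**Where the work concentrates.** The main obstacle is Step~(2): showing that vanishing of $\tad(\xi)^2$ on the single module $U^1$ forces it to vanish on $\fg^2$ (equivalently on all of $\fg$). The faithfulness hypothesis is clearly what makes this go, but translating ``$U^1$ faithful over $\fg^0$'' into a statement strong enough to recover the Legendrian equations requires care: one needs that the map $\fg^2\to\tEnd(U^1)$, $v\mapsto(\text{action of }[\xi,[\xi,\text{-}]]\text{-type elements})$, is injective enough, or that $\tad(\xi)^2\fg^2=0$ can be detected after applying $U^1$. I expect the honest argument goes by a root-space computation parallel to the displayed $G_2$ calculation — pick root vectors, write $\xi=\sum\xi_\beta x^{-\beta}$ over the roots $\beta$ with $\beta(\ttE)=1$, compute $(\tad\xi)^2v$ for $v$ the highest root vector landing in $\fg^0$, and compare its coordinates to the matrix entries of $\Psi(\xi^2)$ acting on a highest-weight vector of $U^1$ — and faithfulness guarantees the two systems of quadrics cut out the same variety. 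An alternative, perhaps slicker, is to use that when $V=\fg$ the Griffiths–Yukawa map for $U=\fg$ \emph{itself} has kernel exactly $\cC_o$ (this is the Landsberg–Manivel Legendrian/second-fundamental-form statement), so it suffices to show $\cY_U\subseteq\cY_{\fg}$; and $\cY_U\subseteq\cY_\fg$ would follow if $\fg^2$ occurs as a $\fg^0$-subquotient of $\tSym^2 U^1$ or $U^1\ot U^{-1}$, which faithfulness should be leveraged to arrange. I would try the second approach first and fall back on the explicit root computation if the clean module-theoretic statement resists a short proof.
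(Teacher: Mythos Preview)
Your outline is right---one only needs $\cY_U\subseteq\cC_o$, and by Proposition~\ref{L:GY} this amounts to showing that $\xi^2|_{U^1}=0$ forces $\tad_\xi^2(v)=0$ for a highest root vector $v\in\fg^2$. You also correctly locate the role of faithfulness: since $\tad_\xi^2(v)\in\fg^0$, it vanishes as soon as it acts trivially on $U^1$. What you are missing is the one--line computation that closes this loop. Your proposed routes (bracket against $\fg^1$ and ``propagate'', do a root-by-root calculation as in the $G_2$ display, or look for $\fg^2$ inside $\tSym^2 U^1$ or $U^1\otimes U^{-1}$) are all far more work than necessary, and the last one is not obviously true.

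The missing idea is simply to view $\tad_\xi^2(v)\in\fg^0$ as an operator on $U$ via the representation and expand the iterated commutator:
\[
  \tad_\xi^2(v)\ =\ \xi\xi v \,-\, 2\,\xi v\xi \,+\, v\xi\xi
  \qquad\text{in }\tEnd(U)\,.
\]
Now use the grading constraint $m=2$, i.e.\ $U=U^1\oplus U^0\oplus U^{-1}$. Since $v\in\fg^2$ raises $\ttE$--degree by $2$ and $\xi\in\fg^{-1}$ lowers it by $1$, the first two terms already vanish on $U^1$ (they would require $U^2$ or $U^3$), leaving
\[
  \tad_\xi^2(v)\big|_{U^1}\ =\ v\,\xi\xi\big|_{U^1}\ =\ v\cdot\bigl(\xi^2|_{U^1}\bigr)\ =\ 0\,.
\]
Faithfulness of $U^1$ over $\fg^0$ then gives $\tad_\xi^2(v)=0$, hence $[\xi]\in\cC_o$. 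That is the whole proof; no orbit stratification, no case analysis, and no comparison with $\cY_\fg$ is needed.
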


\begin{remark} \label{R:adj_sp}
The adjoint variety fails to be a minimal homogeneous embedding only for the simple Lie algebra $\fg = \fsp_{2r}\bC$ --- it is the second Veronese embedding $v_2(\bP^{2r-1}) \subset \bP\,\tSym^2\bC^{2r}$.  As such it contains \emph{no} lines.
\end{remark}


Theorem \ref{T:adj_m=2} is proved in $\S$\ref{S:prf_adj_m=2}.  Given the theorem, it is interesting to identify those irreducible representations $U$ of $G$ for which \eqref{E:U1} holds when $\ttE$ is the grading element associated to an adjoint variety.  These representations are listed in 

\begin{lemma} \label{L:m=2}
Let $G/P \inj \bP\fg$ be an adjoint variety with associated grading element $\ttE$.  Let $U$ be an irreducible representation of highest weight $\lambda$.  Then $\lambda(\ttE) = 1$ if and only if $\lambda$ is among those listed below.
\begin{a_list_emph}
\item $\fg = \fsl_{r+1}\bC$ and $\ttE= \ttS^1 + \ttS^r$: $\lambda = \w_i$, for any $1 \le i \le r$; in this case $U = \tw^i \bC^{r+1}$.
\item $\fg = \fso_{2r+1}\bC$ and $\ttE = \ttS^2$: either $\lambda = \w_1$, in which case $U = \bC^{2r+1}$ is the standard representation; or $\lambda = \w_r$, in which case $U$ is the spin representation.
\item $\fg = \fsp_{2r}\bC$ and $\ttE = \ttS^1$: $\lambda = \w_i$, for any $1 \le i \le r$.
\item $\fg = \fso_{2r}\bC$ and $\ttE = \ttS^2$:  either $\lambda = \w_1$, in which case $U = \bC^{2r}$ is the standard representation; or $\lambda = \w_{r-1}\,,\ \w_r$, in which case $U$ is one of the spin representations.
\item $\fg = \fe_6$ and $\ttE = \ttS^2$: $\lambda = \w_1 \,,\ \w_6$. \\
$\fg = \fe_7$ and $\ttE = \ttS^1$: $\lambda = \w_7$. \\
(In the case that $\fg = \fe_8$ and $\ttE = \ttS^8$, we have $\lambda(\ttE) > 1$ for all $\lambda$.)
\item $\fg = \ff_4$ and $\ttE = \ttS^1$: $\lambda = \w_4$.
\item $\fg = \fg_2$ and $\ttE = \ttS^2$: $\lambda = \w_1$ and $U = \bC^7$ is the standard representation.
\end{a_list_emph}
In each of these cases, we also have $\lambda^*(\ttE) = 1$.
\end{lemma}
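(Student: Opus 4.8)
The plan is to reduce the statement to a positivity observation and then verify the resulting list by a type-by-type inspection of the inverse Cartan matrices (Appendix~\ref{S:dynkin}, equivalently the Bourbaki plates).

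First I would make the reduction. Recall $\ttE = \sum_{i\in I}\ttS^i$ with $I$ as in Table~\ref{t:highest_rt}; the one caveat is $\fg = \fsp_{2r}\bC$, where $\tilde\a = 2\w_1$ and one uses instead the grading element $\ttE = \ttS^1$ of \eqref{E:ttE} attached to the adjoint parabolic $P_1$. In every case $\a_k(\ttE) = 1$ for $k\in I$ and $\a_k(\ttE) = 0$ otherwise, so for a dominant weight $\lambda = \sum_j\lambda^j\w_j$, expanding $\w_j$ in the basis of simple roots gives $\lambda(\ttE) = \sum_j\lambda^j\,\w_j(\ttE)$, where $\w_j(\ttE) = \sum_{i\in I}(A^{-1})_{ji} > 0$ ($A$ the Cartan matrix; positivity is the standard fact that the inverse Cartan matrix of a simple Lie algebra has strictly positive entries). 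Hence $\lambda(\ttE) = 1$ forces all but one $\lambda^j$ to vanish, so $\lambda = c\,\w_j$ with $c\in\bZ_{>0}$ and $c\,\w_j(\ttE) = 1$. The lemma therefore reduces to computing $\w_j(\ttE)$ for all $j$ in each type.

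Next comes the computation. Concretely $\w_j(\ttE)$ is the sum, over $i\in I$, of the coefficient of $\a_i$ in the expansion of $\w_j$ into simple roots. For $\fg = \fsl_{r+1}\bC$ ($I = \{1,r\}$) one gets $\w_j(\ttE) = \tfrac{r+1-j}{r+1} + \tfrac{j}{r+1} = 1$ for every $j$, and likewise $\w_j(\ttE) = 1$ for all $j$ when $\fg = \fsp_{2r}\bC$; in every remaining type $\w_j(\ttE)$ turns out to be a \emph{positive integer}. Reading off the values: for $B_r$ and $D_r$ ($\ttE = \ttS^2$) one finds $\w_j(\ttE) = 1$ precisely at the ``tip'' nodes $j\in\{1,r\}$ (resp. $j\in\{1,r-1,r\}$) and $\w_j(\ttE) = 2$ otherwise; for $G_2$ ($\ttE = \ttS^2$), $\w_1(\ttE) = 1$ and $\w_2(\ttE) = 2$; for $F_4$ ($\ttE = \ttS^1$), $\w_4(\ttE) = 1$ and $\w_1(\ttE),\w_2(\ttE),\w_3(\ttE)\ge 2$; for $E_6$ ($\ttE = \ttS^2$), $\w_1(\ttE) = \w_6(\ttE) = 1$ and the rest are $\ge 2$; for $E_7$ ($\ttE = \ttS^1$), $\w_7(\ttE) = 1$ and the rest are $\ge 2$; and for $E_8$ ($\ttE = \ttS^8$), $\w_j(\ttE)\ge 2$ for all $j$. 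Since $\w_j(\ttE)$ is always a positive integer, $c\,\w_j(\ttE) = 1$ forces $c = 1$ and $\w_j(\ttE) = 1$, which yields exactly the list (a)--(g); the identifications of $U$ (with $\tw^i\bC^{r+1}$, the standard representation, a (half-)spin representation, a minuscule representation of $E_6$ or $E_7$, the $26$-dimensional representation of $F_4$, or $\bC^7$) are immediate from the highest weights.

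Finally, for the assertion $\lambda^*(\ttE) = 1$: since the adjoint representation is self-dual, $-w_0\tilde\a = \tilde\a$, so the index set $I$ is stable under the diagram involution $\s$ defined by $-w_0\a_i = \a_{\s(i)}$; dually $-w_0\ttS^i = \ttS^{\s(i)}$, whence $-w_0\ttE = \sum_{i\in I}\ttS^{\s(i)} = \ttE$, i.e. $w_0\ttE = -\ttE$. Therefore, for \emph{any} $\lambda$, $\lambda^*(\ttE) = (-w_0\lambda)(\ttE) = -\lambda(w_0\ttE) = -\lambda(-\ttE) = \lambda(\ttE)$, so in particular $\lambda^*(\ttE) = 1$ on the list. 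I do not expect a genuine obstacle here: the argument is a finite root-theoretic calculation, and the only points demanding care are (i) the exceptional bookkeeping for $\fsp_{2r}\bC$, where $\tilde\a = 2\w_1$ is not a sum of distinct fundamental weights so one must use the parabolic grading element rather than ``$\sum_{i\in I}\ttS^i$'' literally, and (ii) transcribing the fundamental-weight expansions (equivalently, inverse Cartan entries) for the exceptional algebras correctly, $E_8$ being the case where the list is \emph{empty} and hence the one most prone to a slip.
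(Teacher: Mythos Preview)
Your proof is correct and takes essentially the same approach as the paper: write $\lambda=\sum_j\lambda^j\w_j$ with $\lambda^j\ge0$, express the $\w_j$ in the simple-root basis (equivalently, read off inverse Cartan entries from the Bourbaki tables), and check case by case which $\w_j$ have $\w_j(\ttE)=1$. Your argument is in fact more detailed than the paper's one-line proof, and your uniform justification of $\lambda^*(\ttE)=\lambda(\ttE)$ via $w_0\ttE=-\ttE$ is a pleasant addition the paper does not supply; the only cosmetic point is that the sentence ``positivity of $\w_j(\ttE)$ forces all but one $\lambda^j$ to vanish'' really uses $\w_j(\ttE)\ge1$, which your subsequent computation establishes in every type.
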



\begin{proof}
As a highest weight $\lambda$ is of the form $\lambda = \sum_i \lambda^i\w_i$ with all $\lambda^i \geq 0$.  The tables of \cite{Bourbaki} express the $\w_i$ as linear combinations of the simple roots $\a_j$.  The lemma follows.
\end{proof}

\noindent Among the cases in Lemma \ref{L:m=2}, one can check\footnote{See \cite[\S 5.3]{KR1long} for these and related computations.} that the exceptional cases (e), (f), (g) satisfy the faithfulness condition of Theorem \ref{T:adj_m=2}, whereas (a)-(d) do not. In at least one case, the situation is improved by considering reducible $U$: if $\fg = \mathfrak{sl}_{r+1}$ and $\ttE = \ttS^1 + \ttS^r$, then $U=V^{\omega_i}+V^{\omega_{r+1-i}}$ (with $2\leq i \leq r-1$ unless $r=2$) satisfies the conditions of the theorem.

\subsection{Equations cutting out $\cC_o$ and $\cY_U$}\label{S:GYeqns}

We begin with $\cY_U$.  Let 
$$
  \Psi^* : \tHom(U^{-m/2} , U^{m/2}) \to \tSym^m(\fg^{-1})^*
$$
denote the dual map.  To be explicit: let $\n \ot u \in U^{m/2} \ot (U^{-m/2})^*= \tHom(U^{-m/2},U^{m/2})$, then the polynomial $\Phi^*(\n\ot u) \in \tSym^m(\fg^{-1})^*$ is given by $\Phi^*(\n \ot u)(\xi) = \n(\xi^m(u))$, for any $\xi \in \fg^{-1}$.  The image
\begin{equation} \label{E:eqnY}
\hbox{\emph{$\tim\,\Psi^* \subset \tSym^m(\fg^{-1})^*$ is the set of polynomials defining $\cY_U \subset \bP \fg^{-1}$}.}
\end{equation}

Turning to $\cC_o$, the following proposition characterizes $\cC_o$ as the set of lines in $\fg^{-1}$ whose iterated action annihilates the highest weight line of $V$.  

\begin{proposition} \label{L:GY}
Let $G/P \inj \bP V$ be the minimal homogeneous embedding.  Let $0 \not= v \in  V$ be a highest weight vector.  Let $\xi \in \fg^{-1} \subset\fg^- \simeq T_o\check D$.  Then $[\x] \in \cC_o$ if and only if $\x^2(v) = 0$.
\end{proposition}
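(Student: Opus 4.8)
The plan is to prove both implications directly from the $G^0$--module description of $\cC_o$ recalled in $\S\S$\ref{S:Pmax}--\ref{S:Pgen}, avoiding second fundamental forms and the defining equations of $\check D$. The implication $\x^2(v)=0\Rightarrow[\x]\in\cC_o$ is immediate: if $\x\in\fg^{-1}$ and $\x^2(v)=0$, then $\x^k(v)=\x^{k-2}(\x^2(v))=0$ for all $k\ge2$, so $\texp(t\x)\cdot v=v+t\,\x(v)$ exactly, for every $t\in\bC$. Hence $\texp(t\x)\cdot o=[v+t\,\x(v)]$ lies in $G\cdot o=\check D$ for all $t$, and letting $t\to\infty$ gives $[\x(v)]\in\check D$ as well, since $\check D$ is closed. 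For the minimal embedding $\eta\mapsto\eta(v)$ is injective on $\fg^-$ (it realizes $T_o\check D\hookrightarrow V/\bC v$), so $\x(v)\notin\bC v$ and $\bP^1(o,[\x])=\bP\,\tspan_\bC\{v,\x(v)\}$ is a genuine line contained in $\check D$; thus $[\x]\in\tilde\cC_o$, and since $\x\in\fg^{-1}$ we conclude $[\x]\in\cC_o$ by \eqref{E:Co}.

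For the reverse implication $[\x]\in\cC_o\Rightarrow\x^2(v)=0$ I would use that, by \eqref{E:CgenP} (and $\S$\ref{S:Pmax}(a) in the maximal case), there is an index $i\in I$, an element $g\in G^0$, and a scalar $\lambda\in\bC^\times$ with $\x=\lambda\,\tAd(g)\,x^{-\a_i}$, where $0\neq x^{-\a_i}\in\fg^{-\a_i}$ is a root vector. Two observations then finish the proof. First, $v$ spans the top $\ttE$--eigenspace $V^m$, which is one--dimensional for a minimal embedding ($\S$\ref{S:Edecomp}) and is $G^0$--stable, so writing $\rho$ for the $G$--action on $V$ we have $\rho(g)^{-1}v=c\,v$ for some $c\in\bC^\times$. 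Second, $v$ has weight $\m=\sum_{j\in I}\w_j$, and for $i\in I$ one has $\m(\a_i^\vee)=\sum_{j\in I}\w_j(\a_i^\vee)=1$; hence, in the $\fsl_2$--triple $\{x^{\a_i},\a_i^\vee,x^{-\a_i}\}$ acting on $V$, the highest weight vector $v$ has $\fsl_2$--weight $1$, so $(x^{-\a_i})^2(v)=0$. Combining these with the identity $\rho(\tAd(g)x^{-\a_i})=\rho(g)\,\rho(x^{-\a_i})\,\rho(g)^{-1}$ gives
\[
  \x^2(v)\ =\ \lambda^2\,\rho(g)\,\rho(x^{-\a_i})^2\,\rho(g)^{-1}v
  \ =\ \lambda^2 c\ \rho(g)\big((x^{-\a_i})^2(v)\big)\ =\ 0\,.
\]

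I do not anticipate a real obstacle here --- the proof is short once the inputs are in place. The step deserving the most care is the reverse implication: one must invoke the correct orbit description of $\cC_o$, including the non--maximal--$P$ case through the decomposition \eqref{E:CgenP}, and then use that the minimal embedding forces $\m(\a_i^\vee)=1$ for every $i\in I$, so that the $\fsl_2^{\a_i}$--string through $v$ has length exactly two. The remaining ingredients --- the truncated exponential series, the injectivity of $\eta\mapsto\eta(v)$, and the conjugation identity for $\rho$ --- are routine.
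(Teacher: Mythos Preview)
Your proof is correct, and it takes a genuinely different route from the paper's. The paper proves both directions via the second fundamental form: it uses that $\check D$ is cut out by quadrics to identify $\cC_o$ with the zero locus of $F^2(\xi)=\xi^2(v)\bmod T$ in $\bP\fg^{-1}$, then for the converse reduces (as you do) to $\xi\in\fg^{-\a_i}$ and argues by weights that the condition $\xi^2(v)\in T$ forces $\xi^2(v)=0$ since $2\a_i$ is not a root. Your argument bypasses $F^2$ and the quadric fact entirely: the forward direction follows because $\xi^2(v)=0$ truncates the exponential, so $\texp(t\xi)\cdot o$ sweeps out a line in $\check D$; the backward direction follows directly from $\mu(\ttH^{\a_i})=1$ and elementary $\fsl_2$--string theory, together with the observation that $G^0\subset P$ stabilizes the highest weight line (so $\rho(g)^{-1}v\in\bC v$). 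Your approach is more self-contained and makes transparent exactly where minimality of the embedding enters (namely $\mu(\ttH^{\a_i})=1$ for $i\in I$); the paper's approach has the advantage of simultaneously yielding the identification \eqref{E:eqnC} of $\tim\,\Phi^*$ with the quadratic equations of $\cC_o$, which is used immediately afterward.
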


\noindent Before proving the proposition, we note the following corollary which describes the equations cutting out the lines $\cC_o$.  Recall the $\ttE$--eigenspace decomposition 
$$
  V \ = \ V^h \,\op\, V^{h-1} \,\op\, V^{h-2}\,\op\cdots
$$
of $\S$\ref{S:Edecomp}, and that $V^h$ is one--dimensional.  Fix a highest weight vector $0\not= v \in V^h$.  Given $\n \in (V^{h-2})^*$ define $p_\n \in \tSym^2(\fg^{-1})^*$ by $p_\n(\xi) = \n(\xi^2 v )$.  This defines a $\fg_0$--module map
$$
  \Phi^* : (V^{h-2})^*\ot V^h \ \to \ \tSym^2(\fg^{-1})^* \,.
$$
By Lemma \ref{L:GY}, this map has the property that the image
\begin{equation} \label{E:eqnC}
\hbox{\emph{$\tim\,\Phi^* \subset \tSym^2(\fg^{-1})^*$ is the set of polynomials defining $\cC_o \subset \bP \fg^{-1}$}.}
\end{equation}
In particular, if $G/P$ is one of the fundamental adjoint varieties, then $\cC_o$ is the variety of lines through a fixed point $o$ (cf. \ref{R:Pmax}); and \eqref{E:eqnC} gives equations for $\cC_o$\footnote{and for $\cY_U$, when Theorem \ref{T:adj_m=2} applies} as a projective variety.

\begin{proof}[Proof of Proposition \ref{L:GY}]
Define $T = \tspan_\bC \{ v ,\, (\fg^-)v \} \subset V$.  Then $T$ is the embedded tangent space to the cone $C(\check D) \subset V$ over $\check D$ at $v$.  (Note that $T$ depends only on the highest weight line $o = V^h$, not on our choice of $v \in o$.)  

The unique $\bP^1 \subset \bP V$ containing $o$ and satisfying $\x \in T_o\bP^1$ is $\bP\,\tspan_\bC\{ v , \x(v)\}$.  Since $\check D$ is cut out by degree two equations \cite[$\S$2.10]{MR1782635}, the line $\bP^1$ is contained in $\check D$ if and only if the line osculates with $\check D$ to second order; equivalently, the second fundamental form $F^2$ vanishes at $\x$.  That is, 
$$
  \cC_o \ = \ \bP \{ \xi \in \fg^{-1} \ | \ F^2(\xi) = 0 \}  \,.
$$  
Since $F^2(\x) = \x^2(v)/T \in V/T$, the vanishing $F^2(\x) = 0$ is equivalent to $\x^2(v) \in T$.  

If $\x^2(v) = 0$, then it is immediate that $F^2(\x) = 0$.  Whence, $[\x] \in \cC_o$.  

Conversely, suppose that $[\x] \in \cC_o$.  Then $[\x]$ is necessarily contained in one of the $\cC_{o,i}$ of \eqref{E:Cogen}.  Since $\cC_{o,i}$ is the $G^0$--orbit of $\fg^{-\a_i} \in \bP\fg^{-1}$, we may assume without loss of generality that $\x \in \fg^{-\a_i}$.  Let $\tilde\m$ denote the highest weight of $V$.  Then $\x^2(v)$ is a weight vector of $V$ for the weight $\tilde\mu - 2\a_i$.  Since $[\xi] \in\cC_o$, and this variety is the zero locus of the second fundamental form, $\x^2(v)$ is necessarily of the form $a v + \z(v)$ for some $a \in \bC$ and $\z \in \fg^{-}$.  Write $\z = \sum \z^\b$ with $\z^\b \in \fg^{-\b}$.  Each $\z^\b(v)$ is a weight vector of $V$ for the weight $\tilde\m - \b$.  Therefore, the condition that $\x^2(v) = a v + \z(v)$ be a weight vector for $\tilde\mu - 2\a_i$ forces $a =0$ and all but at most one $\z^\b(v)$ to vanish.  If $\z^\b(v) \not=0$, then $\tilde\mu - 2\a_i = \tilde\m - \b$ forcing $\b = 2\a_\tti$.  This is not possible, since $2\a_i$ is not a root.  Therefore, $\x^2(v) = 0$.
\end{proof}

\subsection{Proof of Theorem \ref{T:GY}}  \label{S:prfGY}

Let $I = \{ i_1 , \ldots, i_\ell\}$ be the index set \eqref{E:I} associated with the parabolic subalgebra $\fp$.  The proof will proceed in two steps.  We begin with the case that $\fp$ is a maximal parabolic (equivalently $|I| = 1$); this forms the basis for the general argument.

\begin{lemma}\label{L:GYmaxP}
Suppose that $\check D = G/P$ and that $P$ is a maximal parabolic.  If $\cY_U \not= \emptyset$, then $\cY_U$ is connected and 
$$
  \cC_o \ \subset \ \cY_U\,.
$$
\end{lemma}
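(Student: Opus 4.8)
The plan is to exploit the irreducibility of $\fg^{-1}$ forced by maximality of $P$. Throughout, write $G^0$ for the (connected) Levi subgroup of $P$ acting on $\fg^{-1}\simeq T_o\check D$; fixing $G^0$ connected costs nothing, since $\cC_o$ is already projective homogeneous under the connected group. Because $P$ is maximal, $\fg^{-1}$ is an \emph{irreducible} $G^0$--module with highest weight line $\fg^{-\a_\tti}$ (\cite[Theorem 8.13.3]{MR928600}, cf.~$\S$\ref{S:Pmax}(a)), and $\cC_o\subset\bP\fg^{-1}$ is the $G^0$--orbit of that line; in particular $\cC_o$ is the \emph{unique} closed $G^0$--orbit in $\bP\fg^{-1}$ (the minimal, highest--weight orbit). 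The first step is to note that $\cY_U$ is a Zariski--closed, $G^0$--invariant subvariety of $\bP\fg^{-1}$: this is immediate from the description \eqref{E:eqnY} of its defining equations, since $\Psi$ --- hence $\Psi^*$ --- is a $G^0$--module map, so $\tim\,\Psi^*\subset\tSym^m(\fg^{-1})^*$ is a $G^0$--submodule.

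Granting this, the containment $\cC_o\subset\cY_U$ is formal. Assume $\cY_U\neq\emptyset$. Being a nonempty, closed, $G^0$--stable subvariety of $\bP\fg^{-1}$, it is a union of $G^0$--orbits; pick one, $G^0\cdot x$, of minimal dimension among those it contains. Its boundary $\overline{G^0\cdot x}\setminus G^0\cdot x$ is a union of orbits of strictly smaller dimension, all contained in $\cY_U$ (as $\cY_U$ is closed and $G^0$--stable), hence empty; so $G^0\cdot x$ is closed in $\bP\fg^{-1}$. By uniqueness of the closed orbit, $G^0\cdot x=\cC_o$, and therefore $\cC_o\subset\cY_U$.

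For connectedness, decompose $\cY_U=Z_1\cup\cdots\cup Z_k$ into irreducible components. Since $G^0$ is connected it cannot permute the finite set $\{Z_1,\dots,Z_k\}$ nontrivially, so each $Z_j$ is $G^0$--stable; it is also nonempty and closed in $\bP\fg^{-1}$, so the previous paragraph applies verbatim and gives $\cC_o\subset Z_j$ for every $j$. As $\cC_o$ is irreducible, hence connected, and is contained in every component, $\cY_U$ is connected.

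I do not anticipate a real obstacle: the argument is soft once the two structural inputs recalled in $\S$\ref{S:Pmax} are in hand --- that $\fg^{-1}$ is $G^0$--irreducible and that $\cC_o$ is its highest--weight orbit, equivalently the unique closed orbit in $\bP\fg^{-1}$. The only point worth stating explicitly is that the orbit--dimension argument must be run with a \emph{connected} group, which is why I would fix $G^0$ connected at the outset. This lemma is then precisely the base case for general (possibly non--maximal) $P$ in $\S$\ref{S:prfGY}, where presumably one decomposes $\cC_o=\bigsqcup_{i\in I}\cC_{o,i}$ as in \eqref{E:CgenP} and applies the maximal case inside each irreducible $G^0$--summand $\fg^{-1}_i$ of $\fg^{-1}$.
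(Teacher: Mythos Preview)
Your proof is correct and follows essentially the same approach as the paper's: both use that maximality of $P$ makes $\fg^{-1}$ an irreducible $G^0$--module, so $\cC_o$ is the unique closed $G^0$--orbit in $\bP\fg^{-1}$, and that $\cY_U$ is closed and $G^0$--stable. The paper is slightly more terse --- it argues directly that each connected component of $\cY_U$ is closed and $G^0$--stable (by connectedness of $G^0$), hence contains the unique closed orbit $\cC_o$, giving containment and connectedness in one stroke --- whereas you separate the two conclusions and work with irreducible components, but the content is the same.
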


\begin{proof}
The parabolic $P$ is maximal if and only if $\fg^{-1}$ is an irreducible $\fg^0$--module.  In this case, $\cC_o \subset \bP\fg^{-1}$ is the unique closed $G^0$--orbit ($\S$\ref{S:Pmax}).  It is clear from the definition \eqref{SE:cY} that $\cY_U$ is closed and preserved under the action of $G^0$.  Therefore, each connected component of $\cY_U$ contains a closed $G^0$--orbit.
\end{proof}

We now turn to the case that $|I| > 1$.  First, we review the relationship between the lines on $G/P \inj \bP V$ and $G/P_i \inj \bP V_{\w_i}$, $i \in I$.  The lines $\cC_o$ on the former are described in $\S$\ref{S:Pgen}.  For the latter, given $i \in I$, let $P_i \subset G$ be the maximal parabolic associated with the index set $\{i\}$.  Let $\check D_i \subset \bP V_{\w_i}$ be the image of the minimal homogeneous embedding $G/P_i \inj \bP V_{\w_i}$, and let $\cC_{o_i}(\check D_i)$ be the lines on $\check D_i$ passing through $o_i = P_i/P_i$.  Then the $\cC_{o,i}$ of \eqref{E:CgenP} is 
\begin{equation} \label{E:prfGY}
  \cC_{o,i} \ = \ \left\{ [\xi] \in \bP\fg^{-1} \ | \ [\xi]\in\cC_{o_i}(\check D_i) \,,\ 
  \xi \in \fp_j \ \forall j\in I\setminus \{i\} \right\} \,.
\end{equation}

Recall that $\ttE = \sum_{i \in I} \ttS^i$, and $U^{m/2}$ is the eigenspace of the largest $\ttE$--eigenvalue $m/2$ on $U$.  Let $m_i/2$ be the largest $\ttS^i$--eigenvalue of $U$, and let $\sU_i \subset U$ be the corresponding eigenspace.  Then 
$$
  m \ = \ \sum_{i \in I} m_i \tand
  U^{m/2} \ = \ \bigcap_{i \in I} \sU_i \,.
$$  
Let $[\xi] \in \cC_{o_i}(\check D_i)$.  Then \eqref{SE:cY} and Lemma \ref{L:GYmaxP} imply that $\left.\xi^{m_i}\right|_{\sU_i} = 0$.  Since $m_i \le m$ and $U^{m/2} \subset \sU_i$, it follows that $\left.\xi^{m}\right|_{U^{m/2}} = 0$.  Theorem \ref{T:GY} now follows from \eqref{E:CgenP}, \eqref{SE:cY} and \eqref{E:prfGY}.

\subsection{Proof of Theorem \ref{T:adj_m=2}}  \label{S:prf_adj_m=2}

Given Theorem \ref{T:GY} it suffices to show that 
\begin{equation}\label{E:YinC}
  \cY_U \ \subset \ \cC_o \,.
\end{equation}
To that end, let $[\xi] \in \cY_U$.  Equivalently,
\begin{equation}\label{E:uinY}
  \xi^2(u) \ = \ 0 \quad \hbox{for all} \quad u \in U^1 \,.
\end{equation}
Fix a highest root vector $0 \not= v \in \fg^{\tilde\a}$.  Given Proposition \ref{L:GY}, to establish \eqref{E:YinC} it suffices to show that 
\begin{equation}\label{E:ad2v}
  \tad_\x^2(v) \ = \ 0 \,.
\end{equation}

As an operator on $U$
\[
  \tad_\xi^2(v) \ = \ \xi \xi v \,-\, 2 \xi v \xi \,+\, v \xi \xi \,.
\]
As an element of $\fg^{-1}$, the endomorphism $\xi$ lowers eigenvalues by one; that is $\xi$ maps $U^a$ into $U^{a-1}$.  Likewise, as an element of $\fg^2 = \fg^{\tilde\a}$, the root vector $v$ maps $U^a$ into $U^{a+2}$.  It then follows from \eqref{E:U1} that 
\begin{equation} \label{E:ad2vU}
  \left. \tad_\xi^2(v) \right|_{U^1} \ = \ \left.v \xi \xi\right|_{U^1} \ 
  \stackrel{\eqref{E:uinY}}{=} \ 0  \,.  
\end{equation}
On the other hand $\tad_\x^2(v) \in \fg^0$.  By assumption $U^1$ is a faithful representation of $\fg^0$.  So \eqref{E:ad2vU} holds if and only if \eqref{E:ad2v} holds.

\section{Construction of limiting mixed Hodge structures} \label{S:cayleyconstr}

\subsection{Overview}

In this section we construct a large class of polarized $G_\bR$--orbits; each orbit will contain the image of a \naive~ limit mapping $\Phi_\infty: B(\sN) \to \check D$, where $\sN$ is an element of a nilpotent cone $\s$ (cf. $\S$\ref{S:B+D}).

Fix a rational homogeneous variety $\check D = G/P$ ($\S$\ref{S:rhv}), and let $\ttE$ be the associated grading element \eqref{E:ttE}.  Associated with $\ttE$ is a rational form $\fg_\bQ$ of $\fg$ ($\S$\ref{S:Qform}).  The rational form is equipped with a Hodge structure that realizes an open $G_\bR$--orbit $D \subset \check D$ as a MT domain ($\S$\ref{S:HSgZ}).  We will construct a sequence of $G_\bR$--orbits $\{\cO_j\}_{j=0}^s$ in $\check D$ such that $\cO_0$ is the open orbit $D$ and 
\begin{equation} \label{E:orbit}
  \cO_{j+1} \ \subset \ \del \cO_j \,.
\end{equation}
We will see that these orbits are rational, polarized and (weakly) cuspidal in the sense of \cite{MR3331177} (Remarks \ref{R:rp} and \ref{R:cusp}).  The orbits arise as follows.  We begin with the identity coset $o_0 = P/P \in \check D$.  By definition $\cO_0 = D$ is the $G_\bR$--orbit of $o_0$, cf.~$\S$\ref{S:HSgZ}.  We will then define $o_j = g_j(o_0)$, where $g_j \in G$ and $\bc_j = \tAd_{g_j}$ is a composition of Cayley transforms ($\S$\ref{S:cayley}).  These Cayley transforms are determined by a suitable set \eqref{SE:dfnA} of strongly orthogonal noncompact roots.

\begin{remark}[Closed orbits] \label{R:c_orb}
In order for $\cO_s$ to be the unique closed orbit, it is necessary (but not sufficient) that $s$ be the real rank of $\fg_\bR$.
\end{remark}

\begin{remark}[Maximal parabolics] \label{R:maxP}
In the case that $P$ is maximal ($\S$\ref{S:maxP}) we will see that, with a few exceptions, there exists a sequence $\{ \cO_j\}_{j=0}^s$ with $s$ equal the real rank of $\fg_\bR$ (Lemma \ref{L:sorPmax}). 
\end{remark}

The construction is given in $\S$\ref{S:details}.  It is first necessary to review the representation theory underlying the construction.

\subsection{The elements $\ttH^\a \in \fh$} \label{S:Ha}

Given a root $\a$, define $\ttH^\a \in \fh$ by $\ttH^\a \in [\fg^\a,\fg^{-\a}]$ and $\a(\ttH^\a) = 2$.  Then
\begin{equation} \label{E:int}
  2\,\frac{(\b,\a)}{(\a,\a)} \ = \ \b(\ttH^\a) \ \in \ \bZ 
  \quad\hbox{for all } \ \a,\b \,\in\,\Delta \,,
\end{equation}
cf.~\cite{MR1153249}.\footnote{The reader consulting other references should beware that the $H_\a$ of \cite{MR496761, MR1920389} is \emph{not} our $\ttH^\a$.  To be precise, $\ttH^\a = \frac{2}{(\a,\a)}\,H_\a$ is the $Z_\a$ of \cite{MR496761} and the $H'_\a$ of \cite{MR1920389}.  However, our $\ttH^\a$ \emph{is} the $H_\a$ of \cite{MR1153249}, and the $h_\a$ of \cite{MR499562}.  (Ha ha!)}  In particular, $\ttH^\a$ is a grading element.  
Indeed, if $0 \le m_\a , n_\a \in \bZ$ are defined by the conditions that 
$$
  \b - m_\a \,\a \,,\ldots,\, \b + n_\a\,\a
$$
is the $\a$--string through $\b$, then $m_\a - n_\a = \b(\ttH^\a)$, \cf\cite[Proposition 2.29]{MR1920389}.  Moreover, if $r_\a$ denotes the the reflection in the root $\a$, then 
\begin{equation} \label{E:refl}
  r_\a(\b) \ = \ \b \,-\, \b(\ttH^\a)\,\a \,.
\end{equation}

\subsection{A rational form} \label{S:Qform}

Given a grading element $\ttT$, we may define an integral structure on $\fg$ as follows.  
\begin{center}
\emph{Fix a Cartan subalgebra $\fh \subset \fg$.}
\end{center}
The $\{\ttH^{\a_i} \ | \ 1\le i \le r \}$ span $\fh$.  Complete this to a \emph{Chevalley basis} 
$$
  \{ x^\a \ | \ \a \in \Delta(\fh) \} \,\cup\,
  \{ \ttH^{\a_1},\ldots,\ttH^{\a_r}\}
$$
of $\fg$, cf.~\cite[$\S$25.2]{MR499562}.  That is, $x^\a \in \fg^\a$ and the following properties hold: the Killing form satisfies $(\ttH^\a,\ttH^\a) >0$ and $(x^\a , x^{-\a}) > 0$ for all $\a \in\Delta$; and the Lie bracket satisfies
\begin{bcirclist}
\item $[x^\a , x^{-\a}] = \ttH^\a \in \tspan_\bZ\{ \ttH^{\a_1},\ldots,\ttH^{\a_r}\}$, for all $\a\in \Delta(\fh)$,
\item $[\ttH^\b , x^\a] \in \bZ x^\a$, for all $\a,\b\in\Delta(\fh)$,
\item if $\a,\b,\a+\b\in\Delta$ and $[x_\a,x_\b] = c_{\a,\b} x^{\a+\b}$, then $c_{-\a,-\b} = -c_{\a,\b} \in \bZ$.
\end{bcirclist}
We emphasize that the structure coefficients with respect to the Chevalley basis are all integers.  Fix
$$
  \bi \ \dfn \ \sqrtminusone \,,
$$
and set
\begin{eqnarray*}
  h^j & \dfn & \bi\,\ttH^{\a_j} \,,\\
  u^\a & \dfn & 
  \renewcommand{\arraystretch}{1.3}\left\{ \begin{array}{ll}
    x^\a-x^{-\a} & \hbox{ if } \a(\ttT) \hbox{ is even,}\\
    \bi (x^\a-x^{-\a}) & \hbox{ if } \a(\ttT) \hbox{ is odd;}
  \end{array} \right. \\
  v^\a & \dfn & 
  \renewcommand{\arraystretch}{1.3}\left\{ \begin{array}{ll}
    \bi (x^\a+x^{-\a}) & \hbox{ if } \a(\ttT) \hbox{ is even,}\\
    x^\a+x^{-\a} & \hbox{ if } \a(\ttT) \hbox{ is odd.}
  \end{array} \right.
\end{eqnarray*}
Define 
\begin{equation} \label{E:gZ}
  \fg_\bZ \ = \ \fk_\bZ \,\op \,\fk^\perp_\bZ
\end{equation}
by
\begin{equation}\label{E:dfn_kkperp}
\renewcommand{\arraystretch}{1.5}
\begin{array}{rcl}
  \fk_\bZ & \dfn & \tspan_\bZ\{ h^j \ | \ 1 \le j \le r \} \ \cup \ 
  \tspan_\bZ\{ u^\a , v^\a \ | \ \a(\ttT) \hbox{ even}\} \,,\\
  \fk_\bZ^\perp & \dfn & \tspan_\bZ\{ u^\a , v^\a \ | \ \a(\ttT) \hbox{ odd}\} \,.
\end{array}
\end{equation}
It is a straightforward exercise to confirm that $[\fg_\bZ , \fg_\bZ] \subset \fg_\bZ$ and $\fg = \fg_\bZ \ot \bC$; that is, $\fg_\bZ$ is an integral form of $\fg$.  It follows that the Killing form $B : \fg_\bZ \times \fg_\bZ \to \bZ$ is defined over $\bZ$.  Likewise,
$$
  [ \fk_\bZ \,,\, \fk_\bZ] \ \subset \ \fk_\bZ \,,\quad
  [ \fk_\bZ \,,\, \fk_\bZ^\perp ] \ \subset \ \fk_\bZ^\perp \tand
  [ \fk_\bZ^\perp \,,\, \fk_\bZ^\perp ] \ \subset \ \fk_\bZ \,,
$$
and one may confirm that the Killing form is negative definitive on $\fk_\bZ$ and positive definite on $\fk^\perp_\bZ$, so that \eqref{E:gZ} is a \emph{Cartan decomposition}.  Let $\theta : \fg_\bZ \to \fg_\bZ$ denote the corresponding \emph{Cartan involution}; that is,
\begin{equation} \label{E:theta}
  \left. \theta\right|_{\fk_\bZ} \ = \ \one \tand
  \left. \theta\right|_{\fk_\bZ^\perp} \ = \ -\one \,.
\end{equation}

\begin{definition} \label{def:MTC}
Let $G_{\bQ}$ be the $\bQ$--form of $G_\bR$ with Lie algebra $\fg_{\bQ}\dfn \fg_{\bZ}\otimes \bQ$.  When a $\bQ$-algebraic group arises in this way (from the above construction), we will call it a \emph{Mumford--Tate--Chevalley group} (MTC group).
\end{definition}

\begin{remark} \label{rem:MTC}
In general, if $G_{\bQ}$ is a ($\bQ$-algebraic)  MT group of a $\bQ$-Hodge structure underlying a semisimple real Lie group $G_\bR$, it need not be MTC.  Indeed, the above construction implies that MTC groups are split over $\bQ (i)$.\footnote{Of course, the construction could be modified to replace $\bQ (i)$ by any imaginary quadratic field.}  Unlike $\bQ$--Chevalley groups, they need not be split over $\bQ$, since $G_\bR$ need not be split over $\bR$; for instance, apply the construction to $\tSU(2,1)$.  However, we do have that $\fg^{\a}\oplus \fg^{-\a}$ is defined over $\bQ$ for each $\a\in \Delta$.
\end{remark}

A Cartan subalgebra of $\fg_\bR$ is given (over $\bZ$) by 
\begin{equation} \label{E:tcpt}
  \fh_\bZ \ \dfn \ \fh \,\cap\, \fg_\bZ \ = \ 
  \tspan_\bZ\{ \bi\,\ttH^1\,,\ldots,
  \bi\,\ttH^r \} \ \subset \ \fk_\bZ \,.
\end{equation}
From $\a_i(\ttS^j) = \d^i_j$ and $\a_i(\ttH^\b) \in \bZ$, we see that the $\ttS^i$ are $\bQ$--linear combinations of the $\ttH^j$.  Therefore, the $\bi\,\ttS^i$ are defined over $\bQ$.  In particular, the rational form is
\begin{equation} \label{E:toQ}
  \fh_\bQ \ = \ \tspan_\bQ\{ \bi\,\ttS^1\,,\ldots,
  \bi\,\ttS^r \}  \ \subset \ \fk_\bQ\,.
\end{equation}
Note that the real form
$$
  \ft \ \dfn \  \fh_\bR \quad \hbox{is compact.}
$$ 

Let $G_\bR \subset G$ be the real form of $G$ with Lie algebra $\fg_\bR$.  For later use, we introduce the subalgebra
\begin{equation} \label{E:sl2a}
  \fsl^\a_2(\bZ) \ \dfn \ \tspan_\bZ\{ u^\a \,,\, \bi\,\ttH^\a \,,\, v^\a\}
  \ \subset \ \fg_\bZ \,.
\end{equation}
Let 
\begin{equation} \label{E:SL2a}
  \tSL_2^\a(\bR) \ \subset \ G_\bR \tand
  \tSL_2^\a(\bC) \ \subset \ G
\end{equation}
be the connected Lie subgroups with Lie algebras $\fsl_2^\a(\bR)$ and $\fsl_2^\a(\bC)$, respectively.

\begin{example}[$\fg=\fsl_2\bC$] \label{eg:sl2a}
Consider the case that $\fg = \fsl_2\bC$, and let $\ttT = \ttS^1$.  We may take the Chevalley basis to be
$$
  x^\a \ = \ \left(\begin{array}{cc} 0 & 1 \\ 0 & 0 \end{array}\right)\,,\quad
  \ttH^\a \ = \ \left(\begin{array}{cc} 1 & 0 \\ 0  & -1 \end{array}\right)\,,\quad
  x^{-\a} \ = \ \left(\begin{array}{cc} 0 & 0 \\ 1 & 0 \end{array}\right)\,.
$$
Then the rational form \eqref{E:gZ} is spanned by 
$$
  u^\a \ = \ \bi\left(
             \begin{array}{cc} 0 & 1 \\ -1 & 0  \end{array}\right)\,,\quad
  h^1 \ = \ \bi \left(
             \begin{array}{cc} 1 & 0 \\ 0  & -1 \end{array}\right)\,,\quad
  v^a \ = \ \left(
             \begin{array}{cc} 0 & 1 \\ 1 & 0 \end{array}\right)\,.
$$
Observe that the real form defined by \eqref{E:gZ} is
$$
  \fg_\bR \ = \ \fsu(1,1) \ = \ 
  \left\{ X \in \fsl_2\bC \ | \ \overline{X}{}^t Q + Q X = 0 \right\} \,,
$$ 
where $Q = \left( \begin{array}{cc} 1 & 0 \\ 0 & -1 \end{array}\right)$.  This is the Lie algebra of the group 
$$
  G_\bR \ = \ \tSU(1,1) \ = \ \{ A \in \tSL_2\bC \ | \ A^* Q A = Q \} \,.
$$
preserving the Hermitian form $\bar z^t Q w$.
\end{example}

\subsection{Hodge structure on $\fg_\bZ$} \label{S:HSgZ}

If $\fg = \op \fg^p$ is the $\ttT$--eigenspace decomposition \eqref{SE:grading}, then \eqref{E:dfn_kkperp} yields
\begin{equation} \label{E:kkperp}
  \fk \ \dfn \ \fk_\bZ \ot \bC \ = \ \fg^\teven \tand
  \fk^\perp \ \dfn \ \fk_\bZ^\perp \ot \bC \ = \ \fg^\todd \,.
\end{equation}
Since $-(u,\theta\overline u) > 0$ for all $0\not= u \in \fg$, it follows that $\fg^{p,-p} = \fg^p$ defines a weight zero, $-(\cdot,\cdot)$--polarized Hodge structure on $\fg$.  (We regard $\ttT$ as an infinitesimal Hodge structure; it is the (rescaled) derivative of a Hodge structure $\varphi_0 : S^1 \to \tAd(G_\bR)$, \cf\cite[$\S$2.3]{MR3217458}.)   The corresponding \emph{Hodge flag} $F^\sb$ is given by 
$$
  F^p \ = \ \bigoplus_{q\ge p} \fg^{q,-q} \,.
$$
Note that $P = \tStab_G(F^\sb)$.  This allows us to identify with the flag $F^\sb$ with the point 
\begin{equation} \nonumber 
  o_0 \ \dfn \ P/P \ \in \ G/P \,.
\end{equation}
Define 
$$
  \check D \ \dfn \ G/P \,,
$$
and let
\begin{equation} \nonumber 
  D  \ \dfn \ G_\bR \cdot o_0 \,.
\end{equation}
Then $D$ is open in $\check D$, and therefore a flag domain ($\S$\ref{S:mtd}).

\begin{definition} \label{def:MTCD}
A Mumford--Tate domain arising in this way is called a \emph{Mumford--Tate--Chevalley domain} (MTC domain).
\end{definition}

\begin{remark} \label{rem:MTCD}
From the Hodge--theoretic perspective, $D$ is the MT domain parameterizing weight zero Hodge structures on $\fg_\bQ$ that are polarized by $-(\cdot,\cdot)$ and have MT group contained in $G_\bQ$.  By construction, this $G_{\bQ}$ is a MTC group.  Moreover, the grading element $\ttT$ associated with $o_0$ is defined over $\bQ (i)$ and is purely imaginary.  The MT group of $o_0$ is therefore a $1$-torus with real points $\varphi_0 (S^1)$. 

In general, a MT domain (determined by Hodge--theoretic data) which is isomorphic to $D$ as a $G_\bR$-homogeneous space, need not be MTC:  this is a property which reflects the arithmetic of the $\bQ$-Hodge tensors.  The MTC case may be thought of as ``maximizing'' the density of MT subdomains in $D$.
\end{remark}

\begin{example}[$\fg = \fsl_2\bC$] \label{eg:sl2b}
This is a continuation of Example \ref{eg:sl2a}.  We have $G = \tSL_2\bC$ and fix the parabolic subgroup 
$$
  P \ = \ 
  \left\{ \left. \left( \begin{array}{cc} a & b \\ 0 & 1/a \end{array} \right) 
  \ \right| \ a,b \in \bC \,,\ a \not=0 \right\} \,.
$$
Let $\check D$ be the complex projective line $\bC\bP^1$.  Then the identity coset $P/P$ corresponds to the point 
$$
  o_0 \ = \ (1:0) \ \in \ \bC\bP^1 \,.
$$
The $G_\bR = \tSU(1,1)$--orbit 
$$
  D \ = \ \{ (1:z) \in \bC\bP^1 \ | \ |z| < 1 \}
$$
of $o_0$ is naturally identified with the unit disc in $\bC$.
\end{example}

\subsection{Standard triples} \label{S:sl2trp}

Let $\fg_\bbk$ be a Lie algebra defined over a field $\bbk$.  A \emph{standard triple} in $\fg_\bbk$ is a set of three elements $\{ N^+ , Y , N\} \subset \fg_\bbk$ such that 
$$
  [Y , N^+] \ = \ 2\,N^+ \,,\quad
  [N^+,N] \ = \ Y \tand
  [Y,N] \ = \ -2\,N \,.
$$
Note that $\{N^+,Y,N\}$ span a three--dimensional semisimple subalgebra (TDS), necessarily isomorphic to $\fsl_2$.  We call $Y$ the \emph{neutral element}, $N$ the \emph{nilnegative element} and $N^+$ the \emph{nilpositive element}, respectively, of the standard triple.

\begin{example} \label{eg:stdtri_1}
The matrices 
\begin{equation} \label{E:stdtri_sl2}
  N^+ \ = \ \left(\begin{array}{cc} 0 & 1 \\ 0 & 0 \end{array}\right) \,,\quad 
  Y \ = \ \left(\begin{array}{cc} 1 & 0 \\ 0 & -1 \end{array}\right) \tand
  N \ = \ \left(\begin{array}{cc} 0 & 0 \\ 1 & 0 \end{array}\right)
\end{equation}
form a standard triple in $\fsl_2\bR$; while the matrices
\begin{equation} \label{E:stdtri_su11}
  N^+ \ = \ \half \left(\begin{array}{cc} \bi & 1 \\
            1 & -\bi \end{array}\right) \,,\quad
  Y \ = \ \left(\begin{array}{cc} 0 & \bi \\ 
           -\bi & 0 \end{array}\right) \tand
  N = \half \left(\begin{array}{cc} -\bi & 1 \\ 
            1 & \bi \end{array}\right)
\end{equation}
form a standard triple in $\fsu(1,1)$.
\end{example}

\subsection{Cayley transforms} \label{S:cayley}

This is a brief review of Cayley transforms; see \cite[Chapter VI.7]{MR1920389} for details.  

Let $\fg_\bR = \fg_\bZ \ot \bR$ be the real form of $\fg$ constructed in $\S$\ref{S:Qform}.  By \eqref{E:toQ}, all roots are imaginary.  Fix a noncompact positive root $\a$.  That is, $\fg^\a \subset \fk^\perp$; equivalently, $\a(\ttE)$ is odd.  Then 
$$
  \overline{x^\a} \ = \ x^{-\a} \,.
$$
The Cayley transform associated to $\a$ is 
\begin{equation} \label{E:ca}
  \bc_\a \ \dfn \ \tAd\left(\texp\,\tfrac{\pi}{4} (x^{-\a} - x^\a) \right) 
  \ \in \ \tAd(G) \,.
\end{equation}

Since $\tAd(g)$ is a Lie algebra automorphism for any $g \in G$, it follows that $\bc_\a$ is a Lie algebra automorphism.  Therefore, 
$$
  \fh' \ = \ \bc(\fh)
$$
is a $\theta$--stable Cartan subalgebra.  The root spaces of $\fh'$ are the Cayley transforms ${}'\fg^\a =\bc(\fg^\a)$ of the root spaces of $\fh$.  Likewise, ${}'\ttH^\b = \bc_\a(\ttH^\b)$.  In particular,
$$
  {}'\ttH^\a \ = \ \bc(\ttH^\a) \ = \ 
  x^\a \,+\, x^{-\a} \ = \ v^\a \ \in \ \fk^\perp_\bZ\,.
$$
The real form $\fh'_\bR = \fh' \,\cap\,\fg_\bR$ of $\fh'$ is
\begin{equation} \label{E:t}
  {}'\fh_\bR \ \dfn \ \bc(\fh) \,\cap\, \fg_\bR 
  \ = \ \tker\left\{ \a : \ft \to \bi\,\bR \right\} 
  \ \op \ \tspan_\bR\{ {}'\ttH^\a \} \,.
\end{equation}
Additionally, we have
\begin{equation} \nonumber 
  \bc_\a(x^\a - x^{-\a}) \,=\, x^\a - x^{-\a} \tand
  \bc_\a(x^\a + x^{-\a}) \,=\, - \ttH^\a \,.
\end{equation}
Therefore, the root space ${}'\fg^{-\a} = \bc(\fg^{-\a})$ is spanned by 
\begin{equation} \label{E:N}
  y^{-\a} \ \dfn \ \bi \,\bc_\a( x^{-\a} ) \ = \ 
  \tfrac{\bi}{2}( x^{-\a} - x^\a - \ttH^\a) 
  \ \in \ \fg_\bQ \,.
\end{equation}  
Likewise, ${}'\fg^\a$ is spanned by 
\begin{equation}\label{E:N+}
  y^\a \ = \ -\bi \,\bc_\a(x^\a) 
  \ = \ \tfrac{\bi}{2}\left( x^{-\a} - x^\a + \ttH^\a \right)
  \ \in \ \fg_\bQ \,.
\end{equation}
From $[x^\a , x^{-\a}] = \ttH^\a$ and $\bc_\a(\ttH^\a) = [\bc_\a(x^\a) , \bc_\a(x^{-\a})]$, we see that 
\begin{equation}\label{E:sl2Q}
  \hbox{$\{ y^\a \,,\, '\ttH^\a \,,\, y^{-\a}\}$ is a standard triple
  defined over $\bQ$,}
\end{equation}
\cf $\S$\ref{S:sl2trp}.  Recall \eqref{E:sl2a} and note that 
$$
  \fsl^\a_2(\bQ) \ = \ \tspan_\bQ\{ y^\a \,,\, '\ttH^\a \,,\, y^{-\a} \} \,.
$$

\begin{example}[$\fg = \fsl_2\bC$] \label{eg:sl2c}
This is a continuation of Examples \ref{eg:sl2a} and \ref{eg:sl2b}.   We have
$$
  \bc_\a \ = \ \frac{1}{\sqrt{2}}\left( \begin{array}{cc}
    1 & \bi \\ \bi & 1 
  \end{array} \right) \,,
$$  
and
$$
  y^\a \ = \ \frac{\bi}{2}\left(
             \begin{array}{cc} 1 & -1 \\ 1 & -1  \end{array}\right)\,,\quad
  {}'\ttH^\a \ = \ \left(
             \begin{array}{cc} 0 & 1 \\ 1 & 0 \end{array}\right)\,,\quad
  y^{-\a} \ = \ \frac{\bi}{2}\left(
             \begin{array}{cc} -1 & -1 \\ 1 & 1 \end{array}\right)\,.
$$
\end{example}


\subsection{The weight zero $\a$--primitive subspace} \label{S:a-prim}

Let 
$$
  \Gamma_\a^0 \ \dfn \ \{ \z \in \fg \ | \ [\fsl_2^\a,\z] = 0 \}
$$
be the trivial isotypic component in the $\fsl_2^\a$--module decomposition of $\fg$; this is the \emph{weight zero, $\a$--primitive} subspace of $\fg$.  Note that 
\begin{center}
  \emph{the Cayley transform $\bc_\a$ restricts to the identity on $\Gamma_\a^0$.}
\end{center}

The Jacobi identity implies $\Gamma^0_\a$ is a subalgebra of $\fg$.  More precisely,
$$
  \Gamma_\a^0 \ = \ \tker\{\left.\a\right|_{\fh}\}
  \ \op \ \bigoplus_{\b\in\Delta(\Gamma_\a^0)} \fg^\b \,,
$$
where
$$
  \Delta(\Gamma_\a^0) \ = \ \{ \b \in \Delta \ | \ \b\pm\a \not\in \Delta \} 
$$
is the set of roots that are strongly orthogonal to $\a$.  In particular, 
\begin{center}
\emph{$\Gamma^0_\a$ is a reductive subalgebra defined over $\bQ$.}
\end{center}
Let 
$$
  \Gamma_\a \ \dfn \ [\Gamma_\a^0,\Gamma_\a^0]
$$
be the semisimple component, and let $\sG \subset G$ be the connected Lie subgroup with Lie algebra $\Gamma_\a$.  Note that $\sP = \sG\,\cap\,P$ is a parabolic subgroup, and let
$$
  \check \sD \ \dfn \ \sG/\sP
$$ 
be the associated rational homogeneous variety.  Let $o = \sP/\sP \in \check D$ be the identity coset.  The real form $\Gamma_\a(\bR) \,\cap\,\fh \subset \tker\{\left.\a\right|_{\ft}\}$ of the Cartan subalgebra is contained in $\ft$, and therefore is compact.  Whence, \begin{center}
\emph{the $\sG(\bR)$--orbit $\sD$ of $o$ is open in $\check\sD$.}
\end{center}

\subsection{The construction} \label{S:details}

Recall the Cartan subalgebra $\fh\subset \fg$ with \emph{compact} real form $\ft$  of $\S$\ref{S:Qform}.  Fix a set 
\begin{subequations} \label{SE:dfnA}
\begin{equation} \label{E:A}
  \cB \ = \ \{ \b_1 , \ldots , \b_s\} \ \subset \ \fh^*
\end{equation}
of strongly orthogonal noncompact roots with the property that
\begin{equation} \label{E:a=1}
  \b_j(\ttE) \ = \ 1 \,.
\end{equation}
\end{subequations}
Given $1 \le j \le s$,  let $\bc_{\b_j}$ be the Cayley transformation associated to $\b_j$ ($\S$\ref{S:cayley}).  Set 
\[
  \bc_j \ \dfn \ \bc_{\b_j} \circ \cdots \circ \bc_{\b_1} \,.
\]
(Strong orthogonality implies that the roots $\{\b_{j+1},\ldots,\b_s\}$ are still imaginary and noncompact with respect to $\fh_j$.  Whence, $\bc_j$ is well-defined.)   By \eqref{E:ca}, the Cayley transform $\bc_\a$ may be written as $\bc_\a = \tAd_{g_\a}$ where $g_\a = \texp \frac{\pi}{4}(x^{-\a} -x^\a) \in G$.  Let $g_j = g_{\b_j}\cdots g_{\b_1}$, so that $\bc_j = \tAd_{g_j}$.  It will be convenient to also set $g_0 = \one$ and $\bc_0 = \tAd_\one = \one$.

Recall the weight zero polarized Hodge structure of $\S$\ref{S:HSgZ}.  This Hodge structure is naturally identified with the identity coset $o_0 = P/P \in \check D$.  Let $\cO_0 = D$ be the the $G_\bR$--orbit of $o_0$.   Given $1 \le j \le s$, set 
\[
  o_j \ \dfn \ g_j(o_0) \ \in \ \check D\,,
\] 
and let $\cO_j \subset \check D$ be the $G_\bR$--orbit of $o_j$.  (We will also write $\cO_{\cB} := \cO_s$, as in the Introduction.) The containment \eqref{E:orbit} follows from this construction, cf.~\cite{MR3331177}.

\begin{remark}[Hermitian symmetric case] \label{R:HSDcase}
Note that $\bc_j$ is the Cayley transform $c_b$ of \cite[$\S$1.6]{\BB66}.  Therefore, in the case that $D$ is Hermitian symmetric, the $\cO_j$ account for \emph{all} the $G_\bR$--orbits in the boundary of $D$.  That is, 
\[
  \overline D \ = \ \bigcup_{j} \cO_j \,.
\]
\end{remark}

\begin{example}[$\fg = \fsl_2\bC$] \label{eg:sl2d}
This is a continuation of Examples \ref{eg:sl2a}, \ref{eg:sl2b} and \ref{eg:sl2c}.  We have 
$$
  o_1 \ = \ (1 : \bi ) \ \in \ \bC\bP^1 \,,
$$  
and 
$$
  \cO_1 \ = \ \{ (1:z) \ | \ |z|=1 \}
$$
is naturally identified with the unit circle in $\bC$.  If we define 
$$
  \bc_\a(t) \ \dfn \ \tAd\left(\texp\,\tfrac{\pi}{4} t (x^{-\a} - x^\a) \right) 
  \ = \ \left( \begin{array}{cc}
    \cos \tfrac{\pi}{4} t & \bi\,\sin \tfrac{\pi}{4} t \\
    \bi\,\sin \tfrac{\pi}{4} t & \cos \tfrac{\pi}{4} t 
  \end{array} \right) \,,
$$  
then the Cayley transform is $\bc_\a = \bc_\a(1)$.  Moreover, the curve $\bc_\a(t)\cdot o_0$ is contained in the unit disc $\cO_0$ for all $0 \le t < 1$.
\end{example}

Associated with the Matsuki point $o_j$ are two filtrations \eqref{E:FW} of $\fg$, which we now describe.  Set $\fh_0 = \fh$ and define
\begin{equation} \nonumber
  \fh_j \ \dfn \ \bc_j(\fh) \,.
\end{equation}
Then $\tdim_\bR (\fh_j \cap \fk^\perp_\bR) = j$.  Thus 
\begin{equation} \label{E:s'}
  s \ \le \ \trank_\bR\,\fg_\bR \,.
\end{equation}

Recall the elements $\ttH^\a \subset \fh_j$ of $\S$\ref{S:Ha},\footnote{Properly speaking, $\ttH^\b = \ttH^\b_j$ depends on $\fh_j$.  Precisely, if $\ttH^\b_0$ is defined with respect to $\fh \subset \fb \subset \fp$, then $\ttH^\b_j = \bc_j(\ttH^\b_0)$.  We have elected to drop the subscript $j$ to keep the notation clean and simply write $\ttH^\b \in \fh_j$.} and  define
\begin{equation} \label{E:ttYj}
  \ttY_j \ \dfn \ \ttH^{\b_1} + \cdots + \ttH^{\b_j} \,.
\end{equation}
Let $\fg = \op\,{}_j\fg^p$ be the $\ttE$--eigenspace\footnote{Again, $\ttE = \bc_j(\ttE_0)$, where $\ttE_0$ is the grading element associated to $\fh \subset \fb \subset \fp$.} decomposition \eqref{SE:grading}, and let $\fg = \op\,{}_j\fg_\ell$ be the $\ttY_j$--eigenspace decomposition.  That is,
\begin{eqnarray} 
  \label{E:gp}
  {}_j\fg^p & \dfn &  \{ \xi \in \fg \ | \ [\ttE,\xi] = p \,\xi \} \,,\\
  \label{E:gl}
  {}_j\fg_\ell & \dfn & \{\xi \in \fg \ | \ [\ttY_j , \xi] = \ell \,\xi \} \,.
\end{eqnarray}
By \eqref{E:int}, the $\ttY_j$--eigenvalues are integers.  As commuting semisimple endomorphisms, $\ttE$ and $\ttY_j$ are simultaneously diagonalizable; therefore,
\begin{subequations} \label{SE:gpq}
\begin{equation} \label{E:gpq}
  {}_j\fg^{p,q} \ \dfn \ {}_j\fg^p \,\cap\, {}_j\fg_{p+q} \ = \
  \left\{ \xi \in \fg \ | \ [\ttE , \xi] = p \xi \,,\ 
  [\ttY_j, \xi] = (p+q) \xi \right\}
\end{equation}
defines a direct sum decomposition 
\begin{equation} 
  \fg \ = \ \bigoplus {}_j\fg^{p,q} \,.
\end{equation}
\end{subequations}
By the Jacobi identity, this is a bigraded Lie algebra decomposition; that is, 
\begin{equation} \nonumber 
  [ {}_j\fg^{p,q} , {}_j\fg^{r,s}] \ \subset \ {}_j\fg^{p+r,q+s} \,.
\end{equation}
Define filtrations
\begin{equation} \label{E:FW}
  {}_jF^p \ \dfn \ \bigoplus_{q\ge p} {}_j\fg^{q}
  \tand {}_jW_\ell \ \dfn \ \bigoplus_{p+q \le \ell} {}_j\fg^{p,q}
  \ = \ \bigoplus_{k \le \ell} {}_j\fg_k\,.
\end{equation}
Note that $\tStab_G({}_jF^\sb) = g_j P g_j^{-1} = \tStab_G(o_j)$, so that there is a natural identification of the flag ${}_jF^\sb$ with the Matsuki point $o_j \in \check D$.

\begin{theorem} \label{T:matsuki}
For each $0 \le j \le s$, there exists a $j$--dimensional, rational nilpotent cone $\s_j \subset \fg_\bQ$ such that $\{\texp(\bC\,\s_j)\cdot o_j\} \in B(\s_j)$ is a nilpotent orbit.\footnote{See $\S$\ref{S:B+D} for the definition of $B(\sigma)$.}  The weight filtration $W_\sb(\s_j)$ is the filtration ${}_jW_\sb$ of \eqref{E:FW}.  The cone $\s_{j-1}$ is a face of $\s_j$, for all $1 \le j \le s$.
\end{theorem}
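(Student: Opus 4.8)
The plan is to write down the cone $\s_j$ explicitly, observe that the face relation is then elementary convex geometry, and reduce the Hodge-theoretic content to standard $\fsl_2$-theory. For $\b_i\in\cB$ let $N_i\dfn y^{-\b_i}\in\fg_\bQ$ be the nilnegative element of the standard triple $\{y^{\b_i},\,{}'\ttH^{\b_i},\,y^{-\b_i}\}$ of \eqref{E:sl2Q} --- explicitly $N_i=\tfrac{\bi}{2}(x^{-\b_i}-x^{\b_i}-\ttH^{\b_i})$ by \eqref{E:N} --- and set
\[
  \s_j \ \dfn \ \{\, t^1 N_1+\cdots+t^j N_j \ \mid \ t^i>0 \,\}\,,\qquad 0\le j\le s\,.
\]
Because the roots in $\cB$ are pairwise strongly orthogonal, $[\fg^{\pm\b_i},\fg^{\pm\b_k}]=0$ and $\b_k(\ttH^{\b_i})=0$ for $i\ne k$, so the $\fsl_2^{\b_i}$ commute pairwise; hence the $N_i$ are commuting ad-nilpotent elements of $\fg_\bR$, and they are linearly independent since $N_i$ involves the root vectors $x^{\pm\b_i}$ attached to the distinct roots $\pm\b_i$. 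Thus $\overline{\s_j}$ is a simplicial cone with extreme rays $\bR_{>0}N_1,\dots,\bR_{>0}N_j$; each $\s_j$ is $j$-dimensional and, being spanned by vectors of $\fg_\bQ$, rational; and $\overline{\s_{j-1}}$ is precisely the facet $\{t^j=0\}$ of $\overline{\s_j}$, cut out by the supporting functional dual to $N_j$. So $\s_{j-1}$ is (the relative interior of) a face of $\s_j$, which settles the last assertion once we know $\s_j$ really is a nilpotent cone.

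\textbf{Bidegrees.} I would next check $N_i\in{}_j\fg^{-1,-1}$ for $i\le j$, with ${}_j\fg^{p,q}$ as in \eqref{E:gpq}. For the $\ttE$-weight: $\fg^{\pm\b_i}$ and $\ttH^{\b_i}$ lie in the primitive subalgebra $\Gamma^0_{\b_k}$ for every $k\ne i$, on which the Cayley transform $\bc_{\b_k}$ is the identity; hence $\bc_j(\fg^{-\b_i})=\bC N_i$, and since $\b_i(\ttE)=1$ by \eqref{E:a=1}, applying $\bc_j$ gives $[\ttE,N_i]=-N_i$. For the $\ttY_j$-weight: the summand ${}'\ttH^{\b_i}$ of $\ttY_j$ (cf. \eqref{E:ttYj}) is the neutral element of the standard triple containing $N_i$, so $[{}'\ttH^{\b_i},N_i]=-2N_i$, while $[\ttH^{\b_k},N_i]=0$ for $k\ne i$ by strong orthogonality; hence $[\ttY_j,N_i]=-2N_i$. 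Consequently every $N\in\s_j$ has ${}_j$-bidegree $(-1,-1)$, so $N\,{}_jF^p\subset{}_jF^{p-1}$ and $N\,{}_jW_\ell\subset{}_jW_{\ell-2}$.

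\textbf{The weight filtration.} For $N=\sum_{i\le j}t^iN_i\in\s_j$, put $N^+\dfn\sum_{i\le j}(t^i)^{-1}y^{\b_i}$. Using the individual standard triples together with the commutativity of the $\fsl_2^{\b_i}$, one verifies $[\ttY_j,N^+]=2N^+$, $[N^+,N]=\ttY_j$ and $[\ttY_j,N]=-2N$, so $\{N^+,\ttY_j,N\}$ is a standard triple. Decomposing $\fg$ under this $\fsl_2$ and invoking the hard-Lefschetz property of the nilnegative element, the weight filtration $W_\sb(N)$ --- centered at weight $0$, since the Hodge structure on $\fg_\bQ$ of $\S$\ref{S:HSgZ} has weight $0$ --- is precisely the increasing filtration by $\ttY_j$-eigenvalues, which is ${}_jW_\sb$ by \eqref{E:FW}. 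In particular $W_\sb(N)$ is independent of $N\in\s_j$ (as it must be, by Theorem \ref{T:cks}(b)) and $W_\sb(\s_j)={}_jW_\sb$.

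\textbf{The nilpotent orbit, and the main obstacle.} It remains to show $({}_jF^\sb;N_1,\dots,N_j)$ is a nilpotent orbit, i.e. $\{\texp(\bC\,\s_j)\cdot o_j\}\in B(\s_j)$. By the multivariable theory of \cite{\CKpmhs} --- given that the $N_i$ commute and share the weight filtration ${}_jW_\sb$ --- this reduces to showing $({}_jF^\sb,N)$ is a polarized limiting mixed Hodge structure for $-(\cdot,\cdot)$ for every $N\in\s_j$. We already have $W_\sb(N)={}_jW_\sb$; by the bigrading \eqref{SE:gpq}, the $m$-th graded piece of ${}_jW_\sb$ equals $\bigoplus_{p+q=m}{}_j\fg^{p,q}$ and ${}_jF^\sb$ induces on it the Hodge decomposition with $(p,q)$-component ${}_j\fg^{p,q}$, so the mixed Hodge structure is $\bR$-split as soon as $\overline{{}_j\fg^{p,q}}={}_j\fg^{q,p}$, and the positivity of $\pm(\cdot,\cdot)$ on the primitive $(p,q)$-pieces then follows from the polarization of the original weight-zero Hodge structure together with the $\fsl_2$-representation theory of the triples $\{y^{\b_i},{}'\ttH^{\b_i},y^{-\b_i}\}$. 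For $s=1$ this is the classical analysis of a single Cayley transform (\cite[Ch.~VI.7]{MR1920389}, \cite{\CKSdeg}, \cite{\Schmid}); the general case should follow by iterating, with the strong orthogonality of $\cB$ guaranteeing that the successive $\fsl_2$'s do not interfere. I expect this last point --- establishing $\overline{{}_j\fg^{p,q}}={}_j\fg^{q,p}$ and tracking how complex conjugation interacts with the bigrading, given that $\bc_j$ is not defined over $\bR$ (indeed $\overline{\bc_j}=\bc_j^{-1}$ on root data, using commutativity of the $\bc_{\b_i}$) --- to be the main obstacle; the rest is bookkeeping with strongly orthogonal $\fsl_2$-triples and elementary convex geometry.
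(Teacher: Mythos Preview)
Your explicit definition of $\s_j$, the face argument, the bidegree check $N_i\in{}_j\fg^{-1,-1}$, and the identification of $W_\sb(\s_j)$ with ${}_jW_\sb$ via the standard triple $\{N^+,\ttY_j,N\}$ are all correct and coincide with what the paper does.  The substantive divergence is in how you establish the \emph{nilpotent orbit} condition.

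You reduce, via Cattani--Kaplan--Schmid, to verifying that $({}_jF^\sb,W_\sb(N))$ is a \emph{polarized} MHS for every $N\in\s_j$, and then propose to check the polarization on primitive pieces directly from $\fsl_2$--theory and the original Killing-form polarization.  The paper does \emph{not} do this: it proves $\texp(\bi N)\cdot o_{j+1}\in D$ \emph{directly} by induction on $j$.  The base step (its Lemma establishing that $(o_1,N_1)$ gives a nilpotent orbit) is an explicit $\tSU(1,1)$ computation inside the line $\tSL_2^{\b_1}(\bC)\cdot o_0\simeq\bP^1$.  For the inductive step the paper introduces the semisimple subgroup $G_j$ with Lie algebra $\Gamma_j=\bigcap_{k\le j}\Gamma_{\b_k}$, observes that strong orthogonality forces $\texp(\s_j)$ to commute with $G_j$, and then for $N\in\s_j$, $y>0$ computes
\[
  \texp\bigl(\bi(N+yN_{j+1})\bigr)\cdot o_{j+1}
  \ \in\ \texp(\bi N)\,D_j
  \ =\ G_{j,\bR}\,\texp(\bi N)\cdot o_j
  \ \subset\ G_{j,\bR}\,D\ =\ D,
\]
using the inductive hypothesis at the last inclusion.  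This yields the multivariable nilpotent orbit condition with no appeal to polarization-checking at all; the polarized-MHS statement then falls out \emph{a posteriori} from Schmid's theorem.

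Your route is not wrong in principle, but the step you flag as the ``main obstacle'' is a genuine one, not bookkeeping.  The conjugation symmetry $\overline{{}_j\fg^{p,q}}={}_j\fg^{q,p}$ you need is indeed true (the paper records it as a separate lemma, proved from $\theta$-- and conjugation--stability of $\fh_j$), but that only gives the $\bR$-split MHS; you still owe the \emph{signs} of $-(\cdot,N^\ell\cdot)$ on the primitive $(p,q)$-pieces.  Deducing these from the polarization at $o_0$ requires tracking how $-(u,\theta\bar u)>0$ transforms under $\bc_j$ and how it interacts with the Lefschetz decomposition of the commuting $\fsl_2^{\b_i}$'s --- doable, but a real computation that the paper's geometric induction entirely sidesteps.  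If you want to complete your argument, the cleanest path is probably to imitate the paper's commutation trick to get $\texp(\bi N)\cdot o_j\in D$ and then read off the LMHS, rather than verifying polarizations by hand.
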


\begin{remark} \label{R:LMHS}
As a corollary to Theorem \ref{T:matsuki}, $(\fg_\bQ , {}_j F^\sb, {}_j W_\sb)$ is a (limit) $\bQ$--mixed Hodge structure.
\end{remark}

\begin{remark}[Rational and polarized] \label{R:rp}
In the language of \cite[$\S$5]{MR3331177}, Theorem \ref{T:matsuki} implies that the orbit $\cO_j$ is polarized and contained in the nilpotent closure of the open $G_\bR$--orbit $D = \cO_0$.  Moreover, from $\s_j \subset \fg_\bQ$ it follows that ${}_jW_\sb$ is defined over $\bQ$, so that $\cO_j$ is rational, \cf\cite[Definition 5.5]{MR3331177}.  
\end{remark}

Recall the $\ttY_j$--eigenspace ${}_j\fg_0$ defined in \eqref{E:gl}.  The Jacobi identity implies that ${}_j\fg_0$ is a Lie subalgebra of $\fg$.  In fact, ${}_j\fg_0$ is reductive.  Let ${}_j\fg_0^\tss = [{}_j\fg_0\,,\,{}_j\fg_0]$; then ${}_j\fg^\tss_0$ is semisimple, and ${}_j\fg_0 = {}_j\fg_0^\tss \op \fz$, where $\fz$ is the center of ${}_j\fg_0$.  Following \cite[Definition 5.9]{MR3331177}, we say that the $G_\bR$--orbit $\cO_j$ is \emph{cuspidal} if ${}_j\fg_0^\tss(\bR)$ contains a compact Cartan subalgebra; that is, a Cartan subalgebra on which the Killing form restricts to be negative definite.

\begin{remark}[Weakly cuspidal] \label{R:cusp}
The orbit $\cO_j$ is cuspidal in the following weak sense.  In the course of the proof we will show that there exist semisimple subgroups $G_{s} \subset \cdots\subset G_1 \subset G_0 = G$ with the properties that:
\begin{bcirclist}
\item The orbit $\check D_j = G_j \cdot o_j$ is a rational homogeneous variety containing the point $o_{j+1}$.
\item The orbit $D_j = G_{j,\bR}\cdot o_j$ is open in $\check D_j$ and equals $\check D_j\,\cap\,\cO_j$; and $o_{j+1} \in \del D_j$.
\item  The orbit $\cC_{j+1} = G_{j,\bR}\cdot o_{j+1} \subset \check D_j\,\cap\,\cO_{j+1}$ is contained in the boundary of $D_j$ and is cuspidal in $\check D_j$.  (This will follow from Lemma \ref{L:O1c} below.)  Note that
$$
  D_{j+1} \ \subset \ \cC_{j+1} \ \subset \ \del D_j\,.
$$
\end{bcirclist}
\end{remark}

\begin{remark}[Sub--Hodge structures]
The subdomains $\{ D_j\}_{j=0}^s$ may be described in greater detail.  In the course of the proof we will show that:
\begin{bcirclist}
\item 
The Lie algebra of $G_j$ is 
\begin{equation}\label{E:Gammaj}
  \Gamma_j \ = \ \bigcap_{1 \le k \le j}\,\Gamma_{\b_i} \,,
\end{equation}
where the $\Gamma^{\b_i}$ are as in $\S$\ref{S:a-prim}.  
\item
The set $\{ \texp(\bC\,N_j) \cdot o_j \}$ is a nilpotent orbit in $\check D_{j-1}$, for all $1 \le j \le s$, \cf\eqref{E:norb_j}.
\end{bcirclist}
These two observations, and results of \cite{KP2012}, yield a natural identification of the open $G_{j,\bR}$--orbit $D_j \subset \check D_j$ as the MT domain of a Hodge structure $\varphi_{j,\sb}$, which we now describe.  Let $W_\sb(N_j)$ denote the weight filtration on $\Gamma_{j-1}$, with the convention that $\Gamma_0 = \fg$.  Let $\tGr_\sb(W_\sb(N_j))$ denote the associated graded and choose $0 \not= N_j^+ \in \fg^{\b_j}$.  Then, given $\ell \ge0$, the filtration ${}_jF^\sb$ induces a weight $\ell$ Hodge structure $\varphi_{j,\ell}$ on $\tker\{ (N_j^+)^{\ell+1} : \tGr_\ell(W_\sb(N_j)) \to \tGr_{-\ell-2}(W_\sb(N_j))\}$ that is polarized by $-(\cdot,N_j^\ell\cdot)$.\footnote{In the notation of \cite{KP2012}, the domain $D_j$ is denoted $D(N_j)$, and the Hodge structure $\varphi_{j,{\tiny{\bullet}}}$ by $\varphi_\mathrm{split}$.}
\end{remark}

Theorem \ref{T:matsuki} and the lemmas that follow are proved in $\S$\ref{S:proofs}.  The first three lemmas codify some distinguished properties of the bigrading \eqref{SE:gpq} and the Cartan subalgebra $\fh_j$.

\begin{lemma} \label{L:conj+Ci}
The bigrading \eqref{SE:gpq} satisfies the symmetries
\begin{subequations} \label{SE:conj+Ci}
\begin{equation} \label{E:bar-pq}
  {}_j\overline{\fg^{p,q}} \ = \ {}_j\fg^{q,p} 
\end{equation}
and
\begin{equation} \label{E:theta-pq}
  \theta( {}_j\fg^{p,q}) \ = \ {}_j\fg^{-q,-p} \,.
\end{equation}
\end{subequations}
\end{lemma}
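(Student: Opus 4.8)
The plan is to deduce both symmetries in \eqref{SE:conj+Ci} from the action of complex conjugation $\overline{\phantom{x}}$ (relative to the real form $\fg_\bR$ of $\S$\ref{S:Qform}) and of the Cartan involution $\theta$ on the two commuting semisimple elements $\ttE = \bc_j(\ttE_0)$ and $\ttY_j = \ttH^{\b_1} + \cdots + \ttH^{\b_j} \in \fh_j$ that cut out the bigrading \eqref{SE:gpq}. Concretely, I will establish the four identities
\[
  \overline{\ttE} \ = \ -\ttE + \ttY_j \,, \qquad \overline{\ttY_j} \ = \ \ttY_j \,,
  \qquad \theta(\ttE) \ = \ \ttE - \ttY_j \,, \qquad \theta(\ttY_j) \ = \ -\ttY_j \,.
\]
Granting these, the Lemma is a short eigenspace computation: for $\xi \in {}_j\fg^{p,q}$ one has $[\ttE,\xi] = p\,\xi$ and $[\ttY_j,\xi] = (p+q)\,\xi$; since $\overline{\phantom{x}}$ is a conjugate-linear Lie algebra automorphism and $p,p+q \in \bZ \subset \bR$, applying it and substituting $\overline{\ttE} = -\ttE + \ttY_j$, $\overline{\ttY_j} = \ttY_j$ gives $[\ttY_j,\overline\xi] = (p+q)\overline\xi$ and then $[\ttE,\overline\xi] = q\,\overline\xi$, i.e. $\overline\xi \in {}_j\fg^{q,p}$; the identical manipulation with the $\bC$-linear automorphism $\theta$ and the identities $\theta(\ttE) = \ttE - \ttY_j$, $\theta(\ttY_j) = -\ttY_j$ places $\theta(\xi)$ in ${}_j\fg^{-q,-p}$.

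To prove the four identities I would first record how $\overline{\phantom{x}}$ and $\theta$ interact with a Cayley transform. For a noncompact root $\a$ we have $\overline{x^\a} = x^{-\a}$, hence $\overline{x^{-\a}-x^\a} = -(x^{-\a}-x^\a)$, so the group element $g_\a = \texp\tfrac{\pi}{4}(x^{-\a}-x^\a)$ satisfies $\overline{g_\a} = g_\a^{-1}$; likewise $\fg^{\pm\a}\subset\fk^\perp$ gives $\theta(x^{-\a}-x^\a) = -(x^{-\a}-x^\a)$, so $\theta(g_\a) = g_\a^{-1}$. Therefore $\overline{\bc_\a(\zeta)} = \bc_\a^{-1}(\overline\zeta)$ and $\theta(\bc_\a(\zeta)) = \bc_\a^{-1}(\theta(\zeta))$ for every $\zeta\in\fg$, and since the $\bc_{\b_k}$ pairwise commute (strong orthogonality) the same holds with $\bc_\a$ replaced by the composite $\bc_j$. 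Because $\bi\,\ttE_0 = \sum_{i\in I}\bi\,\ttS^i \in \fk_\bZ$ we get $\overline{\ttE_0} = -\ttE_0$, and because $\ttE_0 \in \fh \subset \fk$ we get $\theta(\ttE_0) = \ttE_0$; thus $\overline{\ttE} = \bc_j^{-1}(\overline{\ttE_0}) = -\bc_j^{-1}(\ttE_0)$ and $\theta(\ttE) = \bc_j^{-1}(\ttE_0)$. It remains to identify $\bc_j(\ttE_0) - \bc_j^{-1}(\ttE_0)$ with $\ttY_j$ and to compute $\overline{\ttY_j}$, $\theta(\ttY_j)$.

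For this I would decompose $\ttE_0$ along the direct sum $\fh = \bigl(\bigcap_{k=1}^{s}\tker\{\b_k|_\fh\}\bigr) \op \tspan_\bC\{\ttH^{\b_1},\ldots,\ttH^{\b_s}\}$ (direct because the strongly orthogonal $\b_k$ are independent and mutually orthogonal): using $\b_k(\ttE_0) = 1$ from \eqref{E:a=1} one finds $\ttE_0 = \ttE_0^\flat + \tfrac{1}{2}\sum_{k=1}^{s}\ttH^{\b_k}$ with $\b_l(\ttE_0^\flat) = 0$ for all $l$. Now $\ttE_0^\flat$ lies in each $\Gamma^0_{\b_k}$ ($k\le j$), on which $\bc_{\b_k}$ is the identity; for $k>j$ the element $\ttH^{\b_k}$ lies in every $\Gamma^0_{\b_l}$ with $l\le j$ (strong orthogonality forces $\fsl_2^{\b_k}\subset\Gamma^0_{\b_l}$), so $\bc_j$ fixes it; and for $k\le j$ one has $\bc_{\b_k}(\ttH^{\b_k}) = x^{\b_k}+x^{-\b_k} = v^{\b_k}$ and $\bc_{\b_k}^{-1}(\ttH^{\b_k}) = -v^{\b_k}$ by the formulas of $\S$\ref{S:cayley}, while the remaining $\bc_{\b_l}$ ($l\ne k$, $l\le j$) fix $v^{\b_k}\in\fsl_2^{\b_k}$. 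Hence $\bc_j^{\pm 1}(\ttE_0) = \ttE_0^\flat \pm \tfrac{1}{2}\sum_{k\le j}v^{\b_k} + \tfrac{1}{2}\sum_{k>j}\ttH^{\b_k}$, and since $\ttY_j = \sum_{k\le j}\bc_j(\ttH^{\b_k}) = \sum_{k\le j}v^{\b_k}$ we get $\bc_j(\ttE_0) - \bc_j^{-1}(\ttE_0) = \ttY_j$. Finally each $v^{\b_k}\in\fk^\perp_\bZ$ is real and $\theta$-anti-invariant, so $\overline{\ttY_j} = \ttY_j$ and $\theta(\ttY_j) = -\ttY_j$; combining, $\overline{\ttE} = -\bc_j^{-1}(\ttE_0) = -\ttE + \ttY_j$ and $\theta(\ttE) = \bc_j^{-1}(\ttE_0) = \ttE - \ttY_j$.

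I expect the only real obstacle to be the transform-by-transform bookkeeping in the last step: one must keep straight that $\ttY_j$ is assembled from the $\ttH^{\b_k}$ of the \emph{Cayley-transformed} Cartan $\fh_j$ (so it equals $\sum_{k\le j}v^{\b_k}\in\fk^\perp_\bZ$), whereas $\ttE$ is the transform of the grading element of the \emph{original} Cartan, and that the strong orthogonality of $\cB$ is exactly what lets the several Cayley transforms be treated independently, each acting nontrivially only inside its own $\fsl_2^{\b_k}$. Once the identities for $\ttE$ and $\ttY_j$ are in hand, the remainder is routine.
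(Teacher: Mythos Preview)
Your proof is correct and considerably more explicit than the paper's.  The paper gives only a three--sentence argument: it observes that the Cayley transforms carry the $\theta$--stability and conjugation--stability of $\fh_0$ over to $\fh_j$, and then invokes the standard fact that for any such Cartan subalgebra the roots satisfy $\theta\a = -\bar\a$.  That relation immediately yields $\theta\circ\overline{\phantom{x}}:\,{}_j\fg^{p,q}\to{}_j\fg^{-p,-q}$, but to disentangle the separate actions of $\overline{\phantom{x}}$ and $\theta$ one still needs the identity $\ttE+\overline{\ttE}=\ttY_j$, which the paper records and proves separately as Lemma~\ref{L:EbarE}.  What you have done is to fold the content of Lemma~\ref{L:EbarE} --- together with its Cartan--involution analogue $\theta(\ttE)=\ttE-\ttY_j$ --- directly into the present argument, via the explicit decomposition $\ttE_0=\ttE_0^\flat+\tfrac12\sum_k\ttH^{\b_k}$ and the computation of $\bc_j^{\pm1}(\ttE_0)$ transform--by--transform.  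Your approach is therefore self--contained and a bit longer; the paper's is more structural (reduce to stability of $\fh_j$, cite $\theta\a=-\bar\a$) but tacitly defers the final step to the adjacent lemma.  Both arrive at the same four identities $\overline{\ttE}=\ttY_j-\ttE$, $\overline{\ttY_j}=\ttY_j$, $\theta(\ttE)=\ttE-\ttY_j$, $\theta(\ttY_j)=-\ttY_j$, from which the eigenspace calculation is, as you say, routine.
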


\begin{remark}\label{R:deligne}
By \eqref{E:FW}, \eqref{SE:gpq} is the Deligne bigrading \eqref{SE:deligne} on the $\bQ$-MHS $(\fg_{\bQ},{}_j F^\sb,{}_j W_{\sb} )$.  It follows from \eqref{E:bar-pq} that this MHS is $\bR$-split, but in fact more is true.  Since $\ttY_j\in \fg_{\bQ}$, $(\fg_{\bQ},{}_j F^\sb,{}_j W_{\sb} )$ is $\bQ$-split; and by \eqref{E:Gammaj}, the other MHS in its $G_{j,\bR}$-orbit remain $\bQ$-split.
\end{remark}

\begin{lemma}\label{L:EbarE}
For $\ttE$ and $\ttH^\b$ defined with respect to the Cartan subalgebra $\fh_j$, we have
$$
  \ttE \, + \, \overline{\ttE} \ = \ \ttY_j \,.
$$
\end{lemma}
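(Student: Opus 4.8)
The plan is to reduce the identity to a rank-one calculation inside each of the commuting subalgebras $\fsl_2^{\b_i}$, $\b_i\in\cB$, by exploiting how complex conjugation relative to $\fg_\bR$ interacts with the Cayley transforms.

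First I would record a conjugation identity for $\bc_j$. Recall from $\S$\ref{S:details} that $\bc_j=\tAd_{g_j}$ with $g_j=g_{\b_j}\cdots g_{\b_1}$ and $g_{\b_i}=\texp\tfrac{\pi}{4}(x^{-\b_i}-x^{\b_i})$, the factors commuting by strong orthogonality of $\cB$. Since $\overline{x^{\pm\a}}=x^{\mp\a}$ for every root $\a$, we get $\overline{g_{\b_i}}=\texp\bigl(-\tfrac{\pi}{4}(x^{-\b_i}-x^{\b_i})\bigr)=g_{\b_i}^{-1}$, hence $\overline{g_j}=g_j^{-1}$; as conjugation intertwines $\tAd_g$ with $\tAd_{\overline{g}}$, this yields
\[
  \overline{\bc_j(\xi)}\ =\ \bc_j^{-1}\bigl(\,\overline{\xi}\,\bigr)
  \qquad\text{for all }\ \xi\in\fg\,.
\]
Applying this to $\xi=\ttE_0$, the grading element of $\fp$ relative to $\fh$: since $\bi\,\ttE_0\in\fg_\bR$ we have $\overline{\ttE_0}=-\ttE_0$, so $\overline{\ttE}=\overline{\bc_j(\ttE_0)}=-\bc_j^{-1}(\ttE_0)$ and therefore $\ttE+\overline{\ttE}=\bc_j(\ttE_0)-\bc_j^{-1}(\ttE_0)$.

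Next I would split off the part of $\ttE_0$ invisible to the Cayley transforms. Strong orthogonality gives $(\b_i,\b_k)=0$ for $i\neq k$, hence $\b_i(\ttH^{\b_k}_0)=2\delta_{ik}$; combined with $\b_i(\ttE_0)=1$ from \eqref{E:a=1}, the element $\ttE_0':=\ttE_0-\tfrac12\sum_{i=1}^{j}\ttH^{\b_i}_0$ lies in $\tker\{\b_i|_{\fh}\}$ and commutes with $\fsl_2^{\b_i}$ for every $i$, i.e. $\ttE_0'\in\bigcap_i\Gamma_{\b_i}^0$. As each $\bc_{\b_i}$ restricts to the identity on $\Gamma_{\b_i}^0$, the composites $\bc_j^{\pm1}$ fix $\ttE_0'$, so it cancels and
\[
  \ttE+\overline{\ttE}\ =\ \tfrac12\sum_{i=1}^{j}
  \Bigl(\bc_j(\ttH^{\b_i}_0)-\bc_j^{-1}(\ttH^{\b_i}_0)\Bigr)\,.
\]

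Finally I would carry out the rank-one computation. For $k\neq i$ both $\ttH^{\b_i}_0$ and $v^{\b_i}=x^{\b_i}+x^{-\b_i}$ lie in $\Gamma_{\b_k}^0$, so all factors $\bc_{\b_k}$ with $k\neq i$ act trivially and $\bc_j(\ttH^{\b_i}_0)=\bc_{\b_i}(\ttH^{\b_i}_0)=v^{\b_i}$ --- which is exactly $\ttH^{\b_i}$ as defined relative to $\fh_j$, so that $\sum_{i}\bc_j(\ttH^{\b_i}_0)=\ttY_j$. For the inverse Cayley transform, invoke the conjugation identity once more: $\bc_{\b_i}^{-1}(\ttH^{\b_i}_0)=-\bc_{\b_i}^{-1}\bigl(\overline{\ttH^{\b_i}_0}\bigr)=-\overline{\bc_{\b_i}(\ttH^{\b_i}_0)}=-\overline{v^{\b_i}}=-v^{\b_i}$. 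Hence each summand above equals $\tfrac12\bigl(v^{\b_i}-(-v^{\b_i})\bigr)=v^{\b_i}$, and $\ttE+\overline{\ttE}=\sum_{i=1}^{j}v^{\b_i}=\ttY_j$, as claimed. This argument has no analytic or structural obstacle; the only thing demanding care is keeping the three operations --- conjugation, Cayley transform, and change of Cartan subalgebra --- cleanly separated, and using strong orthogonality systematically so that everything decouples over the $\fsl_2^{\b_i}$.
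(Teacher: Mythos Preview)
Your proof is correct and proceeds by a genuinely different route from the paper's. The paper argues structurally: it decomposes $\fh_j(\bR)=\ft_j\oplus\fa_j$ into compact and noncompact parts, observes that $\ttE\in\bi\ft_j\oplus\fa_j$ (since roots take real values on $\ttE$), whence $\ttE+\overline{\ttE}\in\fa_j=\tspan_\bR\{\ttH^{\b_1},\ldots,\ttH^{\b_j}\}$, and then pins down the coefficients by evaluating each real root $\b_i$ on $\ttE+\overline{\ttE}$ using \eqref{E:a=1} and \eqref{E:sorth}. Your approach instead explicitly tracks conjugation through the Cayley transforms via the identity $\overline{\bc_j(\xi)}=\bc_j^{-1}(\overline\xi)$, splits off the $\bc_j$-invariant part $\ttE_0'$, and reduces to an $\fsl_2$ computation. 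The paper's argument is shorter and more conceptual; yours is more mechanical but has the virtue of making the role of the individual Cayley transforms transparent and of yielding the intermediate formula $\ttE+\overline{\ttE}=\bc_j(\ttE_0)-\bc_j^{-1}(\ttE_0)$, which is of independent interest.

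One small slip: you assert $\overline{x^{\pm\a}}=x^{\mp\a}$ for \emph{every} root $\a$, but this holds only for noncompact roots (for compact roots one has $\overline{x^\a}=-x^{-\a}$, as follows from the definitions of $u^\a,v^\a$ in $\S$\ref{S:Qform}). Since you only apply it to the $\b_i$, which are noncompact by \eqref{E:a=1}, the argument is unaffected; just sharpen the quantifier.
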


\noindent As a consequence, ${}_j\fg^{p,q} = {}_j\fg^p \,\cap\,{}_j\overline{\fg^q}$. 

\begin{lemma} \label{L:bar-b}
The roots $\a \in \Delta(\fg , \fh_j)$ of the Cartan subalgebra $\fh_j$ satisfy
\begin{equation}\label{E:bar-b}
  \overline\a \ = \ -\a \,+\,\sum_{i=1}^j \a(\ttH^{\b_i})\,\b_i \,.
\end{equation}
\end{lemma}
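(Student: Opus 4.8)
The plan is a direct computation, resting on two facts about the composite Cayley transform $\bc_j = \tAd_{g_j}$, where $g_j = g_{\b_j}\cdots g_{\b_1}$ and $g_{\b_i} = \texp\tfrac{\pi}{4}(x^{-\b_i}-x^{\b_i})$. \emph{Step 1 (conjugation intertwines with $\bc_j$).} Writing $c$ for the conjugation of $\fg$ fixing $\fg_\bR$, I would first check $\overline{g_j} = g_j^{-1}$: each $\b_i$ is noncompact, so $\overline{x^{\b_i}} = x^{-\b_i}$ (see $\S$\ref{S:cayley}) and hence $\overline{x^{-\b_i}-x^{\b_i}} = -(x^{-\b_i}-x^{\b_i})$, giving $\overline{g_{\b_i}} = g_{\b_i}^{-1}$; strong orthogonality of $\cB$ makes the $g_{\b_i}$ pairwise commute, so $\overline{g_j} = g_{\b_j}^{-1}\cdots g_{\b_1}^{-1} = g_j^{-1}$. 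Therefore $c\circ\bc_j = \bc_j^{-1}\circ c$ on $\fg$; in particular $\overline{\fh_j} = \bc_j^{-1}(\overline{\fh}) = \bc_j^{-1}(\fh) = \fh_j$, so $\fh_j$ is $c$-stable and $\overline\a$ is a well-defined root of $\fh_j$ for each $\a\in\Delta(\fg,\fh_j)$.

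\emph{Step 2 (the action of $\bc_j^2$ on $\fh$).} A standard $\fsl_2^{\b_i}$ computation shows that $\bc_{\b_i}^2 = \tAd(\texp\tfrac{\pi}{2}(x^{-\b_i}-x^{\b_i}))$ fixes $\Gamma^0_{\b_i}\cap\fh = \tker(\left.\b_i\right|_{\fh})$ pointwise and sends $\ttH^{\b_i}_0$ to $-\ttH^{\b_i}_0$; dually it acts on $\Delta(\fg,\fh)$ by the reflection $r_{\b_i}$. Strong orthogonality makes the $\bc_{\b_i}^2$ commute and preserve $\fh$, so $\bc_j^2$ restricts to an automorphism of $\fh$ (hence $\bc_j$ restricts to isomorphisms $\fh\to\fh_j$ and $\fh_j\to\fh$), acting on roots of $\fh$ by $r_{\b_1}\cdots r_{\b_j}$, which by \eqref{E:refl} and the (strong) orthogonality of the $\b_i$ is $\gamma\mapsto \gamma - \sum_{i=1}^j \gamma(\ttH^{\b_i}_0)\,\b_i$.

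\emph{Step 3 (assemble).} Given $\a\in\Delta(\fg,\fh_j)$, write $\fg^\a_{\fh_j} = \bc_j(\fg^\gamma_\fh)$ for the root $\gamma\in\Delta(\fg,\fh)$ with $\a = \gamma\circ\bc_j^{-1}$. Since $\fh$ is a \emph{compact} Cartan, $\overline\gamma = -\gamma$, so Step 1 gives
$$
  \overline{\fg^\a_{\fh_j}} \ = \ c\,\bc_j(\fg^\gamma_\fh) \ = \ \bc_j^{-1}\,c(\fg^\gamma_\fh)
  \ = \ \bc_j^{-1}(\fg^{-\gamma}_\fh)\,,
$$
which is the $\fh_j$-root space of the functional $H\mapsto -\gamma(\bc_j H)$ on $\fh_j$. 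Evaluating $\a$ and $\overline\a$ at $H = \bc_j H_0$ with $H_0\in\fh$ yields $\a(\bc_j H_0) = \gamma(H_0)$ and $\overline\a(\bc_j H_0) = -\gamma(\bc_j^2 H_0)$, hence by Step 2
$$
  (\overline\a + \a)(\bc_j H_0) \ = \ \gamma(H_0) - \gamma(\bc_j^2 H_0)
  \ = \ \sum_{i=1}^j \gamma(\ttH^{\b_i}_0)\,\b_i(H_0)\,.
$$
Finally I would transport this to $\fh_j$ via $\bc_j$: by the convention of the footnote in $\S$\ref{S:details}, the element written $\ttH^{\b_i}$ in the statement is $\bc_j(\ttH^{\b_i}_0)$, so $\a(\ttH^{\b_i}) = \gamma(\ttH^{\b_i}_0)$; and the root written $\b_i$ is $\b_i\circ\bc_j^{-1}$, so it takes the value $\b_i(H_0)$ at $\bc_j H_0$. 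Since $\bc_j H_0$ ranges over all of $\fh_j$, the display becomes $\overline\a = -\a + \sum_{i=1}^j \a(\ttH^{\b_i})\,\b_i$ as functionals on $\fh_j$.

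\emph{Main obstacle.} The real content is Steps 1 and 2: getting the intertwining $c\circ\bc_j = \bc_j^{-1}\circ c$ with the correct inverse, and identifying the restriction of $\bc_j^2$ to the root system with the product of commuting reflections $r_{\b_1}\cdots r_{\b_j}$. Strong orthogonality of $\cB$ is precisely what is needed here --- it lets one commute the Cayley transforms and decouple the reflections --- and everything after that is bookkeeping between the root data of $\fh$ and of $\fh_j$. (An equivalent route is induction on $j$, peeling off one Cayley transform $\bc_{\b_j}$ at a time and using that each $\b_j$ remains a noncompact imaginary root strongly orthogonal to its predecessors; this replaces the closed-form Step 2 by its one-step version but is otherwise the same argument. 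One also gets Lemmas \ref{L:conj+Ci} and \ref{L:EbarE} as immediate consequences, using $\b_i(\ttE)=1$ and $\ttY_j = \sum\ttH^{\b_i}$.)
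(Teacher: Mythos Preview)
Your proof is correct and takes a genuinely different route from the paper's.  The paper argues directly on $\fh_j$: it uses the decomposition $\fh_j(\bR)=\ft_j\oplus\fa_j$ (from the proof of Lemma \ref{L:EbarE}), observes that roots are $\bi\bR$--valued on $\ft_j$ and $\bR$--valued on $\fa_j$, so that $\bar\a|_{\ft_j}=-\a|_{\ft_j}$ and $\bar\a|_{\fa_j}=\a|_{\fa_j}$.  Since each $\b_i$ vanishes on $\ft_j$, the formula is immediate there; on $\fa_j=\tspan_\bR\{\ttH^{\b_k}\}$ it reduces to the orthogonality relation $\b_i(\ttH^{\b_k})=2\d_{ik}$.

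Your approach instead transports the question back to the compact Cartan $\fh$ via $\bc_j$: writing $\a=\gamma\circ\bc_j^{-1}$, the intertwining $c\circ\bc_j=\bc_j^{-1}\circ c$ gives $\bar\a=-\gamma\circ\bc_j$, and the formula then falls out from recognizing $\bc_j^2|_\fh$ as the commuting product of reflections $r_{\b_1}\cdots r_{\b_j}$.  This is more work but is more explanatory --- it makes transparent the remark immediately following the lemma that $\bar\a=-r_{\b_j}\cdots r_{\b_1}(\a)$, and (as you note) packages Lemmas \ref{L:conj+Ci} and \ref{L:EbarE} into the same computation.  The paper's argument is shorter and reuses structure already in place; yours is self-contained and clarifies \emph{why} the reflection formula appears.
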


\noindent Consider Lemma \ref{L:bar-b} in the case that $j=1$.  Then \eqref{E:refl} and \eqref{E:bar-b} yield $\overline\a = -r_{\b_1}(\a)$.  More generally, if $j > 1$, then the strong orthogonality of the roots $\cB$ implies $\bar\a = -r_{\b_j} \cdots r_{\b_1}(\a)$.

The following lemma will be invoked in the proof of Theorem \ref{T:matsuki}, which is by induction.  

\begin{lemma} \label{L:O1p}
There exists $N_1 \in \fg^{-\b_1}_\bQ$ such that $\{ \texp(\bC\,N_1)\cdot o_1 \} \in B(N_1)$ is a nilpotent orbit.  \emph{(In particular, $\cO_1$ is rational and polarized.)}
\end{lemma}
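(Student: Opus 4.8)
The plan is to take for $N_1$ the nilnegative element of the $\bQ$--standard triple produced by the Cayley transform of $\b_1$: set $N_1 \dfn y^{-\b_1} = \bi\,\bc_{\b_1}(x^{-\b_1}) = \tfrac{\bi}{2}(x^{-\b_1}-x^{\b_1}-\ttH^{\b_1}) \in \fg_\bQ$ as in \eqref{E:N}, which spans the $\fh_1$--root space $\fg^{-\b_1}$.  First I would dispose of the bookkeeping.  The element $N_1$ is nilpotent --- it is the nilnegative element of the standard triple $\{y^{\b_1},\ttY_1,y^{-\b_1}\}$ of \eqref{E:sl2Q} --- and it lies in $\fg_\bQ\subset\fg_\bR$.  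Carrying eigenvalues through the automorphism $\bc_1 = \bc_{\b_1}$ gives $[\ttE,N_1] = -\b_1(\ttE)\,N_1 = -N_1$ by \eqref{E:a=1}, while $[\ttY_1,N_1] = -2N_1$ since $\ttY_1 = \ttH^{\b_1}$ is the neutral element of that triple; hence $N_1\in{}_1\fg^{-1,-1}$, so $N_1\cdot{}_1F^p\subset{}_1F^{p-1}$ by \eqref{E:gr} and \eqref{E:FW}, and (by $\fsl_2$--representation theory for $\{y^{\b_1},\ttY_1,N_1\}$ acting on $\fg$) the weight filtration $W_\sb(N_1)$ centred at $0$ is exactly ${}_1W_\sb$.

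It then remains to verify the positivity condition in \eqref{E:htno}, namely $\texp(zN_1)\cdot o_1\in D$ for $\tIm(z)\gg0$, and here I would pass to the rank--one subpicture.  Since $g_1 = \texp\,\tfrac{\pi}{4}(x^{-\b_1}-x^{\b_1})\in\tSL_2^{\b_1}(\bC)$ and $N_1\in\fsl_2^{\b_1}(\bC)$, the entire curve $z\mapsto\texp(zN_1)\cdot o_1$ lies in the orbit $\bP^1_{\b_1}\dfn\tSL_2^{\b_1}(\bC)\cdot o_0$; moreover $\fsl_2^{\b_1}(\bC)\cap\fp = \bC x^{\b_1}\op\bC\ttH^{\b_1}$ is a Borel of $\fsl_2^{\b_1}$ (as $x^{-\b_1}\notin\fp$), so $\bP^1_{\b_1}$ is a line on which $\tSL_2^{\b_1}(\bC)$ acts in the standard way.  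Identifying this action with the $\fg=\fsl_2\bC$ model of Examples \ref{eg:sl2a}--\ref{eg:sl2d}, the points $o_0$ and $g_1\cdot o_0 = o_1$ become $(1{:}0)$ and $(1{:}\bi)$ and $N_1$ becomes the matrix $y^{-\a}$ of Example \ref{eg:sl2c}; since $N_1^2 = 0$, a short computation gives $\texp(zN_1)\cdot o_1 = (1{:}w(z))$ with $1-|w(z)|^2 = 2\,\tIm(z)\,/\,\bigl|1+\tfrac{z}{2}(1-\bi)\bigr|^2$, so $\texp(zN_1)\cdot o_1$ lies in the open disc $\{(1{:}w):|w|<1\}$ for every $z$ with $\tIm(z)>0$.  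That disc is the $\tSL_2^{\b_1}(\bR)$--orbit of $o_0$, and $\tSL_2^{\b_1}(\bR)\subset G_\bR$ (its Lie algebra $\fsl_2^{\b_1}(\bR)\cong\fsu(1,1)$ sits inside $\fg_\bR$, by \eqref{E:sl2a} and \eqref{E:dfn_kkperp}), so the disc lies in $G_\bR\cdot o_0 = D$.  Hence $\texp(zN_1)\cdot o_1\in D$ for $\tIm(z)>0$; together with the first paragraph this shows $(o_1;N_1)$ satisfies \eqref{E:htno}, so $\{\texp(\bC N_1)\cdot o_1\}$ is a one--variable nilpotent orbit, i.e.\ an element of $B(N_1)$.

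Finally, the parenthetical claims: $N_1\in\fg_\bQ$ makes $W_\sb(N_1) = {}_1W_\sb$ rational, so $\cO_1$ is rational in the sense of \cite[Definition 5.5]{MR3331177}; and the \naive~limit $\lim_{\tIm z\to\infty}\texp(zN_1)\cdot o_1$ lies on the boundary circle $\{|w|=1\}$, a single $\tSL_2^{\b_1}(\bR)$--orbit through $o_1$, hence in $\cO_1$, so $\cO_1$ is polarized.  I expect the positivity step to be the only genuine obstacle --- equivalently, the assertion that the Deligne bigrading \eqref{SE:gpq} underlies a \emph{polarized} limit MHS; essentially all the work there is in setting up the reduction to the $\fsu(1,1)$--model, after which the inclusion $\tSL_2^{\b_1}(\bR)\cdot o_0\subseteq D$ finishes the job with no appeal to Cattani--Kaplan--Schmid.  (Alternatively one could deduce the positivity from Theorem \ref{T:cks}(a) by checking that $(\,{}_1F^\sb,N_1)$ is a polarized LMHS on $(\fg,-(\cdot,\cdot))$, the Hodge--Riemann positivity on the $N_1$--primitive parts of $\tGr\,{}_1W_\sb\,\fg$ being inherited from the weight--zero polarization at $o_0$.)
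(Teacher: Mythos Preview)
Your proof is correct and follows essentially the same route as the paper: choose $N_1 = y^{-\b_1}$ and reduce the positivity condition to an explicit $\tSU(1,1)$--computation inside the $\tSL_2^{\b_1}(\bC)$--orbit $\bP^1_{\b_1}$, using that $\tSL_2^{\b_1}(\bR)\cdot o_0\subset D$.  Your version is somewhat more thorough---you verify the bookkeeping $N_1\in{}_1\fg^{-1,-1}$ and $W_\sb(N_1)={}_1W_\sb$, compute for arbitrary complex $z$ (the paper restricts to $z=\bi t$, $t>0$), and spell out the parenthetical rationality/polarizability claims---but the key idea is identical.
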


\begin{lemma} \label{L:O1c}  
The $G_\bR$--orbit $\cO_1$ is cuspidal.
\end{lemma}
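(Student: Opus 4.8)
The plan is to produce an explicit compact Cartan subalgebra of ${}_1\fg_0^\tss(\bR)$, which is precisely the content of cuspidality. Everything will be bookkeeping with the formulas of $\S$\ref{S:cayley} and $\S$\ref{S:Ha}; no information about $\cO_1$ beyond Lemma \ref{L:O1p} is needed.

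First I would describe ${}_1\fg_0$ at the level of roots. By \eqref{E:gl}, ${}_1\fg_0$ is the centralizer of $\ttY_1 = \ttH^{\b_1}$ computed with respect to the Cayley-transformed Cartan $\fh_1 = \bc_{\b_1}(\fh)$; concretely $\ttY_1 = {}'\ttH^{\b_1} = v^{\b_1} = x^{\b_1}+x^{-\b_1}$, the neutral element of the standard triple \eqref{E:sl2Q}. Since $\a(\ttY_1) = \a(\ttH^{\b_1}) = 2(\a,\b_1)/(\b_1,\b_1)$ for every $\a \in \Delta(\fg,\fh_1)$ by \eqref{E:int}, we get ${}_1\fg_0 = \fh_1 \oplus \bigoplus_{(\a,\b_1)=0}\fg^\a$, so the semisimple part ${}_1\fg_0^\tss = [{}_1\fg_0,{}_1\fg_0]$ has root system $\{\a \in \Delta(\fg,\fh_1) : (\a,\b_1)=0\}$ and Cartan subalgebra $\tspan_\bC\{\ttH^\a : (\a,\b_1)=0\}$.

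Next I would show this Cartan lies in the compact part of $\fh_1$. If $(\a,\b_1)=0$ then $\b_1(\ttH^\a) = 2(\b_1,\a)/(\a,\a) = 0$, so $\tspan_\bC\{\ttH^\a : (\a,\b_1)=0\} \subseteq \tker\{\b_1 : \fh_1 \to \bC\}$. By \eqref{E:t}, $\fh_1\cap\fg_\bR = \tker\{\b_1 : \ft \to \bi\bR\} \oplus \bR\,{}'\ttH^{\b_1}$; since $\b_1({}'\ttH^{\b_1}) = 2 \ne 0$, the real points of $\tker\{\b_1|_{\fh_1}\}$ lie in $\tker\{\b_1|_\ft\} \subseteq \ft \subseteq \fk_\bR$, where the Killing form is negative definite. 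Finally Lemma \ref{L:bar-b} gives $\overline{\a} = -\a$ whenever $(\a,\b_1)=0$ (the correction term drops out because $\a(\ttH^{\b_1})=0$), whence $\overline{\ttH^\a} = -\ttH^\a$ and $\bi\ttH^\a \in \fg_\bR$. Therefore $\ft' := \tspan_\bR\{\bi\ttH^\a : (\a,\b_1)=0\}$ is a Cartan subalgebra of the real semisimple Lie algebra ${}_1\fg_0^\tss(\bR)$ contained in $\fk_\bR$, hence a compact one, and $\cO_1$ is cuspidal. (When there are no roots orthogonal to $\b_1$ — e.g. $\fg=\fsl_2\bC$ — one has ${}_1\fg_0^\tss=0$ and $\ft'=0$, trivially compact.)

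The step I expect to require the most care is the bookkeeping in the previous paragraph: that the Cayley transform concentrates all of the new noncompactness of $\fh_1\cap\fg_\bR$ in the single direction $\ttY_1 = {}'\ttH^{\b_1}$, and that the complementary codimension-one summand $\tker\{\b_1|_\ft\}$ — which contains the Cartan of ${}_1\fg_0^\tss$ — is genuinely compact. This is exactly \eqref{E:t} together with $v^{\b_1}\in\fk_\bZ^\perp$, but the point that must not be skipped is that the naive candidate $\fh_1\cap\fg_\bR\cap{}_1\fg_0^\tss(\bR)$ is compact because $\b_1$ annihilates all of $\tspan\{\ttH^\a : (\a,\b_1)=0\}$, forcing it into that compact summand — rather than being compact by coincidence. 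That is the one place where the argument is not completely mechanical; the rest follows from the explicit formulas already recorded.
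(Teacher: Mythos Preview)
Your proof is correct and follows essentially the same approach as the paper: both use \eqref{E:t} to see that the noncompact direction of $\fh_1\cap\fg_\bR$ is spanned by ${}'\ttH^{\b_1}$, while the Cartan of ${}_1\fg_0^\tss$ lives in the complementary compact piece $\ker\{\b_1|_\ft\}$. The paper's version is more concise, simply observing that $\ttH^{\b_1}$ lies in the center of ${}_1\fg_0$ (so the semisimple Cartan is automatically orthogonal to it), whereas you unpack this via the explicit root-space description and Lemma~\ref{L:bar-b}; your added detail is harmless but not strictly needed, since for $(\a,\b_1)=0$ the Cayley transform fixes $\ttH^\a$ and $\bi\ttH^\a$ already sits in $\ft$.
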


\begin{lemma} \label{L:codimO1}
If the first root $\b_1\in\cB$ is simple, then the $G_\bR$--orbit $\cO_1$ has codimension one in $\check D$.
\end{lemma}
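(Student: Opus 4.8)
The plan is to compute $\tcodim_\bR(\cO_1,\check D)$ directly from the Deligne bigrading ${}_1\fg^{p,q}$ of \eqref{SE:gpq} (with $j=1$), reduce it to a count of roots, and then exploit the simplicity of $\b_1$. First I would set up a codimension formula. Write $\fp_1 = \mathit{Lie}\,\tStab_G(o_1) = g_1\fp g_1^{-1} = {}_1\fg^{\ge0} = \bigoplus_{p\ge0}{}_1\fg^p$; then $T_{o_1}\check D = \fg/\fp_1$, and the tangent space to $\cO_1$ at $o_1$ is the image $(\fg_\bR+\fp_1)/\fp_1$ of the infinitesimal $G_\bR$-action, whose kernel is $\fg_\bR\cap\fp_1$. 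Comparing real dimensions gives
\[
  \tcodim_\bR(\cO_1,\check D) \ = \ \dim_\bC\fg \,-\, 2\dim_\bC\fp_1 \,+\, \dim_\bR(\fg_\bR\cap\fp_1)\,.
\]
By the conjugation symmetry $\overline{{}_1\fg^{p,q}}={}_1\fg^{q,p}$ of Lemma \ref{L:conj+Ci}, a real element of $\fp_1$ automatically lies in $\fq := \bigoplus_{p\ge0,\,q\ge0}{}_1\fg^{p,q}$, which is conjugation-stable, so $\dim_\bR(\fg_\bR\cap\fp_1)=\dim_\bC\fq$. Substituting, writing $d_{p,q}=\dim_\bC{}_1\fg^{p,q}$, and simplifying with the symmetries $d_{p,q}=d_{q,p}=d_{-p,-q}$ (again Lemma \ref{L:conj+Ci}, via $\theta$ and conjugation), every term cancels except the "$(p,q)$ both positive" block, leaving
\[
  \tcodim_\bR(\cO_1,\check D) \ = \ \sum_{p>0,\,q>0} d_{p,q}\,.
\]

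Next I would reduce this to a count of roots. Since $\fh_1\subset{}_1\fg^{0,0}$, the right-hand side is the number of roots $\a\in\Delta(\fg,\fh_1)$ whose root space lies in some ${}_1\fg^{p,q}$ with $p,q>0$; using ${}_1\fg^{p,q}={}_1\fg^p\cap\overline{{}_1\fg^q}$ (the consequence of Lemma \ref{L:EbarE}) these are precisely the roots with $\a(\ttE)>0$ and $\overline\a(\ttE)>0$. By Lemma \ref{L:bar-b} with $j=1$ we have $\overline\a=-r_{\b_1}(\a)$, so the second condition becomes $r_{\b_1}(\a)(\ttE)<0$. Transporting everything along the root-system isomorphism $\Delta(\fg,\fh_1)\to\Delta(\fg,\fh)$ induced by $\bc_1$ — a Killing isometry that preserves the values on $\ttE$ and on $\ttH^{\b_1}$, fixes $\b_1$, intertwines $r_{\b_1}$, and under which we take the positive system on $\fh_1$ to be the pullback of $\Delta^+$ — the count becomes $\#\{\a\in\Delta \ : \ \a(\ttE)>0,\ r_{\b_1}(\a)(\ttE)<0\}$.

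Finally, the simple-root argument. A root $\gamma$ can have $\gamma(\ttE)>0$ only if all of its $\a_i$-coefficients are $\ge0$, i.e.\ $\gamma\in\Delta^+$; likewise $\gamma(\ttE)<0$ forces $\gamma\in\Delta^-$. So the count equals $\#\{\a\in\Delta^+ : r_{\b_1}(\a)\in\Delta^-\}$. Because $\b_1$ is \emph{simple}, $r_{\b_1}$ permutes $\Delta^+\setminus\{\b_1\}$ and sends $\b_1\mapsto-\b_1$, so the only possibility is $\a=\b_1$; and $\a=\b_1$ does qualify, since $\b_1(\ttE)=1>0$ by \eqref{E:a=1} and $r_{\b_1}(\b_1)=-\b_1$ with $(-\b_1)(\ttE)=-1<0$. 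Hence the count is exactly $1$, which is the assertion.

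I expect the main obstacle to be the first step — the dimension bookkeeping that collapses the tangent-space count to $\sum_{p,q>0}d_{p,q}$, and, interleaved with it, keeping straight the roles of the three Cartan subalgebras $\fh\subset\fg$, $\fh_1=\bc_1(\fh)$, and the conjugation and positive system attached to each. Once the formula $\tcodim_\bR(\cO_1,\check D)=\sum_{p,q>0}d_{p,q}$ is in hand, the reduction to roots and the simple-reflection count are short and routine. (As a sanity check, for $\fg=\fsl_2\bC$ this recovers $\cO_1=\{\,|z|=1\,\}\subset\bC\bP^1$ of real codimension one, cf.\ Example \ref{eg:sl2d}.)
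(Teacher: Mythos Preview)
Your argument is correct and follows essentially the same route as the paper: both reduce the codimension to the cardinality of $\Delta(+,+)=\{\a:\a(\ttE)>0,\ \bar\a(\ttE)>0\}$ and then use $\bar\a=-r_{\b_1}(\a)$ together with simplicity of $\b_1$ to conclude that this set is $\{\b_1\}$. The only substantive difference is that you derive the codimension formula $\tcodim_\bR\cO_1=\sum_{p,q>0}d_{p,q}=|\Delta(+,+)|$ from the bigrading by hand, whereas the paper imports it from \cite[Proposition~4.1]{MR3331177}; your phrasing via the standard fact that a simple reflection permutes $\Delta^+\setminus\{\b_1\}$ is a clean repackaging of the paper's direct coefficient argument.
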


\noindent \cite[Appendix A]{KR1long} presents a thorough study of these codimension one boundary orbits in the case that $P$ is a maximal parabolic.

\begin{remark}[$\tcodim\,\cO_1$] \label{R:codimO1}
The codimension of $\cO_1$ is the number of roots in 
\[
  \Delta(+,+) \ = \ \{ \a\in \Delta \ | \ \a(\ttE) \,,\ \bar\a(\ttE) \ge 1 \}\,,
\] 
\cf\cite[Proposition 4.1]{MR3331177}.  By Lemma \ref{L:bar-b}
\[
  \Delta(+,+) \ = \  \{ \a\in \Delta \ | \ 1 \le \a(\ttE) < \a(\ttH^{\b_1}) \} \,,
\]  
and $\bar\b_1 = \b_1 \in \Delta(+,+)$.  
\end{remark}

\begin{example}[$\tcodim\,\cO_1 > 1$] \label{eg:codimO1}
In the case that $\b_1$ is not simple one may have $\tcodim_\bR\cO_1 > 1$. If $\check{D}=G_2/P_2$ (i.e. $\ttE_i = \ttS^2$) and $\beta_1 = 2\alpha_1 + \alpha_2$, then $\ttH^{\beta_1}=\ttS^1$ and $\Delta(+,+)=\{ 2\alpha_1 + \alpha_2, 3\alpha_1+\alpha_2, 3\alpha_1+ 2\alpha_2\}$, so that $\tcodim\,\cO_1=3$. See Example 6.54 of \cite{KR1long} for further instances.
\end{example}

\begin{remark}[Relationship to work of Kerr--Pearlstein]
Theorem \ref{T:matsuki} is closely related to the discussion and results of \cite{MR3331177}.  Indeed, Lemma \ref{L:EbarE} implies that the bigrading \eqref{SE:gpq} is that of \cite[Lemma 3.2]{MR3331177}.  One may also view $o_j={}_j F^\sb$ as the image of a $\bQ$-split point in a boundary component under the \naive~ limit map, as we explain briefly in $\S$\ref{S:B+D}.

\end{remark}

\subsection{The case that $P$ is a maximal parabolic} \label{S:maxP}

Assume that $G$ is simple and $P$ is a maximal parabolic. Then the grading element \eqref{E:ttE} is of the form 
$$
  \ttE \ = \ \ttS^\tti \,.
$$

\begin{proposition} \label{P:maxP}
Let $P$ be a maximal parabolic in a simple Lie group $G$.  Fix a real form $G_\bR$ and an open $G_\bR$--orbit $D \subset G/P$ admitting the structure of a Mumford--Tate domain.  Then $\tbd(D) \subset \check D$ contains a unique codimension one $G_\bR$--orbit.
\end{proposition}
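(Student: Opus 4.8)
The plan is to reduce the statement to root combinatorics, and then treat existence and uniqueness separately. First I would record what maximality buys us: $I(\fp) = \{\tti\}$ and $\ttE = \ttS^\tti$, so $\a(\ttE)$ is the $\a_\tti$--coefficient of a root $\a$; in particular $\a_j(\ttE) = \d^j_\tti$, and $\a_\tti$ is the unique simple root on which $\ttE$ does not vanish. I would also note that, since the Weil operator $\varphi(\sqrtminusone)$ acts on $\fg^p$ by $(-1)^p$ and is a Cartan involution (it is positive for the polarization $-(\cdot,\cdot)$), a root is noncompact with respect to the compact Cartan $\ft$ exactly when $\a(\ttE)$ is odd; hence $\a_\tti$ is noncompact with $\a_\tti(\ttE) = 1$, so it satisfies \eqref{E:a=1}. (One may assume $G_\bR$ noncompact; otherwise $D = \check D$ and there is nothing to prove.)

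For \emph{existence} I would simply run the construction of $\S$\ref{S:details} with $\cB = \{\a_\tti\}$: by Lemma \ref{L:codimO1} the orbit $\cO_{\{\a_\tti\}}$ has real codimension one in $\check D$, and by Theorem \ref{T:matsuki} (\cf Remark \ref{R:rp}) it is a polarized orbit lying in $\tbd(D)$.

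For \emph{uniqueness}, let $\cO \subset \tbd(D)$ be any real--codimension--one orbit. The first move is to realize $\cO$ as $\cO_\cB$ for some set $\cB = \{\b_1,\dots,\b_s\}$ of strongly orthogonal noncompact roots with $\b_l(\ttE) = 1$; this rests on the structure of the boundary of a flag domain (\cite{\FHW}, \cite{MR3331177}; in the Hermitian case it is Remark \ref{R:HSDcase}, and only the ``shallowest'' boundary orbits are in play here), namely that a codimension--one orbit in $\overline D$ has a Matsuki point whose Cartan subalgebra --- $\theta$--stable and stable under complex conjugation --- is a chain of Cayley transforms of $\ft$. Then, using the codimension formula of Remark \ref{R:codimO1} extended to $\cO_\cB$ (via \cite[Prop.~4.1]{MR3331177} and Lemma \ref{L:bar-b}), I would observe that each $\b_l$ lies in $\Delta(+,+)$: indeed $\b_l(\ttE) = 1$, and strong orthogonality forces $\overline{\b_l} = \b_l$ by Lemma \ref{L:bar-b}. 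Hence $\tcodim_\bR\cO \ge s$, so $s = 1$; writing $\cO = \cO_{\{\b\}}$ with $\b$ noncompact and $\b(\ttE) = 1$, one moreover gets $\#\{\a\in\Delta : \a(\ttE)\ge 1,\ \a(\ttH^\b) > \a(\ttE)\} = 1$, the single element being $\b$ itself.

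It then remains to prove $\cO_{\{\b\}} = \cO_{\{\a_\tti\}}$ for every such $\b$, and this is where I expect the main obstacle to lie. Because $P$ is maximal, $\fg^{-1}$ is an irreducible $\fg^0$--module ($\S$\ref{S:Pmax}) with highest weight $-\a_\tti$, so its extreme weights form the single $W(\fg^0) = W(R)$--orbit of $-\a_\tti$; if $-\b$ is extreme, then $-\b = n\cdot(-\a_\tti)$ for some $n \in R$, and since $n$ fixes $o_0$ and conjugates $\bc_{\a_\tti}$ to $\bc_{\pm\b}$, we conclude $\cO_{\{\b\}} = G_\bR\cdot n\,\bc_{\a_\tti}(o_0) = \cO_{\{\a_\tti\}}$. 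So the crux is to show that the codimension--one constraint forces $-\b$ to be extreme --- equivalently, that $\b$ has the same length as $\a_\tti$. This is automatic when $\fg$ is simply laced (all roots being of one length), and for $B$, $C$, $F_4$, $G_2$ I would argue that if $-\b$ were \emph{not} extreme, then raising the non--dominant weight $-\b$ inside $\fg^{-1}$ produces a second root $\gamma\ne\b$ with $\gamma(\ttE)\ge 1$ and $\gamma(\ttH^\b) > \gamma(\ttE)$, contradicting $\tcodim_\bR\cO_{\{\b\}} = 1$. Conceptually this is the statement that the codimension--one orbit corresponds to the smallest horizontal nilpotent, i.e.\ to the highest--weight line $\fg^{-\a_\tti}$ of $\fg^{-1}$ (which projectivizes to the closed $G^0$--orbit $\cC_o$ of $\S$\ref{S:Pmax}); verifying this ``codimension one $\Leftrightarrow$ minimal'' dichotomy uniformly is the part requiring the most care.
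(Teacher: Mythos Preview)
Your overall strategy matches the paper's: both reduce to showing that any noncompact root $\b$ with $\b(\ttE)=1$ yielding a codimension-one orbit lies in the $W(\fg^0,\fh)$--orbit of $\a_\tti$, whence $G_\bR\cdot g_\b\cdot o_0 = G_\bR\cdot g_{\a_\tti}\cdot o_0$.  But the executions diverge, and yours leaves the decisive step incomplete.

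A minor point first: your reduction to a single Cayley transform is more circuitous than needed.  The paper simply invokes \cite[Proposition~4.14]{MR3331177}, which says the codimension-one orbits in $\tbd(D)$ are $G_\bR\cdot(g_\b\cdot o_0)$ for $\b\in\Delta(\fg^+)$; then \cite[Corollary~4.4]{MR3331177} forces $\b(\ttE)=1$.  No appeal to general Matsuki theory or to the $\cO_\cB$ formalism is required, and the detour through ``$s\ge 1$ so $s=1$'' is avoided.

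The real issue is your final step.  You propose to show that the codimension-one condition forces $-\b$ to be an \emph{extreme} weight of $\fg^{-1}$ by arguing via root lengths, with the non--simply-laced cases only sketched; you flag this yourself as ``the part requiring the most care''.  The paper bypasses the length/extreme-weight dichotomy entirely with a short, type-independent computation.  After acting by $W(\fg^0,\fh)$, one may assume $(\b,\a_j)\le 0$ --- equivalently $\a_j(\ttH^\b)\le 0$ --- for all $j\ne\tti$.  Writing $\b=\a_\tti+\sum_{j\ne\tti}m_j\a_j$ with $m_j\ge 0$ (since $\b$ is positive with $\b(\ttE)=1$), one gets
\[
  \a_\tti(\ttH^\b)\ =\ \b(\ttH^\b)\ -\ \sum_{j\ne\tti} m_j\,\a_j(\ttH^\b)\ \ge\ 2\,.
\]
Hence $\bar\a_\tti(\ttE)=\a_\tti(\ttH^\b)-\a_\tti(\ttE)\ge 1$ by Lemma~\ref{L:bar-b}, so $\a_\tti\in\Delta(+,+)$ along with $\b$, and Remark~\ref{R:codimO1} gives $\tcodim_\bR\cO\ge 2$ unless $\b=\a_\tti$.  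This is exactly the ``second root in $\Delta(+,+)$'' you were seeking --- but the witness is always $\a_\tti$ itself, so no case analysis by Lie type is needed.
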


\noindent The proposition is proved in $\S$\ref{S:proofs}.  A generalization of Proposition \ref{P:maxP} holds for arbitrary (not necessarily maximal) parabolics: the number of codimension one orbits in $\tbd(D)$ is $|I(\fp)|$, \cf\cite{SL2}.

\begin{remark} \label{rem:openGR}
(i) That an open $G_\bR$--orbit admit the structure of a  MT domain is a strong constraint.  Given $o_0\in D$ of this type, with corresponding grading element $\ttE\in\fh\subset\fg^0$, we have 
\begin{equation}\label{E:DcDn} 
  \ttE(\Delta_c)\subset 2\mathbb{Z} \quad\tand\quad
  \ttE(\Delta_n)\subset 2\mathbb{Z}+1\,,
\end{equation}
where $\Delta_c$ and $\Delta_n$ denote the compact and noncompact roots, respectively \cite{\GGK}.  Applying the complex Weyl group $W(\fg,\fh)$ to $\ttE$ yields points $w(o_0)$ in each open orbit; more precisely, it produces a bijection between double cosets in $W(\fg_{\bR},\fh_{\bR})\setminus W(\fg,\fh)/W(\fp,\fh)$ and open orbits \cite[Theorem  4.5(ii)]{MR3331177}.  The orbit containing $w(o_0)$ is a MT domain if and only if $w(\ttE)$ satisfies \eqref{E:DcDn}. This holds in general, for arbitrary parabolics $P$. When $P$ is maximal, $D$ is often unique (or nearly so), and in this case Proposition \ref{P:maxP} is useful for reducing the candidates using orbit incidence data.

(ii) Furthermore, a maximal parabolic $P_{\tti}$ only stabilizes a filtration $F^{\bullet}$ on $\fg_{\bC}$ induced by $\ttE=\ttS^{\tti}$. If $\check{D}=G/P_{\tti}$ is a fundamental adjoint variety, then we have $\dim_{\bC} {Gr}_F^{-2} \fg_{\bC} = 1$ and $\dim_{\bC} {Gr}_F^{-2k}\fg_{\bC}=0$ ($\forall k>1$), so that \eqref{E:DcDn} forces $\dim_{\bC}(K_{\bC}\cdot o_0)=1$. Running $\mathtt{kgp}$ in ATLAS \cite{ATLAS}, one finds (from this constraint alone) that in each of the fundamental adjoint cases $D$ is indeed unique.
\end{remark}


As noted in \eqref{E:s'}, the number of elements $s$ in the set \eqref{SE:dfnA} is bounded above by the real rank of $\fg_\bR$.

\begin{lemma} \label{L:sorPmax}
\emph{(a)} 
There exists a choice of $\cB$ that both maximizes $s$ and contains the simple root $\a_\tti$.
\smallskip\\
\emph{(b)} 
Excepting the pairs 
\begin{equation} \label{E:excp}
  (\fg,\tti) \ = \  (\fe_7,5) \,,\, (\fe_8;2,5,6)\,,\, 
  (\ff_4,2) \,,\, (\fg_2,1) \,,
\end{equation}
there exists a set \eqref{SE:dfnA} with $s$ equal to the real rank of $\fg_\bR$. 
\end{lemma}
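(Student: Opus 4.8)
Since $P=P_{\tti}$ is maximal, $\ttE=\ttS^{\tti}$, so for a root $\b=\sum_j c_j\a_j$ one has $\b(\ttE)=c_{\tti}$. Hence the noncompact roots are those with $c_{\tti}$ odd, every root occurring in $\fg^1$ is positive, and the constraint \eqref{E:a=1} selects exactly $\Delta(\fg^1)=\{\b\in\Delta \mid c_{\tti}=1\}$; thus a set $\cB$ as in \eqref{SE:dfnA} is precisely a strongly orthogonal subset of $\Delta(\fg^1)$. Two facts are used throughout. First, $\a_{\tti}\in\Delta(\fg^1)$ and it is the \emph{unique minimal noncompact root}: any $\b\in\Delta(\fg^1)$ is positive with $\b-\a_{\tti}$ a nonnegative combination of simple roots, so $\b\ge\a_{\tti}$. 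Second (Harish--Chandra; \cite{MR1920389}), $\trank_{\bR}\fg_{\bR}$ equals the maximal cardinality of a strongly orthogonal set of noncompact roots --- \emph{without} the level--one restriction --- and a maximal such system can always be produced by the greedy construction (lowest positive noncompact root, then lowest strongly orthogonal to it, and so on). In view of \eqref{E:s'}, part (b) is the assertion that imposing \eqref{E:a=1} costs nothing outside the listed cases.

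\emph{The generic case of (b), and (a).} Reading off Table \ref{t:highest_rt}, the coefficient of $\a_{\tti}$ in the highest root $\tilde\a$ is $\le 2$ for every node of $A_r,B_r,C_r,D_r$, and exceeds $2$ only for the finitely many pairs $(\fe_6,4)$, $(\fe_7,\tti)$ with $\tti\in\{3,4,5\}$, $(\fe_8,\tti)$ with $\tti\in\{2,\dots,7\}$, $(\ff_4,\tti)$ with $\tti\in\{2,3\}$, and $(\fg_2,1)$. When this coefficient is $\le 2$, every positive root has $c_{\tti}\in\{0,1,2\}$, so every noncompact positive root already lies in $\Delta(\fg^1)$. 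Therefore a greedy Harish--Chandra system of noncompact positive roots is automatically a valid $\cB$ of cardinality $\trank_{\bR}\fg_{\bR}$, and --- since it is built from below and $\a_{\tti}$ is the lowest noncompact positive root --- it begins with $\a_{\tti}$. This proves (a) and (b) at once for all non--high--coefficient nodes; for $\tilde\a$--coefficient $1$ the grading is a $1$--grading, $\fg_{\bR}$ is Hermitian symmetric, and this recovers the classical picture (cf.~Remark \ref{R:HSDcase}).

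\emph{The high--coefficient cases.} For each remaining pair I would argue directly from the root data of $\fg$: identify the MTC real form $\fg_{\bR}$ through its maximal compact $\fk=\fg^{\teven}$ (cf.~\eqref{E:kkperp}) and compute $\trank_{\bR}\fg_{\bR}$; then either (i) exhibit a strongly orthogonal subset of $\Delta(\fg^1)$ of that cardinality containing $\a_{\tti}$, or (ii) show that every strongly orthogonal subset of $\Delta$ of maximal cardinality $\trank_{\bR}\fg_{\bR}$ must contain a root of $\a_{\tti}$--coefficient $\ge 3$, so that no valid $\cB$ attains it. For (i), covering $(\fe_6,4)$, $(\fe_7,3)$, $(\fe_7,4)$, $(\fe_8,3)$, $(\fe_8,4)$, $(\fe_8,7)$, $(\ff_4,3)$, I would produce the set greedily: start with $\b_1=\a_{\tti}$, pass to the weight--zero $\b_1$--primitive subalgebra $\Gamma^0_{\b_1}$ of $\S$\ref{S:a-prim} with its induced grading, choose a level--one noncompact root $\b_2$ there, and iterate; by construction the result contains $\a_{\tti}$, settling (a) for these pairs as well. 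For (ii), covering exactly the pairs $(\fe_7,5)$, $(\fe_8;2,5,6)$, $(\ff_4,2)$, $(\fg_2,1)$ of \eqref{E:excp}, the point is a finite combinatorial check that the level--one part of a maximum strongly orthogonal system never suffices; for $(\fg_2,1)$ it is immediate, since the only level--one roots $\a_1$ and $\a_1+\a_2$ are not strongly orthogonal while $\trank_{\bR}\fg_2=2$ (e.g. $\{\a_1,3\a_1+2\a_2\}$ works but uses a coefficient--$3$ root). Here (a) amounts to an exchange argument against the minimality of $\a_{\tti}$, carried out alongside the explicit descriptions.

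\textbf{Main obstacle.} The crux is the high--coefficient analysis in the last paragraph --- above all confirming the dichotomy for $\fe_8$, where six maximal parabolics must be handled against a root system with $120$ positive roots, and checking that the shortfall (ii) occurs at exactly the four families of \eqref{E:excp} and not elsewhere. By contrast, once the "coefficient $\le 2$" reduction is in place the classical cases are routine, and the uniform reduction of $\cB$ to combinatorics of $\Delta(\fg^1)$ together with the minimality of $\a_{\tti}$ makes part (a) a bookkeeping companion to part (b).
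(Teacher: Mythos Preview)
Your plan is correct and considerably more informative than the paper's own proof, which simply reads: ``For most pairs $(\fg,\tti)$ the sequence $\{\b_j\}_{j=1}^s$ is exhibited in \cite[Appendix C.3-4]{MR1920389}.  The remainder may be identified from inspection of the root system.''  In other words, the paper delegates everything to Knapp's tables and a case check, with no conceptual layer at all.

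Your ``coefficient $\le 2$'' reduction is the genuine added value: once one observes that for $\tilde\a(\ttS^\tti)\le 2$ every noncompact positive root already lies in $\Delta(\fg^1)$, parts (a) and (b) follow immediately from the standard Harish--Chandra fact that $\trank_\bR\fg_\bR$ equals the maximum cardinality of a strongly orthogonal noncompact system.  This disposes of all classical types and several exceptional nodes in one stroke, whereas the paper treats them uniformly by table lookup.  For the finitely many high--coefficient pairs, both approaches reduce to direct inspection of the root system; you organize this as a dichotomy (i)/(ii) and sketch the iterative $\Gamma^0_{\b_1}$ construction, while the paper simply points to Knapp and, in Remark \ref{R:excp}, records explicit $\cB$'s for the exceptional pairs \eqref{E:excp} (each visibly beginning with $\a_\tti$, which also settles (a) there).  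One minor caution: your appeal to the ``greedy, lowest-first'' construction for (a) in the generic case is fine, but the cleanest justification is the standard fact that \emph{any} inclusion--maximal strongly orthogonal noncompact system has cardinality $\trank_\bR\fg_\bR$ (since the Cayley-transformed Cartan then has no noncompact imaginary roots, hence is maximally split); this lets you start the system at $\a_\tti$ without worrying about whether ``lowest-first'' is the canonical recipe.
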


\begin{proof}
For most pairs $(\fg,\tti)$ the sequence $\{\b_j\}_{j=1}^s$ is exhibited in \cite[Appendix C.3-4]{MR1920389}.  The remainder may be identified from inspection of the root system.
\end{proof}

\begin{corollary}\label{C:sorthset}
Excepting the pairs \eqref{E:excp}, the Cartan subalgebra $\fh_s$ has maximally noncompact real form $\fh_s \cap \fg_\bR$.
\end{corollary}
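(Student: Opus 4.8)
The plan is to read the statement off from Lemma~\ref{L:sorPmax}(b) together with the split--dimension bookkeeping already built into the Cayley construction. Recall that a $\theta$--stable Cartan subalgebra $\fh'$ of $\fg_\bR$ decomposes as $\fh' = (\fh'\cap\fk_\bR)\op(\fh'\cap\fk^\perp_\bR)$, and is by definition \emph{maximally noncompact} precisely when its split part $\fh'\cap\fk^\perp_\bR$ is a maximal abelian subspace of $\fk^\perp_\bR$; equivalently, when $\tdim_\bR(\fh'\cap\fk^\perp_\bR) = \trank_\bR\,\fg_\bR$, this being the maximal dimension of an abelian subspace of $\fk^\perp_\bR$ (\cite[Ch.~VI]{MR1920389}). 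So it suffices to exhibit, for an appropriate choice of $\cB$, a $\theta$--stable $\fh_s$ whose split part has dimension $\trank_\bR\,\fg_\bR$.

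First I would take $\cB = \{\b_1,\ldots,\b_s\}$ as furnished by Lemma~\ref{L:sorPmax}(b), so that (away from the pairs \eqref{E:excp}) $s = \trank_\bR\,\fg_\bR$. Each $\bc_{\b_j}$ is a Cayley transform relative to the $\theta$--stable Cartan subalgebra $\fh_{j-1}$ --- this makes sense because strong orthogonality keeps $\b_j$ imaginary and noncompact for $\fh_{j-1}$, see $\S$\ref{S:details} --- so $\fh_j = \bc_{\b_j}(\fh_{j-1})$ is again $\theta$--stable, and inductively $\fh_s$ is $\theta$--stable. For the dimension count, \eqref{E:t} (applied with $\fh_{j-1}$ in place of $\fh$ and $\b_j$ in place of $\a$) shows that in passing from $\fh_{j-1}$ to $\fh_j$ exactly one compact direction $\bR\,\bi\ttH^{\b_j}$ is traded for exactly one split direction $\bR\,v^{\b_j}\subset\fk^\perp_\bR$; since $\fh_0 = \fh$ has compact real form $\ft$, induction yields $\tdim_\bR(\fh_j\cap\fk^\perp_\bR) = j$ --- the fact already recorded in $\S$\ref{S:details}. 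Taking $j = s$, the abelian subspace $\fh_s\cap\fk^\perp_\bR$ of $\fk^\perp_\bR$ has dimension $s = \trank_\bR\,\fg_\bR$, hence is maximal abelian; therefore $\fh_s\cap\fg_\bR$ is maximally noncompact, which is the assertion.

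I do not expect a genuine obstacle. All of the arithmetic determining which pairs achieve $s = \trank_\bR\,\fg_\bR$ is already settled in Lemma~\ref{L:sorPmax}(b), and it is precisely for the pairs \eqref{E:excp} that the construction stalls with a strictly smaller split part, so the exclusion is sharp. The only point requiring any care is the two--sided bookkeeping in \eqref{E:t} --- that each Cayley step raises the split dimension by \emph{exactly} one --- which is immediate since $v^{\b_j}$ spans a single line in $\fk^\perp$ while $\tdim_\bC\fh_{j-1} = \tdim_\bC\fh_j$ forces the complementary compact direction to drop by exactly one. This, combined with the standard characterization of maximally noncompact Cartan subalgebras (\cite[Ch.~VI]{MR1920389}), completes the argument.
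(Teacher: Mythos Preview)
Your argument is correct and is precisely the intended one: the paper states Corollary~\ref{C:sorthset} as an immediate consequence of Lemma~\ref{L:sorPmax}(b) together with the dimension count $\tdim_\bR(\fh_j\cap\fk^\perp_\bR)=j$ already recorded in \S\ref{S:details} (cf.~\eqref{E:s'}). There is nothing to add; the only content beyond the lemma is the standard characterization of maximally noncompact $\theta$--stable Cartan subalgebras, which you invoke correctly.
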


\begin{remark}[Exceptional cases] \label{R:excp}
For each of the exceptional cases of \eqref{E:excp} we may choose the set $\{\b_j\}_{j=1}^s$ as follows.  Below we express the root $\b = \sum_i b^i\a_i$ as $(b^1\cdots b^r)$.
\begin{bcirclist}
\item 
$\cB(\fe_7,5) = \{ (0^4 1 0^2) \,,\, (0^3 1^3 0) \,,\, (0 1^2 21 0^2) \,,\, (0 1^5 0) \,,\, (0101^4) \,,\, (0^2 1^5) \}$. \\
Here $s = 6$, while $\trank_\bR\,\fg_\bR = 7$.   
\item 
$\cB(\fe_8,2) = \{ (010^6) \,,\, (01^2 210^3) \,,\, (1^3 21^2 0^2) \,,\, (1^2 2^3 1 0^2) \,,\, (1^2 2^2 1^3 0) \,,\, (1^3 2^2 1^2 0) \,,\,(01^2 2^3\,10)\}$.\\
Here $s=7$, while $\trank_\bR\,\fg_\bR = 8$.         
\item 
$\cB(\fe_8,5) = \{ (0^4 1 0^3)\,,\,(0^3 1^3 0^2)\,,\,(0 1^2 2 1 0^3)\,,\,(0 1^5 0^2)\,,\,(010 1^4 0)\,,\,(0^2 1^5 0) \}$.\\
Here $s=6$, while $\trank_\bR\,\fg_\bR = 8$.         
\item 
$\cB(\fe_8,6) = \{ (0^5 1 0^2)\,,\,(0^4 1^3 0)\,,\,(0^3 1^5)\,,\,(0 1^2 2^2 1 0^2)\,,\,(0 1^2 2 1^3 0)\,,\,(0 1^7) \}$.\\
Here $s=6$, while $\trank_\bR\,\fg_\bR = 8$.         
\item 
$\cB(\ff_4,2) = \{ (0100) , (1110) , (0120) \}$.\\
Here $s = 3$, while $\trank_\bR\,\fg_\bR = 4$.
\item 
$\cB(\fg_2,1) = \{ (10) \}$.\\
Here $s = 1$, while $\trank_\bR\,\fg_\bR = 2$.
\end{bcirclist}
Outside these cases, it is natural to ask whether all the orbits in $\tbd(D)$ are of the form $\mathcal{O}_{\mathcal{B}}$ ($1\leq s\leq \text{rk}_{bR}(G)$), hence also polarizable. This is verified for all the fundamental adjoint varieties in $\S$\ref{S:adIII}.
\end{remark}

\subsection{Proofs} \label{S:proofs}

\begin{proof}[Proof of Lemma \ref{L:O1p}]
Let $N_1$ be the nilpotent element $y^{-\b_1} \in {}'\fg^{-\b_1}_\bQ$ of \eqref{E:N}.  By \cite[Theorem 6.16]{MR0382272}, it suffices to show that $\texp (\bi\,t\,N_1) \cdot o_1 \in D$ for all $t > 0$.

We may reduce to the case that $\fg = \fsl_2\bC$ and $\check D = \bC\bP^1$ (of Examples \ref{eg:sl2a}, \ref{eg:sl2b}, \ref{eg:sl2c} and \ref{eg:sl2d}) as follows.  Set $\b = \b_1$ and recall the Lie subalgebra $\fsl_2^\b \subset \fg$ of \eqref{E:sl2a} and the Lie subgroup $\tSL_2^\b(\bC) \subset G$ of \eqref{E:SL2a}.  The Cayley transform $\bc_\b$ may be viewed as an element of $\tAd(\tSL_2^\b(\bC))$.  The $\tSL_2^\b(\bC)$--orbit of $o_0$ is a $\bC\bP^1 \subset \check D$ containing $o_1$.  We may identify $o_0$ with the point $(1:0) \in \bC\bP^1$ of Example \ref{eg:sl2b}, and $o_1$ with the point $(1:\bi) \in \bC\bP^1$ of Example \ref{eg:sl2d}.  Moreover, $\fsl^\b_2(\bR)$ is isomorphic with the Lie algebra $\fsu(1,1)$ of Example \ref{eg:sl2a}, and we may identify $D \cap \bC\bP^1$, which is the $\tSL_2^\b(\bR)$--orbit of $o_0$, with the unit disc as in Example \ref{eg:sl2b}.

Under these identifications, 
$$
  \bi\,y^{-\b} \ = \ 
  \half \left( \begin{array}{cc} 1 & 1 \\ -1 & -1 \end{array} \right) \,,
$$
and 
$$
  \texp( \bi\,t\,N_1) \cdot o_1 \ = \
  \texp\left( \begin{array}{cc} 1/2 & 1/2 \\ -1/2 & -1/2 \end{array} \right)
  \left(\begin{array}{c} 1 \\ \bi \end{array}\right) \\
  \ = \ \renewcommand{\arraystretch}{1.3}
  \left(\begin{array}{c} 1 + \half t + \half\bi\, t \\ -\half t + \bi (1-\half t) \end{array}\right) \,.
$$
Whence $\texp(\bi\,t\,N_1)\cdot o_1$ is contained in the unit disc $D \cap \bC\bP^1 \subset D$.
\end{proof}

\begin{proof}[Proof of Theorem \ref{T:matsuki}]

Throughout the proof it will be helpful to keep in mind that strong orthogonality of the roots $\cB$ implies
\begin{equation} \label{E:c=1}
\hbox{the Cayley transform $\bc_{\b_j}$ restricts to the 
identity on $\fsl_2^{\b_i}$ for all $j\not=i$.}
\end{equation}
Whence, it follows from \eqref{E:sl2a} and \eqref{E:sl2Q} that
\begin{equation} \label{E:c(sl2)Q}
\hbox{\emph{the $\bc_j(\fsl_2^{\b_i})$ are defined over $\bQ$ for all $i,j$.}}
\end{equation}

The proof of the theorem is by induction.  

\subsubsection*{The inductive hypotheses}

Assume the theorem holds for $j$, and that 
\begin{equation}\label{E:NC}
  \s_j \ = \ \tspan_{\bQ_{>0}}\{ N_1 , \ldots , N_j \} \quad\hbox{with } \ 
  N_i \,\in\, \fg^{-\b_i}_\bQ \quad\hbox{for all } \ 1 \le i \le j \,.
\end{equation}
Suppose furthermore that a semisimple group $G_j \subset G$, with Lie algebra $\Gamma_j \subset \fg$, and parabolic subgroup $P_j = G_j(\bC)\,\cap\,P$ are given with the following properties: 
\begin{a_list}
\item
The Lie algebra $\Gamma_j$ is defined over $\bQ$.  The roots of $\Gamma_j$ are precisely those that are strongly orthogonal to the $\{\b_1,\ldots,\b_j\}$; equivalently, \eqref{E:Gammaj} holds.  \begin{remark*}Observe that this hypothesis on the roots of $\Gamma_j$ implies that the subalgebra $\bc_{j}(\fsl_2^{\b_i}) = \fsl_2^{\b_i}$ is contained in $\Gamma_j$ for all $i>j$.\end{remark*}
\item 
The parabolic $P_j = \tStab_{G_j(\bC)}(o_j)$, so that $G_j(\bC)/P_j$ is naturally identified with the complex orbit $\check D_j = G_j(\bC) \cdot o_j$.  The real orbit $D_j = G_j(\bR)\cdot o_j$ is open in $\check D_j$.
\item
For all $i \ge j+1$ the Cayley transform $\bc_{\b_i}$ lies in $\tAd(G_j)$.  \begin{remark*}It follows from these last two hypotheses that $o_{j+1} = g_{\b_{j+1}}(o_j) \in \check D_j$.  By \eqref{E:orbit}, the orbit $\cC_{j+1} = G_j(\bR)\cdot o_{j+1}$ is contained in the topological boundary $\del D_j$.\end{remark*}
\end{a_list}

\subsubsection*{The base case}

Set $\s_0 = \{0\}$.  It follows from $\S$\ref{S:HSgZ} that the theorem holds for $j=0$.  The hypothesis \eqref{E:NC} is trivial.  Likewise, setting $G_0 = G$, $\Gamma_0 = \fg$, and $P_0 = P$, the hypotheses (a-c) are immediate.  (Here, $\check D_0 = \check D$, $D_0 = D$ and $\cC_1 = \cO_1$.)

\subsubsection*{The induction: part 1}

First note that \eqref{E:c=1} implies that the nilpotent cone
\begin{equation} \nonumber 
  \s_j \ = \ \bc_{\b_{j+1}}(\s_j) 
\end{equation}
is unchanged under the next Cayley transform.
  
By \eqref{E:c(sl2)Q} the root space ${}_{j+1}\fg^{-\b_{j+1}}$ is defined over $\bQ$.  By Lemma \ref{L:O1p}, we may choose $N_{j+1} \in {}_{j+1}\fg^{-\b_{j+1}}_\bQ$ so that $\{ \texp(\bi \,y N_{j+1}) \cdot o_{j+1} \} \in B_j(N_{j+1})$ is a nilpotent orbit in $\check D_j$.  Equivalently,
\begin{equation} \label{E:norb_j}
  \texp(\bi \,y N_{j+1}) \cdot o_{j+1} \ \in \ D_j
  \quad \hbox{for all } \ y > 0 \,.
\end{equation}

By the hypothesis (a), $N_{j+1} \subset \Gamma_j$ so that $\texp(\bC\,N_{j+1}) \subset G_j(\bC)$.  Moreover, the hypotheses \eqref{E:NC} and (a) imply that $\texp(\s_j)$ commutes with $G_j$.  Whence, given any $N \in \s_j$ and $y > 0$, 
\begin{eqnarray*}
  \texp( \bi\,(N + y N_{j+1}) ) \cdot o_{j+1}
  & = & 
  \texp(\bi\,N) \,\texp(\bi\,y \,N_{j+1}) \cdot o_{j+1} \\
  & \subset &
  \texp(\bi\,N) \, D_j \\
  & = &
  \texp(\bi\,N) \, G_j(\bR) \cdot o_j \\
  & = &
  G_j(\bR) \, \texp(\bi\,N) \, \cdot o_j \\
  & \subset & 
  G_j(\bR) \, D \ = \ D \,.
\end{eqnarray*}
Setting 
$$
  \s_{j+1} \ = \ \tspan_{\bQ_{>0}}\{\s_j,N_{j+1}\}\,,
$$ 
we have shown that $\{\texp(\bC\,\s_{j+1}) \cdot o_{j+1} \} \in B(\s_{j+1})$ is a nilpotent orbit; this establishes the first two (of three) claims of Theorem \ref{T:matsuki} for $j+1$.  Also, we see that \eqref{E:NC} holds for $j+1$.

Given $i \le j+1$, complete the pair $\ttH^{\b_i} , N_i \in \fg_\bQ$ to a standard triple ($\S$\ref{S:sl2trp}) $\{ N_i^+ , \ttH^{\b_i} , N_i\}$ with $N_i^+ \in \fg^{\b_i}_\bQ$.  To see that the third claim $W_\sb(\s_{j+1}) = {}_{j+1}W_\sb$ holds it suffices to observe that the strong orthogonality of the roots implies 
\begin{eqnarray*}
  N^+ & = & N_1^+ \,+\cdots+\, N_{j+1}^+ \,,\\
  \ttY_{j+1} & = & \ttH^{\b_1}\,+\cdots+\,\ttH^{\b_{j+1}} \,,\\
  N & = & N_1 \,+\cdots+\, N_{j+1}
\end{eqnarray*}
is a standard triple in $\fg_\bQ$.

\subsubsection*{The induction: part 2}

It remains to show that the additional inductive hypotheses (a-c) hold for $j+1$.  Let $\Gamma_{j+1} \subset \Gamma_j$ be the Lie algebra $\Gamma_{\b_{j+1}}$ of $\S$\ref{S:a-prim}.  Then hypothesis (a) is immediate.  Likewise, let $G_{j+1} \subset G_j$ and $P_{j+1} \subset P_j$ be the corresponding $\sG$ and $\sP$, respectively; and let $D_{j+1} \subset \check D_{j+1}$ be the corresponding $\sD \subset \check \sD$, respectively.  Recall that $D_{j+1}$ is open in $\check D_{j+1}$; see $\S$\ref{S:a-prim}.  Thus the hypothesis (b) holds.  

Let $i \ge j+2$.  A priori the Cayley transformation $\bc_{\b_i}$ lies in $\tAd(G)$.  However, strong orthogonality implies that $\cB\backslash\{\b_1,\ldots,\b_{j+1}\} \subset \Delta(\Gamma_{j+1})$ and 
$$
  \fsl_2^{\b_i} \ = \ \bc_{j+1}(\fsl_2^{\b_i}) \ \subset \ \Gamma_{j+1}\,.  
$$
Therefore, the Cayley transformation $\bc_{\b_i}$ lies in $\tAd(G_{j+1})$.  Whence hypothesis (c) holds.  
\end{proof}

\begin{proof}[Proof of Lemma \ref{L:conj+Ci}]
By construction the Cartan subalgebra $\fh = \fh_0$ is preserved by conjugation and Cartan involution ($\S$\ref{S:Qform}).  Since the Cayley transform preserves these properties, the Cartan subalgebra $\fh_j$ is also preserved by conjugation and the Cartan involution.  For any such Cartan subalgebra the roots satisfy $\theta\a = -\bar\a$.
\end{proof}

\begin{proof}[Proof of Lemma \ref{L:EbarE}]
Define
$$
  \ft_j \ \dfn \ \fh_j \,\cap\, \fk_\bR \tand
  \fa_j \ \dfn \ \fh_j \,\cap\, \fk^\perp_\bR \,.
$$
Then the real form $\fh_j(\bR) \dfn \fh_j \cap \fg_\bR$ is $\ft_j \op \fa_j$.  In analogy with \eqref{E:t},
\begin{equation} \nonumber 
  \ft_j \ = \ \bigcap_{i=1}^j \tker\{ \b_i : \fh_\bR \to \bC \}
  \tand
  \fa_j \ = \ \tspan_\bR\{ \ttH^{\b_1} , \ldots , \ttH^{\b_j} \}\,.
\end{equation}
Moreover, 
\begin{equation} \label{E:conj}
\hbox{the roots of $\fh_j$ are $\bR$--valued on $\fa_j$, and $\bi\,\bR$--valued on $\ft_j$.}
\end{equation}
Since $\a(\ttE) \in \bR$, we see from \eqref{E:conj} that $\ttE \in \bi \ft_j \,\op\,\fa_j$.  Therefore, $\ttE + \overline{\ttE} \in \fa_j$.  Thus
$$
  \ttE \,+\, \overline{\ttE} \ \in \ 
  \tspan_\bR\{ \ttH^{\b_1} , \ldots , \ttH^{\b_j} \} \,.
$$
Since the roots $\b_i$ are real, \eqref{E:a=1} and \eqref{E:sorth} yield $\b_i(\ttE + \overline{\ttE}) = \b_i(\ttE) + \b_i(\ttE)= 2$.
\end{proof}

\begin{proof}[Proof of Lemma \ref{L:bar-b}]
By definition, the conjugate root $\bar\b$ is given by $\bar\b(\xi) = \b(\bar\xi)$ for any $\xi \in \fh_j$.  Whence \eqref{E:bar-b} holds when restricted to $\ft_j$.  It remains to show that the equation holds when restricted to $\fa_j$.  Since the latter is spanned by the $\{\ttH^{\b_i}\}_{i=1}^j$, it suffices to show that \eqref{E:bar-b} holds when evaluated on any of these $\ttH^{\b_i}$.  Strong orthogonality of the roots $\b_i \in \cB$ and \eqref{E:int} imply that 
\begin{equation} \label{E:sorth}
  2\,\frac{(\b_i , \b_k )}{(\b_k,\b_k)} \ = \ 
  \b_i(\ttH^{\b_k}) \ = \ 2 \d_{ik} \,,\quad \hbox{for any } \ 
  1 \le i,k \le s \,.
\end{equation}
It follows that \eqref{E:bar-b} holds when evaluated on the $\{\ttH^{\b_k}\}_{k=1}^j$.
\end{proof}

\begin{proof}[Proof of Lemma \ref{L:O1c}]
From the definition \eqref{E:gl} we see that $\fh_1$ is a Cartan subalgebra of $\fg_0$, and ${}_1\ttH^{\b_1}$ is contained in the center $\fz$ of ${}_1\fg_0$.  It then follows from \eqref{E:t} that $\fh_1 \,\cap\, {}_1\fg_0^\tss$ is a compact Cartan subalgebra of ${}_1\fg_0^\tss$.
\end{proof}

\begin{proof}[Proof of Lemma \ref{L:codimO1}]
For the duration of the proof, let $\a$ denote the simple root $\b_1 \in \cB$.  Set $\fg^{p,q} = {}_1\fg^{p,q}$, and $o = o_1$ and $\cO = \cO_1$.  By \cite[Proposition 4.1]{MR3331177}, the (real) codimension of $\cO$ is the number of roots in 
$$
  \Delta(+,+) \ = \ \{ \gamma \in \Delta \ | \ 
  \gamma(\ttE) \,,\ \bar\gamma(\ttE) > 0 \} \,.
$$
We will show that 
\begin{equation} \label{E:codim=1}
  \Delta(+,+) \ = \ \{ \a\}  \,;
\end{equation}
whence $\tcodim_\bR\cO_1 = 1$, establishing the lemma.  By Remark \ref{R:codimO1}, $\a\in\Delta(+,+)$.

Now suppose that $\gamma \in \Delta(+,+)$. 
The two inequalities $\gamma(\ttE),\bar\gamma(\ttE)>0$ imply that both $\gamma$ and $\bar\gamma$ are positive roots.  But from Lemma \ref{L:bar-b}, we see that $\bar\gamma = -\gamma + \ell \a$, where $\ell = \gamma(\ttH^\a)$.  In particular, if $\gamma$ is not a multiple of the simple root $\a$, then $\bar\gamma$ is a negative root.  Whence $\gamma=\a$.
\end{proof}

\begin{proof}[Proof of Proposition \ref{P:maxP}]

By \cite[Proposition 4.14]{MR3331177}, the codimension one $G_\bR$--orbits in $\tbd(G_\bR\cdot o_0)$ are of the form $G_\bR\cdot(g_\b\cdot o_0)$, with $\b \in\Delta(\fg^+)$.  In fact, we must have $\b\in\Delta(\fg^1)$ in order not to violate \cite[Corollary 4.4]{MR3331177}.

So let $\b$ be a (noncompact, positive) root with $\b(\ttE)=1$. Recall $\ttE=\ttS^{\tti}$.  We will show that $\b$ belongs to the orbit of $\a_{\tti}$ under the subgroup $W(\fg^0,\fh)\subset W(\fg_{\bR},\fh_{\bR})$ of the real Weyl group.  It will follow that $g_\b\cdot o_0$ and $g_{\a_{\tti}}\cdot o_0$ belong to the same $G_\bR$--orbit, giving the desired uniqueness.

Acting by $W(\fg^0,\fh)$ if necessary, we may assume $(\b,\a_j)\leq 0$ for all $j\neq \tti$; equivalently,
\begin{equation} \label{E:1}
  \a_j(\ttH_\b)\ \leq \ 0 \,.  
\end{equation}
(That is, use the Weyl group of $G^0$ to reflect the orthogonal projection of $\b$ to $\ker(\ttE)$ into the negative Weyl chamber.)  Set $\cB=\{\b\}$ and $\fg^{p,q}= {}_1 \fg^{p,q}$ for the bigrading associated to $o_1=g_\b\cdot o_0$.  For any root $\a \in\Delta(\fg,\fh_1)$, write $\fg^{p(\a),q(\a)}\supset \fg^{\a}$.  By Lemma \ref{L:EbarE}, $p(\a)+q(\a)=\a(\ttH^{\b})$, and of course $p(\b)=1=q(\b)$.

Now for any positive root $\a$, we have $$ \a=p(\a)\a_{\tti}+\sum_{j\neq \tti} m_j(\a)\a_j$$ where $m_j(\a)\geq 0$.  Since $p(\b)=1$ and $\b$ is positive, 
\[
  p(\a_{\tti})+q(\a_{\tti}) \ = \ \a_{\tti}(\ttH^{\b}) \ = \ 
  \b(\ttH^\b) \ - \ \sum_{j\neq \tti} m_j(\b)\a_j(\ttH^{\b})
  \ \stackrel{\eqref{E:1}}{\geq} \  \b(\ttH^{\b})=2\,.
\]
But $p(\a_{\tti})=\a_{\tti}(\ttE)=\a_{\tti}(\ttS^{\tti})=1$, and so $q(\a_{\tti})\geq 1$.  By Remark \ref{R:codimO1}, $\text{codim}(G_\bR\cdot o_1)$ is now $\geq 2$ unless $\b=\a_{\tti}$.
\end{proof}

\section{Enhanced $\tSL_2$-orbits and adjoint varieties}\label{S:sl2+adj}

Following some motivation ($\S$\ref{S:SvHs}) and a brief review of boundary components of  MT domains ($\S$\ref{S:B+D}), we use the construction of \S\ref{S:cayleyconstr} to produce a class of MT subdomains ($\S$\ref{S:sl2con}) which are usually not Schubert varieties, hence will be interesting to compare with them in cohomology.  (We only take this up in a limited way here, in $\S$\ref{S:[X(N)]eg} for the $G_2$ adjoint variety.)  The remainder of this section ($\S\S$\ref{S:adI}-\ref{S:adIII}) is devoted to working out these subdomains and boundary components for the fundamental adjoint varieties, where they have striking relationships to both the varieties of lines studied in $\S$\ref{S:sec5} and the Calabi-Yau VHS studied in \cite{FL}.

\subsection{Schubert varieties and horizontal subdomains}\label{S:SvHs}

Let $\check{D}=G/P$ be the compact dual of a MT domain $D$, $X\subset \check{D}$ a horizontal subvariety, and $Y$ a (nonempty) connected component of $X\cap D$. Recall the result of Friedman and Laza:

\begin{proposition} \label{Prop A}
If $X$ is smooth and $Y$ has strongly quasi-projective image in an arithmetic group quotient $\Gamma \setminus D$, then $Y$ is a (necessarily Hermitian) Mumford--Tate subdomain of $D$.
\end{proposition}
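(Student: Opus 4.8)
\smallskip
\noindent\textbf{Proof plan.} The goal is to realize $Y$ as the orbit of a reductive $\bQ$--subgroup of $G$ and then to show that such an orbit, being horizontal, is forced to be Hermitian symmetric. I would begin by restricting to $Y$ the tautological polarized variation of Hodge structure carried by $D$; this gives a polarizable $\bZ$--variation over $S\dfn\Gamma_Y\backslash Y$, and the hypothesis that $Y$ has strongly quasi--projective image in $\Gamma\backslash D$ makes $S$ quasi--projective with period map $\phi$ whose image is a locally closed algebraic subvariety. Two standard structural facts are recorded at once. (i) By the theorem of the fixed part together with Andr\'e's normality theorem \cite{MR2918237}, the connected algebraic monodromy group $\mathbf H\subseteq G$ is a semisimple normal $\bQ$--subgroup of the derived group $\mathbf M^{\mathrm{der}}$ of the generic Mumford--Tate group $\mathbf M$ of the variation, $\mathbf M$ is reductive \cite{MR654325}, and $\phi(S)$ is contained in (the image of) the Mumford--Tate subdomain $D_{\mathbf M}\dfn\mathbf M(\bR)\cdot o\subseteq D$. (ii) By horizontality, at each $y\in Y$ the tangent space corresponds to a subspace of $\fg^{-1}$ annihilated by the bracket $\fg^{-1}\times\fg^{-1}\to\fg^{-2}$ (the integrability obstruction for $T^1$).

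It then suffices to prove the equality $Y=D_{\mathbf M}$, since $D_{\mathbf M}$ is by construction a Mumford--Tate subdomain, namely the locus in $D$ of the $\bQ$--Hodge tensors fixed by $\mathbf M$. This equality is the heart of the matter and the step I expect to be the main obstacle: it cannot come from the monodromy alone, since a low--dimensional horizontal $Y$ has correspondingly small $\mathbf H$, so the smoothness hypothesis on $X$ must be used decisively. The plan is to show that smoothness of the Zariski closure $X=\overline Y\subseteq\check D$ forces $X$ to be the $\mathbf M(\bC)$--orbit of $o$ in $\check D$, a homogeneous rational variety $\mathbf M(\bC)/P'$. This parallels --- but is genuinely harder than --- the fact established in \cite{KR2} that a smooth \emph{horizontal Schubert} variety is a homogeneous embedded Hermitian symmetric space: here there is no Schubert (hence no Borel--orbit) structure to exploit, and one must rule out a priori that $X$ develops a lower--dimensional orbit in its ``boundary'' $X\cap\tbd(D)$ or that the natural orbit map onto a homogeneous model is ramified along such a locus. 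I anticipate the essential ingredients to be: Schmid's nilpotent--orbit and $\tSL_2$--orbit theorems, to control $X$ and its tangent cones along $X\cap\tbd(D)$; the algebraicity of $\phi$, guaranteed by the quasi--projectivity hypothesis, to make sense of these limits inside $\check D$; and the seminegativity of holomorphic sectional curvature in horizontal directions, to rigidify the family $\{T_yY\}\subseteq\bP\fg^{-1}$ into a single $\mathbf M(\bR)$--conjugacy class --- from which $Y$ is an $\mathbf M(\bR)$--orbit, hence all of $D_{\mathbf M}$ by (i).

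Granting $Y=\mathbf M(\bR)\cdot o=D_{\mathbf M}$, with $\check Y=\mathbf M(\bC)/P'$ and $\fp'=\mathit{Lie}(\mathbf M)\cap\fp$ parabolic, the Hermitian conclusion drops out of (ii): one has $\mathit{Lie}(\mathbf M)=\fp'\oplus\fn'^-$ with $\fp'\subseteq\fg^{\ge0}$ and, by horizontality, the opposite nilradical $\fn'^-\cong\mathit{Lie}(\mathbf M)/\fp'$ contained in $\fg^{-1}$; hence $\mathit{Lie}(\mathbf M)\cap\fg^{-2}=0$ and $[\fn'^-,\fn'^-]\subseteq\fg^{-2}\cap\mathit{Lie}(\mathbf M)=0$, so $\fp'$ has abelian nilradical, i.e.\ is cominuscule, and $\check Y$ is a product of compact Hermitian symmetric spaces. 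Thus $Y$ is its noncompact dual --- a Hermitian Mumford--Tate subdomain of $D$ --- as asserted. One moral of the argument is that the force of ``strongly quasi--projective'' is to secure simultaneously the algebraicity of $\phi$, the semisimplicity of $\mathbf H$, and the availability of controlled boundary limits of $X$; weakening it is presumably why the alternative route via \cite{KR2} leading to Proposition \ref{Prop B} must instead impose an arithmetic hypothesis on the ``$\bQ$--types''.
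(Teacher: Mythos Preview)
The paper does not prove this proposition.  Look at the sentence immediately preceding it: ``Recall the result of Friedman and Laza.''  Proposition~\ref{Prop A} is simply quoted from \cite{FL} as background, and the authors then prove their own analogue (Proposition~\ref{Prop B}) under the alternative hypothesis that $X$ is a smooth \emph{Schubert} variety.  So there is no ``paper's own proof'' to compare against here; you have attempted to reconstruct a proof of the Friedman--Laza theorem itself.

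As to your sketch: the overall architecture --- pass to the generic Mumford--Tate group $\mathbf M$, show $Y$ is the full $\mathbf M(\bR)$--orbit, then deduce Hermitian from horizontality via abelian nilradical --- is the right shape, and your final paragraph is correct.  But you are candid that the step $Y=D_{\mathbf M}$ is the crux, and your plan for it (Schmid's orbit theorems plus curvature rigidity to force the tangent spaces into a single conjugacy class) is more of a wish list than an argument.  The actual Friedman--Laza proof hinges on a different mechanism: the strong quasi-projectivity hypothesis makes $Y$ \emph{semi-algebraic} in $D$, and a closed horizontal semi-algebraic subvariety of $D$ is shown to be algebraic; once $Y$ is algebraic (not just $X=\overline{Y}$ in $\check D$, but $Y$ itself inside $D$), one can invoke structure theory for algebraic group actions to get homogeneity.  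Your curvature/nilpotent-orbit route does not obviously supply this, and without it the gap between ``$X$ is a smooth projective variety containing $Y$ as an open piece'' and ``$Y$ is a single $\mathbf M(\bR)$--orbit'' remains.  If you want to pursue this, consult \cite{FL} directly rather than the present paper.
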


\noindent Motivated by the second author's study of ``Schubert VHS'' \cite{MR3217458}, the authors showed the apparently unrelated result \cite[Theorem 1.3]{KR2} that for $X$ a (horizontal) \emph{Schubert} variety, $X$ \emph{is smooth if and only if it is a} (homogeneously embedded, possibly reducible) \emph{Hermitian symmetric space}. However, if we assume that $D$ is a MTC domain, this result has the following corollary --- clearly analogous to Proposition \ref{Prop A}, but with a stronger assumption on $X$ and none on $Y$:

\begin{proposition} \label{Prop B}
If $X$ is a smooth Schubert variety, then up to translation by $G_{\bR}$, $Y$ is a (necessarily Hermitian) Mumford--Tate subdomain of $D$.
\end{proposition}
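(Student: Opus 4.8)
The plan is to leverage the two results invoked just above the statement: the Friedman--Laza Proposition~\ref{Prop A}, and the authors' structure theorem \cite[Theorem~1.3]{KR2} for smooth horizontal Schubert varieties. The strategy is to show that when $D$ is a MTC domain, a smooth horizontal Schubert variety $X$ automatically satisfies the hypotheses of Proposition~\ref{Prop A} on some translate --- in fact the quasi-projectivity hypothesis becomes free, because $X$ is (a translate of) the compact dual of a sub-MT-domain and hence $Y$ maps \emph{finitely}, indeed with closed image, into $\Gamma\backslash D$.

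First I would recall from \cite[Theorem~1.3]{KR2} that a smooth horizontal Schubert variety $X\subset\check D$ is, as an abstract variety, a (possibly reducible) homogeneously embedded Hermitian symmetric space $\check D' = G'/P'$ for a semisimple subgroup $G'\subset G$; concretely $X = G'\cdot o'$ for a suitable point $o'$ in the Schubert cell, and the Schubert-cell structure identifies $T_{o'}X$ with a $\fg'^0$-submodule of $\fg^{-1}/\fp$ that is bracket-closed in the appropriate sense. The key point here is that $X$ being a Schubert variety means it is defined over the rational structure coming from $\fh$, and --- this is where the MTC hypothesis enters --- since $G_\bQ$ is a MTC group, the subgroup $G'$ (being generated by root subgroups $\fg^{\pm\a}$, each of which is defined over $\bQ$ by Remark~\ref{rem:MTC}) is itself $\bQ$-algebraic. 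Thus $G'_\bR$ acts on $X\cap D$, and I would next verify that some connected component $Y$ of $X\cap D$ is an open $G'_\bR$-orbit, i.e. that $Y = G'_\bR\cdot o'$ is a flag domain in $\check D' = X$; this uses that the grading element $\ttE$ restricts to a grading element of $\fg'$ satisfying the parity conditions \eqref{E:DcDn} (possibly after translating $o'$ into the relevant Weyl chamber for $\fg'$, which is the source of the ``up to translation by $G_\bR$'' in the statement), so that $Y$ carries a polarized Hodge structure making it a sub-MT-domain. Because $\check D'$ is Hermitian symmetric (IPR trivial on $X$), $Y$ is automatically Hermitian.

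With $Y$ identified as an open $G'_\bR$-orbit in the projective variety $X$, the arithmetic quotient picture is automatic: intersecting an arithmetic $\Gamma\subset G_\bQ$ with $G'_\bQ$ gives an arithmetic $\Gamma'\subset G'_\bQ$, and $\Gamma'\backslash Y$ is a locally Hermitian symmetric space, which by Baily--Borel is quasi-projective; its image in $\Gamma\backslash D$ is therefore strongly quasi-projective. Applying Proposition~\ref{Prop A} to this $Y$ (or directly observing that the construction has already exhibited $Y$ as a Hermitian MT subdomain) completes the argument. In fact, once $Y$ is known to be an open $G'_\bR$-orbit with $G'_\bQ$ a sub-MT-group, one need not re-invoke Proposition~\ref{Prop A} at all --- the MT subdomain structure is manifest --- but citing it makes the parallel with Proposition~\ref{Prop A} transparent and absorbs the verification that the Hodge structure at $o'$ polarizes correctly.

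The main obstacle, and the step I would spend the most care on, is the passage from ``$X$ is abstractly a homogeneous Hermitian space $G'/P'$'' to ``$G'$ is defined over $\bQ$ and $X\cap D$ contains an open $G'_\bR$-orbit that is a \emph{polarized} sub-Hodge-domain.'' The rationality of $G'$ is where the MTC hypothesis is genuinely used (a general MT domain would not suffice, as Remark~\ref{rem:MTC} stresses), and one must check that the semisimple subgroup cut out by the Schubert data is generated by the $\bQ$-rational root subalgebras $\fg^\a\oplus\fg^{-\a}$ rather than by something only defined over $\bQ(i)$ or $\bR$. The polarization/parity check --- that the restricted grading element lands in the correct Weyl chamber so that \eqref{E:DcDn} holds for $\fg'$ --- is the reason the conclusion is stated only up to $G_\bR$-translation, and getting this chamber bookkeeping right (for possibly reducible $X$, so $G'$ a product) is the fiddly part of the write-up.
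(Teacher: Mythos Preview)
Your approach is valid but takes a genuinely different route from the paper's. You reduce to Proposition~\ref{Prop A} by invoking Baily--Borel on the arithmetic quotient $\Gamma'\backslash Y$ to verify strong quasi-projectivity. The paper instead gives a direct, self-contained argument that avoids both Friedman--Laza and Baily--Borel: after translating $Y$ through the base point $o_0$ and enlarging $G'_\bR$ by the Hodge circle $\varphi_0(S^1)$ to $\tilde G'_\bR$ (which is $\bQ$-algebraic by the MTC hypothesis), it computes the Mumford--Tate group $\tilde G''_\bQ$ at a \emph{very general} point of $Y$ via a Lie-algebra closure argument. The key observation is that $\tad(\ttT)|_{\fg'_{-1}} = -\one$, so the $\bQ$-Lie-algebra closure of a generic $\tad(\fg'_\bR)$-translate of $\ttT$ must contain all of $\fg'_{-1}$, hence $\tilde G''_\bR$ acts transitively on $Y$.

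What each buys: your route makes the parallel with Proposition~\ref{Prop A} explicit and offloads the ``why is it a MT subdomain'' question onto Friedman--Laza, at the cost of depending on their result and on Baily--Borel. The paper's route keeps Proposition~\ref{Prop B} logically independent of Proposition~\ref{Prop A} (so the two really are ``apparently unrelated'' results with a common source in \cite{KR2}), and exposes the concrete mechanism by which the generic MT group fills out $\tilde G'$. One caution about your parenthetical ``direct'' alternative: knowing that $G'_\bQ$ is $\bQ$-algebraic and acts transitively on $Y$ does \emph{not} by itself make $Y$ a MT subdomain --- you still need that $G'_\bQ$ (or $\tilde G'_\bQ$) is the MT group of some point of $Y$, which is exactly what the paper's Lie-algebra closure step supplies.
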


\begin{proof}
Translate $Y$ to pass through the distinguished base point $o_0$ of $D$ ($\S$\ref{S:HSgZ}), with corresponding grading element $\ttT$ and Hodge structure $\varphi_0: S^1\to G_\bR$.  By \cite[Theorem 1.3]{KR2}, the Schubert variety $X$ is a compact Hermitian symmetric space of the form $X = G'\cdot o$, where $G' \subset G$ is a semisimple subgroup corresponding to a subdiagram of the Dynkin diagram of $\fg$.  We have $Y=(G'_{\bR})^{\circ}\cdot o_0$.\footnote{Here the superscript $\circ$ denotes the connected identity component.} Since $\fg'=\fg'_1\oplus \fg'_0\oplus\fg'_{-1}$ decomposes into a direct sum of $\fh \cap \fg'$ and a subset of $\fh$--root spaces, it follows that $\fg'$ is stable under $\ttT \in \fh$.  Therefore, $\varphi_0(S^1)$ normalizes $G'_{\bR}$, and we set $\tilde{G}'_{\bR}\dfn \varphi_0(S^1)\cdot G'_{\bR}$.  By Remarks \ref{rem:MTC} and \ref{rem:MTCD}, this is underlain by a $\bQ$-algebraic group $\tilde{G}'_{\bQ}$.

Now since $\tad(\ttT)|_{\fg'_{-1}} = -\one_{\fg'_{-1}}$, the (complexified) $\bQ$--Lie-algebra-closure of a very general $\tad(\fg'_{\bR})$--translate of $\ttT$ contains $\fg'_{-1}=\tilde{\fg}'_{-1}$.  Therefore, so does the $\bC$--Lie algebra of the MT group $\tilde{G}''_{\bQ}$ of a very general $\tilde{G}'_{\bR}$-conjugate of $\varphi_0$.  It follows that $Y=(\tilde{G}''_{\bR})^{\circ}\cdot o_0$, where $\tilde{G}''_{\bQ}$ is the MT group of a very general point (i.e., Hodge structure) in $Y$; that is, $Y$ is a connected MT subdomain.
\end{proof}

\begin{remark}\label{rem:Corcon}
(i) The converse of Proposition \ref{Prop B} fails dramatically:  the compact dual $\check D'\subset \check D$ of a horizontal (Hermitian) MT subdomain $D'\subset D$ need not be a Schubert variety, even when $\check D'$ is a maximal integral of the IPR.

(ii) We expect that Proposition \ref{Prop B} itself fails for more general (non-MTC) MT domains, to an extent which should be describable in terms of the action of $\mathit{Gal}(\bar{\bQ}/\bQ)$ in the Dynkin diagram $\sD$.
\end{remark}

\noindent A large class of examples illustrating (i), called \emph{enhanced $\tSL_2$-orbits}, is constructed below in $\S$\ref{S:sl2con}. The simplest one is probably the $\tSL_2\times \tSL_2$ subdomain in the $G_2$-adjoint variety (see $\S\S$\ref{S:adII}-\ref{S:[X(N)]eg}). The value of having such explicit counterexamples is that, being horizontal, they can be expanded in nonnegative integer combinations of horizontal Schubert classes in $H^*(\check{D},\mathbb{Z})$, yielding a submonoid of smoothable classes in each degree.

\subsection{Boundary components and the naive limit map}\label{S:B+D}

First we review the definitions and recall some results from \cite{KP2012, MR3331177}.  Fix a MT domain $D \subset \check D$ and pairwise commuting nilpotents $N_1,\ldots,N_m \in \fg_\bQ$.  Let $\sigma\dfn \bQ_{\geq 0}\langle N_1,\ldots,N_m\rangle\subset \fg_{\bQ}$ be the rational nilpotent cone, with interior $\sigma^{\circ}\dfn \bQ_{>0}\langle N_1,\ldots ,N_m\rangle$.  By Theorem \ref{T:cks}(b), $W_{\sb}(\sigma)\dfn W_{\sb}(N)$ is independent of $N\in \sigma^{\circ}$.  Regarding\footnote{See $\S$\ref{S:no} for relevant definitions}
$$
  \tilde B(\sigma) \ \dfn \ 
  \{F^\sb \in \check D \ | \ (F^\sb ;N_1 ,\ldots,N_m ) \hbox{ is a nilpotent orbit} \}
$$
as a set of (limiting) $\bQ$-mixed Hodge structures $(V_{\bQ},F^\sb,W_{\sb}(\sigma))$, it makes sense to consider its generic  MT group (or that of some subset).  The \emph{boundary component} associated to  $\sigma$ is defined by
$$
  B(\sigma) \ \dfn \ \tilde B(\sigma) / \exp(\bC\,\sigma) \, ;
$$
the points of $B(\sigma)$ are $\sigma$-nilpotent orbits, or equivalently LMHS up to ``reparametrization'' (i.e. modulo $e^{\bC\,\sigma}$).  When $m=1$, we shall replace $\sigma$ everywhere in our notation by $N_1$; e.g. $\tilde{B}(N_1)$.  By \cite[Remark 5.6]{MR3331177}, we have $\tilde{B}(\sigma)=\cap_{N\in \sigma^{\circ}}\tilde{B}(N)$.

Let $D(\sigma)$ be the MT domain parameterizing the split polarized HS $\op_\ell (\tGr_\ell W_\sb(\sigma) , F^\sb)$. We have a natural map $\tilde{\rho}:\,\tilde{B}(\sigma)\twoheadrightarrow D(\sigma)$ given by taking the associated graded Hodge structure.  A choice of $\bQ$-split base point in $\tilde{B}(\sigma)$ gives rise to a section $D(\sigma)\hookrightarrow\tilde{B}(\sigma)$ of $\rho$, and we shall sometimes blur the distinction between $D(\sigma)$ and its image.

 Regarding $N_1,\ldots,N_m$ as elements of $\tEnd(\fg)$, let $W_\sb(\sigma)_\fg$ be the corresponding weight filtration of $\fg$.  Let $Z(\sigma) \subset G$ be the centralizer of $\sigma$, and let $\fz(\sigma)$ denote the Lie algebra of $Z(\sigma)$.  Note that 
\begin{equation} \label{E:zNinW0}
  \fz(\sigma) \ \subset \ W_0(\sigma)_\fg \,.
\end{equation}
Let $W_\sb(\sigma)_\fz = \fz(\sigma) \cap W_\sb(\sigma)_\fg$ be the induced filtration of $\fz(\sigma)$.  Then
$$
  \fz(\sigma) \ \simeq \ \bigoplus_{\ell\ge0} \fz_{-\ell}(\sigma)\,,\quad\hbox{where}\quad
  \fz_{-\ell}(\sigma) \ \dfn \ W_{-\ell}(\sigma)_\fz / W_{-\ell-1}(\sigma)_\fz \,.
$$
There is a natural tower of fibrations (factoring $\tilde{\rho}$)
\begin{equation} \label{E:tower}
  B(\sigma) \ \sur \ \cdots \ \sur \ B(\sigma)_{(k)} \ \stackrel{\rho_{\sigma}^{(k)}}{\sur} \ B(\sigma)_{(k-1)} \ \sur \ \cdots
  \ \sur \ B(\sigma)_{(1)} \ \stackrel{\rho_{\sigma}^{(1)}}{\sur} \ D(\sigma)
\end{equation}
with $\rho_{\sigma}^{(k)}$--fibre $\mathfrak{F}^{(k)}$ through $F^\bullet$ (mod $e^{\bC\,\sigma}$) equal to 
\begin{equation} \label{E:fibre}
  \mathfrak{F}^{(k)} \ = \ 
  \frac{\fz_{-k}(\sigma)}{F^0\,\fz_{-k}(\sigma)}\,,
  \quad \hbox{for } k \not=2 \,,\tand
  \mathfrak{F}^{(2)} \ = \ 
  \frac{\fz_{-2}(\sigma)}{\bC\,\sigma \, \op \, F^0\,\fz_{-2}(\sigma)} \,.
\end{equation}
(cf. \cite[$\S$7]{KP2012}).

Now let $\sigma\dfn \sigma_s = \tspan_{\bQ_{\ge0}}\{ N_1 , \ldots , N_s\}$ be the cone of Theorem \ref{T:matsuki}, with $N_j \in {}_s\fg^{-\b_j}$, and regard ${}_s F^\sb$ (\cf\eqref{E:FW}) as a $\bQ$-split base point in $\tilde{B}(\sigma)$.\footnote{Though $o_s\in\check D$ corresponds to this filtration, here we shall write ${}_s F^\sb$ for the point of $(D(\sigma)\subset )\,\tilde{B}(\sigma)$ and $o_s$ for the point of $\check D$.}  The Lie algebra $\fg_{B(\sigma)}$ of the generic MT group $G_{B(\sigma)}$ of $D(\sigma)$ may be described as follows (cf. \cite[$\S\S$4-5]{KP2012}).  Let $\ttE$ and $\ttY = \sum_{j=1}^s \ttH^{\b_j}$ be the grading elements of \eqref{E:gp}${}_{j=s}$ and \eqref{E:gl}${}_{j=s}$.  Set $\phi = \bi (\ttE - \bar \ttE)$ (which is equal to $\bi(2\ttE - \ttY)$ by Lemma \ref{L:EbarE}).  Then $\fg_{B(\sigma)}$ is the $\bQ$--Lie algebraic closure in $\fz_0(\sigma)=\cap_{j=1}^s \Gamma^0_{\b_j}$ of 
\[
  Z_0(\s)_\bR \cdot \bC\phi \,.
\]
A consequence of our construction (and Remark \ref{rem:MTCD}) is that $\phi\in \bi\fg_{\bR}$, and the roots of $\fh_s$ in $\fz_0 (\sigma)$ come in conjugate pairs defined over $\bQ (\mathbf{i})$.  Therefore, $\fg_{B(\sigma)}$ is the ($\bC$--)Lie algebraic closure of
\[
  \bC\phi \ + \
  \sum_{\tiny \begin{array}{c} \beta\in\Delta(\Gamma_s) \\ \beta(\phi)\neq 0 \end{array} } \fg^{\beta}\, ,
\]
and so $\fg_{B(\sigma)}\subseteq \Gamma_s + \bC \phi $ while $\fg^{ss}_{B(\sigma)}\subseteq \Gamma_s$.  In most cases of interest we will have equality.  In any case\footnote{The discrepancy between $\fg_{B(\sigma)}$ and $\Gamma_s + \bC\phi$ lies in $\ker(\tad\,\phi)$, and therefore stabilizes ${}_s F^\sb$.} we have
\[
D(\sigma) = G_{s,\bR} \cdot{}_s F^\sb \cong G_{s,\bR} / G_{s,\bR}\cap P
\]
(where $\mathit{Lie}(G_s)=\Gamma_s$), and this gives an open subset in the homogeneous variety
\[
\check D(\sigma)\dfn  G_s \cdot{}_s F^\sb \cong G_s / G_s \cap P .
\]
Now we could regard $\check D(\sigma)$ as a subvariety of $\check D$ by identifying it with $\check D_s \dfn  G_s \cdot o_s \subset \check D$ (cf. Remark \ref{R:cusp}).  However, for some purposes it is better to regard it as a separate variety and map it into $\check D$ in a different way:

\begin{definition} \label{D:naive}
Following \cite{MR3331177, MR3115136},\footnote{The term \emph{reduced limiting period mapping} is used in \cite[App. to $\S$10]{MR3115136}, but we prefer to reserve the term ``period mapping'' for Hodge--theoretic classifying maps associated to families of motives.  They are ``naive'' because they replace the limit MHS of a nilpotent orbit by its (much coarser) actual limit, which students in Hodge theory are trained to \emph{not} take.} the \emph{naive limit map} \[ \Phi^{\sigma}_{\infty}:\, B(\sigma)\to \tbd (D) \] sends $F^\sb\mapsto \lim_{\text{Im}(z)\to \infty} e^{zN} F^\sb$, which is independent of the choice of $N\in \sigma^{\circ}$, \cf\cite[Remark 5.6]{MR3331177}).
\end{definition}

Though $\Phi^{\sigma}_{\infty}$ is not isomorphic onto its image (denoted $\hat{B}(\sigma)$ in \cite{MR3331177}), its restriction to $D(\sigma)$ is a $G_{s,\bR}$--equivariant isomorphism, which extends to a $G_s$--equivariant embedding of $\check D(\sigma)$ into $\check D$.  However, the image of this embedding is not $\check D_s$.

To present  $\check D_s$ as the image of a naive limit map, recall the commuting standard triples $\{ N_j^+,\ttH^{\b_j},N_j \}$ from the proof of Theorem \ref{T:matsuki}, and put $\tilde{\sigma}\dfn \bQ_{\geq 0}\langle -N_1^+,\ldots ,-N_s^+\rangle $.  Setting ${}_s \tilde{\fg}^{p,q} \dfn  {}_s \fg^{-q,-p}$, ${}_s \tilde{F}^a \dfn  \oplus_{p\geq a;\, q\in \mathbb{Z}} \,{}_s \tilde{\fg}^{p,q}$, ${}_s \tilde{W}_b \dfn \oplus_{p+q\leq b}\, {}_s \tilde{\fg}^{p,q}$, and reasoning as in the proof of \cite[Theorem  5.15]{MR3331177}, one shows that:
\begin{bcirclist}
\item ${}_s \tilde{W}_{\sb} = W_{\sb}(\tilde{\sigma})_{\fg}$ is graded by $\tilde{\ttY}\dfn -\ttH^{\b_1}-\cdots -\ttH^{\b_s}$;

\item $( {}_s \tilde{F}^\sb ; -N_1^+,\ldots ,-N_s^+ )$ is a nilpotent orbit, and $( {}_s \tilde{F}^\sb, W_{\sb}(\tilde{\sigma})_{\fg} )$ is $\bQ$-split; and

\item $\Phi^{\tilde{\sigma}}_{\infty}$ sends ${}_s \tilde{F}^\sb$ to ${}_s F^\sb$.
\end{bcirclist}
We conclude that $D(\tilde{\sigma})$ and $\check D(\tilde{\sigma})$ are the $G_{s,\bR}$ and $G_s$--orbits of ${}_s \tilde{F}^\sb$, respectively, and $\Phi_{\infty}^{\tilde{\sigma}}$ restricts to a $G_s$-equivariant isomorphism from $\check D(\tilde{\sigma})$ to $\check D_s$.

\subsection{Construction of the enhanced $\tSL_2$-orbits} \label{S:sl2con}

The above discussion gave rise to identifications of $\check D(\sigma)$ with the two subsets $\check D_s$ and $\overline{\Phi^{\sigma}_{\infty}(D(\sigma))}^{\text{Zar}}$ of $\check D$; we shall now ``interpolate'' them.  Via the section $D(\sigma)\hookrightarrow \tilde{B}(\sigma)\twoheadrightarrow B(\sigma)$; each point $g\cdot{}_s F^\sb \in D(\sigma)$, with $g\in G_{s,\bR}$, produces a $\sigma$--nilpotent orbit $e^{\bC\, \sigma}g\cdot o_s\in\check D$ with $\bQ$-split LMHS.  Taking the Zariski closure of the union of these, we have the

\begin{definition}\label{D:ESL2}
The \emph{enhanced (multivariable) $\tSL_2$-orbit} associated to $(o_s,\sigma)$ is 
\[
  X(\sigma) \ \dfn \ \overline{e^{\bC \,\sigma}G_s\cdot o_s}^{\text{Zar}} 
  \ = \ \tilde{G}_s \cdot o_s \,,
\]
where 
\[
  \tilde{G}_s \ \dfn \  G_s \times (\prod_{j=1}^s \tSL_2^{\b_j} ) \,.
\]
\end{definition}

Note that $X(\sigma)$ does not identify with a subset of $\tilde{B}(\sigma)$, though $e^{\bC \,\sigma}G_s\cdot o_s$ does.

\begin{proposition}\label{P:X(sigma)}
The enhanced $\tSL_2$--orbit $X(\sigma)$ is the compact dual of a  Mumford--Tate subdomain $Y(\sigma)\subset D$; in particular, it is smooth.
\end{proposition}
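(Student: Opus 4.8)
The plan is to realize $Y(\sigma)$ as the $\tilde G_{s,\bR}$--orbit of an explicit polarized Hodge structure lying in $D$, and to identify $X(\sigma)$ with the corresponding compact dual.

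First I would set up the ambient group. By $\S$\ref{S:a-prim}, $\Gamma_s=\bigcap_{j=1}^s\Gamma_{\b_j}$ is reductive and defined over $\bQ$; being the weight--zero $\prod_j\fsl_2^{\b_j}$--primitive subspace of $\fg$, it commutes with every $\fsl_2^{\b_j}$, while strong orthogonality of $\cB$ gives $[\fsl_2^{\b_i},\fsl_2^{\b_j}]=0$ for $i\ne j$. Hence $\tilde\Gamma_s\dfn\Gamma_s\oplus\bigoplus_{j=1}^s\fsl_2^{\b_j}=\mathit{Lie}(\tilde G_s)$ is a reductive $\bQ$--subalgebra of $\fg$. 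Since the nilnegative elements satisfy $N_j\in\fsl_2^{\b_j}$, the group $e^{\bC\,\sigma}=\prod_j\exp(\bC N_j)$ lies in $\tilde G_s$, so $e^{\bC\,\sigma}G_s\cdot o_s$ sits inside the single orbit $\tilde G_s\cdot o_s$. Moreover \eqref{E:sorth} gives $\fh_s=\bigl(\bigcap_k\ker\b_k\bigr)\oplus\tspan_\bC\{\ttH^{\b_1},\dots,\ttH^{\b_s}\}\subset\tilde\Gamma_s$, so the grading element $\ttE\in\fh_s$ lies in $\tilde\Gamma_s$ and $\tilde\Gamma_s\cap\fp_s=\tilde\Gamma_s^{\ge0}$ is parabolic in $\tilde\Gamma_s$. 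Thus $\tilde G_s\cdot o_s$ is Zariski closed and equals the flag variety $\tilde G_s/(\tilde G_s\cap P_s)$; a routine density check (each $\overline{\exp(\bC N_j)\cdot x}^{\text{Zar}}$ recovers the full $\tSL_2^{\b_j}$--orbit of $x$) then confirms the equality $X(\sigma)=\tilde G_s\cdot o_s$ already written in Definition \ref{D:ESL2}.

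Next I would choose the base point. By Theorem \ref{T:matsuki}, $(o_s;N_1,\dots,N_s)$ is a nilpotent orbit, so $o'\dfn\exp\bigl(\bi\,c\textstyle\sum_j N_j\bigr)\cdot o_s\in D$ for $c\gg0$; since $\exp(\bi\,c\sum_j N_j)\in e^{\bC\,\sigma}\subset\tilde G_s$ we get $\tilde G_s\cdot o'=\tilde G_s\cdot o_s=X(\sigma)$. The crux is to show the Mumford--Tate group of the Hodge structure $o'$ is contained in $\tilde G_s$. Here one uses: that $(\fg_\bQ,{}_sF^\sb,{}_sW_\sb)$ is a $\bQ$--split polarized MHS (Theorem \ref{T:matsuki}, Remark \ref{R:deligne}); that the commuting standard triples $\{N_j^+,\ttH^{\b_j},N_j\}\subset\fsl_2^{\b_j}$ from the proof of Theorem \ref{T:matsuki} present $N=\sum_j N_j$ as the nilnegative of the multivariable $\tSL_2$ sitting inside $\prod_j\tSL_2^{\b_j}(\bR)$; and that the split graded Hodge structure on $\bigoplus_\ell\tGr^W_\ell\fg$ has its circle inside $G_{s,\bR}$ (since $\fg^{ss}_{B(\sigma)}\subseteq\Gamma_s$, \cf $\S$\ref{S:B+D}). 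The $\tSL_2$--orbit theorem in this $\bR$--split, several--variable setting then makes $(z_1,\dots,z_s)\mapsto\exp(\sum_j z_jN_j)\cdot o_s$ a $\tilde G_{s,\bR}$--equivariant map of $\sH^s$ into $D$, so the Weil operator and circle $\varphi_{o'}(S^1)$ of $o'$ lie in $\tilde G_{s,\bR}$; as $\tilde G_s$ is $\bQ$--algebraic, the Mumford--Tate group of $o'$ is contained in $\tilde G_s$.

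Finally I would conclude: $\tilde G_s$ is a reductive $\bQ$--group and $o'$ a polarized $\bQ$--Hodge structure whose Mumford--Tate group lies in $\tilde G_s$, so $Y(\sigma)\dfn\tilde G_{s,\bR}\cdot o'$ is a Mumford--Tate subdomain; it lies in $D$ because $\tilde G_{s,\bR}\subset G_\bR$ preserves $D$ and $o'\in D$, and it is open in its compact dual since the $\tilde G_{s,\bR}$--isotropy of $o'$ is compact (being contained in the compact isotropy $R$ of $D$), which forces $\dim_\bR(\tilde G_{s,\bR}\cdot o')=\dim_\bR X(\sigma)$. That compact dual is $\tilde G_s\cdot o'=X(\sigma)$, which is therefore smooth, being a $\tilde G_s$--flag variety. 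The step I expect to be the main obstacle is the middle paragraph: verifying precisely that tilting the $\bQ$--split polarized MHS by $\exp(\bi\,cN)$ yields a Hodge structure whose symmetries do not leak outside $\tilde G_{s,\bR}$ --- i.e.\ carefully invoking the ($\bR$--split, multivariable) $\tSL_2$--orbit theorem and checking that the subgroup it generates on the orbit is exactly $\tilde G_{s,\bR}$. The group--theoretic bookkeeping of the first paragraph and the dimension count of the last are routine.
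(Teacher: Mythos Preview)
Your approach is sound but differs substantially from the paper's. The paper observes that smoothness is immediate from the homogeneous description $X(\sigma)=\tilde G_s\cdot o_s$, and then invokes \cite[Lemma 5.14]{MR3331177}: since $o_s$ is $\bR$-split, that lemma furnishes directly a product decomposition $\check D(\sigma)\times(\bP^1)^{\times s}\cong X(\sigma)$ together with $D(\sigma)\times\sH^{\times s}\cong Y(\sigma)\subset D$, from which the generic Mumford--Tate group $G_{B(\sigma)}\times\prod_j\tSL_2^{\b_j}$ and its transitivity on $Y(\sigma)$ are read off. Your route is more self-contained --- no external citation --- but pays for this with the middle paragraph you correctly flag as the main obstacle.

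Two refinements to your version. First, that obstacle can be sidestepped: since each $g_{\b_j}\in\tSL_2^{\b_j}(\bC)\subset\tilde G_s$, one has $o_0=g_s^{-1}\cdot o_s\in X(\sigma)\cap D$, and the circle of $o_0$ already lies in the Cartan torus; the only subtlety is that $\fh_0$ may overshoot $\tilde\Gamma_s$ by the center of $\Gamma_s^0$, but that center stabilizes $o_0$ and one proceeds exactly as in the proof of Proposition~\ref{Prop B}. Second, showing that the Mumford--Tate group of $o'$ is contained in $\tilde G_s$ does not by itself make $\tilde G_{s,\bR}\cdot o'$ a Mumford--Tate subdomain; you must also check that the \emph{generic} Mumford--Tate group of this orbit acts transitively on it. Since $\tilde\Gamma_s$ is semisimple, this follows by the same $\tad$-orbit density argument appearing in the proof of Proposition~\ref{Prop B}, but it should be made explicit. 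Your openness deduction from compact isotropy becomes correct once the circle is known to lie in $\tilde G_{s,\bR}$, which is precisely what the middle step supplies.
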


\begin{proof}
The homogeneous description in Definition \ref{D:ESL2} already implies smoothness.  Since $o_s$ is $\bQ$-split (\emph{a fortiori} $\bR$-split) \cite[Lemma 5.14]{MR3331177} yields a diagram
\[
\begin{array}{ccccc} 
  \check D(\sigma)\times (\mathbb{P}^1)^{\times s} 
  & \buildrel{\simeq}\over{\to} & X(\sigma) & \subset & \check D \\ 
  \cup & & \cup & & \cup \\ 
  D(\sigma)\times \sH^{\times s} & \buildrel{\simeq}\over{\to} 
  & Y(\sigma) & \subset & D 
\end{array}
\]
in which the bottom row is given by 
\[
  (g\cdot{}_s F^\sb\,;\n z_1,\ldots ,z_s) \ \mapsto \ 
  \exp(\textstyle\sum s_j N_j) g\cdot{}_s F^\sb\,.
\]
(Note that the $\mathbb{P}^1$ factors need not be minimally embedded in $\check D$.)  Moreover, it is clear that the generic MT group of $Y(\sigma)$ is $G_{B(\sigma)}\times (\times_{j=1}^s SL_2^{\b_j})$, which acts transitively on $Y(\sigma)$.
\end{proof}

\begin{lemma} \label{L:X(sigma)}
The enhanced $\tSL_2$--orbit $X(\sigma)$ is horizontal (and Hermitian) if and only if $-1\leq \ttE (\a)\leq 1$ for every $\a\in\Delta$ strongly orthogonal to $\{\b_1,\ldots,\b_s\}$.
\end{lemma}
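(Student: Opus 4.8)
The plan is to translate the horizontality condition $T_{o_s}X(\sigma)\subset T^1_{o_s}\check D$ into a statement about $\ttE$-eigenvalues, using the homogeneous description $X(\sigma)=\tilde G_s\cdot o_s$ from Definition~\ref{D:ESL2}. Since $X(\sigma)$ is homogeneous under $\tilde G_s = G_s\times\prod_j \tSL_2^{\b_j}$ and both $G_s$ and the $\tSL_2^{\b_j}$ commute with each other (strong orthogonality of $\cB$), it suffices to check horizontality at the single point $o_s$; there the tangent space is the image of $\tilde\fg_s=\Gamma_s\oplus\bigoplus_j\fsl_2^{\b_j}$ in $\fg/\fp_s$ (recall $\fp_s=g_sPg_s^{-1}$, and everything is computed with respect to $\fh_s$ so that $\ttE$ is the grading element defining $\fp_s$). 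Because $T^1$ is the $G$-homogeneous subbundle with fibre $\fg^{\ge -1}/\fp$, horizontality of $X(\sigma)$ at $o_s$ is exactly the condition that every root space $\fg^\a$ appearing in $\tilde\fg_s$ satisfies $\a(\ttE)\ge -1$ — equivalently, since $\tilde\fg_s$ is stable under $-\one$ (it is a sum of semisimple subalgebras, so $\fg^\a\subset\tilde\fg_s\iff\fg^{-\a}\subset\tilde\fg_s$), that $|\a(\ttE)|\le 1$ for all such $\a$.

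First I would identify which root spaces occur in $\tilde\fg_s$. The roots of $\Gamma_s=\bigcap_j\Gamma^0_{\b_j}$ are precisely the roots strongly orthogonal to all of $\b_1,\dots,\b_s$ ($\S$\ref{S:a-prim}, together with \eqref{E:Gammaj}), and the roots occurring in $\fsl_2^{\b_j}$ are $\pm\b_j$, which by \eqref{E:a=1} satisfy $\b_j(\ttE)=1$. Hence the set of roots in $\tilde\fg_s$ is $\{\pm\b_1,\dots,\pm\b_s\}\cup\{\a\in\Delta:\a\ \text{strongly orthogonal to all}\ \b_j\}$. Combining this with the previous paragraph: $X(\sigma)$ is horizontal iff $|\a(\ttE)|\le 1$ for every $\a$ strongly orthogonal to $\{\b_1,\dots,\b_s\}$ — exactly the stated criterion — since the contribution of the $\pm\b_j$ automatically satisfies $|\b_j(\ttE)|=1$.

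For the Hermitian assertion, I would invoke Proposition~\ref{P:X(sigma)}: $X(\sigma)=\check Y(\sigma)$ is the compact dual of the MT subdomain $Y(\sigma)$, whose generic MT group is $G_{B(\sigma)}\times\prod_j\tSL_2^{\b_j}$ acting transitively. A MT domain is Hermitian symmetric exactly when its horizontal distribution is the full tangent bundle — i.e. when its compact dual is horizontal in itself — so once $X(\sigma)$ is horizontal in $\check D$, the induced distribution on $Y(\sigma)$ is the whole tangent space and $Y(\sigma)$ is Hermitian symmetric. (Alternatively one reads this off the bigrading: under the eigenvalue bound, $\tilde\fg_s$ has only $\ttE$-eigenvalues $-1,0,1$, so $\tilde\fg_s^{-1}$ generates nothing beyond itself and the parabolic $\fp_s\cap\tilde\fg_s$ has abelian nilradical, which characterizes the Hermitian case.)

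The main obstacle I anticipate is bookkeeping around which Cartan subalgebra and which grading element are in play: $\ttE$, $\ttH^{\b_j}$, $\fp_s$, and the "strongly orthogonal" condition must all be read relative to $\fh_s=\bc_s(\fh)$, and one must be careful that the Cayley transforms do not disturb the $\fsl_2^{\b_j}$ (this is \eqref{E:c=1}) nor the subalgebra $\Gamma_s$ (where they restrict to the identity, $\S$\ref{S:a-prim}). Once one is disciplined about this — noting that $\bc_s$ fixes $\Gamma_s$ pointwise and permutes/fixes the $\fsl_2^{\b_j}$, so the root-space description of $\tilde\fg_s$ is the same whether computed with $\fh$ or $\fh_s$ — the argument is a short unwinding of definitions. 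A secondary point to handle cleanly is the clause "$\tilde\fg_s$ is stable under $-\one$ on roots": this holds because $\Gamma_s$ is reductive (its root system is symmetric) and each $\fsl_2^{\b_j}$ contains both $\fg^{\b_j}$ and $\fg^{-\b_j}$, so the reduction from "$\a(\ttE)\ge -1$" to "$|\a(\ttE)|\le 1$" is legitimate.
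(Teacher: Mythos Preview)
Your proposal is correct and follows essentially the same route as the paper: reduce horizontality of the homogeneous subvariety $X(\sigma)=\tilde G_s\cdot o_s$ to the condition $|\ttE(\a)|\le 1$ on all roots of $\tilde\fg_s$, then observe that the $\pm\b_j$ satisfy this automatically by \eqref{E:a=1}, leaving exactly the strongly-orthogonal roots. The paper's proof is a bare list of four equivalent conditions; your version supplies the connecting tissue (homogeneity reduces to one point, symmetry of the root system of $\tilde\fg_s$ turns $\a(\ttE)\ge -1$ into $|\a(\ttE)|\le 1$, and the Cayley-transform bookkeeping) and also spells out the Hermitian clause, which the paper leaves implicit.
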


\begin{proof}
The following are equivalent:
\begin{bcirclist}
\item 
$X(\sigma)$ is horizontal.
\item 
$Y(\sigma)$ is horizontal.
\item 
$-1 \le \ttE(\a)\le 1$ for all $\a\in\Delta(\mathit{Lie}(\tilde{G}_s))$.
\item  
$-1 \le \ttE(\a)\le 1$ for all $\a\in\Delta(\mathit{Lie}(G_s))=\Delta(\Gamma_s)=\cap_j\,\Delta(\Gamma_{\b_j})$, \cf $\S$\ref{S:a-prim}.
\end{bcirclist}
\end{proof}

In \cite[Appendix A]{KR1long} we determine the bigradings $\{ {}_s \fg^{p,q}\}$ for $s=1$ and $P$ a maximal parabolic.  The condition of the lemma fails in many cases, but holds for a sizable subset which includes all the adjoint varieties (including type $A$, where the parabolic is not maximal).  We will elaborate on these cases in $\S\S$\ref{S:adI}-\ref{S:adII}.

One reason for studying the $X(\sigma)$ is to express classes of (effective linear combinations of) Schubert varieties in terms of ``simpler'' objects.  Of particular interest is the horizontal case, where $[X(\sigma)]$ can be written in terms of classes of horizontal Schubert varieties \cite{MR3217458}.  Here ``simpler'' might mean that $X(\sigma)$ is smooth while the Schubert varieties of a given dimension are singular; or it could simply be the case that $X(\sigma)$ (note its $\mathbb{P}^1$ factors) is decomposable while the $\{ X_w\}$ are not.  Along these lines we have the easy

\begin{proposition}
If $\check D=G/P$ with $P$ a maximal parabolic corresponding to a non-short simple root, then the $\{ X(\sigma)\}$ of dimension at least $2$ are never Schubert varieties.
\end{proposition}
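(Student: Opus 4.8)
The plan is to separate $X(\sigma)$ from every Schubert variety of $\check D$ by means of a single discrete invariant, the second Betti number. Throughout, ``$X(\sigma)$'' refers to the enhanced $\tSL_2$--orbit attached to a \emph{nontrivial} cone, i.e. with $s=|\cB|\ge 1$ (taking $\cB=\emptyset$ would just return $\check D$ itself). First I would recall from the proof of Proposition~\ref{P:X(sigma)} the isomorphism of varieties
\[
  X(\sigma) \ \cong \ \check D(\sigma)\,\times\,(\bP^1)^{\times s}\,,
\]
in which $\check D(\sigma)=G_s/(G_s\cap P)$ is the rational homogeneous variety of the connected semisimple group $G_s$ with Lie algebra $\Gamma_s$. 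Since rational homogeneous varieties, and finite products of such (e.g. $(\bP^1)^{\times s}$), have integral cohomology concentrated in even degrees, the K\"unneth formula gives $b_2(X(\sigma))=b_2(\check D(\sigma))+s$. A positive--dimensional rational homogeneous variety has $b_2\ge 1$ (its Picard group has rank $\ge 1$). Hence, if $\dim X(\sigma)\ge 2$, then either $s\ge 2$, or else $s=1$ and $\dim\check D(\sigma)\ge 1$; in both cases $b_2(X(\sigma))\ge 2$.

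Next I would bound $b_2$ of an arbitrary Schubert variety $X_w\subset\check D=G/P$ from above. The Bruhat decomposition realizes $X_w$ as a disjoint union of affine cells $BvP/P$ of complex dimension $\ell(v)$, one for each $v\le w$ in $W^P$; as all cells are even--dimensional, $H_\sb(X_w,\bZ)$ is free with $b_{2k}(X_w)$ equal to the number of such $v$ with $\ell(v)=k$. When $P=P_\tti$ is a maximal parabolic, $W^P$ has a single element of length one, namely the simple reflection $s_\tti$ (for $k\ne\tti$ one has $s_k\in W_P$). Therefore $b_2(X_w)\le 1$ for every Schubert variety $X_w$ in $\check D$. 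Comparing the two bounds, an $X(\sigma)$ with $\dim X(\sigma)\ge 2$ cannot be isomorphic to, and in particular cannot equal, any Schubert variety of $\check D$ --- which is the assertion.

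I do not expect a genuine obstacle: the only ingredients are the product description of Proposition~\ref{P:X(sigma)}, the (standard) freeness and evenness of the Bruhat cell structure on $X_w$, and the combinatorial observation about $W^{P_\tti}$. Two remarks on the hypotheses. The non--shortness of $\a_\tti$ is the condition (Lemma~\ref{L:X(sigma)} together with $\S$\ref{S:Pmax}) under which the $X(\sigma)$ are horizontal, so that $[X(\sigma)]$ expands in horizontal Schubert classes; this is the context that makes the statement relevant, but the argument above works equally well without that hypothesis. Alternatively one could phrase the second paragraph via $\mathrm{rk}\,\tPic(X_w)\le 1$ for a smooth Schubert variety $X_w$, using that $X(\sigma)$ is smooth by Proposition~\ref{P:X(sigma)}; the Betti--number version is cleaner and avoids invoking smoothness at all.
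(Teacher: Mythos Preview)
Your argument is correct and takes a genuinely different route from the paper's. The paper invokes the Hong--Mok classification \cite{HongMok2013}: under the non-short hypothesis, every \emph{smooth} Schubert variety in $G/P$ arises from a \emph{connected} subdiagram $\sD'\subset\sD$, hence is irreducible as a homogeneous space and cannot split off a $\bP^1$ factor; since $X(\sigma)$ is smooth (Proposition~\ref{P:X(sigma)}) and visibly does split, it cannot be a Schubert variety.

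Your $b_2$ argument is more elementary and, as you observe, strictly stronger: it uses only the Bruhat cell structure of $X_w$ and the uniqueness of the length-one element $s_\tti$ in $W^{P_\tti}$, and therefore dispenses with both the Hong--Mok result and the non-short hypothesis on $\a_\tti$. It also applies to singular Schubert varieties directly, whereas the paper's route implicitly reduces to the smooth case. The paper's approach does contribute something extra, namely the structural information that smooth Schubert varieties in this setting are \emph{themselves} rational homogeneous of the form $X(\sD')$; but for the bare statement of the proposition your invariant suffices and is cleaner.
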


\begin{proof}
By \cite[Proposition 3.7]{HongMok2013}, the smooth Schubert varieties are of the form $X(\mathcal{D}')$ for $\mathcal{D}'\subset \mathcal{D}$ a \emph{connected} subdiagram of the Dynkin diagram (\cf\cite[Remark 3.5]{KR2}).  So they do not have $\mathbb{P}^1$ factors.
\end{proof}

\subsection{Fundamental adjoint varieties I: codimension one orbits}\label{S:adI}

In the case that $\check D = G/P$ is a fundamental adjoint variety ($\S$\ref{S:AV}), a unique open $G_{\bR}$-orbit $D$ admits the structure of a MT domain (Remark \ref{rem:openGR}(ii)). Further, by Proposition \ref{P:maxP}, there is a unique codimension-one $G_\bR$--orbit $\cO_1\in \tbd(D)$. If $\a_{\tti}$ is the simple root associated to the maximal parabolic $P$, then taking $\cB = \{ \b_1 = \a_\tti\}$ yields a distinguished base point $o_1\in\cO_1$ via the construction of $\S$\ref{S:details}.

We claim that the bigradings $\fg = \op{}_1\fg^{p,q}$ associated to $o_1$ exhibit a uniform appearance for all the adjoint varieties, as indicated in Figure \ref{f:adj_bigr}.
\begin{figure}[!ht]
\caption{The bigrading $\fg = \op\,{}_1\fg^{p,q}$ for fundamental adjoint varieties.}
\setlength{\unitlength}{2.0pt}
\begin{picture}(50,50)(-22,-22)
\multiput(0,-10)(0,10){3}{\circle*{1.3}}
\multiput(10,-20)(0,10){4}{\circle*{1.3}}
\put(20,-10){\circle*{1.3}}
\multiput(-10,20)(0,-10){4}{\circle*{1.3}}
\put(-20,10){\circle*{1.3}}
\thicklines
\put(0,0){\circle{2.8}} \put(0,0){\circle{5.0}}
\multiput(-10,10)(10,0){2}{\circle{2.8}}
\multiput(0,-10)(10,0){2}{\circle{2.8}}
\multiput(-10,0)(20,0){2}{\circle{2.8}}
\thinlines
\put(0,0){\vector(1,0){23}} \put(24,0){\small{$p$}}
\put(0,0){\vector(-1,0){23}} \put(1,23){\small{$q$}}
\put(0,0){\vector(0,1){23}}
\put(0,0){\vector(0,-1){23}}
\end{picture}
\label{f:adj_bigr}
\end{figure}
There, the nodes $\bullet$ indicate a $\fg^{p,q}$ of dimension one, the once-circled nodes indicate dimension $a$, and the twice-circled node indicates dimension $b$, where the values of $a,b$ are as given in Table \ref{t:dims}.
\begin{footnotesize}
\begin{table}[!ht]
\caption{Dimensions of the $\fg^{p,q}$ in Figure \ref{f:adj_bigr}.}
\renewcommand{\arraystretch}{1.2}
\begin{tabular}{|c||c|c|c|c|c|c|c|}
  \hline
  $G$ & $B_r$ ($r\ge3$) & $D_r$ ($r\ge4$) & $E_6$ & $E_7$ & $E_8$ & $F_4$ & $G_2$ 
  \\ \hline
  {\setlength{\unitlength}{2.5pt}
   \begin{picture}(1,1)(-0.5,-0.7)
     \put(0,0){\circle*{1.3}} 
   \end{picture}
  } 
  & 1 & 1 & 1 & 1 & 1 & 1 & 1 \\ \hline
  {\setlength{\unitlength}{2.5pt}
   \begin{picture}(1,1)(-0.5,-0.7)
     \put(0,0){\circle*{1.3}} 
     \thicklines\put(0,0){\circle{2.5}} \thinlines
   \end{picture}
  } 
  & $2r-4$ & $2r-5$
  & 9 & 15 & 27 & 6 & 1 \\
  \hline
  {\setlength{\unitlength}{2.5pt}
   \begin{picture}(1,1)(-0.5,-0.8)
     \put(0,0){\circle*{1.3}} 
     \thicklines\put(0,0){\circle{2.5}} \put(0,0){\circle{3.8}} \thinlines
   \end{picture}
  } 
  & $2r^2 - 11 r + 18$ & $2r^2 - 13 r + 24$
  & 18 & 37 & 80 & 10 & 2 \\
  \hline
\end{tabular}
\label{t:dims}
\end{table}
\end{footnotesize}

Writing $\fg^{p,q}$ for ${}_1 \fg^{p,q}$ and $h^{p,q}\dfn \dim_\bC\fg^{p,q}$, we recall that $\fg^{p,q}=\fg^p\cap\fg_{p+q}$, where the grading $\fg=\oplus\fg^p$ [resp. $\fg =\oplus\fg_{\ell}$] is induced by $\ttE=\ttS^{\tti}$ [resp. $\ttH^{\b_1}=\ttH^{\a_{\tti}}$].  By Lemma \ref{L:conj+Ci} we have $h^{p,q}=h^{q,p}=h^{-q,-p}=h^{-p,-q}$. To verify Figure \ref{f:adj_bigr} we shall also need Lemma \ref{L:adjFvW} below, which gives $\dim(\fg_{\ell})=\dim(\fg^{-\ell})$. It also implies 
\[
  w(\fg^{p,q}) \ = \ w(\fg^p\cap\fg_{p+q}) \ = \ \fg^{-(p+q)}\cap\fg_{-p}
  \ = \ \fg^{-p-q,q}\,,
\]
where $w \in \sW$ is as in Lemma \ref{L:adjFvW}, and hence the additional symmetry $h^{p,q}=h^{-p-q,q}$ of the Hodge--Deligne numbers.

From $\S$\ref{S:AV}, we know that $\dim(\fg^{-2})=\dim(\fg^2)=1$ and $\dim(\fg^j)=0$ for $|j|>2$.  Hence $\dim(\fg_{-2})=\dim(\fg_2)=1$ (and $\dim(\fg_{\ell})=0$ for $|\ell |>2$), so that $h^{p,q}=h^{q,p}$ implies $\fg_{-2}=\fg^{-1,-1}$, $\fg_2=\fg^{1,1}$, $h^{1,1}=h^{-1,-1}=1$.  The ``extra symmetry'' now gives $h^{-2,1}=h^{2,-1}=h^{-1,2}=h^{1,-2}=1$, as well as $h^{1,0}=h^{-1,0}=h^{0,1}=h^{0,-1}=h^{-1,1}=h^{1,-1}=:a$.

Finally, to obtain the dimensions in Table \ref{t:dims} we solve
\begin{eqnarray*} 2a + 3 & = & \tdim_\bC\fg^- \ = \ \tdim_\bC\check D \ = \ n \,,\\  6a + b + 6 & = & \tdim_\bC \fg \ = \ N+1 \,. \end{eqnarray*}
using Table \ref{t:numerics}.

\subsection{A curious symmetry} \label{S:sym}

The main result of this section is Lemma \ref{L:adjFvW}, which asserts that (in the case of $\tti$ associated to a fundamental adjoint representation) the decompositions \eqref{E:gp}${}_{j=1}$ and \eqref{E:gl}${}_{j=1}$ are congruent under the action of the Weyl group.

The grading element corresponding to $P$ is $\ttS^\tti$.  By Table \ref{t:highest_rt}, the largest $\ttS^\tti$--eigenvalue on $\fg$ is $\tilde\a(\ttS^\tti) = 2$.  Therefore, the graded decomposition \eqref{E:gp}${}_{j=1}$ of $\fg$ into $\ttS^\tti$--eigenspaces is 
$$
  \fg \ = \ \fg^2 \,\op\, \fg^1 \,\op\, \fg^0 \,\op\, \fg^{-1} 
  \,\op\, \fg^{-2} \,.
$$
As discussed in $\S$\ref{S:Edecomp}, $\fg^2$ is one-dimensional; indeed,  
\begin{equation} \label{E:g^ta}
  \fg^{\pm2} \ = \ \fg^{\pm\tilde\a} \,.
\end{equation}

Let $\sW \subset \tAut(\rtL) \subset \tAut(\fh^*)$ denote the Weyl group of $\fg$.  Recall the $\ttH^{\a_\tti}$--eigenspace decomposition \eqref{E:gl}${}_{j=1}$, and the filtrations \eqref{E:FW}${}_{j=1}$.

\begin{lemma} \label{L:adjFvW}
There exists $w \in \sW$ such that $w(\fg_\ell) = \fg^{-\ell}$.  In particular, $w(W_\ell) = F^{-\ell}$.
\end{lemma}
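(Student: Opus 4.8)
The statement compares two gradings on $\fg$: the $\ttE = \ttS^\tti$--grading $\fg = \op_p \fg^p$ and the $\ttH^{\a_\tti}$--grading $\fg = \op_\ell \fg_\ell$. The plan is to exhibit an explicit Weyl group element carrying one to the other. The natural candidate is the reflection $r_{\tilde\a}$ in the highest root $\tilde\a$, or more precisely a short product of reflections built from $\tilde\a$ and $\a_\tti$; I would first check that $r_{\tilde\a}$ alone does the job. The key input is the relation \eqref{E:refl}, $r_\a(\b) = \b - \b(\ttH^\a)\a$, together with the two facts recorded in $\S$\ref{S:AV}: that for a fundamental adjoint variety $\fg^{\pm2} = \fg^{\pm\tilde\a}$ and that $\tilde\a = \w_\tti$, so $\tilde\a(\ttH^{\a_j}) = 0$ for $j \ne \tti$ while $\tilde\a(\ttS^\tti) = 2$; dually $\a_\tti(\ttS^\tti)=1$ and $\a_\tti(\ttH^{\a_\tti})=2$.

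The heart of the argument is a bookkeeping computation on roots. For a root $\gamma$, lying in $\fg^p$ means $\gamma(\ttE) = p$ and lying in $\fg_\ell$ means $\gamma(\ttH^{\a_\tti}) = \ell$. So I need a $w \in \sW$ with $(w\gamma)(\ttH^{\a_\tti}) = -\gamma(\ttE)$ for every root $\gamma$ (and on $\fh$ itself, which sits in $\fg^0 \cap \fg_0$, this is the trivial identity $0 = 0$, so the Cartan contributes nothing). Since $\ttE = \ttS^\tti$ pairs with a root $\gamma = \sum_j c_j \a_j$ to give $c_\tti$, and $\ttH^{\a_\tti}$ pairs to give $\gamma(\ttH^{\a_\tti}) = 2(\gamma,\a_\tti)/(\a_\tti,\a_\tti)$, the claim becomes a purely combinatorial assertion about the root system. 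I would verify that $w = r_{\tilde\a}$ works: compute $(r_{\tilde\a}\gamma)(\ttH^{\a_\tti})$ using \eqref{E:refl}, which gives $\gamma(\ttH^{\a_\tti}) - \gamma(\ttH^{\tilde\a})\,\tilde\a(\ttH^{\a_\tti})$, and try to identify this with $-\gamma(\ttE)$. Because $\tilde\a = \w_\tti$ the coefficient $\tilde\a(\ttH^{\a_\tti})$ is an explicit small integer, and $\gamma(\ttH^{\tilde\a})$ measures the $\tilde\a$--string through $\gamma$, which for a simply-laced or carefully-handled general $\fg$ takes only the values $0, \pm 1, \pm 2$. If $r_{\tilde\a}$ does not suffice on its own I would compose with $r_{\a_\tti}$ (note $r_{\a_\tti}$ commutes with the relevant data since $\tilde\a - \a_\tti$ is a root precisely by \eqref{E:ta_si}) and recompute; one of these products will invert the bigrading. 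The second sentence of the lemma, $w(W_\ell) = F^{-\ell}$, is then immediate from the definitions \eqref{E:FW}: $W_\ell = \op_{k \le \ell}\fg_k \mapsto \op_{k\le\ell}\fg^{-k} = \op_{q \ge -\ell}\fg^q = F^{-\ell}$.

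The main obstacle I anticipate is the uniformity: the paper wants a single statement covering $B_r$, $D_r$, $E_6$, $E_7$, $E_8$, $F_4$, $G_2$, and the pairings $\gamma(\ttH^{\tilde\a})$ and the structure of $\tilde\a$--strings are not literally uniform across these (in particular $G_2$ and $F_4$ have roots of two lengths, so $\gamma(\ttH^{\tilde\a})$ can be $\pm 3$). So the clean ``$w = r_{\tilde\a}$'' identity may need a case-dependent finishing reflection, or the argument may have to be phrased as: the element $w$ exists because both gradings are induced by grading elements conjugate under $\sW$ — i.e. it suffices to show $\ttH^{\a_\tti}$ and $-\ttE$ lie in the same $\sW$--orbit in $\fh$. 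That reformulation is cleaner: one checks $-\ttS^\tti$ and $\ttH^{\a_\tti}$ have the same pairing-data up to $\sW$, e.g. by noting $\ttH^{\tilde\a}$ (a grading element for the adjoint grading, since $\tilde\a(\ttE)=2$) is $\sW$--conjugate to $\ttH^{\a_\tti}$ when $\tilde\a = \w_\tti$ — one can even write $\tilde\a = w_0'(\a_\tti)$ for a suitable Weyl element after reducing to a rank-one or rank-two subsystem. I expect the cleanest route is: reduce to showing $\ttH^{\a_\tti}$ and $-\ttE$ are $\sW$-conjugate, then observe $\ttE = \ttS^\tti$ and $-\ttS^\tti$ is $\sW$-conjugate to $\ttH^{\a_\tti}$ precisely because $\a_\tti$ appears with coefficient matching $\tilde\a = \w_\tti$; the verification then collapses to a short check per Cartan type using Table \ref{t:highest_rt}, which is exactly the level of computation the paper tolerates.
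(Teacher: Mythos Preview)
Your final reformulation --- reduce to showing $\ttH^{\a_\tti}$ and $-\ttE = -\ttS^\tti$ lie in the same $\sW$--orbit in $\fh$ --- is exactly the paper's approach, and your proposal would work. The difference lies in how the conjugacy is established. You propose guessing explicit reflections ($r_{\tilde\a}$, then possibly $r_{\a_\tti}\circ r_{\tilde\a}$) or falling back on a type-by-type check via Table~\ref{t:highest_rt}. The paper instead proves a clean preliminary identity (Lemma~\ref{L:tS}): $\ttH^{\tilde\a} = \ttS^\tti$. This is the insight you are circling but never state; once you have it, your computation $\gamma(\ttH^{\tilde\a})$ simply \emph{equals} $\gamma(\ttE)$, and no case analysis is needed. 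The paper then observes that $\a_\tti$ and $\tilde\a$ are both long (hence $\sW$--conjugate by \cite[Lemma~10.4.C]{MR499562}), so any $w$ with $w(-\a_\tti)=\tilde\a$ sends $\ttH^{\a_\tti}$ into $[\fg^{\tilde\a},\fg^{-\tilde\a}]$, forcing $w(\ttH^{\a_\tti}) = -\ttS^\tti$ after a scaling check.

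Two small corrections. First, your concern that $\gamma(\ttH^{\tilde\a})$ can be $\pm 3$ in $G_2$ or $F_4$ is unfounded: $\tilde\a$ is always long, and for a long root $\a$ one has $\gamma(\ttH^\a)\in\{0,\pm1,\pm2\}$ for every root $\gamma$. Second, your parenthetical that ``$r_{\a_\tti}$ commutes with the relevant data'' is not quite right --- $r_{\a_\tti}$ and $r_{\tilde\a}$ do not commute --- though the composite $r_{\a_\tti}\circ r_{\tilde\a}$ does indeed send $\ttH^{\a_\tti}$ to $-\ttE$ once you use $\ttH^{\tilde\a}=\ttS^\tti$, so your guessed product is correct.
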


\begin{corollary}
The filtration $F^\sb$ is the weight filtration associated to a nilpotent $0 \not= \tilde N \in \fg^{\tilde\a}$.
\end{corollary}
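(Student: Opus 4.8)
The plan is to read this off directly from Lemma \ref{L:adjFvW} together with the standard $\fsl_2$-theoretic description of weight filtrations. First I would record that, in the fundamental adjoint setting, the singleton $\cB = \{\b_1 = \a_\tti\}$ is an admissible choice for the construction of $\S$\ref{S:details} ($\a_\tti$ is a noncompact root with $\a_\tti(\ttE) = \a_\tti(\ttS^\tti) = 1$, and a one-element set is trivially strongly orthogonal), so $\ttY_1 = \ttH^{\b_1} = \ttH^{\a_\tti}$ and the increasing filtration ${}_1W_\sb = \op_{k \le \bullet}\fg_k$ of \eqref{E:FW}${}_{j=1}$ is the filtration of $\fg$ by $\ttH^{\a_\tti}$-eigenvalues. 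Since $\a_\tti(\ttH^{\a_\tti}) = 2$, the triple $\{x^{\a_\tti}, \ttH^{\a_\tti}, x^{-\a_\tti}\}$ is a standard triple ($\S$\ref{S:sl2trp}) with neutral element $\ttH^{\a_\tti}$ and nilnegative element $x^{-\a_\tti} \in \fg^{-\a_\tti}$; applying the representation theory of $\fsl_2$ to its adjoint action on $\fg$ shows that ${}_1W_\sb$ is precisely the weight filtration $W_\sb(x^{-\a_\tti})$ (centered at $0$).

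Next I would transport this along the Weyl element $w$ of Lemma \ref{L:adjFvW}. Choose a representative $\dot w \in N_G(H)$ and set $\tilde N \dfn \tAd_{\dot w}(x^{-\a_\tti})$. Because $x^{-\a_\tti}$ lies in the $(-2)$-eigenspace $\fg_{-2}$ of $\ttH^{\a_\tti}$, the lemma (with $\ell = -2$) together with \eqref{E:g^ta} gives $\tilde N \in w(\fg_{-2}) = \fg^{2} = \fg^{\tilde\a}$; and $\tilde N \ne 0$ since $\tAd_{\dot w}$ is an automorphism of $\fg$ and $x^{-\a_\tti} \ne 0$. Finally I would invoke the $\tAd$-equivariance of weight filtrations (for $g \in G$, the filtration $g\cdot W_\sb(N)$ satisfies the defining properties of $W_\sb(\tAd_g N)$, hence equals it) to conclude, using again Lemma \ref{L:adjFvW}, that $W_\ell(\tilde N) = \tAd_{\dot w}(W_\ell(x^{-\a_\tti})) = w({}_1W_\ell) = F^{-\ell}$ for all $\ell$; that is, $F^\sb$ is the weight filtration of $\tilde N \in \fg^{\tilde\a}$.

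I do not anticipate a genuine obstacle: the corollary is a formal consequence of Lemma \ref{L:adjFvW}. The only point needing care is the bookkeeping of conventions — matching the ``centered at $0$'' normalization of $W_\sb(x^{-\a_\tti})$ with the indexing of ${}_1W_\sb$ and with the fact that $F^\sb$ is a \emph{decreasing} filtration, so that the identification comes out as $F^p = W_{-p}(\tilde N)$ — and confirming that a lift $\dot w$ of $w$ really does carry the root space $\fg^{-\a_\tti}$ into the highest root space $\fg^{\tilde\a}$, which is immediate here because $\fg^{-\a_\tti} \subset \fg_{-2}$ and $w(\fg_{-2}) = \fg^{2} = \fg^{\tilde\a}$.
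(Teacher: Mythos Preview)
Your proposal is correct and is exactly the argument the paper has in mind: the corollary is stated without proof, immediately after Lemma \ref{L:adjFvW}, as a direct consequence. You have simply written out the details the paper omits --- that ${}_1W_\sb$ is the weight filtration of the nilnegative element $x^{-\a_\tti}$ in the standard triple $\{x^{\a_\tti},\ttH^{\a_\tti},x^{-\a_\tti}\}$, and that transporting by a lift $\dot w$ of the Weyl element carries this to the weight filtration of $\tilde N \in \fg^{\tilde\a}$, matching $F^{-\ell}$ by the lemma.
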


Before proving Lemma \ref{L:adjFvW} we first establish Lemma \ref{L:tS}.

\begin{lemma} \label{L:tS}
Define  $\tilde \ttS \in [\fg^{\tilde\a} \,,\, \fg^{-\tilde\a}] \subset \fh$ by $\tilde\a(\tilde\ttS) = 2$.  Then $\tilde \ttS = \ttS^\tti$.
\end{lemma}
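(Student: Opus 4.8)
\textbf{Proof plan for Lemma \ref{L:tS}.} The claim is that the coroot-type element $\tilde\ttS$ attached to the highest root $\tilde\a$ (normalized so $\tilde\a(\tilde\ttS)=2$) coincides with the fundamental coweight $\ttS^\tti$ dual to $\a_\tti$, in the case where $\tilde\a=\w_\tti$ is fundamental. The plan is to compare the two elements of $\fh$ by evaluating all simple roots on each of them and invoking the nondegeneracy of the root pairing.

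First I would observe that, by definition of $\tilde\ttS$, for every simple root $\a_j$ we have $\a_j(\tilde\ttS) = 2(\a_j,\tilde\a)/(\tilde\a,\tilde\a) = \a_j(\ttH^{\tilde\a})$ in the notation of $\S$\ref{S:Ha}, provided $\tilde\a$ is a long root --- which it is, since the highest root of a simple Lie algebra is always long (for $A,D,E$ all roots have the same length, and for $B,C,F_4,G_2$ one checks directly from Table \ref{t:highest_rt} that $\tilde\a$ is long). Actually it is cleaner to argue intrinsically: $\tilde\ttS$ is characterized among elements of $\fh$ by $\tilde\a(\tilde\ttS)=2$ together with $\tilde\ttS \in [\fg^{\tilde\a},\fg^{-\tilde\a}] = \bC\,\ttH^{\tilde\a}$, so in fact $\tilde\ttS = \ttH^{\tilde\a}$ on the nose. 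Hence the lemma reduces to showing $\ttH^{\tilde\a} = \ttS^\tti$.

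The key step is then to evaluate both sides on each simple root $\a_j$. On the right, $\a_j(\ttS^\tti) = \d_{j\tti}$ by the definition of the dual basis in $\S$\ref{S:grelem}. On the left, $\a_j(\ttH^{\tilde\a}) = 2(\a_j,\tilde\a)/(\tilde\a,\tilde\a)$. Since $\tilde\a = \w_\tti$ is the $\tti$-th fundamental weight, $(\a_j,\w_\tti) = \d_{j\tti}\,(\a_\tti,\a_\tti)/2$ by the defining property of fundamental weights. Therefore $\a_j(\ttH^{\tilde\a}) = \d_{j\tti}\,(\a_\tti,\a_\tti)/(\tilde\a,\tilde\a)$, and this equals $\d_{j\tti}$ precisely because $\tilde\a$ and $\a_\tti$ have the same length --- which is exactly the content of \eqref{E:ta_si}: $\tilde\a - \a_\tti$ being a root (while $\tilde\a-\a_j$ is not for $j\ne\tti$) forces $\a_\tti$ to be long, hence $(\a_\tti,\a_\tti)=(\tilde\a,\tilde\a)$. [Alternatively, from the explicit data in Table \ref{t:highest_rt} one sees in each case that the coefficient of $\a_\tti$ in $\tilde\a$ is $1$, which via the standard string-length argument again pins down $(\a_\tti,\a_\tti) = (\tilde\a,\tilde\a)$.] So $\a_j(\ttH^{\tilde\a}) = \a_j(\ttS^\tti)$ for all simple $\a_j$; since the simple roots span $\fh^*$ and the pairing $\fh^* \times \fh \to \bC$ is nondegenerate, $\ttH^{\tilde\a} = \ttS^\tti = \tilde\ttS$.

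The main obstacle is purely bookkeeping: making sure the normalization conventions for $\ttH^\a$, the fundamental weights $\w_i$, and the Killing form are all consistent (the footnote in $\S$\ref{S:Ha} flags exactly this danger), and confirming cleanly that $\tilde\a$ is long in the non-simply-laced cases. Both are routine given the tables already in the paper, so I expect no real difficulty beyond care with constants.
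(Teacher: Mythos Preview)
Your approach is correct in outline and genuinely different from the paper's. The paper computes $\a_j(\tilde\ttS)$ directly via the Jacobi identity: writing $\tilde\ttS = [\tilde N, \tilde N^-]$ and bracketing against a root vector $\xi\in\fg^{\a_j}$, one term vanishes because $\tilde\a + \a_j$ is never a root, and the other vanishes unless $\tilde\a - \a_j$ is a root, which by \eqref{E:ta_si} happens only for $j = \tti$. This sidesteps the Killing form and the length question entirely. Your route --- identifying $\tilde\ttS = \ttH^{\tilde\a}$ and then using $\tilde\a = \w_\tti$ to evaluate $\a_j(\ttH^{\tilde\a})$ via the inner product --- is standard root-system bookkeeping and is arguably cleaner once the length of $\a_\tti$ is established.

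That said, your justification that $\a_\tti$ is long has a real gap. The assertion that \eqref{E:ta_si} ``forces $\a_\tti$ to be long'' is not substantiated: knowing that $\tilde\a - \a_\tti$ is a root does not by itself constrain the length of $\a_\tti$. More seriously, your alternative claim that ``the coefficient of $\a_\tti$ in $\tilde\a$ is $1$'' is false --- a glance at Table \ref{t:highest_rt} shows the coefficient is $2$ in every fundamental adjoint case ($B_r$, $D_r$, $E_6$, $E_7$, $E_8$, $F_4$, $G_2$). The clean fix is already implicit in your own computation: you showed $\a_\tti(\ttH^{\tilde\a}) = (\a_\tti,\a_\tti)/(\tilde\a,\tilde\a)$, and by \eqref{E:int} this is an integer; since $\tilde\a$ is long the ratio lies in $(0,1]$, hence equals $1$. (The paper itself notes, in the proof of Lemma \ref{L:adjFvW}, that $\a_\tti$ is not short, so you could also simply cite that.)
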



\begin{proof}[Proof of Lemma \ref{L:tS}]
Let $0\not=\xi \in \fg^{\a_j}$.  Fix $\tilde N \in\fg^{\tilde\a}$ and $\tilde N^- \in \fg^{-\tilde\a}$ such that $[\tilde N , \tilde N^-] = \tilde\ttS$.  Then
\begin{eqnarray*}
  \a_j(\tilde\ttS) \, \x & = & \left[\tilde\ttS , \x \right] 
  \ = \ \big[ \big[ \tilde N , \tilde N^- \big] \,,\, \x \big] \\
  & = & \big[ \big[ \tilde N , \x \big] \,,\, \tilde N^- \big]
  \ + \ \big[ \big[ \x , \tilde N^- \big] \,,\, \tilde N \big] \,.
\end{eqnarray*}
Since $\tilde \a$ is a highest root, $\tilde\a+\a_j$ is not a root.  Therefore, $[\tilde N , \x] = 0$.  Similarly, the bracket $[\x , \tilde N^-]$ is nonzero if and only if $-\tilde\a+\a_j$ is a root; equivalently, $\tilde\a - \a_j$ is a root.  The latter is equivalent to $\fg^{-\a_j} \not\subset \fp$; that is, $j = \tti$.  Therefore, $\a_j(\tilde\ttS) = 0$ if $j \not= \tti$.  It follows that $\tilde\ttS$ is a multiple of $\ttS_\tti$.  Since both $\tilde\a(\ttS_\tti)$ and $\tilde\a(\tilde\ttS)$ equal 2, it must be the case that $\tilde\ttS = \ttS^\tti$.
\end{proof}

\begin{proof}[Proof of Lemma \ref{L:adjFvW}]
Note that $\a_\tti$ is not a short root.  Since the highest root $\tilde\a$ of $\fg$ is also not a short root, it follows $\a_\tti$ and $\tilde\a$ have the same length.  Therefore, there exists a Weyl group element $w \in \sW$ mapping $w(-\a_\tti) = \tilde\a$, \cf\cite[Lemma 10.4.C]{MR499562}.  Therefore,
$$
  w(\ttH^\tti) \ \in \ w \left[\fg^{-\a_\tti}\,,\,\fg^{\a_\tti} \right]
   \ = \ \left[w\fg^{-\a_\tti}\,,\,w\fg^{\a_\tti} \right]
   \ = \ \left[\fg^{\tilde\a}\,,\,\fg^{-\tilde \a} \right] \,.
$$
By Lemma \ref{L:tS}, $w(\ttH^\tti)$ is necessarily a multiple of $\tilde\ttS = \ttS^\tti$.  Moreover, 
$$
  2 \ = \ \a_\tti(\ttH^\tti) \ = \ -(w^{-1}\tilde\a)(\ttH^\tti) \ = \ -\tilde\a(w\ttH^\tti)
$$
forces $w(\ttH^\tti) = -\ttS^\tti$.  Given $\x \in \fg_\ell$, we have 
$$
  \ell\, w(\x) \ = \ w( \ell \x ) \ = \ w[ \ttH^\tti , \x] \ = \ -[\ttS^\tti , w(\x) ] \,.
$$
Thus $w(\x) \in \fg^{-\ell}$, and we conclude $w(\fg_\ell) = \fg^{-\ell}$.
\end{proof}

Table \ref{t:H} expresses the $\ttH^{\a_\tti}$ in terms of the grading elements $\{ \ttS^j\}$.
%
%
\begin{small}
\begin{table}[!ht]
\caption{The grading element $\ttH$.}
\renewcommand{\arraystretch}{1.2}
\begin{tabular}{|c|c|}
\hline $G$ & $\ttH$ \\ \hline\hline
 $D_4$ & $-\ttS^1 \,+\, 2 \ttS^2 \,-\, \ttS^3 \,-\,\ttS^4$ \\
 $B_r$, $D_r$ & $-\ttS^1 \,+\, 2 \ttS^2 \,-\, \ttS^3$ \\ 
 $E_6$ & $2 \ttS^2 \,-\, \ttS^4$ \\
 $E_7$ & $2 \ttS^1 \,-\, \ttS^3$ \\
 $E_8$ & $-\ttS^7 \,+\, 2 \ttS^8$ \\
 $F_4$ &  $2 \ttS^1 \,-\, \ttS^2$ \\
 $G_2$ & $- \ttS^1 \,+\, 2 \ttS^2$ \\ \hline
\end{tabular}
\label{t:H}
\end{table}
\end{small}

\begin{remark}\label{R:irrmod}
For the grading $\fg=\oplus\fg^p$ associated to a fundamental adjoint variety, we know that $\fg^{-1}$ is an irreducible $\fg^0$--module.  It will be important in the sequel that the symmetry in Lemma \ref{L:adjFvW} identifies $\fg^{-1}$ as a $\fg^0$-module with $\fg_1$ as a $\fg_0$--module.
\end{remark}

\subsection{Fundamental adjoint varieties II: boundary components and enhanced $\tSL_2$-orbits} \label{S:adII}

We continue with the notation of $\S$\ref{S:adI}, and write $N\in \fg^{-1,-1}\cap \fg_{\bQ}$ for the nil-negative element of the standard triple associated to $\b_1 =\a_{\tti}$.  In this section we briefly describe $X(N)$ and $B(N)$.

In each case it is clear from the description in $\S$\ref{S:B+D} that $\fg_{B(N)}^{ss}$ is the semisimple Lie subalgebra of $\fg$ whose roots are the ones strongly orthogonal to the $\a_{\tti}$. Since $\fg_1\cong\fg_{-1}=\fz_{-1}(N)$ is a faithful representation of $G^{ss}_{B(N)}$,\footnote{$G^{ss}_{B(N)}$ is the group $G_s$ ($s=1$) of $\S$\ref{S:sl2con}, but we will not write $G_1$ due to notational conflict with $\fg_1$ (which is obviously not its Lie algebra).} and $\phi\in\fg^{ss}_{B(N)}$, $G_{B(N)}^{ss}$ is the generic MT group of the Hodge structures on $\fg_1$ parameterized by $D(N)$.  Moreover, the action of $G_{B(N)}^{ss}$ on the line $\fg^{\tilde{\a}}=\fg^{2,-1}\subset \fg_1$ presents $\check D(N)$ as a subvariety of $\mathbb{P}\fg_1$.  Since the image of $\tilde{\a}$ under $w$ (cf. $\S$\ref{S:sym}) is the highest $\fg^0$-weight of $\fg^{-1}$, we obtain

\begin{proposition}
The compact dual $\check D(N)\subset\mathbb{P}\fg_1$ is isomorphic to the variety of lines $\cC_o\subset \mathbb{P}\fg^{-1}$.  We therefore have (in $\check D$) $X(N)\cong\mathbb{P}^1\times\check D(N)\cong\mathbb{P}^1\times\cC_o$.
\end{proposition}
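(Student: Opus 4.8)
The plan is to establish $\check D(N)\cong\cC_o$ first, as an isomorphism of projective varieties compatible with the embeddings $\check D(N)\subset\bP\fg_1$ and $\cC_o\subset\bP\fg^{-1}$, and then to read off the product decomposition of $X(N)$ from Proposition \ref{P:X(sigma)}.

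I would take as input, from the paragraph immediately preceding the proposition, that (with $\cB=\{\b_1=\a_\tti\}$, so $s=1$) the variety $\check D(N)$ is presented in $\bP\fg_1$ as the orbit $G^{ss}_{B(N)}\cdot[\fg^{\tilde\a}]$, with $\fg^{\tilde\a}=\fg^{2,-1}$ and $\fg_{B(N)}^{ss}$ the semisimple subalgebra of $\fg$ whose roots are those strongly orthogonal to $\a_\tti$. The first point to record is that this is the \emph{closed} orbit: the nilradical of $\fb\cap\fg_0$ is spanned by root spaces $\fg^\gamma$ with $\gamma>0$ and $\gamma\perp\a_\tti$, and $[\fg^\gamma,\fg^{\tilde\a}]\subseteq\fg^{\gamma+\tilde\a}=0$ since $\tilde\a$ is the highest root, so $[\fg^{\tilde\a}]$ is a highest-weight line of the $\fg_0$-module $\fg_1$, which is irreducible. (Here $\fg_0$, the $\ttH^{\a_\tti}$-eigenspace for the eigenvalue $0$, equals $\fg_{B(N)}^{ss}$ plus a torus in $\fh$ fixing $[\fg^{\tilde\a}]$, because $\a_\tti$ is not short; so the $G^{ss}_{B(N)}$- and $G_0$-orbits of $[\fg^{\tilde\a}]$ coincide.) Hence $\check D(N)$ is the unique closed orbit in $\bP\fg_1$.

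Next I would bring in the Weyl element $w$ of Lemma \ref{L:adjFvW}, realised as $w=\tAd_{\dot w}$ for some $\dot w\in N_G(H)$. Since $w(\fg_\ell)=\fg^{-\ell}$, the automorphism $w$ restricts to a Lie algebra isomorphism $\fg_0\to\fg^0$ which intertwines the $\fg_0$-module $\fg_1$ with the $\fg^0$-module $\fg^{-1}$ — precisely the identification of Remark \ref{R:irrmod}. Consequently $\dot w$ induces a variety isomorphism $\bP\fg_1\to\bP\fg^{-1}$, equivariant with respect to the conjugation $G_0\to G^0$, and therefore carries the closed $G_0$-orbit onto the closed $G^0$-orbit; concretely $\dot w$ sends $\fg^{\tilde\a}$ onto the highest-weight line $\fg^{-\a_\tti}$ of $\fg^{-1}$ (the content of the remark on $w\tilde\a$ in $\S$\ref{S:sym}, using Lemma \ref{L:tS}). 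By $\S$\ref{S:Pmax}(a), the closed $G^0$-orbit in $\bP\fg^{-1}$ is exactly $\cC_o$, the centre of $\fg^0$ acting by scalars on the irreducible module $\fg^{-1}$. Thus $\check D(N)\cong\cC_o$.

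Finally, because $\sigma=\bQ_{\ge0}\langle N\rangle$ has a single generator, the commutative diagram in the proof of Proposition \ref{P:X(sigma)} (built on \cite[Lemma 5.14]{MR3331177}) becomes $X(N)=X(\sigma)\cong\check D(N)\times\bP^1$ inside $\check D$, so $X(N)\cong\bP^1\times\cC_o$. I expect the middle paragraph to be the main obstacle: identifying $\check D(N)$ with a closed orbit and then transporting it to $\bP\fg^{-1}$ uses the full force of $\S$\ref{S:sym} — Lemma \ref{L:adjFvW} (for $w(\fg_\ell)=\fg^{-\ell}$, hence $\fg_0\cong\fg^0$), Lemma \ref{L:tS} together with the nonshortness of $\a_\tti$ (to match $\fg_{B(N)}^{ss}$ with $\fg^0_\tss$ and the distinguished lines $\fg^{\tilde\a}$ and $\fg^{-\a_\tti}$), and the irreducibility of $\fg^{-1}$ — whereas the first and last paragraphs are bookkeeping with material already in $\S$\ref{S:Pmax} and $\S$\ref{S:B+D}.
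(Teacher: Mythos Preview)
Your argument for $\check D(N)\cong\cC_o$ is correct and essentially reproduces (in greater detail) what the paper does in the paragraph immediately preceding the proposition: identify $\check D(N)$ as the closed $G_0$--orbit in $\bP\fg_1$ through $[\fg^{\tilde\a}]$, then transport it to the closed $G^0$--orbit $\cC_o\subset\bP\fg^{-1}$ via the Weyl element $w$ of Lemma \ref{L:adjFvW} (using Remark \ref{R:irrmod}). The paper regards this part as already done before the proof begins.

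What you miss is the one thing the paper's proof actually addresses. Proposition \ref{P:X(sigma)} gives only an \emph{abstract} isomorphism $X(\sigma)\cong\check D(\sigma)\times(\bP^1)^{\times s}$; its proof explicitly warns that ``the $\bP^1$ factors need not be minimally embedded in $\check D$.'' The statement here is meant to assert that the $\bP^1$ factor of $X(N)\subset\check D\subset\bP\fg$ is an honest line --- this is what ``(in $\check D$)'' signals, and it is what makes the entries $\bP^1\times\cC_o$ in Table \ref{t:FAV} meaningful (e.g.\ for $G_2$ the table records $\bP^1\times v_3(\bP^1)$, not $v_k(\bP^1)\times v_3(\bP^1)$). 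So invoking Proposition \ref{P:X(sigma)} alone does not close the argument. The missing check is short: the $\tSL_2^{\a_\tti}$--orbit of the highest weight line $[\fg^{\tilde\a}]$ in $\bP\fg$ has degree $\tilde\a(\ttH^{\a_\tti})$, and for a fundamental adjoint variety $\tilde\a=\w_\tti$ gives $\tilde\a(\ttH^{\a_\tti})=1$. Add that sentence and your proof is complete.
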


\begin{proof}
We only need to check that the $\mathbb{P}^1$ factor of $X(N)$ is minimally embedded in $\check D$.  This follows from $\tilde{\a}(\ttH^{\a_{\tti}})=1$, since $\tilde{\a}$ is also the highest weight of $\fg$.
\end{proof}

Turning to the boundary component $B(N)\buildrel{\rho}\over{\twoheadrightarrow} D(N)$, we may think of points in $D(N)$ as Hodge flags $F^\sb$ on the Tate twist $U_N\dfn \fg_1(-1)=\fg_{-1}(-2)$ (weight 3 HS).  From \eqref{E:fibre} we see at once that the fibres of $\rho$ take the form $U_N/F^2 U_N$.

Now the object that provides a partial compactification of the quotient of $D$ by a neat arithmetic subgroup $\mathfrak{G}$ of $G_{\bQ}$, is the quotient $\overline{B(N)}$ of $B(N)$ by $\mathfrak{G}\cap Z(N)$, \cf\cite[$\S$7]{KP2012}.

\begin{proposition} \label{P:adjB(N)}
For each of the fundamental adjoint varieties, $\overline{B(N)}\buildrel{\bar{\rho}}\over{\twoheadrightarrow}\overline{D(N)}$ is a family of intermediate Jacobians associated to a variation of Hodge structure (with Hodge numbers $(1,a,a,1)$) over a Shimura variety.  The corresponding variations $\mathcal{U}_N\to D(N)$ recover (with one exception) the list of weight 3 maximal Hermitian VHS of Calabi-Yau type from \rm{\cite{FL}}.
\end{proposition}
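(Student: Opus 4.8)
The plan is to combine the structural results of $\S\S$\ref{S:adI}--\ref{S:adII} with the fibration description \eqref{E:tower}--\eqref{E:fibre} of \cite{KP2012}, and then to match the outcome against the classification of \cite{FL} case by case. Recall we have already shown that $\check D(N)\cong\cC_o$, that $\fg^{ss}_{B(N)}$ is the semisimple subalgebra $\Gamma_{\a_\tti}$ of $\S$\ref{S:a-prim} whose roots are strongly orthogonal to $\a_\tti$, and --- via Lemma \ref{L:adjFvW} and Figure \ref{f:adj_bigr} --- that $\fg_{-1}=\tGr^W_{-1}\fg$ carries a Hodge structure (defined by $\phi=\bi(\ttE-\overline{\ttE})$, Tate--twisted) so that $U_N=\fg_{-1}(-2)=\fg_1(-1)$ has Hodge numbers $(1,a,a,1)$, the outer $1$'s being \eqref{E:g^ta}. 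The argument I would give has four steps: (i) identify $D(N)$ as a Hermitian symmetric domain and $\overline{D(N)}$ as a connected Shimura variety; (ii) show the tautological variation $\cU_N\to D(N)$ is a maximal VHS of Calabi--Yau type; (iii) identify $\overline{B(N)}\to\overline{D(N)}$ with the associated family of intermediate Jacobians; (iv) carry out the comparison with \cite{FL} and isolate the exceptional (orthogonal) case.

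First I would do step (i). By $\S$\ref{S:Pmax}(a) and \cite[Proposition 2.11]{MR1966752}, for every fundamental adjoint variety the variety of lines $\cC_o$ of Table \ref{t:adj} is a (possibly reducible --- this happens exactly in the orthogonal case, where $\cC_o=\bP^1\times\cQ^{n-6}$) cominuscule homogeneous variety, hence a compact Hermitian symmetric space; so $\check D(N)\cong\cC_o$ is Hermitian and $D(N)$ is its noncompact dual, a bounded symmetric domain. Since $G_{B(N)}$ is cut out over $\bQ$ inside the MTC group $G_\bQ$ (Remarks \ref{rem:MTC}, \ref{rem:MTCD}) and is equipped with the Deligne--torus morphism underlying the split Hodge structure on $\fg_1$ (equivalently, on $U_N$ after Tate twist), the pair $(G_{B(N),\bQ},D(N))$ is a connected Shimura datum, and the quotient of $D(N)$ by the arithmetic group $\mathfrak{G}\cap Z(N)$ is the Shimura variety $\overline{D(N)}$. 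For step (ii): $\cU_N$ is the tautological weight--$3$ variation on $U_N$; its period map to the relevant flag variety is $G_{B(N)}$--equivariant, so Griffiths transversality reduces to verifying that $\tad$ of the $(-1,1)$--part of $\fg_{B(N)}$ lowers the $\ttE$--grading on $\fg_1$ by exactly one --- a finite root computation I expect to run uniformly off Figure \ref{f:adj_bigr}, Table \ref{t:H} and Lemma \ref{L:adjFvW}. Maximality then follows from the numerical identity $\dim_\bC D(N)=\dim_\bC\cC_o=a=h^{2,1}$, checked against Tables \ref{t:adj}, \ref{t:dims}, \ref{t:numerics} ($9=\dim\tGr(3,\bC^6)$, $15=\dim\cS_6$, $27=\dim E_7/P_7$, $6=\dim\tLG(3,\bC^6)$, $1=\dim\bP^1$).

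For step (iii): by the fibre computation recalled in $\S$\ref{S:adII} (applying \eqref{E:fibre} with $\fz_{-1}(N)\cong\fg_{-1}$, $\fz_{-2}(N)=\bC N$), the fibre of $\rho\colon B(N)\to D(N)$ is $U_N/F^2U_N$ --- precisely the Griffiths intermediate Jacobian of the weight--$3$ Hodge structure $U_N$. Since $U_N$ inherits a $\bQ$--structure from $\fg_\bQ$ and a polarization from the (Tate--twisted) Killing form, the fibre is a polarized complex torus; writing $Z(N)=Z_0(N)\ltimes Z_-(N)$ with $Z_-(N)$ unipotent, the fibre direction is swept by $\exp(\fg_{-1})$, and $\mathfrak{G}\cap Z_-(N)$ acts on each fibre through the lattice $U_{N,\bZ}$. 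Passing to the quotient therefore presents $\overline{B(N)}\twoheadrightarrow\overline{D(N)}$ as the family of intermediate Jacobians of $\cU_N$, as in \cite[$\S$7]{KP2012}. For step (iv): in each non--orthogonal case I would read off $\fg^{ss}_{B(N)}$ and its module $\fg_{-1}$ (using Table \ref{t:H}, Remark \ref{R:irrmod}, and the root system), getting $(\fsl_6,\wedge^3\bC^6)$ for $E_6$, $(\fso_{12},\text{half-spin})$ for $E_7$, $(\fe_7,\text{the }56\text{-dim representation})$ for $E_8$, $(\fsp_6,\text{the }14\text{-dim fundamental representation})$ for $F_4$, and $(\fsl_2,\tSym^3\bC^2)$ for $G_2$; together with the Hodge structure defined by $\phi$, these are exactly the five entries of the Friedman--Laza list (the $G_2$ case being the symmetric cube of a weight--$1$ variation, matching the Running Example). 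The orthogonal ($B_r$, $D_r$) case is the exception: there $D(N)$ is reducible ($\cong\sH\times$ a type--$\mathrm{IV}$ domain) and $\cU_N\cong(\text{weight }1)\otimes(\text{weight }2)$, which is not among the irreducible Hermitian CY--type VHS of \cite{FL}.

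The hard part will be the clean execution of step (ii): showing that $\cU_N$ satisfies Griffiths transversality over \emph{all} of $D(N)$ --- equivalently, that $D(N)$ with its tautological variation is a horizontal subvariety of the relevant period domain --- uniformly across the exceptional types rather than one diagram at a time; and, secondarily, the bookkeeping of step (iv), pinning down each $\fg_{-1}$ as a $\Gamma_{\a_\tti}$--module and matching polarizations with the normalizations of \cite{FL}. Step (i)'s Shimura--datum assertion is safe given the MTC hypothesis but must be stated with care, since it is what upgrades $\overline{D(N)}$ from a bare arithmetic quotient to a Shimura variety.
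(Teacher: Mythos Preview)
Your overall structure is sound, and steps (i)--(iii) match the paper's setup (the paper's own proof is essentially a one--liner: ``Immediate from comparison with \cite{FL} and the fact that $G_{B(N)}^{ss}$ is the MT group of $\mathcal{U}_N$''). So you are doing far more work than necessary --- in particular, your ``hard part'' (ii), verifying Griffiths transversality and maximality by hand, is precisely what the comparison with \cite{FL} is meant to short--circuit: once you have identified $(G^{ss}_{B(N)},\fg_{-1})$ as one of the pairs on the Friedman--Laza list, horizontality and maximality are already theorems of \cite{FL}, not something you need to re--prove root--theoretically.

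However, there is a substantive error in step (iv): you have misidentified the ``one exception''. The statement asserts that the $\cU_N$ arising from fundamental adjoint varieties recover, with one exception, the items on the Friedman--Laza list --- i.e., one entry of \emph{their} list is missing from \emph{our} construction. That missing entry is the case labelled $\mathrm{I}_{1,n}$ in \cite[Corollary 2.29]{FL}; as the paper notes in Remark \ref{R:adjB(N)}, it arises instead from the (non--fundamental) adjoint variety for $A_{n+2}$. The orthogonal case $\fso(n+4)$, with $D(N)\cong\sH\times\mathrm{IV}_{n-2}$, \emph{is} on the FL list and is \emph{not} the exception; your claim that it ``is not among the irreducible Hermitian CY--type VHS of \cite{FL}'' misreads both the proposition and the FL classification. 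So your case--by--case match is correct for $G_2,F_4,E_6,E_7,E_8$, but your concluding sentence about the exception needs to be replaced: keep the orthogonal case as a legitimate member of the list, and instead observe that $\mathrm{I}_{1,n}$ does not appear because the $A$--type adjoint variety is excluded from the fundamental hypothesis.
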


\begin{proof}
Immediate from comparison with \cite{FL} and the fact that $G_{B(N)}^{ss}$ is the MT group of $\mathcal{U}_N$.
\end{proof}

\begin{remark}\label{R:adjB(N)}
We refer to Corollary 2.29 and Theorem 6.7 of \cite{FL} for further description of these maximal variations.  The missing case, labeled $\mathrm{I}_{1,n}$ in \cite[Corollary 2.29]{FL}, arises in the same way from a boundary component of the (non-fundamental) adjoint variety for $A_{n+2}$.

With this addition, the compact duals $\check D(N)$ yield the homogeneous Legendrian varieties as presented in \cite[Theorem  6.7]{FL}; whereas the varieties of lines $\cC_o$ realize them (as in \cite{MR2372722}) as the subadjoint varieties.  The Weyl flip of $\S$\ref{S:sym} toggles between these realizations.  To the authors, this suggests comparing the homology classes of $X(N)$ and the Schubert variety $\mathit{Cone}(\cC_o)$, \cf $\S$\ref{S:[X(N)]eg}.
\end{remark}

The data for all the fundamental adjoint varieties is summarized in Table \ref{t:FAV}, where we note that $\dim(X(N))=a+1$.
\small
\begin{table}
\caption{$D(N)$ and $X(N)$ for the fundamental adjoint varieties}
\renewcommand{\arraystretch}{1.2}
\begin{tabular}{|c||c|c|c|c|c|c|}
\hline $\fg$ & $\mathfrak{so}(n+4)$ ($n\geq 3$) & $\mathfrak{e}_6$ & $\mathfrak{e}_7$ & $\mathfrak{e}_8$ & $\mathfrak{f}_4$ & $\mathfrak{g}_2$ \\ \hline \hline
$\fg_{\bR}$ & $\mathfrak{so}(4,n)$ & $\mathrm{E\, II}$ & $\mathrm{E\, VI}$ & $\mathrm{E\, IX}$ & $\mathrm{F\, I}$ & $\mathrm{G}$\\
$\fg^{ss}_{B(N),\bR}$ & \tiny $\mathfrak{so}(2,n-2)\oplus \mathfrak{su}(1,1)$ & $\mathfrak{su}(3,3)$ & $\mathfrak{so}^*(12)$ & $\mathrm{E\, VII}$ & $\mathfrak{sp}(3,\bR)$ & $\mathfrak{su}(1,1)$ \\
$D(N)$ & $\sH\times\mathrm{IV}_{n-2}$ & $\mathrm{I}_{3,3}$ & $\mathrm{II}_6$ & $\mathrm{E\, VII}$ & $\mathrm{III}_3$ & $\sH$ \\
$X(N)$ & \tiny $\mathbb{P}^1\times\mathbb{P}^1\times\mathcal{Q}^{n-2}$ & \tiny $\mathbb{P}^1\times Gr(3,\bC^6)$ & \tiny $\mathbb{P}^1\times \mathcal{S}_6$ & \tiny $\mathbb{P}^1\times E_7/P_7$ & \tiny $\mathbb{P}^1\times LG(3,\bC^6)$ & \tiny $\mathbb{P}^1\times v_3(\mathbb{P}^1)$ \\
$\dim(X(N))$ & $n$ & $10$ & $16$ & $28$ & $7$ & $2$ \\ \hline
\end{tabular}
\label{t:FAV}
\end{table}
\normalsize
The type of $D(N)$ as a Hermitian symmetric domain comes from \cite{FL}; otherwise, see \cite[Appendix A]{KR1long} for more details.

\begin{proposition} \label{P:X(N)max}
The $X(N)$ of Table \ref{t:FAV} are all maximal horizontal subvarieties of $\check D$.
\end{proposition}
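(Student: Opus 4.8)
The plan is to reduce the statement to a dimension count against the contact structure. By $\S$\ref{S:AV}, each fundamental adjoint variety $\check D$ is a (complex) contact manifold: the horizontal distribution $T^1\subset T\check D$ is a contact distribution, so for a local contact form $\alpha$ with $T^1=\ker\alpha$ the form $d\alpha$ restricts to a nondegenerate $2$-form on each fibre of $T^1$. Consequently, if $Y\subset\check D$ is any horizontal subvariety, then at a general smooth point $y\in Y$ the subspace $T_yY\subset T^1_y$ is isotropic for $d\alpha|_{T^1_y}$, whence $\dim Y=\dim_yY\le\tfrac{1}{2}\,\mathrm{rk}\,T^1=\tfrac{1}{2}(\dim\check D-1)$; the subvarieties attaining this bound (the Legendrian ones) are exactly the maximal horizontal subvarieties. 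So it suffices to check (i) that $X(N)$ is horizontal, and (ii) that $\dim X(N)=\tfrac{1}{2}(\dim\check D-1)$.

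For (i), I would apply Lemma \ref{L:X(sigma)} with $s=1$ and $\b_1=\a_\tti$: the orbit $X(N)=X(\sigma)$ is horizontal as soon as $-1\le\ttE(\a)\le1$ for every root $\a$ strongly orthogonal to $\a_\tti$. Since $\ttE=\ttS^\tti$ is the grading element of the adjoint variety, the $\ttE$--grading has the five steps $\fg=\fg^{-2}\oplus\fg^{-1}\oplus\fg^0\oplus\fg^1\oplus\fg^2$ with $\fg^{\pm2}=\fg^{\pm\tilde\a}$ one--dimensional; hence $\pm\tilde\a$ are the only roots on which $|\ttE|$ exceeds $1$. By \eqref{E:ta_si}, $\tilde\a-\a_\tti$ is a root, so neither $\tilde\a$ nor $-\tilde\a$ is strongly orthogonal to $\a_\tti$, and the hypothesis of Lemma \ref{L:X(sigma)} is met. (This is precisely the assertion, recorded just before $\S$\ref{S:adI}, that the adjoint varieties belong to the subset for which $X(\sigma)$ is horizontal.)

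For (ii), recall from $\S$\ref{S:adI} that $\dim_\bC\check D=\dim_\bC\fg^-=2a+3$, while $\dim X(N)=a+1$ (as noted in the discussion of Table \ref{t:FAV}, and also visible from $X(N)\cong\bP^1\times\cC_o$ together with the values of $\dim\cC_o$ in that table). Hence $\dim X(N)=a+1=\tfrac{1}{2}(\dim\check D-1)$, the extremal value. Since $X(N)=\tilde{G}_s\cdot o_s$ is smooth and irreducible (Proposition \ref{P:X(sigma)}) and horizontal, any horizontal subvariety $Y$ containing $X(N)$ has an irreducible component through $X(N)$ of dimension $\ge a+1$, which by the contact bound has dimension exactly $a+1$ and therefore, containing the irreducible variety $X(N)$ of the same dimension, equals $X(N)$; taking $Y$ irreducible, $Y=X(N)$. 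Thus $X(N)$ is a maximal horizontal subvariety, for each entry of Table \ref{t:FAV}.

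I do not expect a genuine obstacle here: the only mildly delicate point is the standard passage from the pointwise isotropy of $T_yY\subset T^1_y$ at a general smooth point to the global dimension bound on a possibly singular $Y$, and this is classical once one knows $\check D$ is contact. Everything else is bookkeeping with the dimension data of $\S$\ref{S:adI}, Table \ref{t:numerics}, and Table \ref{t:FAV}.
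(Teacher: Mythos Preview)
Your argument is correct and takes a genuinely different route from the paper's proof. The paper proceeds case by case via the bound from \cite[Corollary 3.13]{MR3217458}, which says that a horizontal manifold has dimension at most that of the largest horizontal Schubert variety, and then cites or computes this maximal Schubert dimension separately for $\fso(n+4)$, the exceptional cases $\fe_6,\ff_4,\fg_2$, and finally $\fe_7,\fe_8$ (the latter two by an explicit enumeration in Table \ref{t:maxE}). Your approach instead exploits the contact structure directly: since $T^1$ is contact on an adjoint variety, any integral subvariety is isotropic for the Levi form, giving the uniform bound $\dim Y\le\tfrac12(\dim\check D-1)=a+1$, which $X(N)$ visibly saturates.

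Your route is both more elementary and more uniform: it replaces a type-by-type verification (and the external references) with a single Legendrian dimension count. What the paper's approach buys, by contrast, is the additional information that the maximal horizontal Schubert varieties themselves have dimension $a+1$ (not obvious a priori, since Schubert varieties need not be Legendrian-saturating), together with an explicit list of them in the $E_7$ and $E_8$ cases. One small stylistic point: your sentence ``the subvarieties attaining this bound (the Legendrian ones) are exactly the maximal horizontal subvarieties'' overstates things (a maximal integral need not be Legendrian in general), but your actual argument only uses the correct direction, so this does not affect the proof.
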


\begin{proof}
By \cite[Corollary 3.13]{MR3217458}, the dimension of a horizontal manifold is bounded by the maximal dimension of the horizontal Schubert varieties (HSV) in $\check D$.  So \cite[Corollary 2.18]{KR2} establishes the proposition for $\fg = \fso(n+4)$.  Likewise \cite[Example 5.9, and Corollaries 5.13 and 5.29]{MR3217458} yield Proposition \ref{P:X(N)max} for the exceptional $\fg = \fe_6 , \ff_4, \fg_2$.

Computing as in \cite[$\S$5]{MR3217458}, one may confirm that the maximal HSV in the $E_7$--adjoint variety $\check D = E_7(\bC)/P_1$ are all of dimension 16.  Indeed, they are the $X_w$ with $\Delta(w) = \{ \a \in \Delta \ | \ \a(\ttS^1) = 1 \,,\ \a(\ttT_w) \le 0 \}$ given by (the first column of) Table \ref{t:maxE}.  Similarly, the maximal HSV in the $E_8$--adjoint variety $\check D = E_8(\bC)/P_8$ are all of dimension 28; they are the $X_w$ with $\Delta(w) = \{ \a \in \Delta \ | \ \a(\ttS^8) = 1 \,,\ \a(\ttT_w) \le 0 \}$ given by (the second column of) Table \ref{t:maxE}.
\end{proof}

\small
\begin{table}
\caption{The maximal horizontal Schubert varieties in the $E_7$ and $E_8$--adjoint varieties}
\renewcommand{\arraystretch}{1.2}
\begin{tabular}{|c|c|}
\hline 
\multicolumn{2}{|c|}{The grading element $\ttT_w$} \\ \hline
  $\fe_7$ & $\fe_8$ \\ \hline
  $-\ttS^1+\ttS^3$ & $\ttS^2-\ttS^8$ \\
  $-\ttS^1+\ttS^5$ & $\ttS^5 - 2\ttS^8$ \\
  $-2\ttS^1+\ttS^3+\ttS^6$ & $\ttS^2+\ttS^6-3\ttS^8$ \\
  $-3\ttS^1+\ttS^3+\ttS^5+\ttS^7$ & $\ttS^2+\ttS^5+\ttS^7-5\ttS^8$ \\
  $-2\ttS^1+\ttS^4+\ttS^7$ & $\ttS^4+\ttS^7-4\ttS^8$ \\
  $-\ttS^1+\ttS^2+\ttS^7$ & $\ttS^3+\ttS^7-3\ttS^8$ \\
  $\ttS^7$ & $\ttS^1+\ttS^7-2\ttS^8$ \\
  & $\ttS^7-\ttS^8$.
\\ \hline
\end{tabular}
\label{t:maxE}
\end{table}
\normalsize
%

\subsection{Fundamental adjoint varieties III: orbits of higher codimension} \label{S:adIII}

We now show that the construction of $\S$\ref{S:details} yields all the $G_{\bR}$-orbits in $\tbd(D)$ for each of the fundamental adjoint varieties, and indicate the LMHS types.

\begin{proposition}\label{P:adIII}
Let $\check{D}$ be a fundamental adjoint variety, $D$ the unique open $G_{\bR}$-orbit with the structure of a Mumford--Tate domain, and $\mathcal{O}\subset \tbd(D)$ a $G_{\bR}$-orbit. Then $\mathcal{O}$ is of the form $\mathcal{O}_{\mathcal{B}}$, hence is polarizable (and can be reached by a sequence of Cayley transforms in strongly orthogonal ``horizontal'' roots). Moreover, with the exception of type $D_4$, there is no more than one $G_{\bR}$-orbit in each codimension.
\end{proposition}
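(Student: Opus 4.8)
The plan is to prove the two assertions separately: (i) every orbit $\cO\subset\tbd(D)$ has the form $\cO_{\cB}$; and (ii) outside type $D_4$, the assignment $\cB\mapsto\tcodim_\bR\cO_{\cB}$ separates the (finitely many) admissible conjugacy classes of sets $\cB$, and these codimensions exhaust all codimensions occurring in $\tbd(D)$. The first step is a reduction peculiar to adjoint varieties. Since $\check D=G/P_{\tti}$ with $P_{\tti}$ maximal and $\ttE=\ttS^{\tti}$, Table \ref{t:highest_rt} gives $\fg=\fg^{-2}\oplus\fg^{-1}\oplus\fg^0\oplus\fg^1\oplus\fg^2$ with $\fg^{\pm2}=\fg^{\pm\tilde\alpha}$; hence a root is noncompact (i.e.\ $\alpha(\ttE)$ odd) exactly when $\alpha(\ttE)=\pm1$, so the constraint $\b_j(\ttE)=1$ in \eqref{SE:dfnA} is \emph{automatic}, and the admissible sets are precisely the strongly orthogonal subsets $\cB\subset\Delta(\fg^1)$. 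By Theorem \ref{T:matsuki} and Remark \ref{R:rp} each such $\cB$ yields a rational, polarized orbit $\cO_{\cB}=\cO_s\subset\tbd(D)$, and applying \cite[Proposition 4.1]{MR3331177} to the $\bQ$-MHS $({}_sF^\sb,{}_sW_\sb)$ together with Lemma \ref{L:bar-b} (using $\b_i(\ttE)=1$) shows its real codimension is $\#\Delta(+,+)_{\cB}$, where
\[
  \Delta(+,+)_{\cB} \ = \ \{\alpha\in\Delta \ | \ \alpha(\ttE)\ge1,\ \ \sum_{i=1}^{s}\alpha(\ttH^{\b_i})\ \ge\ \alpha(\ttE)+1\}\,,
\]
a purely combinatorial quantity; and $\cB,\cB'$ related by reordering or by $W(\fg^0,\fh)$ give the same orbit.

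For (i) the content is that these $\cO_{\cB}$ exhaust $\tbd(D)$. A conceptual reason, valid precisely because the adjoint grading has no noncompact root of $\ttE$-value $\ge2$, is that every $G_\bR$-orbit in the closure of $D$ has a Matsuki representative (isotropy containing a $\theta$-stable Cartan) obtained from $o_0$ by a composite of Cayley transforms in a strongly orthogonal set of noncompact positive imaginary roots --- all of which lie in $\Delta(\fg^1)$ here --- so the representative is one of the points $o_s$ of $\S$\ref{S:details} (Wolf--Matsuki, \cite{\FHW}; this is Remark \ref{R:HSDcase} in the Hermitian case). As a concrete and fully rigorous check, I would run $\mathtt{kgp}$ in ATLAS \cite{ATLAS} for each fundamental adjoint pair $(\fg,G_\bR)$ (the unique relevant $G_\bR$ by Remark \ref{rem:openGR}(ii)) to list all $G_\bR$-orbits on $\check D$ with their codimensions, and confirm the list coincides with $\{\cO_{\cB}\}$; Proposition \ref{P:maxP} already pins down the unique codimension-one orbit as $\cO_{\{\alpha_{\tti}\}}$.

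It remains to enumerate the strongly orthogonal subsets $\cB\subset\Delta(\fg^1)$ up to $W(\fg^0,\fh)$ and compute $\#\Delta(+,+)_{\cB}$ in each. For $B_r$ and $D_r$ one has $\fg^0_{\tss}=\fsl_2\times\fso_{2r-3}$ (resp.\ $\fsl_2\times\fso_{2r-4}$) and $\fg^1=\bC^2\boxtimes(\text{standard})$, so $\Delta(\fg^1)$ is a product-type weight set and its strongly orthogonal subsets are classified immediately (and correspond to isotropic flags for the orthogonal structure). For $E_6,E_7,E_8,F_4,G_2$, $\Delta(\fg^1)$ is the weight set of the module whose projectivization contains the Legendrian variety $\cC_o$ (Table \ref{t:adj}), and the finitely many strongly orthogonal subsets are listed directly from the root data, $E_7$ and $E_8$ being the bulkiest. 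Comparing the resulting codimensions shows they are pairwise distinct except in type $D_4$: there the triality automorphism of $\sD$ permutes three $W(\fg^0,\fh)$-inequivalent strongly orthogonal subsets of $\Delta(\fg^1)$ of equal size, hence produces three $G_\bR$-orbits in one codimension --- exactly the kind of coincidence anticipated in Remark \ref{rem:Corcon}(ii) and treated systematically in \cite{SL2}. The LMHS types are then read off from the bigrading ${}_s\fg^{p,q}$ of \eqref{SE:gpq}, computed as in $\S$\ref{S:adI}.

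The main obstacle is the last paragraph: the combinatorial enumeration of strongly orthogonal subsets of $\Delta(\fg^1)$ and, above all, the verification that distinct $W(\fg^0,\fh)$-classes give distinct real codimensions outside $D_4$. Everything else is either formal (the reduction to strongly orthogonal subsets of $\Delta(\fg^1)$ and the codimension formula), cited (Theorem \ref{T:matsuki}, Wolf--Matsuki theory, \cite{ATLAS}), or routine bookkeeping with the bigradings of Figure \ref{f:adj_bigr}.
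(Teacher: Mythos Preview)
Your overall strategy is sound, and like the paper you ultimately rely on an ATLAS orbit count to certify exhaustion; but the organizing idea is different, and your ``conceptual reason'' for part~(i) has a gap.  The paper does \emph{not} argue that Wolf--Matsuki theory delivers every boundary orbit via Cayley transforms in strongly orthogonal noncompact imaginary roots.  Your citation of Remark~\ref{R:HSDcase} applies only when $D$ itself is Hermitian symmetric, which the adjoint domains are not; the statement you want is essentially the proposition itself, and the paper does not claim it follows from general structure theory.  Instead, the paper exploits that the \emph{reduced} domain $D(N)$ (attached to $\cB=\{\alpha_\tti\}$, cf.\ \S\ref{S:adII}) \emph{is} Hermitian symmetric.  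By Remark~\ref{R:HSDcase} applied to $D(N)$, all $G^{ss}_{B(N),\bR}$-orbits in $\tbd(D(N))$ arise from further strongly orthogonal Cayley transforms; since $\alpha_\tti$ is strongly orthogonal to all roots of $\fg^{ss}_{B(N)}$, pushing these forward under a map $\Theta:\tbd(D(N))/G^{ss}_{B(N),\bR}\to\tbd(D)/G_\bR$ yields a supply of $\cO_\cB$'s.  Codimensions and Matsuki-dual $K_\bC$-orbit dimensions $\mu_\cB$ are then computed from the Hodge--Deligne numbers (not via your $\Delta(+,+)_\cB$ count), and \texttt{kgporder} in ATLAS confirms that the number of $G_\bR$-orbits in $\tbd(D)$ matches $|\mathrm{im}\,\Theta|$ --- except for $G_2$, where $\Theta$ misses one orbit and Example~\ref{eg:codimO1} supplies the missing $\cO_{\{2\alpha_1+\alpha_2\}}$.

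Your direct enumeration of strongly orthogonal subsets of $\Delta(\fg^1)$ up to $W(\fg^0,\fh)$ would also work and has the merit of treating $G_2$ uniformly, since $\{2\alpha_1+\alpha_2\}$ appears naturally in your list.  But the combinatorics for $E_7$ and $E_8$ are substantially heavier than the paper's route through $D(N)$, and you would still need to determine which $W(\fg^0,\fh)$-classes of $\cB$ collapse to the same $G_\bR$-orbit (the relevant equivalence is by $W(\fg_\bR,\fh_\bR)$, which is coarser) before comparing codimensions.  The paper sidesteps this by computing the invariants $(c_\cB,\mu_\cB)$ only for the orbits produced by $\Theta$ and matching the total count against ATLAS, which simultaneously verifies (i) and (ii).
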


\begin{proof}
By Remark \ref{R:HSDcase}, all orbits in $\tbd(D(N))/G^{ss}_{B(N),\bR}$ are obtained (from $D(N)$) by a sequence of Cayley transforms in strongly orthogonal roots. (Outside the reducible cases --- $G$ of type $B_r,D_r$, and $D(N)\cong \sH\times \mathrm{IV}_{n-2}$ --- we need just one linear sequence of CTs.) As $Gr^W_{\pm 1}\fg$ is a faithful representation of $G^{ss}_{B(N)}$, we have $\check{D}(N)\cong G^{ss}_{B(N)}(\bC)\cdot F^{\bullet} \subseteq G(\bC) \cdot F^{\bullet} \cong \check{D}$, hence we may view $D(N)\subset \cO_1 \subset \tbd(D)$.  From Figure \ref{f:adj_bigr} it is evident that $\alpha_{\tti}$ is strongly orthogonal to all roots of $\fg^{ss}_{B(N)}$. Thus the orbits in the image of
\begin{equation*}
\Theta:\; \tbd(D(N))/G^{ss}_{B(N),\bR} \twoheadrightarrow \{G_{\bR}\cdot\tbd(D(N))\}/G_{\bR} \subseteq \tbd(D)^{pol}/G_{\bR} \subseteq \tbd(D)/G_{\bR}
\end{equation*}
are all of the form $\cO_{\cB}$. We claim that (in all but one case) $\Theta$ is surjective.

To show this, we shall make use of Matsuki duality \cite[$\S$6.6]{MR1247501}), which yields a bijection between the sets of $K_{\bC}$- and $G_{\bR}$-orbits in $\check{D}$, with the posets defined by orbit closure relations exactly opposite.  Let $\cO_{\cB}=G_{\bR}\cdot o_{\cB}$ be one of the orbits in $\text{image}(\Theta)$, with Hodge-Deligne numbers $\{ h_{\cB}^{p,q}\}_{-2\leq p,q\leq 2}$. By \cite[$\S\S$5.3-4]{MR3331177} (and using $h^{0,0}_D = h^{0,0}_{\cB} + 2h^{0,1}_{\cB} + 2h^{0,2}_{\cB}$) one has the dimension formulas
\begin{equation*}
c_{\cB} := \text{codim}_{\bR}(G_{\bR}\cdot o_{\cB}) = h^{1,1}_{\cB}+2h^{1,2}_{\cB}+h^{2,2}_{\cB}
\end{equation*}
and
\begin{eqnarray*}
k_{\cB} & := & \dim_{\bR}(K_{\bR}\cdot o_{\cB}) = \dim_{\cR}(\fk_{\bR}\cap \fg^{0,0}_{\cB}) \\
&=& (h^{0,0}_D + h^{2,-2}_D + h^{-2,2}_D) - (h^{0,0}_{\cB} - h^{1,1}_{\cB} + h^{2,2}_{\cB}) \\
&=& 2h^{0,1}_{\cB} + 2h^{0,2}_{\cB} + h^{1,1}_{\cB} - h^{2,2}_{\cB} + 2.
\end{eqnarray*}
Since $\text{codim}_{\bR}(K_{\bR}\cdot o_{\cB}) = \text{codim}_{\bR} (K_{\bC}\cdot o_{\cB}) + \text{codim}_{\bR} (G_{\bR}\cdot o_{\cB})$, we find in addition that
\begin{equation*}
\mu_{\cB}:= \dim_{\bC} (K_{\bC}\cdot o_{\cB}) -1 = \tfrac{1}{2} (c_{\cB}+k_{\cB}) - 1 = h^{0,1}_{\cB}+ h^{0,2}_{\cB} + h^{1,1}_{\cB} + h^{1,2}_{\cB} ,
\end{equation*}
which we have normalized so that $\mu_D = 0$.

Using the description of ${}_1\fg_{\pm 1}$ as ${}_1\fg_0$-modules, we may compute the effects of the Cayley transforms on the Hodge-Deligne numbers, obtaining the list in Table \ref{t:nd},\footnote{here $SO(n+4)$ corresponds to $B_{r}$ with $n=2r-3$ [resp. $D_{r}$ with $n=2r-4$]} except for the middle orbit in the last row. (The parentheses mean that it is {\it not} in $\text{im}(\Theta)$.) 
\small
\begin{table}
\caption{Normalized dimensions of $K_{\bC}$-orbits (dual to $G_{\bR}$-orbits in $\text{image}(\Theta)$).}
\renewcommand{\arraystretch}{1.2}
\begin{tabular}{|c||c|c|}
\hline $G$ & $\mu_{\cB}$ on $\text{im}(\Theta)$ &  $c_{\cB}$ on $\text{im}(\Theta)$ \\ \hline \hline
$SO(7)$ & $3, 4, 5, 6$ & $1, 3, 4, 7$ \\
$SO(8)$ & $4, 5, 5, 5, 7, 8$ & $1, 4, 4, 4, 5, 9$ \\
{\tiny $\begin{matrix} SO(n+4) \\ (n\geq 5) \end{matrix}$}  & $ n, 2n-3, n+1, 2n-1, 2n$ & $ 1, 4, n, n+1, 2n+1$ \\
$E_6$ & $10, 15, 19, 20 $ & $1, 6, 11, 21 $ \\
$E_7$ & $16, 25, 31, 32$ & $1, 8, 17, 33$ \\
$E_8$ & $28, 45, 55, 56$ & $1, 12, 29, 57$ \\
$F_4$ & $7, 10, 13, 14$ & $1, 5, 8, 15$ \\
$G_2$ & $2, (3,) 4$ & $1, (3,) 5$ \\ 
\hline
\end{tabular}
\label{t:nd}
\end{table}
\normalsize
Finally, using {\tt kgporder} in ATLAS \cite{ATLAS} to compute the poset of $K_{\bC}$-orbits in $\check{D}$, and interpreting the results by Matsuki duality, we confirm that there are 3 ($G_2$), resp. 4 ($B_3, E_6, E_7, E_8, F_4$), 5 ($B_{r\geq 4}, D_{r\geq 5}$), 6 ($D_4$) $G_{\bR}$-orbits in $\tbd(D)$.  (In fact, the $\mu_{\cB}$ numbers can be shown to match using {\tt kgp}.) Therefore $\Theta$ is surjective except in the $G_2$ case, where one knows the remaining orbit is of the form $\cO_{\beta_1}$ by Example \ref{eg:codimO1}.
\end{proof}

\begin{figure}
\caption{Hasse diagrams for $\text{cl}(D)$ in (mostly) fundamental adjoint cases.}
\begin{center}
\setlength{\unitlength}{3.2pt}
\begin{picture}(125,50)(0,-5)
\put(-2,35){$\mathbf{B_2}$}
\multiput(0,30)(7,0){2}{\circle*{1.2}}
\put(0,30){\Vector[m](1,0){7}}
\put(-2,25){$D$}\put(5,25){IIa}
\put(19,35){$\mathbf{D_3}$}
\put(21,30){\circle*{1.2}}\put(28,34.8){\circle*{1.2}}\put(28,25.2){\circle*{1.2}}\multiput(35,30)(7,0){2}{\circle*{1.2}}
\put(35,30){\Vector[m](1,0){7}}\put(21,30){\Vector[m](3,2){7}}\put(21,30){\Vector[m](3,-2){7}}\put(28,25.2){\Vector[m](3,2){7}}\put(28,34.8){\Vector[m](3,-2){7}}
\put(19,25){$D$}\put(27,21){I}\put(27,30){I}\put(33,25){IIa}\put(41,25){IV}
\put(54,35){$\mathbf{B_3}$}
\multiput(56,30)(7,0){5}{\circle*{1.2}}
\multiput(56,30)(7,0){4}{\Vector[m](1,0){7}}
\put(54,25){$D$}\put(62,25){I}\put(68,25){IIa}\put(75,25){III}\put(83,25){IV}
\put(96,35){$\mathbf{D_4}$}
\multiput(98,30)(7,0){5}{\circle*{1.2}}\put(112,37){\circle*{1.2}}\put(112,23){\circle*{1.2}}
\multiput(98,30)(7,0){4}{\Vector[m](1,0){7}}\put(105,30){\Vector[m](1,1){7}}\put(105,30){\Vector[m](1,-1){7}}\put(112,37){\Vector[m](1,-1){7}}\put(112,23){\Vector[m](1,1){7}}
\put(96,25){$D$}\put(104,25){I}\put(111,19){\tiny IIa}\put(111,26){\tiny IIa}\put(111,33){\tiny IIa}\put(118,25){III}\put(125,25){IV}
\put(0,15){$\mathbf{B_{r\geq 4} ,\, D_{r\geq 5}}$}
\multiput(7,10)(7,0){2}{\circle*{1,2}}\multiput(28,10)(7,0){2}{\circle*{1,2}}\put(21,14.8){\circle*{1.2}}\put(21,5.2){\circle*{1.2}}
\put(7,10){\Vector[m](1,0){7}}\put(28,10){\Vector[m](1,0){7}}\put(14,10){\Vector[m](3,2){7}}\put(14,10){\Vector[m](3,-2){7}}\put(21,14.8){\Vector[m](3,-2){7}}\put(21,5.2){\Vector[m](3,2){7}}
\put(5,5){$D$}\put(13,5){I}\put(19,1){IIb}\put(19,10){IIa}\put(26,5){III}\put(33,5){IV}
\put(42,15){$\mathbf{E_6,\, E_7,\, E_8,\, F_4}$}
\multiput(49,10)(7,0){5}{\circle*{1.2}}
\multiput(49,10)(7,0){4}{\Vector[m](1,0){7}}
\put(47,5){$D$}\put(56,5){I}\put(62,5){II}\put(69,5){III}\put(76,5){IV}
\put(89,15){$\mathbf{G_2}$}
\multiput(91,10)(7,0){4}{\circle*{1.2}}
\multiput(91,10)(7,0){3}{\Vector[m](1,0){7}}
\put(89,5){$D$}\put(97,5){I}\put(104,5){III}\put(111,5){IV}
\end{picture}
\label{f:hasse}
\end{center}
\end{figure}

The poset structure on $\text{cl}(D)=D\cup \tbd(D)$ in each case is displayed in Figure \ref{f:hasse}, where we include the (non-fundamental) adjoint varieties for $B_2$ and $D_3$ for completeness, and arrows indicate the orbit closure relation \[ \cO \geq \cO ' \iff \overline{\cO} \supseteq \cO ' . \] The Roman numerals designate the LMHS types (on $\fg$) corresponding to each $G_{\bR}$-orbit:\footnote{For the period domain case $G=SO(n+4)$ ($D=D_{(2,n,2)}$), types I, II, III, IV, and V of \cite{KPR} correspond to our types I, IIa, IIb, III, and IV, respectively.}
\begin{description}
\item[I] LMHS as in Figure \ref{f:adj_bigr}.
\item[II] LMHS as in Figure \ref{f:adj_bigr2}, with a bifurcation into {\bf IIa} and {\bf IIb} for $B_r,D_r$.
\item[III] $90^{\circ}$ rotation of Figure \ref{f:adj_bigr}.
\item[IV] Hodge-Tate LMHS.
\end{description}

\begin{figure}
\caption{The bigrading $\fg = \op \fg^{p,q}$ for type {\bf II}.}
\begin{center}
\setlength{\unitlength}{2.0pt}
\begin{picture}(50,50)(-22,-22)
\multiput(-20,0)(10,0){5}{\circle*{1.5}}
\multiput(-10,-10)(10,0){3}{\circle*{1.5}}
\multiput(-10,10)(10,0){3}{\circle*{1.5}}
\put(0,20){\circle*{1.5}}
\put(0,-20){\circle*{1.5}}
\thicklines
\put(0,0){\circle{2.8}} \put(0,0){\circle{5.0}}
\multiput(-10,0)(20,0){2}{\circle{3.0}}
\multiput(-11,-10.6)(20,0){2}{\framebox(2,2)}
\multiput(0,-10)(0,20){2}{\circle{3.0}}
\multiput(-11,9.4)(20,0){2}{\framebox(2,2)}
\thinlines
\put(0,0){\vector(1,0){25}} \put(26,0){\small{$p$}}
\put(0,0){\vector(-1,0){25}} \put(1,25){\small{$q$}}
\put(0,0){\vector(0,1){25}}
\put(0,0){\vector(0,-1){25}}
\end{picture}
\label{f:adj_bigr2}
\end{center}
\end{figure}

\noindent The Hodge-Deligne numbers for type {\bf II} are displayed in Table \ref{t:typeII}.

\begin{footnotesize}
\begin{table}
\caption{Dimensions of the $\fg^{p,q}$ in Figure \ref{f:adj_bigr2}.}
\renewcommand{\arraystretch}{1.2}
\begin{tabular}{|c||c|c|c|c|c|c|}
  \hline
  $G$ & {\tiny $\begin{matrix} SO(n+4) \\ (n\geq 1) \end{matrix}$} ({\bf IIa}) & {\tiny $\begin{matrix} SO(n+4) \\ (n\geq 5) \end{matrix}$}({\bf IIb}) & $E_6$ & $E_7$ & $E_8$ & $F_4$ 
  \\ \hline
  {\setlength{\unitlength}{2.5pt}
   \begin{picture}(1,1)(-0.5,-0.7)
     \put(0,0){\circle*{1.3}} 
   \end{picture}
  } 
  & 1 & 1 & 1 & 1 & 1 & 1  \\ \hline
  {\setlength{\unitlength}{2.5pt}
   \begin{picture}(1,1)(-0.5,-0.7)
     \put(0,0){\circle*{1.3}} 
     \thicklines\put(0,0){\circle{2.5}} \thinlines
   \end{picture}
  } 
  & $0$ & $2n-8$
  & 8 & 16 & 32 & 4  \\
  \hline
 {\setlength{\unitlength}{2.5pt}
   \begin{picture}(1,1)(-0.5,-0.7)
     \put(-1,-0.7){\framebox(2,2)} 
     \thicklines\put(0,0){\circle*{1.3}} \thinlines
   \end{picture}
  } 
  & $n$ & $4$
  & 6 & 8 & 12 & 5  \\
  \hline
  {\setlength{\unitlength}{2.5pt}
   \begin{picture}(1,1)(-0.5,-0.8)
     \put(0,0){\circle*{1.3}} 
     \thicklines\put(0,0){\circle{2.5}} \put(0,0){\circle{3.8}} \thinlines
   \end{picture}
  } 
  & $\tfrac{1}{2}n^2 - \tfrac{1}{2}n+ 2$ & $\tfrac{1}{2}n^2 - \frac{9}{2}n+18$  & 18 & 33 & 68 & 12  \\
  \hline
\end{tabular}
\label{t:typeII}
\end{table}
\end{footnotesize}

\subsection{Computing $[X(N)]$: a simple example} \label{S:[X(N)]eg}

One purpose of introducing the $X(\sigma)$ was to produce smooth algebraic representatives of classes in $H_*(\check D,\mathbb{Z})$.  In particular, for the $G_2$ and $F_4$ adjoint varieties, none of the maximal HSV are smooth.  So it seems natural to conclude this paper by computing $[X(N)]$ for the $G_2$-adjoint variety.\footnote{The analogous computation of $[X(N)]$ in the case of the $\tSO(2r+1,\bC)$--adjoint variety $\check D$ is worked out in \cite[Appendix B]{KR1long}. The systematic computation of $[X(\sigma)]$ will be taken up in a subsequent paper, using different methods.}

Recall the Schubert variety $X$ of \eqref{E:Xo}.  Because $\tdim_\bC X(N) = 2$ and $H_4(\check D , \bZ) = \bZ[X]$, the class $[X(N)]$ will necessarily be a multiple of $[X]$.  We will determine this multiple.

We begin with the description of $X$ as a Tits transform; see \cite[\S4]{KR2} for more detail.  Let $Q \subset G_2$ be the maximal parabolic associated to the first simple root, so that $G_2/Q$ is a five dimensional quadric, and the minimal homogeneous embedding lies in $\bP V_{\w_1}$.  Then $G_2/Q$ parameterizes a uniruling of $\check D$ by $\bP^1s$, \cf Table \ref{t:adjvar_lines}.  Let $G'$ be the connected simple subgroup of $G_2$ associated to the simple root $\a_1$, and let $\Sigma \subset G_2/Q$ be the $G'$--orbit of the identity coset $Q/Q$.  By \cite[(4.7a) and Lemma 4.10]{KR2}, the variety $X$ is the Tits transform $\cT(\Sigma)$.  If $\eta$ is the fundamental weight of $G'$, then $\w_1$ restricts to $\eta$ on $\fg'$.  It follows that $\Sigma$ is a minimal homogeneous embedding of $\bP^1$.

Let $\Sigma(N)$ denote the $G^\mathrm{ss}_{B(N)}(\bC)$--orbit of $Q/Q$ in $G_2/Q$.  Then $X(N)$ is the Tits transform $\cT(\Sigma(N))$.  Note that the simple root $2 \a_1 + \a_2$ of $G^\mathrm{ss}_{B(N)}$ is image of $\a_1$ under the Weyl group element $w = (12)\in\sW$.  It follows that $G^\mathrm{ss}_{B(N)} = \tAd_w(G')$.  As a consequence we may identify $\eta$ with the fundamental weight of $G^\mathrm{ss}_{B(N)}$.  

\begin{claim*}
The restriction of $\w_1$ to $G^\mathrm{ss}_{B(N)}$ is $2\eta$.  
\end{claim*}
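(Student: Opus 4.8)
The plan is to compute the restriction of the weight $\w_1$ to the semisimple subalgebra $\fg^{ss}_{B(N)} \subset \fg_2$ directly, using the root-theoretic identification of $\fg^{ss}_{B(N)}$ already in hand. Recall from $\S$\ref{S:adII} that $\fg^{ss}_{B(N)}$ is the $\fsl_2$ whose unique positive root is $2\a_1+\a_2$ (the root of $\fg_2$ strongly orthogonal to $\a_1 = \b_1 = \a_\tti$), and that $G^{ss}_{B(N)} = \tAd_w(G')$ for $w = (12) \in \sW$. Since $w$ sends $-\a_1 \mapsto \tilde\a = 3\a_1 + 2\a_2$ and sends $\a_1 \mapsto -(2\a_1+\a_2)$ (as follows from Lemma \ref{L:adjFvW} and $w(\ttH^{\a_1}) = -\ttS^1$; one checks $w$ is the product $r_{\a_1} r_{\a_2} r_{\a_1}$ or similar), the fundamental weight $\eta$ of $G^{ss}_{B(N)}$ is $w$ applied to the fundamental weight of $G'$, which is the projection of $\w_1$ (the $G'$-restriction of $\w_1$ being $\eta$ itself, already noted).

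First I would set up the pairing: for the $\fsl_2$ generated by the root vectors of $\pm(2\a_1+\a_2)$, the coroot is $\ttH^{2\a_1+\a_2}$, and the restriction of $\w_1$ to this $\fsl_2$ is the weight $k\,\eta$ where $k = \w_1(\ttH^{2\a_1+\a_2})$ and $\eta$ is normalized by $\eta(\ttH^{2\a_1+\a_2}) = 1$. So the whole claim reduces to the single numerical computation $\w_1(\ttH^{2\a_1+\a_2}) = 2$. Using \eqref{E:int}, this is $2(\w_1, 2\a_1+\a_2)/(2\a_1+\a_2, 2\a_1+\a_2)$. I would evaluate this with the standard $G_2$ data: in the normalization where the short roots have squared length $2$ and the long roots squared length $6$, one has $2\a_1+\a_2$ long (it lies in the $W$-orbit of $\tilde\a = 3\a_1+2\a_2$), and $(\w_1,\a_j) = \delta_{1j}(\a_1,\a_1)/2 = \delta_{1j}$ since $\a_1$ is short. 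Thus $(\w_1, 2\a_1+\a_2) = 2$, while $(2\a_1+\a_2,2\a_1+\a_2) = 6$ is the long-root length, giving $\w_1(\ttH^{2\a_1+\a_2}) = 2 \cdot 2 / 6$... so I would need to recheck the length of $2\a_1+\a_2$: in fact $2\a_1+\a_2$ has $(2\a_1+\a_2,2\a_1+\a_2) = 4(\a_1,\a_1) + 4(\a_1,\a_2) + (\a_2,\a_2) = 8 - 12 + 6 = 2$, so it is short, and then $\w_1(\ttH^{2\a_1+\a_2}) = 2\cdot 2/2 = 2$, as claimed. (This is consistent with $2\a_1+\a_2$ being $W$-conjugate to $\a_1$, not to $\tilde\a$.) Alternatively, and more robustly, I would verify $\w_1(\ttH^{2\a_1+\a_2}) = 2$ by expanding $\ttH^{2\a_1+\a_2}$ in the basis $\{\ttS^1,\ttS^2\}$ using Table \ref{t:H} or a direct $\a$-string computation, then pairing against $\w_1$ via $\w_1(\ttS^j) = \delta_{1j}$.

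The main obstacle is essentially bookkeeping with the $G_2$ normalizations — it is easy to conflate the short and long roots, or to misidentify which root of $\fg^{ss}_{B(N)}$ is simple, so I would pin this down carefully via the explicit identification $2\a_1+\a_2 = -w(\a_1)$ with $w$ the specified Weyl element, and cross-check the final integer $2$ by the independent route of expanding $\ttH^{2\a_1+\a_2}$ in grading-element coordinates. Once $\w_1(\ttH^{2\a_1+\a_2}) = 2$ is confirmed, the claim $\w_1|_{\fg^{ss}_{B(N)}} = 2\eta$ is immediate, since a weight of $\fsl_2$ is determined by its value on the coroot and $\eta$ is the one with value $1$. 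This in turn is what is needed to conclude (in the surrounding discussion) that $[X(N)] = 2[X]$, since the Tits-transform / degree comparison of $\cT(\Sigma(N))$ versus $\cT(\Sigma) = X$ is governed by the ratio of the embedding weights $2\eta$ versus $\eta$.
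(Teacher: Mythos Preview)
Your proposal is correct and takes essentially the same approach as the paper: both reduce the claim to the single evaluation $\w_1(\ttH^{2\a_1+\a_2})=2$, and both conclude immediately from $\eta(\ttH^{2\a_1+\a_2})=1$. The paper's version is slightly cleaner in that it first identifies $\ttH^{2\a_1+\a_2}=\ttS^1$ (your ``alternative route'') and then evaluates via $\w_1=2\a_1+\a_2$, avoiding the inner-product bookkeeping where you briefly stumbled on the length of $2\a_1+\a_2$.
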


\noindent It follows that $\Sigma(N)$ is the second Veronese embedding of $\bP^1$.  Since $\tdim_\bC \Sigma(N) = 1$ and $H_2(G_2/Q,\bZ) = \bZ[\Sigma]$, we may conclude that
$$
  [\Sigma(N)] \ = \ 2 \,[\Sigma] \,.
$$
It follows from \cite[Lemmas 3.11 and 3.13]{MR3130568} that 
$$
  [X(N)] \ = \ 2\,[X] \,.
$$

\begin{proof}[Proof of claim]
The Lie algebra $\fg^\tss_{B(N)}(\bC)$ is $\fg^\a \,\op\, \bC\,\ttH^\a \,\op\,\fg^{-\a}$ where $\a = 2\a_1 + \a_2$.  The fundamental weight $\eta$ is defined by $\eta(\ttH^\a) = 1$.  On the other hand $\ttH^\a = \ttS^1$, as an element of the Cartan subalgebra of $\fg_2$, and $\w_1 = 2\a_1 + \a_2$.  Therefore $\w_1(\ttH^\a) = 2$, and the claim follows.
\end{proof}

\appendix

\section{Dynkin diagrams} \label{S:dynkin}
For the reader's convenience we include in Figure \ref{f:dynkin} the Dynkin diagrams of the complex simple Lie algebras.  Recall that: each node corresponds to a simple root $\a_i \in \sS$; two nodes are connected if and only if $\langle \a_i , x\a_j \rangle \not=0$ and in this case the number if edges is $|\a_i|^2/|\a_j|^2 \ge 1$ (that is, $i,j$ are ordered so that the inequality holds).  Below, if $G = B_r$, then $r \ge 3$; and if $G = D_r$, then $r \ge 4$.
%
%
\begin{figure}[!ht] 
\caption{Dynkin diagrams for complex simple Lie algebras.}
\begin{center}
\setlength{\unitlength}{3.5pt}
\begin{picture}(50,65)(0,-5)
\multiput(0,55)(7,0){3}{\circle*{0.8}}
\put(0,55){\line(1,0){14}}
\multiput(15.5,55)(2,0){3}{\circle*{0.3}}
\multiput(21,55)(7,0){3}{\circle*{0.8}}
\put(21,55){\line(1,0){14}}
\put(-0.7,56){\footnotesize{1}}
\put(6.3,56){\footnotesize{2}}
\put(25,56){\footnotesize{$r-1$}}
\put(33.3,56){\footnotesize{$r$}}
\put(38,55){\small{$A_r = \fsl_{r+1}\bC$}}
\multiput(0,49)(7,0){3}{\circle*{0.8}}
\put(0,49){\line(1,0){14}}
\multiput(15.5,49)(2,0){3}{\circle*{0.3}}
\multiput(21,49)(7,0){3}{\circle*{0.8}}
\put(21,49){\line(1,0){7}}
\multiput(28,48.75)(0,0.5){2}{\line(1,0){7}}
\put(31.3,48.4){$\rangle$}
\put(-0.7,50){\footnotesize{1}}
\put(6.3,50){\footnotesize{2}}
\put(25,50){\footnotesize{$r-1$}}
\put(33.3,50){\footnotesize{$r$}}
\put(38,49){\small{$B_r = \fso_{2r+1}\bC$}}
\multiput(0,42)(7,0){3}{\circle*{0.8}}
\put(0,42){\line(1,0){14}}
\multiput(15.5,42)(2,0){3}{\circle*{0.3}}
\multiput(21,42)(7,0){3}{\circle*{0.8}}
\put(21,42){\line(1,0){7}}
\multiput(28,41.75)(0,0.5){2}{\line(1,0){7}}
\put(31.3,41.4){$\langle$}
\put(-0.7,43){\footnotesize{1}}
\put(6.3,43){\footnotesize{2}}
\put(25,43){\footnotesize{$r-1$}}
\put(33.3,43){\footnotesize{$r$}}
\put(38,42){\small{$C_r = \fsp_{2r}\bC$}}
\multiput(0,35)(7,0){3}{\circle*{0.8}}
\put(0,35){\line(1,0){14}}
\multiput(15.5,35)(2,0){3}{\circle*{0.3}}
\multiput(21,35)(7,0){2}{\circle*{0.8}}
\put(21,35){\line(1,0){7}}
\multiput(35,32.67)(0,4.66){2}{\circle*{0.8}}
\put(28,35){\line(3,1){7}}\put(28,35){\line(3,-1){7}}
\put(-0.7,36){\footnotesize{1}}
\put(6.3,36){\footnotesize{2}}
\put(23.5,36){\footnotesize{$r-2$}}
\put(34,38.5){\footnotesize{$r-1$}}
\put(34,34){\footnotesize{$r$}}
\put(39,33){\small{$D_r = \fso_{2r}\bC$}}
\multiput(0,25)(7,0){5}{\circle*{0.8}}
\put(0,25){\line(1,0){28}}
\put(14,30){\circle*{0.8}} 
\put(14,30){\line(0,-1){5}} 
\put(-0.7,26){\footnotesize{1}}
\put(6.3,26){\footnotesize{3}}
\put(15,29){\footnotesize{2}}
\put(12,26){\footnotesize{4}}
\put(20.3,26){\footnotesize{5}}
\put(27.3,26){\footnotesize{6}}
\put(31,24.7){\small{$E_6$}}
\multiput(0,16)(7,0){6}{\circle*{0.8}}
\put(0,16){\line(1,0){35}}
\put(14,21){\circle*{0.8}} \put(14,21){\line(0,-1){5}}
\put(-0.7,17){\footnotesize{1}}
\put(6.3,17){\footnotesize{3}}
\put(15,20){\footnotesize{2}}
\put(12,17){\footnotesize{4}}
\put(20.3,17){\footnotesize{5}}
\put(27.3,17){\footnotesize{6}}
\put(34.3,17){\footnotesize{7}}
\put(39,15.7){\small{$E_7$}}
\multiput(0,7)(7,0){7}{\circle*{0.8}}
\put(0,7){\line(1,0){42}}
\put(14,12){\circle*{0.8}} \put(14,12){\line(0,-1){5}}
\put(-0.7,8){\footnotesize{1}}
\put(6.3,8){\footnotesize{3}}
\put(15,11){\footnotesize{2}}
\put(12,8){\footnotesize{4}}
\put(20.3,8){\footnotesize{5}}
\put(27.3,8){\footnotesize{6}}
\put(34.3,8){\footnotesize{7}}
\put(41.3,8){\footnotesize{8}}
\put(46,6.7){\small{$E_8$}} 
\multiput(0,0)(7,0){4}{\circle*{0.8}} 
\put(0,0){\line(1,0){7}}
\put(7,0.3){\line(1,0){7}} \put(7,-0.3){\line(1,0){7}}
\put(10,-0.7){$\rangle$}
\put(14,0){\line(1,0){7}} \put(24,-0.3){\small{$F_4$}}
\put(-0.7,1){\footnotesize{1}}
\put(6.3,1){\footnotesize{2}}
\put(13.3,1){\footnotesize{3}}
\put(20.2,1){\footnotesize{4}}
\multiput(40,0)(7,0){2}{\circle*{0.8}}
\put(40,0){\line(1,0){7}}\put(40,0.3){\line(1,0){7}}
\put(40,-0.3){\line(1,0){7}}
\put(43.5,-0.7){$\langle$} \put(49,-0.3){\small{$G_2$}}
\put(39.3,1){\footnotesize{1}}
\put(46.3,1){\footnotesize{2}}
\end{picture}
\label{f:dynkin}
\end{center}
\end{figure}
\def\cprime{$'$} \def\Dbar{\leavevmode\lower.6ex\hbox to 0pt{\hskip-.23ex
  \accent"16\hss}D}



\begin{thebibliography}{10}

\bibitem{ATLAS}
Atlas of Lie Groups and Representations Software.
\newblock {http://atlas.math.umd.edu/software/index.html}.

\bibitem{MR2668874}
Andrea Altomani, Costantino Medori, and Mauro Nacinovich.
\newblock Orbits of real forms in complex flag manifolds.
\newblock {\em Ann. Sc. Norm. Super. Pisa Cl. Sci. (5)}, 9(1):69--109, 2010.

\bibitem{MR0216035}
W.~L. Baily, Jr. and A.~Borel.
\newblock Compactification of arithmetic quotients of bounded symmetric
  domains.
\newblock {\em Ann. of Math. (2)}, 84:442--528, 1966.

\bibitem{MR1782635}
Sara Billey and V.~Lakshmibai.
\newblock {\em Singular loci of {S}chubert varieties}, volume 182 of {\em
  Progress in Mathematics}.
\newblock Birkh\"auser Boston Inc., Boston, MA, 2000.

\bibitem{MR0137126}
William~M. Boothby.
\newblock A note on homogeneous complex contact manifolds.
\newblock {\em Proc. Amer. Math. Soc.}, 13:276--280, 1962.

\bibitem{MR0149503}
Armand Borel and Andr{\'e} Haefliger.
\newblock La classe d'homologie fondamentale d'un espace analytique.
\newblock {\em Bull. Soc. Math. France}, 89:461--513, 1961.

\bibitem{Bourbaki}
N.~Bourbaki.
\newblock {\em \'{E}l\'ements de math\'ematique. {G}roupes et alg\`ebres de
  {L}ie. {C}hapitre {I-VI}}.
\newblock Actualit\'es Scientifiques et Industrielles. Hermann, Paris, 1968,
  1971-1972.

\bibitem{Bryant2000}
Robert~L. Bryant.
\newblock {\'E}lie {C}artan and {G}eometric {D}uality.
\newblock In {\em Journ{\'e}es {\'E}lie {C}artan 1998 et 1999}, volume~16 of
  {\em Institut {\'E}lie {C}artan}, 2000.

\bibitem{MR1509120}
\'Elie Cartan.
\newblock Les syst\'emes de Pfaff, \'a cinq variables et les \'equations aux d\'eriv'ees partialles du second ordre.
\newblock{\em Ann. Sci \'Ecole Norm. Sup.}, 27(3): 109-192, 1910.

\bibitem{MR664326}
Eduardo Cattani and Aroldo Kaplan.
\newblock Polarized mixed {H}odge structures and the local monodromy of a
  variation of {H}odge structure.
\newblock {\em Invent. Math.}, 67(1):101--115, 1982.

\bibitem{MR840721}
Eduardo Cattani, Aroldo Kaplan, and Wilfried Schmid.
\newblock Degeneration of {H}odge structures.
\newblock {\em Ann. of Math. (2)}, 123(3):457--535, 1986.

\bibitem{MR496761}
Eduardo~H. Cattani and Aroldo~G. Kaplan.
\newblock Horizontal {${\rm SL}_{2}$}-orbits in flag domains.
\newblock {\em Math. Ann.}, 235(1):17--35, 1978.

\bibitem{MR3130568}
Izzet Coskun and Colleen Robles.
\newblock Flexibility of {S}chubert classes.
\newblock {\em Differential Geom. Appl.}, 31(6):759--774, 2013.

\bibitem{Jrthesis}
G. da Silva Jr.
\newblock {\em On the limiting behavior of variations of Hodge structures}.
\newblock Washington University Ph.D. Thesis, 2016.

\bibitem{MR0498551}
Pierre Deligne.
\newblock Th{\'e}orie de {H}odge. {II}.
\newblock {\em Inst. Hautes {\'E}tudes Sci. Publ. Math.}, 40:5--57, 1971.

\bibitem{MR654325}
Pierre Deligne, James~S. Milne, Arthur Ogus, and Kuang-yen Shih.
\newblock {\em Hodge cycles, motives, and {S}himura varieties}, volume 900 of
  {\em Lecture Notes in Mathematics}.
\newblock Springer-Verlag, Berlin-New York, 1982.

\bibitem{MR2188135}
Gregor Fels, Alan Huckleberry, and Joseph~A. Wolf.
\newblock {\em Cycle spaces of flag domains}, volume 245 of {\em Progress in
  Mathematics}.
\newblock Birkh\"auser Boston Inc., Boston, MA, 2006.
\newblock A complex geometric viewpoint.

\bibitem{FL}
Robert Friedman and Radu Laza.
\newblock Semi-algebraic horizontal subvarieties of {C}alabi--{Y}au type.
\newblock to appear, 2013.

\bibitem{MR1153249}
William Fulton and Joe Harris.
\newblock {\em Representation theory}, volume 129 of {\em Graduate Texts in
  Mathematics}.
\newblock Springer-Verlag, New York, 1991.
\newblock A first course, Readings in Mathematics.

\bibitem{GGVan}
Mark Green and Phillip Griffiths.
\newblock Deformation theory and limiting mixed hodge structures.
\newblock Preprint, 2014.

\bibitem{MR2918237}
Mark Green, Phillip Griffiths, and Matt Kerr.
\newblock {\em Mumford-{T}ate groups and domains: their geometry and
  arithmetic}, volume 183 of {\em Annals of Mathematics Studies}.
\newblock Princeton University Press, Princeton, NJ, 2012.

\bibitem{MR3115136}
Mark Green, Phillip Griffiths, and Matt Kerr.
\newblock {\em Hodge theory, complex geometry, and representation theory},
  volume 118 of {\em CBMS Regional Conference Series in Mathematics}.
\newblock Published for the Conference Board of the Mathematical Sciences,
  Washington, DC, 2013.

\bibitem{GGR}
Mark Green, Phillip Griffiths, and Colleen Robles.
\newblock Extremal degenerations of polarized {H}odge structures.
\newblock Preprint, arXiv:1403.0646, 2014.

\bibitem{GTAGS}
Phillip Griffiths.
\newblock Hodge theory and boundaries of moduli spaces.
\newblock Lecture delivered at the Texas Algebraic Geometry Symposium at Texas
  A\&M U, April 2015.

\bibitem{GFRG3}
Phillip Griffiths.
\newblock Hodge theory and h-surfaces.
\newblock Lecture delivered at the Workshop on Hodge Theory, Moduli and
  Representation Theory at Washingtion U in St. Louis, April 2015.

\bibitem{GMiami2015}
Phillip~A. Griffiths.
\newblock Hodge theory and moduli.
\newblock Department colloquium at U Miami.

\bibitem{GatF60}
Phillip~A. Griffiths.
\newblock Limiting mixed hodge structures and boundaries of moduli.
\newblock Lecture at the conference Perspectives on Complex Algebraic Geometry,
  Columbia U, May 2015.

\bibitem{HongMok2013}
Jaehyun Hong and Ngaiming Mok.
\newblock Characterization of smooth schubert varieties in rational homogeneous
  manifolds of picard number 1.
\newblock {\em J. Algebraic Geom.}, 22:333--362, 2013.
\newblock IMR Preprint Series
  http://hkumath.hku.hk/~imr/IMRPreprintSeries/2010/IMR2010-6.pdf.

\bibitem{Horikawa}
Eiji Horikawa.
\newblock Algebraic surfaces of general type with small $c_1^2$. II.
\newblock {\em Invent. Math.} 37(2):121--155, 1976.

\bibitem{MR499562}
James~E. Humphreys.
\newblock {\em Introduction to {L}ie algebras and representation theory},
  volume~9 of {\em Graduate Texts in Mathematics}.
\newblock Springer-Verlag, New York, 1978.
\newblock Second printing, revised.

\bibitem{MR1748609}
Jun-Muk Hwang and Ngaiming Mok.
\newblock Varieties of minimal rational tangents on uniruled projective
  manifolds.
\newblock In {\em Several complex variables ({B}erkeley, {CA}, 1995--1996)},
  volume~37 of {\em Math. Sci. Res. Inst. Publ.}, pages 351--389. Cambridge
  Univ. Press, Cambridge, 1999.

\bibitem{KU}
Kazuya Kato and Sampei Usui.
\newblock {\em Classifying spaces of degenerating polarized Hodge structure}, volume 169 of {\em Annals of Mathematics Studies}.
\newblock Princeton University Press, Princeton, NJ, 2009.

\bibitem{MR3364748}
Nicholas~M. Katz.
\newblock Elliptic convolution, {$G_2$}, and elliptic surfaces.
\newblock {\em Ast{\'e}risque}, (370):197--205, 2015.


\bibitem{KP2012}
Matt Kerr and Gregory Pearlstein.
\newblock Boundary components of {M}umford--{T}ate domains.
\newblock {\em Duke Math. J.}, 165(4):661--721, 2016.

\bibitem{MR3331177}
Matt Kerr and Gregory Pearlstein.
\newblock Naive boundary strata and nilpotent orbits.
\newblock {\em Ann. Inst. Fourier (Grenoble)}, 64(6):2659--2714, 2014.

\bibitem{KPR}
Matt Kerr, Gregory Pearlstein and Colleen Robles.
\newblock Partial orders and polarized relations on limit mixed hodge
  structures.
\newblock In prepration, 2016.

\bibitem{KR2}
M.~Kerr and C.~Robles.
\newblock Classification of smooth horizontal schubert varieties.
\newblock In preparation, 2016.

\bibitem{KR1long}
Matt Kerr and Colleen Robles.
\newblock Hodge theory and real orbits in flag varieties.
\newblock preprint, arXiv:1407.4507.

\bibitem{MR1920389}
Anthony~W. Knapp.
\newblock {\em Lie groups beyond an introduction}, volume 140 of {\em Progress
  in Mathematics}.
\newblock Birkh\"auser Boston Inc., Boston, MA, second edition, 2002.

\bibitem{Kostant}
Bertram Kostant.
\newblock Lie algebra cohomology and the generalized Borel-Weil theorem.
\newblock {\em Ann. of Math}, 74(2):329--387, 1961.

\bibitem{MR2372722}
J.~M. Landsberg and L.~Manivel.
\newblock Legendrian varieties.
\newblock {\em Asian J. Math.}, 11(3):341--359, 2007.

\bibitem{MR1890196}
Joseph~M. Landsberg and Laurent Manivel.
\newblock Construction and classification of complex simple {L}ie algebras via
  projective geometry.
\newblock {\em Selecta Math. (N.S.)}, 8(1):137--159, 2002.

\bibitem{MR1966752}
Joseph~M. Landsberg and Laurent Manivel.
\newblock On the projective geometry of rational homogeneous varieties.
\newblock {\em Comment. Math. Helv.}, 78(1):65--100, 2003.

\bibitem{Patrikis}
Stefan Patrikis.
\newblock Mumford-Tate groups of polarizable Hodge structures.
\newblock {\em Proc. Amer. Math. Soc.}, 2016, http://dx.doi.org/10.1090/proc/13040.

\bibitem{MR1247501}
R.~W. Richardson and T.~A. Springer.
\newblock Combinatorics and geometry of {$K$}-orbits on the flag manifold.
\newblock In {\em Linear algebraic groups and their representations ({L}os
  {A}ngeles, {CA}, 1992)}, volume 153 of {\em Contemp. Math.}, pages 109--142.
  Amer. Math. Soc., Providence, RI, 1993.

\bibitem{MR3217458}
C.~Robles.
\newblock Schubert varieties as variations of {H}odge structure.
\newblock {\em Selecta Math. (N.S.)}, 20(3):719--768, 2014.

\bibitem{CC}
C.~Robles.
\newblock Characteristic cohomology of the infinitesimal period relation.
\newblock {\em Asian J. Math.}, 2015.
\newblock To appear, arXiv:1310.8154.

\bibitem{SL2}
Colleen Robles.
\newblock Classification of horizontal $\mathrm{SL}_2$'s.
\newblock {\em Compositio Math.}, 2015.
\newblock To appear, arXiv:1405.3163.

\bibitem{MR0382272}
Wilfried Schmid.
\newblock Variation of {H}odge structure: the singularities of the period
  mapping.
\newblock {\em Invent. Math.}, 22:211--319, 1973.

\bibitem{SXZ}
Mao Sheng, Jinxing Xu and Kang Zuo.
\newblock The monodromy groups of Dolgachev's moduli spaces are Zariski dense.
\newblock preprint, arXiv:1407.0933v2.

\bibitem{MR2657440}
Mao Sheng and Kang Zuo.
\newblock Polarized variation of {H}odge structures of {C}alabi-{Y}au type and
  characteristic subvarieties over bounded symmetric domains.
\newblock {\em Math. Ann.}, 348(1):211--236, 2010.

\bibitem{Usui}
Sampei Usui.
\newblock Generic Torelli theorem for quintic-mirror family.
\newblock {\em Proc. Japan Acad. Ser. A Math. Sci.}, 84(8):143--146, 2008.

\bibitem{MR0251246}
Joseph~A. Wolf.
\newblock The action of a real semisimple group on a complex flag manifold.
  {I}. {O}rbit structure and holomorphic arc components.
\newblock {\em Bull. Amer. Math. Soc.}, 75:1121--1237, 1969.

\bibitem{MR928600}
Joseph~A. Wolf.
\newblock {\em Spaces of constant curvature}.
\newblock Publish or Perish Inc., Houston, TX, fifth edition, 1984.

\bibitem{Yun}
Zhiwei Yun.
\newblock Motives with exceptional Galois groups and the inverse Galois problem.
\newblock {Invent. Math.}, 196(2):267-337, 2014.

\end{thebibliography}
\end{document}